\newtheorem{thm}{Theorem}
\numberwithin{thm}{section}
\newtheorem{cor}[thm]{Corollary}
\newtheorem{prop}[thm]{Proposition}
\newtheorem{lem}[thm]{Lemma}
\theoremstyle{definition}
\newtheorem{defn}[thm]{Definition}
\theoremstyle{remark}
\newtheorem{rmk}[thm]{Remark}
\newtheorem{exam}[thm]{Example}
\newcommand{\co}{\colon\thinspace}
\newcommand{\mb}[1]{\mathbb{#1}}
\newcommand{\mc}[1]{\mathcal{#1}}
\newcommand{\mi}[1]{\mathscr{#1}}
\newcommand{\op}{{op}}
\newcommand{\FMap}{\mathrm{Map}^f}
\newcommand{\der}{\dag}
\DeclareMathOperator{\Hom}{Hom}
\DeclareMathOperator{\Map}{Map}
\DeclareMathOperator{\map}{map}
\DeclareMathOperator{\Fun}{Fun}
\DeclareMathOperator{\im}{Im}
\DeclareMathOperator{\ev}{ev}
\DeclareMathOperator{\Sp}{Sp}
\DeclareMathOperator{\sk}{sk}
\DeclareMathOperator{\sh}{sh}
\DeclareMathOperator{\shad}{sh^\perp}
\DeclareMathOperator{\gr}{gr}
\DeclareMathOperator{\Spaces}{\mi{S}}
\DeclareMathOperator{\Set}{Set}
\DeclareMathOperator{\sSet}{Set^\Delta}
\DeclareMathOperator{\cSet}{Set^\square}
\DeclareMathOperator{\Alg}{Alg}
\DeclareMathOperator{\Cat}{Cat}
\DeclareMathOperator{\LMod}{LMod}
\DeclareMathOperator{\Comod}{Comod}
\DeclareMathOperator{\Syn}{Syn}
\DeclareMathOperator{\SSyn}{SSyn}
\DeclareMathOperator{\Rees}{Rees}
\DeclareMathOperator*{\colim}{colim}
\DeclareMathOperator*{\hocolim}{hocolim}
\DeclareMathOperator*{\holim}{holim}
\DeclareMathOperator{\Fil}{Fil}
\DeclareMathOperator{\Ind}{Ind}
\DeclareMathOperator*{\modmod}{/\mkern-6mu/}
\newcommand{\sumsum}{\Sigma\mkern-10.5mu\Sigma}
\DeclareMathOperator*{\cocoprod}{\amalg\mkern-12mu\amalg}
\DeclareMathOperator*{\ttimes}{\times\mkern-9mu\times}
\title{Filtered spaces, filtered objects}
\author{Tyler Lawson}
\begin{document}
\maketitle

\begin{abstract}
  We introduce a operation on categories enriched in filtered spaces, whose effect is to turn categories of $E_1$-pages into categories of $E_2$-pages. This allows us to give a homotopical versions of several results that were previously implemented using $E_2$-model structures or more sophisticated machinery in higher algebra.

  We find that we can recover the homotopy theory of spaces from this page-turning operation on the homotopy theory of CW-complexes and filtration-shifting maps, a version of the cellular approximation theorem. In the category of filtered spectra, we show that this implements the procedure on filtered spectra sending the homotopy exact couple to its associated derived couple. Finally, we recover Pstr\k{a}gowski's category of synthetic spectra from applying this page-turning operation to the category of filtered modules over a spectral version of the Rees ring.
\end{abstract}

\section{Introduction}

Filtrations have been, and continue to be, a core calculational tool in homotopy theory. Spectral sequences are often built from a tower of interlocking exact sequences associated to a filtered object; obstruction theory uses cellular filtrations and Postnikov filtrations to analyze objects and maps.

A key component of these techniques is a \emph{lookahead} technique. A spectral sequence has differentials that capture obstructions to existence and uniqueness as an element passes through the associated tower; obstruction theory passes from cellular cochains to cohomology classes by looking at maps on one skeleton which are the restriction of maps on the next. This lookahead progressively eliminates obstructions to existence and uniqueness, removing dead ends from consideration---represented in a spectral sequence by ``turning the page''. It often allows us to begin with a tractable calculation and progress towards a complete one. Most hard calculations often rely on our ability to lift data from the algebraic end to the original filtered object.

More recent developments have led to better tools in this area, taking categories of resolutions and constructing categories with the flavor of an $E_2$-page. Originally these came from the ``resolution model structures,'' or $E_2$-model structures, used by Dwyer--Kan--Stover \cite{dwyerkanstover-e2},  Bousfield \cite{bousfield-cosimplicial}, and Goerss--Hopkins \cite{goerss-hopkins-summary}, which incorporate (co)simplicial objects as their basic objects of study. More modern tools for stable theory came from Pstr\k{a}gowski's \emph{synthetic spectra} \cite{pstragowski-synthetic}, which has alternative approaches via Burklund--Hahn--Senger \cite{burklund-hahn-senger-artintate} or the Beilinson $t$-structure \cite{ariotta-chain}. These latter approaches are more robust (in particular, not requiring a model structure), but fundamentally made use of a large number of tools from stable homotopy theory.

Our goal in this paper is to introduce a new technique for producing ``$E_2$-categories'' like these, enjoying properties similar to those of the  nonabelian derived category. We believe that the construction here is of additional use for several reasons.
\begin{itemize}
\item It is widely applicable, requiring only that we have an extension of $\mc C$ to a category with \emph{filtered} spaces of maps. In particular, (co)completeness or presentability properties of $\mc C$ are useful, but not strictly necessary, to the construction.
\item It interacts well with filtration-compatible monoidal structures on $\mc C$.
\item It does not require stability or a $t$-structure, but in stable cases it is closely connected to d\'ecalage and the Beilinson $t$-structure on filtered objects.
\item It is readily iterable. For example, the category of filtered spectra can be thought of as a category of ``$E_1$-pages'', and we can inductively apply our construction to form categories of $E_2$-pages, $E_3$-pages, and so on.
\end{itemize}

In particular, this technique will allow us a better connection with techniques employed in algebra. When $R$ is a ring with an ideal $J$, there is an associated Rees ring; the theory of modules over the Rees ring interpolates between $R$-modules and modules over the associated graded $\gr(R)$ of the $J$-adic filtration. We will show that Pstr\k{a}gowski's synthetic spectra \cite{pstragowski-synthetic} can be interpreted in terms of modules over homotopy-theoretic version of the Rees ring.

\subsection{Summary}

The core idea goes back to the cellular approximation theorem.

A CW-complex $K$ has a nested sequence of skeleta
\[
  \sk^0 K \subset \sk^1 K \subset \sk^2 K \subset \dots
\]
forming a filtered space. Given CW-complexes $K$ and $L$, any map $K \to L$ is homotopic to a cellular map that sends $\sk^n K$ into $\sk^n L$. Two homotopic maps are not usually homotopic through cellular maps: however, cellular approximation says that a homotopy $K \times [0,1] \to L$ can be chosen that sends $\sk^n K \times [0,1]$ into $\sk^{n+1} L$.

If we introduce the notation $\sh(\sk L)$ for the \emph{shift} of this filtered space, given by $\sk^{n+1} L$ in degree $n$, then we have a natural inclusion $\sk L \to \sh(\sk L)$. The cellular approximation theorem then tells us:
\begin{itemize}
\item up to equivalence, any map $K \to L$ comes from a point from the space $\Map(\sk K,\sk L)$ of filtration-preserving maps;
\item up to equivalence, any homotopy between two such maps $K \to L$ comes from a path in the space $\Map(\sk K,\sh(\sk L))$; 
\item and this continues in a clear pattern for higher homotopies.
\end{itemize}

This leads us to consider the following construction. Given a filtered space $X^0 \to X^1 \to \dots$, we can produce a new space $X^\der$ as a simplicial set:
\begin{itemize}
\item the vertices of $X^\der$ are points of $X^0$;
\item the paths between two vertices of $X^\der$ are paths between their images in $X^1$;
\item the 2-simplices of $X^\der$ extend their boundary via a $2$-simplex in $X^2$;
\item and so on.
\end{itemize}
This new space is analyzable in terms of $X$. Moreover, it extends readily to being a filtered space itself, and so the construction can be applied inductively.

If $\mc C$ is a category enriched in filtered spaces, such as the category of CW-complexes, we can then apply this construction to each mapping space in $\mc C$ to produce a category $\mc C^\der$.

\subsection{Outline}

In Section~\ref{sec:successor} we introduce the main page-turning construction of this paper, taking a filtered space $X$ and producing a new \emph{successor} space $X^\der$, by a recipe similar to the above. The bulk of Section~\ref{sec:successor} is devoted to showing that this has several good properties: it has homotopy groups calculable in terms of $X$, and interacts well with many categorical constructions. The hardest result in this section is Theorem~\ref{thm:Dpullback}, which shows how $X \mapsto X^\der$ interacts with general homotopy pullbacks. To the reader who is not sure why we begin this paper by using point-set models such as cubical sets: in this model Theorem~\ref{thm:Dpullback} is nearly self-evident, whereas it is difficult to intractable in some others. The reader who is willing to take this result as given might skim through the more technical work in this section.

In Section~\ref{sec:homotopical} we give two alternative, model-independent descriptions of $X^\der$, one using mapping spaces and one in terms of a localization of filtered spaces. In particular, we show in Proposition~\ref{prop:beilinsonweight} that if $X = \Omega^\infty Y$ for a filtered spectrum $Y$, then $X^\der$ can be recovered from the \emph{d\'ecalage} construction on $Y$, part of the Beilinson $t$-structure on filtered spectra. We also show how it factors the geometric realization of simplicial spaces through filtered spaces.

In Section~\ref{sec:monoidality} we explain the sense in which $X \mapsto X^\der$ is monoidal---both for the point-set version and the homotopical version. The main theorem is Theorem~\ref{thm:Dmonoidal}, showing that the construction is coherently lax symmetric monoidal.

Once this is done, we can apply it to enriched categories. Section~\ref{sec:successorcat} takes a category $\mc C$ enriched in filtered spaces and associates to it a category $\mc C^\der$. We then demonstrate how homotopy limits and colimits in $\mc C^\der$ can be deduced from those in $\mc C$, as a consequence of Theorem~\ref{thm:Dpullback}. Of particular interest are categories of filtered objects.

In Section~\ref{sec:diagrams} we show how this structure interacts with diagrams of categories and adjunctions, and how monoidal structures on $\mc C$ can give rise to monoidal structures on $\mc C^\der$.

Section~\ref{sec:stable} analyzes the special case when $\mc C$ is \emph{stable}. In particular, $\mc C^\der$ is often stable, and Proposition~\ref{prop:cofiberpres} gives a criterion for when cofiber sequences in $\mc C$ can remain cofiber sequences in $\mc C^\der$ that is useful for applications.

The rest of the paper is devoted to examples. In Section~\ref{sec:cw} we  prove that we can recover the homotopy theory of CW-complexes by applying this construction to their filtered mapping spaces. Section~\ref{sec:filteredspectra} is about the category of filtered spectra, and shows that these constructions recover the process of sending the exact couple for the homotopy of a filtered spectrum to the associated derived couple. Section~\ref{sec:ssyn} shows that if we have a filtered ring, such as a Rees ring, we get an associated \emph{semisynthetic category} that interpolates between modules over the underlying ring and modules over the associated graded, and in Section~\ref{sec:syn} we show that Pstr\k{a}gowski's category of $E$-synthetic spectra is a category of semisynthetic modules over an Adam--Rees ring for $E$. Rees rings have long been employed in algebraic geometry as a tool for interpolating between a ring and a quotient; this gives us a more direct analogy between these deformation-theoretic techniques and the synthetic category. Further, we show that the homotopy category of ``$C\tau$-modules'' can be very directly connected with a category of chain-complex objects and chain homotopy classes of maps.

This paper is part of a long-term goal: to reimplement and extend the Goerss--Hopkins obstruction theory, including the technique of resolving an operad by simpler ones, into an obstruction theory that can more easily be applied to a wider variety of examples. We expect to return to this obstruction theory, and in particular to extended power constructions, in future work.

\subsection{Acknowledgements}

The author would like to thank
Clark Barwick,
Andrew Blumberg,
Jeremy Hahn,
Jacob Hegna,
Liam Keenan,
Haynes Miller,
and
Piotr Pstr\k{a}gowski
for discussions related to this paper. This work was partially supported by NSF grant DMS-2208062.

\section{Filtered spaces and their successors}
\label{sec:successor}
\subsection{Filtrations}

\begin{defn}
  \label{def:filtered}
  For a category $\mc C$, a \emph{filtered object} $X$ of $\mc C$ is a functor from the poset $\mb Z$ to $\mc C$, viewed as a sequence of maps
  \[
    \dots \to X^0 \to X^1 \to X^2 \to X^3 \to \dots
  \]
  We write $\Fil(\mc C)$ for the category of filtered objects.

  Similarly, a \emph{nonnegatively filtered object} is a functor from the poset $\mb N$ to $\mc C$, which form the category $\Fil^+(\mc C)$.
\end{defn}

We regard each filtered object as having an underlying nonnegatively filtered object. We can also regard nonnegatively filtered objects as equivalent to filtered objects which are constant in nonpositive degrees. (If $\mc C$ has an initial object $\emptyset$, we could instead regard nonnegatively filtered objects as equivalent to filtered objects isomorphic to $\emptyset$ in negative degrees.)

\begin{exam}
  \label{exam:skeletalfilt}
  For a CW-complex $W$, the sequence of skeleta $W^{(n)} \subset W$ determines a nonnegatively filtered topological space, which we refer to as the \emph{skeletal filtration} and write as $\sk(W)$. Similarly, for a simplicial or cubical set $K$, the sequence of skeleta $\sk^n K \subset K$ determines a natural nonnegatively filtered object also written as $\sk(K)$.
\end{exam}

\begin{defn}
  \label{def:shift}
  For a filtered object $X$ and an integer $k$, the \emph{shift of $X$ by $k$}, $\sh^k(X)$, is the composite of the functor $n \mapsto n+k\co \mb Z \to \mb Z$ with $X$. For nonnegatively filtered objects $X$, we similarly define $\sh^k(X)$ for $k \geq 0$. When $k=1$, we simply write $\sh(X)$. 
  
  Roughly,
  \[
    \sh(\dots \to X^0 \to X^1 \to X^2 \to \dots) = (\dots \to X^1 \to X^2 \to X^3 \to \dots).
  \]

  The maps $X^n \to X^{n+1}$ assemble into natural transformations $\tau^k\co X \to \sh^k X$ for any $k \geq 0$, satisfying $\sh(\tau) = \tau$.
\end{defn}

\begin{defn}
  \label{def:freeeval}
  For any $n \in \mb Z$ there are evaluation functors
  \[
    \ev_n\co \Fil(\mc C) \to \mc C,
  \]
  given by $X \mapsto X^n$. If $\mc C$ has an initial object $\emptyset$, then these have left adjoints
  \[
    F_n\co \mc C \to \Fil (\mc C),
  \]
  such that
  \[
    (F_n W)^k = \begin{cases}
      W &\text{if }k \geq n,\\
      \emptyset &\text{if }k < n,
    \end{cases}
  \]
  with most structure maps being identity maps.

  Similar adjoint pairs are defined for $\Fil^+(\mc C)$ for $n \in \mb N$.
\end{defn}

\subsection{The successor functor}

Functoriality of skeleta in cubical sets allows us to make the following definition.

\begin{defn}
  \label{def:Ddefinition}
  Suppose $X$ is a (nonnegatively) filtered cubical set. The associated \emph{successor cubical set} $X^\der$ is, in degree $n$, the set of maps of filtered objects
  \[
    \sk(\square^n) \to X.
  \]
  More precisely, $X^\der$ is the composite functor
  \[
    \Hom(\sk(-),X) \co \square^\op \to \Fil(\Set^\square)^\op \to \Set.
  \]
\end{defn}

In particular, for a filtered cubical set $X^0 \to X^1 \to X^2 \to \dots$, a point of $X^\der$ is the same as a point of $X^0$; an edge in $X^\der$ is a pair of points of $X^0$, together with an edge between their images in $X^1$; and so on.

The definition of the successor automatically makes the following result hold.
\begin{prop}
  \label{prop:Dadjunction}
  There is a natural isomorphism
  \[
    \Hom_{\Fil(\cSet)}(\sk(K), X) \cong \Hom_{\cSet}(K,X^\der),
  \]
  making the successor $(-)^\der$ into a functor right adjoint to $\sk$.
\end{prop}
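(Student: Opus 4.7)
The plan is to reduce to the case of representables via density and cocontinuity. By construction, the set of $n$-cubes of $X^\der$ is exactly $\Hom_{\Fil(\cSet)}(\sk(\square^n), X)$, and the Yoneda lemma identifies this with $\Hom_{\cSet}(\square^n, X^\der)$. So the desired isomorphism is tautological when $K = \square^n$.

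For a general cubical set $K$, I would use the co-Yoneda lemma to write
\[
  K \cong \colim_{\square^n \to K} \square^n,
\]
the colimit taken over the category of cubes over $K$. The key point is then that the functor $\sk \co \cSet \to \Fil^+(\cSet)$ preserves colimits. To see this, observe that colimits in $\Fil^+(\cSet) = \Fun(\mb N, \cSet)$ are computed levelwise, while at each level the skeleton functor $\sk^n \co \cSet \to \cSet$ is left adjoint to the $n$-coskeleton $\cosk^n$ and hence cocontinuous. Thus
\[
  \sk(K) \cong \colim_{\square^n \to K} \sk(\square^n)
\]
in $\Fil^+(\cSet)$. Applying the contravariantly continuous functor $\Hom_{\Fil(\cSet)}(-, X)$ yields
\[
  \Hom_{\Fil(\cSet)}\!\bigl(\sk(K), X\bigr) \cong \lim_{\square^n \to K} \Hom_{\Fil(\cSet)}\!\bigl(\sk(\square^n), X\bigr) = \lim_{\square^n \to K} (X^\der)_n,
\]
while the standard Yoneda computation for cubical sets gives $\Hom_{\cSet}(K, X^\der) \cong \lim_{\square^n \to K} (X^\der)_n$. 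The two limits agree, naturally in $K$ and $X$, which produces the required natural isomorphism and hence the adjunction $\sk \dashv (-)^\der$ with the expected unit and counit.

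The only non-formal step is the claim that $\sk$ preserves colimits, and I expect this to be the sole point requiring attention: it hinges on the existence of the coskeleton right adjoint in the cubical setting, which is where the choice of cubical (rather than, say, CW) models does work for us. Once that is in hand, every other step — naturality, the reduction to representables, and the translation between colimits and limits under $\Hom$ — is routine Yoneda bookkeeping.
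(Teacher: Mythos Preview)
Your proof is correct and is essentially the argument the paper has in mind: the paper gives no proof at all, simply asserting that ``the definition of the successor automatically makes the following result hold,'' which is exactly the standard nerve--realization adjunction pattern you have spelled out. Your only nontrivial ingredient, that $\sk$ preserves colimits via the levelwise coskeleton adjoint, is the right justification for why the extension from representables to all $K$ goes through.
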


\begin{prop}
  The successor functor $(-)^\der$ lifts to functors $\Fil(\cSet) \to \Fil(\cSet)$ and  $\Fil^+(\cSet) \to \Fil^+(\cSet)$.
\end{prop}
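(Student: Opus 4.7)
The plan is to equip $X^\der$ with a filtration by using the shift functor $\sh$ from Definition~\ref{def:shift}. Specifically, I would define the $k$-th filtration level of $X^\der$ to be $(\sh^k X)^\der$, with structure maps induced by applying the (as yet unfiltered) successor functor to the natural transformation $\tau \co \sh^k X \to \sh^{k+1} X$. In concrete terms, an $n$-cube of $(X^\der)^k$ is a filtration-preserving map $\sk(\square^n) \to \sh^k X$, i.e., an $n$-cube of $X^{k+n}$ together with compatible boundary data in lower filtration levels, so in particular this does recover $X^\der$ at filtration level $0$.

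First I would verify that this really defines a filtered cubical set: the composites of structure maps in the $k$-direction are strictly associative because the $\sh^k$ are strictly composable endofunctors of $\Fil(\cSet)$ and $(-)^\der \co \Fil(\cSet) \to \cSet$ is a strict functor by Proposition~\ref{prop:Dadjunction}. Next I would check naturality in $X$: given $f \co X \to Y$ in $\Fil(\cSet)$, the shifted maps $\sh^k f$ commute with the $\tau$'s (because $\sh(\tau) = \tau$ makes $\tau$ a natural transformation of endofunctors), so the induced cubical maps $(\sh^k X)^\der \to (\sh^k Y)^\der$ assemble into a morphism of filtered cubical sets.

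Packaged cleanly, the lift factors as the composite
\[
  \Fil(\cSet) \xrightarrow{\,\sh^\bullet\,} \Fun(\mb Z, \Fil(\cSet)) \xrightarrow{(-)^\der} \Fun(\mb Z, \cSet) = \Fil(\cSet),
\]
where $\sh^\bullet$ sends $X$ to the filtered-in-$\Fil(\cSet)$ object $k \mapsto \sh^k X$ (with structure maps $\tau$), and the second arrow applies $(-)^\der$ levelwise. For the nonnegatively filtered version one replaces $\mb Z$ by $\mb N$; since $\sh$ restricts to an endofunctor of $\Fil^+(\cSet)$ (Definition~\ref{def:shift} only requires $k \geq 0$ there), the same composite defines a functor $\Fil^+(\cSet) \to \Fil^+(\cSet)$.

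No real obstacle is expected here: the construction is entirely formal, built by composing functors that have already been established. The only thing to watch is the compatibility between $\sh$ restricted to nonnegatively filtered objects and the rest of the construction, but this is immediate from the definitions.
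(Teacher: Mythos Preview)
Your proposal is correct and matches the paper's own proof essentially verbatim: the paper defines the lifted filtration by applying $(-)^\der$ to the functorial tower $\dots \to X \to \sh X \to \sh^2 X \to \dots$, which is exactly your composite $\Fil(\cSet) \xrightarrow{\sh^\bullet} \Fun(\mb Z,\Fil(\cSet)) \xrightarrow{(-)^\der} \Fil(\cSet)$. Your additional remarks on strict associativity and naturality in $X$ are fine elaborations of what the paper leaves implicit.
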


\begin{proof}
  If $X$ is a filtered cubical set, it has a functorially associated sequence
  \[
    \dots \to X \to \sh X \to \sh^2 X \to \dots
  \]
  of filtered cubical sets. Taking successors gives a new filtered cubical set
  \[
    \dots \to X^\der \to (\sh X)^\der \to (\sh^2 X)^\der \to \dots.
  \]
  The same argument applies to nonnegatively filtered cubical sets.
\end{proof}

\begin{rmk}
  \label{rmk:Dhocolim}
  If the maps $X^n \to X^{n+1}$ are isomorphisms for $n \geq 0$, making $X$ eseentially a constant filtered cubical set, then $X^\der \cong X^0$.

  If we view the maps $X^m \to \colim_{n \in \mb Z} X^n$ as determining a map from $X$ to a constant filtered object, taking successors then gives a natural transformation
  \[
    X^\der \to \colim_{n \in \mb Z} X^n \simeq \hocolim_{n \in \mb Z} X^n.
  \]
  Similarly, the maps $X^0 \to X^n$ determine a map $X^0 \to X^\der$.
\end{rmk}

\subsection{Fibration properties}

The following technical result tells us that $X^\der$ is often well-behaved homotopically.

\begin{defn}
  \label{def:defibrationcond}
  Suppose that $f\co X \to Y$ is a map of filtered spaces. We say that $f$ satisfies the \emph{defibration condition} if, for all $m \geq 0$, the map of relative homotopy sets
  \[
    \pi_m(X^m, X^{m-1}) \to \pi_m(Y^m, Y^{m-1})
  \]
  is surjective (at any basepoint).

  A map of filtered cubical sets or filtered simplicial sets satisfies the defibration condition if it does so on geometric realization.
\end{defn}

The defibration condition will allow us to convert fibrations to fibrations. However, we first require the following basic lemma about model categories.
\begin{lem}
  \label{lem:homotopylift}
  Suppose that we have a model category $\mc M$ with a diagram
  \[
    \begin{tikzcd}
      A \ar[r] \ar[d,hook] & X \ar[d,two heads]\\
      B \ar[r] & Y
    \end{tikzcd}
  \]
  such that
  \begin{enumerate}
  \item either $A$ is cofibrant or $\mc M$ is left proper, and
  \item either $Y$ is fibrant or $\mc M$ is right proper.
  \end{enumerate}
  Then there is a lift $B \to X$ in $\mc M$ if and only if there is a lift in the homotopy category $h\mc M$.
\end{lem}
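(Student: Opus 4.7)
The ``only if'' direction is trivial: any strict lift descends to a lift in $h\mc M$. For the converse, let $f\co A \to X$ and $g\co B \to Y$ denote the top and bottom maps of the square, and suppose $h\co B \to X$ is a map in $\mc M$ whose homotopy class makes both triangles commute in $h\mc M$, i.e., $[hi] = [f]$ and $[ph] = [g]$ in $h\mc M$. The plan is to modify $h$ into a strict lift.

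First I would invoke the properness hypotheses to reduce to the case where $A$ is cofibrant and $Y$ is fibrant. A cofibrant replacement $\wt A \to A$ extends along the cofibration $i$ to a cofibration $\wt A \to \wt B$ together with an acyclic fibration $\wt B \to B$, and left properness ensures that the resulting replaced square has a strict lift if and only if the original one does, and similarly for lifts in $h\mc M$. A dual argument using right properness handles $Y$. In the reduced setting, $B$ is also cofibrant (the composite $\emptyset \to A \to B$ is a cofibration) and $X$ is also fibrant (the composite $X \to Y \to *$ is a fibration).

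Second, in this bifibrant setting, I would argue via the standard fact that the restriction-and-postcomposition map
\[
\map(B, X) \to \map(A, X) \times_{\map(A, Y)} \map(B, Y),
\]
built using framings when $\mc M$ is not enriched, is a fibration whose fiber over the point $(f, g)$ is the space of strict lifts of the given square. The homotopy class $[h]$ defines an element of $\pi_0 \map(B, X)$ mapping to the class of $(f, g)$ in $\pi_0$ of the target, and by exactness of the long exact sequence of the fibration at $\pi_0 \map(B, X)$, this element lies in the image of $\pi_0$ of the fiber. Picking a representative yields a strict lift.

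The main obstacle is the properness reduction: one must verify that cofibrantly or fibrantly replacing a corner of the square (in the presence of left or right properness) genuinely preserves the lifting problem in a way detected by both strict and $h\mc M$-lifts. Setting up the mapping spaces via framings in a non-enriched model category is the other technical piece, but it is standard.
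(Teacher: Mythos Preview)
There is a genuine gap in your second step. You assert that $[h]\in\pi_0\map(B,X)$ maps to the component of $(f,g)$ in $\pi_0 P$, where $P=\map(A,X)\times_{\map(A,Y)}\map(B,Y)$, but this does not follow. The image of $h$ under your fibration is $(hi,ph)$; the hypotheses $[hi]=[f]$ and $[ph]=[g]$ only say that $(hi,ph)$ and $(f,g)$ have the same image in $\pi_0\map(A,X)\times_{\pi_0\map(A,Y)}\pi_0\map(B,Y)$. Since $P$ is a homotopy pullback, $\pi_0 P$ is in general strictly larger than this fiber product---the fibres of the comparison map are double cosets for $\pi_1\map(A,Y)$---so $(hi,ph)$ and $(f,g)$ can lie in different components of $P$, and then path-lifting gives you nothing over $(f,g)$. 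Concretely: take $\mc M$ to be topological spaces, $A=S^0$, $B=[0,1]$, $X=Y=S^1$, $p(z)=z^2$, $f(0)=1$, $f(1)=-1$, and $g$ the constant path at $1$. All four objects are bifibrant, the square commutes strictly, and there is no strict lift (it would be a path in the discrete fibre $\{\pm 1\}$ from $1$ to $-1$). But in $h\mc M$ we have $B\simeq\ast$ and $[S^0,S^1]=\ast$, so any map $\ast\to S^1$ is a lift of the resulting square.

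This is not so much a divergence from the paper as a shared soft spot: the paper argues instead via the slice model structure on $\mc M_{A/-/Y}$ and Quillen equivalences among its variants, but the sentence ``we can represent this map in $h\mc M$ by a dotted lift $B_c\to X_f$ in this diagram'' is asserting exactly the rectification that fails in the example above---passing from two separately homotopy-commuting triangles to one strictly commuting square. Both arguments do go through whenever $\map(A,Y)$ is simply connected, so that $\pi_0 P$ really \emph{is} the fibre product of $\pi_0$'s; in particular they go through when $A$ is weakly initial, which is the situation in the paper's only application of the lemma (there $A$ is a contractible object in a pointed category). Your mapping-space approach is otherwise sound and arguably more transparent than the paper's slice-category argument, but the lemma as literally stated needs an additional hypothesis of this kind.
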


\begin{proof}
  Consider the induced model structure on $\mc M_{A / -/ Y}$, the category of objects under $A$ and over $Y$. The object $B$ is cofibrant and $X$ is fibrant in $\mc M_{A / - / Y}$, and so every map in the homotopy category $h\mc M_{A / - / Y}$ comes from a genuine map in $\mc M_{A / - / Y}$. Thus, it suffices to show that the existence of a lift in $h \mc M$ implies the existence of a lift in $h\mc M_{A / - / Y}$.

  Choose a cofibrant replacement $A_c$ of $A$ and a fibrant replacement $Y_f$ of $Y$, then expand to a diagram:
  \[
    \begin{tikzcd}
      A_c \ar[r, two heads, "\sim"] \ar[d, hook]&
      A \ar[r] \ar[d,hook] &
      X \ar[r, hook, "\sim"] \ar[d, two heads] &
      X_f \ar[d, two heads] \\
      B_c \ar[r, two heads, "\sim"']&
      B \ar[r] &
      Y \ar[r, hook, "\sim"'] &
      Y_f
    \end{tikzcd}
  \]
  If there exists a lift in $h\mc M$, we can represent this map in $h\mc M$ by a dotted lift $B_c \to X_f$ in this diagram. Thus, we find that there is a map $B_c \to X_f$ in $\mc M_{A_c / - / Y_f}$, and hence a map $B \to X$ in the homotopy category $h\mc M_{A_c / - / Y_f}$.

  Now consider the composite functor
  \[
    \mc M_{A / - / Y} \to \mc M_{A_c / - / Y} \to \mc M_{A_c / - / Y_f},
  \]
  where the first functor is right Quillen and the second is left Quillen. The first is a Quillen equivalence if either $\mc M$ is left proper or $A_c \to A$ is a weak equivalence between cofibrant objects; the second is a Quillen equivalence if either $\mc M$ is right proper or $Y \to Y_f$ is a weak equivalence between fibrant objects.
\end{proof}

\begin{lem}
  \label{lem:Dfibration}
  Suppose that $f\co X \to Y$ is a map of filtered cubical sets that is a levelwise fibration. If $f$ satisfies the defibration condition, then the map $X^\der \to Y^\der$ is a fibration of cubical sets.
\end{lem}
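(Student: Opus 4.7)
By Proposition~\ref{prop:Dadjunction}, the map $X^\der \to Y^\der$ is a fibration of cubical sets exactly when $f$ has the right lifting property in $\Fil(\cSet)$ against each filtered cofibration $\sk(\Lambda^n_i) \hookrightarrow \sk(\square^n)$ induced by a horn inclusion. Fix such a square. Since $\sk^k(\Lambda^n_i) = \sk^k(\square^n)$ for $k \leq n-2$, while $\sk^k(\square^n) = \square^n$ and $\sk^k(\Lambda^n_i) = \Lambda^n_i$ for $k \geq n$, a filtered lift is specified by two maps $\tilde g_{n-1}\co \partial\square^n \to X^{n-1}$ and $\tilde g_n\co \square^n \to X^n$ that extend the given data on $\Lambda^n_i$, strictly cover the given maps into $Y^{n-1}$ and $Y^n$, and satisfy $(X^{n-1} \to X^n)\circ \tilde g_{n-1} = \tilde g_n|_{\partial\square^n}$. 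The values in levels $k<n-1$ are already prescribed; the values in levels $k>n$ are forced by postcomposition with $X^n \to X^k$.

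The construction of the pair $(\tilde g_{n-1}, \tilde g_n)$ is driven by the defibration condition in degree $n$. The given maps into $Y$ package into a relative $n$-cell whose class in $\pi_n(|Y^n|, |Y^{n-1}|)$, at a basepoint coming from $\sk^0(\square^n)$, lifts by surjectivity to a relative $n$-cell of $X$. A standard homotopy-extension argument, using that $\Lambda^n_i \hookrightarrow \square^n$ is anodyne and that $f^n$ and $f^{n-1}$ are fibrations, lets one modify this relative cell within its class so that it restricts to the prescribed $(g_n, g_{n-1})$ on the horn. Lemma~\ref{lem:homotopylift}, applied in the proper model category of cubical sets, then upgrades the resulting homotopy-category solution to an honest pair of maps strictly covering $(h_n, h_{n-1})$.

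The main obstacle is the bookkeeping: arranging the three compatibility requirements---extension of the horn data $(g_{n-1}, g_n)$, strict lifting of the $Y$-maps $(h_{n-1}, h_n)$, and commutation with the structure map $X^{n-1}\to X^n$---into a single homotopy-category lifting problem whose solvability is exactly what defibration provides. Once this packaging is set up, surjectivity from the defibration hypothesis supplies the abstract lift, and Lemma~\ref{lem:homotopylift} converts it into the required point-set maps, completing the construction of the filtered lift and hence establishing the fibration property of $X^\der \to Y^\der$.
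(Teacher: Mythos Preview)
Your plan is essentially the paper's strategy: reduce via the adjunction of Proposition~\ref{prop:Dadjunction} to a filtered lifting problem concentrated in levels $n-1$ and $n$, then recognise the remaining problem as governed by $\pi_n(Y^n,Y^{n-1}) \to \pi_n(X^n,X^{n-1})$ and invoke Lemma~\ref{lem:homotopylift}. The ingredients and the architecture are right.

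The one point that needs sharpening is your packaging. In paragraph~2 you describe a sequential procedure---lift the relative class, then homotopy-extend to match the horn data, then apply Lemma~\ref{lem:homotopylift} ``in the proper model category of cubical sets'' to strictify over $(h_{n-1},h_n)$---but each of these adjustments can disturb what the previous one arranged, and applying Lemma~\ref{lem:homotopylift} in $\cSet$ alone yields a single lift, not a compatible pair across the structure map $X^{n-1}\to X^n$. Your third paragraph correctly diagnoses that all three constraints must be encoded in \emph{one} lifting problem, but you don't say which model category realises this.

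The paper's move is to work in the projective model structure on the \emph{arrow category} $\Fun(\{0\to 1\},\cSet)$. There the two-level problem becomes a single square
\[
\begin{tikzcd}
(\sqcap^n_{k,\epsilon},\sqcap^n_{k,\epsilon}) \ar[r] \ar[d,hook] & (X^{n-1},X^n) \ar[d,two heads] \\
(\partial\square^n,\square^n) \ar[r] & (Y^{n-1},Y^n),
\end{tikzcd}
\]
with the left map a cofibration between cofibrant objects and the right map a projective fibration (since each $f^k$ is a fibration). Right properness of $\cSet$ passes to the arrow category, so Lemma~\ref{lem:homotopylift} applies there. In the homotopy category of arrows the left map is isomorphic to $(\ast,\ast)\to(D^n,S^{n-1})$, and the existence of a lift is then literally the surjectivity of $\pi_n(X^n,X^{n-1})\to\pi_n(Y^n,Y^{n-1})$. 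This single step simultaneously handles extension of the horn data, strict covering of $Y$, and compatibility with $X^{n-1}\to X^n$, replacing the delicate sequential bookkeeping you anticipated.
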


\begin{proof}
  We need to show that the map $X^\der \to Y^\der$ satisfies the right lifting property against any cap inclusion $\sqcap^m_{k,\epsilon} \to \square^m$. By the adjunction of Proposition~\ref{prop:Dadjunction}, this is equivalent to showing that the map $X \to Y$ satisfies the right lifting property against the map $\sk(\sqcap^m_{k,\epsilon}) \to \sk(\square^m)$ of filtered cubical sets.

  The maps $sk^n \sqcap^m_{k,\epsilon} \to \sk^n \square^m$ are isomorphisms for $n < m-1$, and so there exists a unique lift below filtration $m-1$. Moreover, the maps $\sk^n \sqcap^m_{k,\epsilon} \to \sk^{n+1} \sqcap^m_{k,\epsilon}$ and $\sk^n \square^m \to \sk^n \square^m$ are isomorphisms for $n \geq m$, and so if a lift is constructed up to degree $m$, it automatically extends to a lift of filtered objects. To complete the proof, then, it suffices to take the commutative cube of maps in degrees $m-1$ and $m$,
  \[
    \begin{tikzcd}[row sep=small, column sep=small]
      \sqcap^m_{k,\epsilon} \ar[rr] \ar[dr] \ar[dd] &&
      X^{m-1} \ar[dr] \ar[dd] \\
      & \sqcap^m_{k,\epsilon} \ar[rr, crossing over] &&
      X^m \ar[dd] \\
      \partial \square^m \ar[dr] \ar[rr] &&
      Y^{m-1} \ar[dr] \\
      & \square^m \ar[rr] \ar[from=uu, crossing over] &&
      Y^m,
    \end{tikzcd}
  \]
  and construct lifts $\partial \square^m \to X^{m-1}$ and $\square^m \to X^m$ making the whole diagram commute. Alternatively, in the \emph{arrow category} $\Fun(\{0 \to 1\}, \cSet)$, it suffices to show that there is a lift in the diagram
  \[
    \begin{tikzcd}
      (\sqcap^m_{k,\epsilon}, \sqcap^m_{k,\epsilon}) \ar[r] \ar[d] &
      (X^{m-1}, X^m) \ar[d] \\
      (\square^m, \partial \square^m) \ar[r] \ar[ur, dotted]&
      (Y^{m-1}, Y^m).
    \end{tikzcd}
  \]
  (When $m > 0$, such lifts are the same as pointed lifts.)
  In the projective model structure on the arrow category of pointed cubical sets, the left-hand map is a cofibration between cofibrant objects and the right-hand map is a fibration; moreover, the category of cubical sets is right proper by \cite[8.4.38]{cisinski-testcat}, and the arrow category inherits this. Therefore, by Lemma~\ref{lem:homotopylift}, a lift exists if and only if one exists in the homotopy category of the arrow category. In this homotopy category, however, the map $(\sqcap^m_{k,\epsilon}, \sqcap^m_{k,\epsilon}) \to (\square^m, \partial \square^m)$ is isomorphic to the map $(\ast, \ast) \to (D^m, S^{m-1})$, and so this is precisely asking that an element in $\pi_m(Y^m, Y^{m-1})$ lifts to an element in $\pi_m(X^m, X^{m-1})$, as desired.
\end{proof}

Applying this to a map $X \to \ast$, we obtain the following result.
\begin{cor}
  \label{cor:Dfibrancy}
  If $X$ is a (levelwise) fibrant filtered cubical set, then $X^\der$ is fibrant. In particular, this is true if $X$ is the singular cubical set of a filtered topological space.
\end{cor}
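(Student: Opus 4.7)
The plan is to apply Lemma~\ref{lem:Dfibration} to the unique map $X \to \ast$, where $\ast$ denotes the constant filtered cubical set at the point. First I would verify that $X \to \ast$ is a levelwise fibration: each $X^n$ is fibrant by hypothesis, and since a cubical set is fibrant precisely when its unique map to $\ast$ is a fibration, this is immediate.

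Next I would check that the defibration condition holds for $X \to \ast$. Since every relative homotopy set $\pi_m(\ast, \ast)$ is a singleton (every map into $\ast$ is unique up to homotopy), the required maps $\pi_m(X^m, X^{m-1}) \to \pi_m(\ast,\ast)$ are automatically surjective. Hence Lemma~\ref{lem:Dfibration} produces a fibration $X^\der \to \ast^\der$.

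To conclude that $X^\der$ is fibrant, I need to identify $\ast^\der$ with a point. The constant filtered cubical set $\ast$ has all structure maps isomorphisms, so Remark~\ref{rmk:Dhocolim} gives $\ast^\der \cong \ast$; alternatively one reads this directly from Definition~\ref{def:Ddefinition}, since $\Hom_{\Fil(\cSet)}(\sk(\square^n), \ast)$ is a singleton for every $n$. Thus $X^\der \to \ast$ is a fibration, i.e.\ $X^\der$ is fibrant.

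For the final sentence, if $X$ is the singular cubical set of a filtered topological space $T$, then $X^n = \Sing(T^n)$ is Kan in each filtration degree (the singular cubical set of any topological space is fibrant), so the hypothesis of the corollary applies. The only potential subtlety is verifying that $\ast^\der = \ast$ in a way compatible with the conventions of the paper, but this is transparent from either Definition~\ref{def:Ddefinition} or Remark~\ref{rmk:Dhocolim}; no serious obstacle arises.
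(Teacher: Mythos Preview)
Your proposal is correct and follows exactly the approach the paper takes: the paper simply says ``Applying this to a map $X \to \ast$, we obtain the following result,'' and you have spelled out the verification that Lemma~\ref{lem:Dfibration} applies (levelwise fibrancy, trivial defibration condition, and $\ast^\der \cong \ast$). No gap or deviation.
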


\subsection{Homotopy groups}

The explicit description of the successor functor allows us to analyze its effect on homotopy groups.

\begin{thm}
  \label{thm:Dhomotopy}
  Suppose that $X$ is a fibrant filtered cubical set. Then the map $X^0 \to X^\der$ induces a natural isomorphism
  \[
    \pi_0(X^\der) \cong \im(\pi_0 X^0 \to \pi_0 X^1).
  \]
  Similarly, given any basepoint $x \in X^0$, with associated image basepoints in $X^m$, there are natural isomorphisms
  \[
    \pi_m(X^\der,x) \cong \im(\pi_m(X^m, x) \to \pi_m(X^{m+1},x))
  \]
  of groups.
\end{thm}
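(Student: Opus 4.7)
The plan is to unpack the adjunction from Proposition~\ref{prop:Dadjunction} directly, reading off based cubes and based homotopies in $X^\der$ from the filtered structure of the skeleta $\sk(\square^n)$.

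For $\pi_0$: since $\sk(\square^0)$ is the constant filtered cubical set at $\square^0$, one identifies the set of $0$-cubes of $X^\der$ with $X^0_0$. An edge in $X^\der$ is a filtered map $\sk(\square^1)\to X$, and because $\sk^0(\square^1) = \partial\square^1$ while $\sk^k(\square^1) = \square^1$ for $k \geq 1$, such a map consists of a pair of vertices $x_0,x_1 \in X^0_0$ together with a $1$-cube in $X^1$ between their images. Fibrancy of $X^\der$ (Corollary~\ref{cor:Dfibrancy}) and of $X^1$ implies path-components are detected by single $1$-cubes, so two vertices of $X^\der$ agree in $\pi_0(X^\der)$ exactly when their images in $\pi_0(X^1)$ agree, yielding $\pi_0(X^\der) \cong \im(\pi_0 X^0 \to \pi_0 X^1)$.

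For $m \geq 1$, I would first classify based $m$-cubes. A based map $(\square^m,\partial\square^m)\to(X^\der,x)$ corresponds under the adjunction to a filtered map $\tilde\sigma\co \sk(\square^m)\to X$ whose restriction to $\sk(\partial\square^m)$ is the constant filtered map at $x$. Since $\sk^k(\partial\square^m)=\sk^k(\square^m)$ for $k\leq m-1$, the components $\tilde\sigma^k$ in those filtrations are forced to be the constant map at $x$; since $\sk^k(\square^m)=\square^m$ for $k\geq m$, the components $\tilde\sigma^k$ for $k\geq m$ are determined by $\tilde\sigma^m$ via the structure maps $X^m\to X^k$. The only genuine datum is thus a based $m$-cube $\tilde\sigma^m\co (\square^m,\partial\square^m)\to(X^m,x)$, producing a natural bijection between based $m$-cubes of $X^\der$ and based $m$-cubes of $X^m$, and consequently a natural surjection of groups $\pi_m(X^m,x)\twoheadrightarrow \pi_m(X^\der,x)$.

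To identify the kernel, the same analysis applies to based homotopies. Writing $\square^{m+1}=I\times\square^m$, a based homotopy $H\co \square^{m+1}\to X^\der$ between $\sigma_0,\sigma_1$ that is constant at $x$ on $I\times\partial\square^m$ corresponds to a filtered map $\sk(\square^{m+1})\to X$ that is constant at $x$ in filtrations below $m$. At filtration $m$ it records a map $\partial\square^{m+1}\to X^m$ whose opposite $m$-cube faces $\{0\}\times\square^m,\{1\}\times\square^m$ are $\tilde\sigma_0^m$ and $\tilde\sigma_1^m$ and whose remaining faces are constant at $x$; such a map represents the class $[\tilde\sigma_0^m]\cdot[\tilde\sigma_1^m]^{-1}$ in $\pi_m(X^m,x)$. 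At filtration $m+1$ it records an extension of the pushforward of this boundary to all of $\square^{m+1}$, i.e., a null-homotopy in $X^{m+1}$. Thus $\sigma_0$ and $\sigma_1$ are identified in $\pi_m(X^\der,x)$ if and only if their classes agree in $\pi_m(X^{m+1},x)$, so the kernel of the surjection is $\ker(\pi_m(X^m,x)\to\pi_m(X^{m+1},x))$, and the first isomorphism theorem gives $\pi_m(X^\der,x)\cong \im(\pi_m(X^m,x)\to\pi_m(X^{m+1},x))$.

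The main technical step is the bookkeeping of skeleta in the homotopy analysis: verifying that $\sk^k(I\times\partial\square^m)$ exhausts $\sk^k(\square^{m+1})$ for $k\leq m-1$ and that exactly the two top $m$-cubes $\{0\}\times\square^m$ and $\{1\}\times\square^m$ remain uncovered at filtration $m$, so that the level-$m$ datum genuinely assembles into a ``difference sphere'' in $X^m$. Fibrancy of $X$ enters throughout: to detect $\pi_0$ via single $1$-cubes, to compute the cubical $\pi_m$ of each $X^k$ in the usual way, and via Corollary~\ref{cor:Dfibrancy} to ensure $\pi_m(X^\der,x)$ is well-defined.
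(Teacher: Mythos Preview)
Your proposal is correct and follows essentially the same route as the paper: use the adjunction of Proposition~\ref{prop:Dadjunction} to translate based $m$-cubes and based $(m+1)$-cube homotopies in $X^\der$ into filtered maps $\sk(\square^m)\to X$ and $\sk(\square^{m+1})\to X$, then read off that the only nontrivial data live in filtration levels $m$ and $m+1$ respectively. Your packaging as ``bijection on based cubes $\Rightarrow$ surjection $\pi_m(X^m)\twoheadrightarrow\pi_m(X^\der)$, then identify the kernel'' is a mild rephrasing of the paper's direct bijection argument, but the content is the same.

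One small point worth tightening: when you write ``consequently a natural surjection of groups,'' you are implicitly using that the bijection on based cubes is compatible with the cubical group operation. The paper treats this as a separate short verification: $\alpha\cdot\beta=\gamma$ in $\pi_m(X^\der)$ is witnessed by a map $\square^{m+1}\to X^\der$ with three prescribed faces and the rest constant, and under the adjunction this becomes exactly the witness for $\alpha\cdot\beta=\gamma$ in $\pi_m(X^{m+1})$. Your argument would benefit from making this step explicit rather than folding it into the word ``consequently.''
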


\begin{proof}
  For convenience, we will leave the basepoints implicit in this proof.

  This follows from an explicit description of the homotopy groups of a fibrant cubical set, using models for $S^n$ and $S^n \times [0,1]$. Namely, each element in $\pi_m X^\der$ is represented by a map $\alpha\co \square^m \to X^\der$ taking the boundary to the basepoint, while two elements $\alpha$ and $\beta$ are equivalent if there is a homotopy $H\co \square^{m+1} \to X^\der$ taking the first face to $\alpha$, its opposing face to $\beta$, and all other faces to the basepoint.

  By adjunction, these correspond to maps of filtered cubical sets. An element $\alpha$ corresponds to a map $\sk(\square^m) \to X$ taking $\sk(\partial \square^m)$ to the basepoint; this is the same as a map $\square^m \to X^m$ taking the boundary to the basepoint, and thus is a representative for an element $\alpha \in \pi_m X^m$. A homotopy $H$ between two elements $\alpha$ and $\beta$ corresponds to a map $\sk(\square^{m+1}) \to X$, and the face properies of $H$ translate into a homotopy in $X^{m+1}$ between the images of $\alpha$ and $\beta$.

  This constructs a well-defined natural bijection
  \[
    \pi_m(X^\der) \cong \im(\pi_m(X^m) \to \pi_m(X^{m+1})).
  \]
  Similarly, for $m \geq 1$ we can verify that this natural isomorphism respects the group structure from an explicit description of the group operation. We have an identity $\alpha \cdot \beta = \gamma$ if an only if there is a map $\square^{m+1} \to X^\der$ taking three appropriately oriented faces to $\alpha$, $\beta$, and $\gamma$, while the remaining faces are the basepoint. Under the adjunction, this is equivalent to saying that $\alpha \cdot \beta = \gamma$ in $\pi_m(X^{m+1})$.
\end{proof}

\begin{rmk}
  Theorem~\ref{thm:Dhomotopy} can be used to show that the successor functor extends to a well-defined derived functor from filtered spaces to spaces. However, we will give a different proof in a later section.
\end{rmk}

\subsection{Categorical products}

The fact that the successor functor is a right adjoint automatically implies that it preserves limits. In most cases it does not preserve homotopy limits. In the special case of products, however, we record the following.

\begin{prop}
  \label{prop:Dproducts}
  The successor functor preserves products: $(\prod X_i)^\der \to \prod X^\der_i$ is always an isomorphism. In particular, if the $X_i$ are fibrant, then so is $\prod X_i$, and so the successor functor preserves homotopy products.
\end{prop}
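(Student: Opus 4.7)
The plan is to deduce both statements almost immediately from results already established in this section. For the first assertion, I would simply invoke Proposition~\ref{prop:Dadjunction}, which exhibits $(-)^\der$ as a right adjoint to $\sk\co \cSet \to \Fil(\cSet)$. Since right adjoints preserve all small limits, in particular arbitrary products, the canonical comparison map $(\prod_i X_i)^\der \to \prod_i X_i^\der$ is an isomorphism. Concretely, one may also verify this by hand from Definition~\ref{def:Ddefinition}: products in $\Fil(\cSet)$ are computed levelwise, so
\[
  \Hom_{\Fil(\cSet)}\bigl(\sk(\square^n),\, \textstyle\prod_i X_i\bigr) \;\cong\; \prod_i \Hom_{\Fil(\cSet)}(\sk(\square^n), X_i),
\]
naturally in $n$, which is exactly the claim.

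For the second assertion, the strategy is to show that both sides are strict products of fibrant cubical sets, so both compute the homotopy product. Assuming each $X_i$ is (levelwise) fibrant, the filtered cubical set $\prod_i X_i$ is also levelwise fibrant, because products in $\Fil(\cSet)$ are formed levelwise and arbitrary products of fibrant cubical sets are fibrant (products of Kan fibrations against cap inclusions admit lifts componentwise). By Corollary~\ref{cor:Dfibrancy}, it then follows that $(\prod_i X_i)^\der$ is fibrant, and similarly each $X_i^\der$ is fibrant.

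Since a strict product of fibrant objects in a right proper model category (here the cubical set model structure of Cisinski) models the homotopy product, the isomorphism produced in the first step exhibits $(\prod_i X_i)^\der$ as the homotopy product of the $X_i^\der$. This completes the proof. There is no real obstacle here: the only content is the compatibility of the right-adjoint property with fibrancy, and both ingredients are already in place.
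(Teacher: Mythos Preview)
Your argument is correct. The paper handles the first clause exactly as you do (it remarks just before the proposition that $(-)^\der$ is a right adjoint and hence preserves limits), but for the homotopy-product statement it takes a different route: rather than invoking Corollary~\ref{cor:Dfibrancy} and the fact that a strict product of fibrant objects models the homotopy product, the paper appeals to Theorem~\ref{thm:Dhomotopy} to check directly that the comparison map is an isomorphism on all homotopy groups (using that $\pi_m$ and the image functor both commute with products). Your approach is more categorical and arguably cleaner, since it avoids the homotopy-group computation entirely; the paper's approach has the minor advantage of making the role of fibrancy explicit through the hypothesis of Theorem~\ref{thm:Dhomotopy} rather than through a separate preservation argument. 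Either way the content is slight.
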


\begin{proof}
  Theorem~\ref{thm:Dhomotopy} implies that the natural map on homotopy groups is an isomorphism at any basepoint.
\end{proof}

\subsection{Homotopy pullbacks}

The defibration condition is one condition to guarantee that homotopy pullbacks are preserved by successors.
\begin{prop}
  \label{prop:Ddefibrationpullback}
  If $X \to Z \leftarrow Y$ is a diagram of fibrant filtered cubical sets and $Y \to Z$ satisfies the defibration condition, then $(X \times_Z Y)^\der$ is a representative for the homotopy pullback of the diagram $X^\der \to Z^\der \leftarrow Y^\der$.
\end{prop}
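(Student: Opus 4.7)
The strategy is to reduce to the setting of Lemma~\ref{lem:Dfibration}.

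I would first factor $Y \to Z$ in the levelwise model structure on filtered cubical sets as $Y \xrightarrow{\sim} \wt Y \twoheadrightarrow Z$, with the first map a levelwise trivial cofibration and the second a levelwise Kan fibration. Since $Z$ is fibrant, so is $\wt Y$. Because $Y \to \wt Y$ is a levelwise weak equivalence, it induces isomorphisms on every relative homotopy set $\pi_m(Y^m, Y^{m-1}) \cong \pi_m(\wt Y^m, \wt Y^{m-1})$, so the defibration condition transfers from $Y \to Z$ to $\wt Y \to Z$.

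By Lemma~\ref{lem:Dfibration}, $\wt Y^\der \to Z^\der$ is then a fibration of cubical sets between fibrant objects (Corollary~\ref{cor:Dfibrancy}). Combined with right properness of cubical sets (\cite[8.4.38]{cisinski-testcat}), this implies that the strict pullback $X^\der \times_{Z^\der} \wt Y^\der$ models the homotopy pullback of $X^\der \to Z^\der \leftarrow \wt Y^\der$. Since $Y^\der \to \wt Y^\der$ is a weak equivalence by Theorem~\ref{thm:Dhomotopy}, the same object also models the homotopy pullback of $X^\der \to Z^\der \leftarrow Y^\der$. Because $(-)^\der$ preserves strict limits (Proposition~\ref{prop:Dadjunction}), the proof reduces to showing that the natural comparison map
\[
  (X \times_Z Y)^\der \;=\; X^\der \times_{Z^\der} Y^\der \;\longrightarrow\; X^\der \times_{Z^\der} \wt Y^\der \;=\; (X \times_Z \wt Y)^\der
\]
is a weak equivalence of cubical sets.

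\textbf{Main obstacle.} This last step is the crux. A direct appeal to Theorem~\ref{thm:Dhomotopy} is blocked because $X \times_Z Y$ need not be levelwise fibrant (the strict pullback of fibrant cubical sets is not fibrant without a fibration hypothesis on one of the legs). My plan is to verify the comparison induces isomorphisms on all homotopy groups at every basepoint by an explicit analysis using the adjunction of Proposition~\ref{prop:Dadjunction}. An $m$-simplex of $(X \times_Z Y)^\der$ corresponds to a compatible pair of filtered maps $\alpha\co \sk(\square^m) \to X$ and $\beta\co \sk(\square^m) \to Y$ with equal composites into $Z$; simplices of $(X \times_Z \wt Y)^\der$ have the analogous description using $\wt Y$ in place of $Y$. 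The defibration condition supplies exactly the lifting data required, skeleton-by-skeleton, to deform a simplex on the $\wt Y$ side into one on the $Y$ side along a homotopy in the ambient cubical sets, in the spirit of the lifting argument in the proof of Lemma~\ref{lem:Dfibration}. Carrying out this induction carefully—using surjectivity of $\pi_m(Y^m, Y^{m-1}) \twoheadrightarrow \pi_m(Z^m, Z^{m-1})$ at each cell-attachment stage to turn homotopies in $Z$ into equalities—is the technical heart of the proof.
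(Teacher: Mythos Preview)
The paper's proof is a two-line argument that tacitly reads the hypothesis as including that $Y \to Z$ is a \emph{levelwise fibration}: then $(-)^\der$, being a right adjoint, preserves the strict pullback, and Lemma~\ref{lem:Dfibration} sends the fibration $Y \to Z$ to a fibration $Y^\der \to Z^\der$; hence $(X \times_Z Y)^\der = X^\der \times_{Z^\der} Y^\der$ is already a model for the homotopy pullback. No factorization is needed and no obstacle arises.

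Your approach instead attempts to prove the statement for an arbitrary map $Y \to Z$ satisfying only the defibration condition, and this is where things go wrong: in that generality the statement is simply false. Take all three objects to be \emph{constant} filtered cubical sets (so $(-)^\der$ is the identity by Remark~\ref{rmk:Dhocolim}), with $X = Y = \ast$ and $Z$ a fibrant model for $S^1$, both maps being the same basepoint. The defibration condition is vacuous here, since $\pi_m(W^m,W^{m-1})$ is trivial for every constant $W$. The strict pullback $X \times_Z Y$ is a single point, so $(X \times_Z Y)^\der \simeq \ast$; but the homotopy pullback of $X^\der \to Z^\der \leftarrow Y^\der$ is $\Omega S^1$, which is not contractible. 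Thus the comparison map
\[
(X \times_Z Y)^\der = X^\der \times_{Z^\der} Y^\der \longrightarrow X^\der \times_{Z^\der} \wt Y^\der
\]
you single out as the ``main obstacle'' genuinely fails to be a weak equivalence in this example, and no skeleton-by-skeleton deformation argument can repair it. The missing ingredient is precisely the levelwise-fibration hypothesis on $Y \to Z$ (needed to invoke Lemma~\ref{lem:Dfibration}); once you add it, your factorization step becomes superfluous and the paper's short argument applies directly.
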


\begin{proof}
  Because $(-)^\der$ is a right adjoint, it preserves pullbacks, and by Lemma~\ref{lem:Dfibration} it preserves fibrations that satisfy the defibration condition.
\end{proof}

The successor functor does not usually preserve homotopy pullbacks. However, we can analyze exactly how it deviates.

Cubical sets have a nonsymmetric monoidal product, $\boxtimes$, such that $\square^n \boxtimes \square^m \cong \square^{n+m}$. This preserves colimits in each variable and satisfies the pushout-product axiom. Correspondingly, there is a functor $\Map^r$ with a natural isomorphism
\[
  \Hom_{\cSet}(K \boxtimes L, M) \cong \Hom_{\cSet}(K, \Map^r(L,M))
\]
satisfying the analogue of the SM7 axiom.

The functor $\Map^r$ allows us to construct a homotopy pullback functor. For a diagram $K \to M \leftarrow L$ of cubical sets, there is a fiber product
\[
  K \times^h_M L = (K \times L) \times_{M \times M} \Map^r(\square^1, M),
\]
which is a homotopy pullback if all three cubical sets are fibrant.

In particular, note that a point of $X^\der \times^h_{Z^\der} Y^\der$ consists of points of $X^0$ and $Y^0$, together with a path between \emph{their images in $Z^1$}---which we can view as a path in the shift of $Z$, rather than a path in $Z$. Our goal in Theorem~\ref{thm:Dpullback} is to generalize this, proving that $(X \times^h_{\sh Z} Y)^\der$ serves as a homotopy pullback of $X^\der \to Z^\der \leftarrow Y^\der$.

\begin{thm}
  \label{thm:Dpullback}
  Associated to a diagram $X \to Z \leftarrow Y$ of filtered cubical sets, there is a natural \emph{isomorphism}
  \[
    X^\der \times^h_{Z^\der} Y^\der \to (X \times^h_{\sh Z} Y)^\der.
  \]
\end{thm}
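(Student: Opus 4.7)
My approach is to verify the claimed isomorphism cube-by-cube, translating both sides via the adjunction $\sk \dashv (-)^\der$ from Proposition~\ref{prop:Dadjunction} and then matching the resulting filtered-cubical-set data through the Leibniz decomposition of the skeleta of a box product.

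First I would unpack both sides via this adjunction. An $n$-cube of $X^\der \times^h_{Z^\der} Y^\der$ consists of $n$-cubes $\alpha \in X^\der_n$, $\beta \in Y^\der_n$, together with an $n$-cube of $\Map^r(\square^1, Z^\der)$---equivalently an $(n+1)$-cube of $Z^\der$---whose two end-faces $\square^n \boxtimes \{\epsilon\}$ are the images of $\alpha, \beta$ in $Z^\der$. By Proposition~\ref{prop:Dadjunction} and $\square^n \boxtimes \square^1 = \square^{n+1}$, this is the data of filtered maps $\sk(\square^n) \to X$ and $\sk(\square^n) \to Y$ together with a filtered map $\sk(\square^{n+1}) \to Z$ whose two end-faces factor through $X$ and $Y$. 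Similarly, an $n$-cube of $(X \times^h_{\sh Z} Y)^\der$ is a filtered map $\sk(\square^n) \to (X \times Y) \times_{\sh Z \times \sh Z} \Map^r(\square^1, \sh Z)$, which at filtration level $k$ amounts to maps $\sk^k(\square^n) \to X^k$, $\sk^k(\square^n) \to Y^k$, and $\sk^k(\square^n) \boxtimes \square^1 \to Z^{k+1}$ (using $(\sh Z)^k = Z^{k+1}$ together with the $\boxtimes$--$\Map^r$ adjunction), with restrictions to $\sk^k(\square^n) \boxtimes \partial \square^1$ matching the composites through $Z^{k+1}$ and with compatibility in $k$ coming from the structure maps of $Z$.

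The bridge between these two descriptions is the elementary pushout identity
\[
  \sk^{k+1}\bigl(\square^n \boxtimes \square^1\bigr) \;=\; \sk^{k+1}(\square^n) \boxtimes \partial \square^1 \;\cup_{\sk^k(\square^n) \boxtimes \partial \square^1}\; \sk^k(\square^n) \boxtimes \square^1,
\]
which is immediate from the fact that $\dim(e \boxtimes f) = \dim e + \dim f$ in $\square^n \boxtimes \square^1$. Under this decomposition, the filtered map $\sk(\square^{n+1}) \to Z$ on the LHS splits at level $k+1$ into (i) its restriction to $\sk^{k+1}(\square^n) \boxtimes \partial \square^1$, which the end-face condition forces to factor through $X^{k+1}$ and $Y^{k+1}$, and (ii) its restriction to $\sk^k(\square^n) \boxtimes \square^1 \to Z^{k+1}$, which is precisely the new data on the RHS. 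The forward map of the theorem is this restriction along $\sk^k(\square^n) \boxtimes \square^1 \hookrightarrow \sk^{k+1}(\square^{n+1})$; the inverse reassembles each filtration level from the pushout, with agreement on the overlap $\sk^k(\square^n) \boxtimes \partial \square^1$ supplied automatically by the boundary condition. Naturality in the cube dimension (face and degeneracy maps of $\square^n$) and in the diagram $X \to Z \leftarrow Y$ is then formal.

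The main obstacle will not be any single step but the simultaneous bookkeeping across all filtration levels $k$: one must check that the restrictions, the end-face conditions, and the structure-map compatibilities all interlock correctly, and that the shift $\sh$ on the RHS genuinely accounts for the index shift inherent in $\sk^{k+1}$ versus $\sk^k$. Once the Leibniz pushout above is recognized, both sides become bijectively and naturally parametrized by the same triples of filtered maps, which is the entire content of the theorem.
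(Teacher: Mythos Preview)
Your proposal is correct and follows essentially the same approach as the paper: both arguments translate each side through the adjunction $\sk \dashv (-)^\der$ and then match the data using the Leibniz decomposition $\sk^{k+1}(\square^n \boxtimes \square^1) = \sk^{k+1}(\square^n) \boxtimes \partial\square^1 \cup_{\sk^k(\square^n)\boxtimes\partial\square^1} \sk^k(\square^n)\boxtimes\square^1$. The only cosmetic difference is that the paper tests against an arbitrary cubical set $K$ rather than the representables $\square^n$, which lets naturality fall out of Yoneda rather than being asserted separately at the end; your version is equivalent and the bookkeeping you describe is exactly what the paper carries out.
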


\begin{proof}
  A map of cubical sets $K \to X^\der \times^h_{Z^\der} Y^\der$ consists of compatible maps $K \to X^\der, K \to Y^\der$, and $K \boxtimes \square^1 \to Z^\der$. By adjunction, this is equivalent to a commutative diagram
  \begin{equation}
    \label{eq:preskeleta}
    \begin{tikzcd}
      \sk(K) \times \{0\} \ar[r] \ar[d] &
      \sk(K \boxtimes \square^1) \ar[d] &
      \sk(K) \times \{1\} \ar[l] \ar[d] \\
      X \ar[r] &
      Z &
      Y. \ar[l]
    \end{tikzcd}
  \end{equation}
  The skeleton functor for cubical sets interacts well with products: there is a natural isomorphism
  \[
    \sk^n(K \boxtimes L) \cong \bigcup_{p+q=n} \sk^p(K) \boxtimes \sk^q(L).
  \]
  Because $\square^1$ is 1-dimensional, we find that a map $\sk(K \boxtimes \square^1) \to Z$ is equivalent to a diagram of maps $\sk^n(K) \boxtimes \partial \square^1 \to Z^n$ for the boundary, together with a diagram of maps $\sk^n(K) \boxtimes \square^1 \to Z^{n+1}$, such that the two resulting maps $\sk^n(K) \boxtimes \partial \square^1 \to Z^{n+1}$ agree. We can re-express these maps as maps into the shift of $Z$, and use the adjunction with $\Map^r$. This shows that a map $\sk(K \boxtimes \square^1) \to Z$ is essentially a commutative diagram 
  \[
    \begin{tikzcd}
      \sk(K) \ar[r] \ar[d] &
      Z \times Z \ar[d] \\
      \Map^r(\square^1, \sh Z) \ar[r] &
      \sh Z \times \sh Z.
    \end{tikzcd}
  \]

  Combining this with equation~\eqref{eq:preskeleta}, we find that a map $\sk(K) \to X^\der \times^h_{Z^\der} Y^\der$ is naturally isomorphic data to a commutative diagram
  \[
    \begin{tikzcd}
      \sk(K) \ar[r] \ar[d] &
      X \times Y \ar[d] \\
      \Map^r(\square^1, \sh Z) \ar[r] &
      \sh Z \times \sh Z.
    \end{tikzcd}
  \]
  However, this is precisely a map $\sk(K) \to X \times^h_{\sh Z} Y$, or a map $K \to (X \times^h_{\sh Z} Y)^\der$. Since this isomorphism of mapping sets is natural in $K$ and the pullback diagram, we find that there is an associated natural isomorphism $X^\der \times^h_{Z^\der} Y^\der \cong (X \times^h_{\sh Z} Y)^\der$ as desired.
\end{proof}

\begin{cor}
  \label{cor:Dhpullback}
  If $X \to Z \leftarrow Y$ is a diagram of fibrant filtered cubical sets, then $(X \times^h_{\sh Z} Y)^\der$ is a representative for the homotopy pullback of the diagram $X^\der \to Z^\der \leftarrow Y^\der$.
\end{cor}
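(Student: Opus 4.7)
The plan is to deduce the corollary directly from Theorem~\ref{thm:Dpullback} together with the fibrancy-preservation properties already established. The only content beyond the theorem is to observe that, under the fibrancy hypothesis, the left-hand side $X^\der \times^h_{Z^\der} Y^\der$ genuinely \emph{is} a homotopy pullback (rather than just a point-set model of one), so that the right-hand side inherits this property along the natural isomorphism.

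First I would invoke Corollary~\ref{cor:Dfibrancy}: since $X$, $Y$, and $Z$ are levelwise fibrant filtered cubical sets, their successors $X^\der$, $Y^\der$, and $Z^\der$ are fibrant cubical sets. Next, I would recall the discussion immediately preceding Theorem~\ref{thm:Dpullback}: the explicit model
\[
  K \times^h_M L = (K \times L) \times_{M \times M} \Map^r(\square^1, M)
\]
computes the homotopy pullback whenever $K$, $L$, and $M$ are fibrant. Applying this with $K = X^\der$, $L = Y^\der$, and $M = Z^\der$ shows that $X^\der \times^h_{Z^\der} Y^\der$ is a representative for the homotopy pullback of $X^\der \to Z^\der \leftarrow Y^\der$.

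Finally, I would cite the natural isomorphism of Theorem~\ref{thm:Dpullback},
\[
  X^\der \times^h_{Z^\der} Y^\der \;\xrightarrow{\ \cong\ \ } \; (X \times^h_{\sh Z} Y)^\der,
\]
to transport this conclusion: the right-hand side $(X \times^h_{\sh Z} Y)^\der$ represents the homotopy pullback of $X^\der \to Z^\der \leftarrow Y^\der$ as well.

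There is no real obstacle here; the corollary is essentially a repackaging of the theorem. The only mildly subtle point is remembering that the formula $K \times^h_M L$ only has the correct homotopical meaning when all three inputs are fibrant, which is exactly what Corollary~\ref{cor:Dfibrancy} supplies from the hypothesis on $X$, $Y$, and $Z$.
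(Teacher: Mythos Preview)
Your proposal is correct and matches the paper's intended argument: the paper states the corollary without proof immediately after Theorem~\ref{thm:Dpullback}, treating it as the evident combination of that isomorphism with Corollary~\ref{cor:Dfibrancy} and the remark that $K \times^h_M L$ computes the homotopy pullback when $K$, $L$, $M$ are fibrant. You have spelled out exactly this.
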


\subsection{Filtered hocolims}

\begin{prop}
  \label{prop:Dfilteredhocolim}
  The successor functor preserves filtered homotopy colimits.
\end{prop}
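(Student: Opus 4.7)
The plan is to reduce the homotopical statement to a strict statement about preservation of filtered colimits, and then verify that the strict identification gives the correct homotopy type using the computation of homotopy groups from Theorem~\ref{thm:Dhomotopy}.

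First, I would establish that the strict successor functor $(-)^\der\co \Fil(\cSet) \to \cSet$ preserves (strict) filtered colimits. The key observation is that $\sk(\square^n)$ is, as a filtered cubical set, ``eventually constant and finite'': $\sk^k(\square^n)$ is a finite cubical set for all $k$, and $\sk^k(\square^n) = \square^n$ for $k \geq n$. Hence a map $\sk(\square^n) \to X$ is determined by the finite tuple of maps $f_k\co \sk^k(\square^n) \to X^k$ for $0 \leq k \leq n$ together with the finitely many compatibility relations between them. Since each $\sk^k(\square^n)$ is a finite presheaf, $\Hom_{\cSet}(\sk^k(\square^n),-)$ commutes with filtered colimits in $\cSet$, and filtered colimits commute with finite limits in $\Set$. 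Combined with the fact that filtered colimits in $\Fil(\cSet)$ are computed levelwise, this gives a natural isomorphism $\colim_\alpha X_\alpha^\der \xrightarrow{\cong} (\colim_\alpha X_\alpha)^\der$.

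Second, I would bootstrap this strict statement to a statement about homotopy colimits. After replacing a diagram by a levelwise fibrant one (which is possible at the level of filtered cubical sets since fibrancy is a levelwise condition), Corollary~\ref{cor:Dfibrancy} ensures each $X_\alpha^\der$ is fibrant. Filtered colimits in $\cSet$ (and hence levelwise in $\Fil(\cSet)$) preserve fibrations and weak equivalences, so the strict filtered colimit computes the filtered homotopy colimit on both sides. To confirm that the resulting comparison map is a weak equivalence, I would apply Theorem~\ref{thm:Dhomotopy} to the colimit:
\[
  \pi_m\bigl((\colim_\alpha X_\alpha)^\der\bigr) \cong \im\Bigl(\pi_m \colim_\alpha X_\alpha^m \to \pi_m \colim_\alpha X_\alpha^{m+1}\Bigr).
\]
Since filtered colimits commute with $\pi_m$ of fibrant cubical sets and with image formation in $\Set$ or $\mathrm{Ab}$, the right-hand side rewrites as $\colim_\alpha \im(\pi_m X_\alpha^m \to \pi_m X_\alpha^{m+1}) \cong \colim_\alpha \pi_m X_\alpha^\der$, which is in turn $\pi_m$ of the filtered (homotopy) colimit of the $X_\alpha^\der$.

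The main obstacle is essentially an exercise in bookkeeping rather than a conceptual difficulty: one must check that filtered colimits in $\Fil(\cSet)$ interact well with both the finite-limit description of $\Hom(\sk(\square^n),-)$ and the image-of-$\pi_m$ description of $\pi_m(X^\der)$. The underlying reason the result is true is the compactness of the finite skeleta $\sk^k(\square^n)$, and the exactness of filtered colimits in $\Set$ and in abelian groups.
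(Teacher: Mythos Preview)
Your proposal is correct. The paper's own proof is essentially just your second step: it invokes Theorem~\ref{thm:Dhomotopy} directly, noting that filtered colimits commute with $\pi_m$ and with image formation, so the comparison map $\hocolim_\alpha X_\alpha^\der \to (\hocolim_\alpha X_\alpha)^\der$ is an isomorphism on all homotopy groups at all basepoints, hence an equivalence.

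Your first step---the observation that each $\sk(\square^n)$ is a compact object of $\Fil(\cSet)$, so the strict functor $(-)^\der$ preserves strict filtered colimits---is a valid alternative route that the paper does not take. It is arguably more conceptual, since it identifies the underlying reason (compactness of the representing objects) rather than checking the conclusion on homotopy groups. Note, though, that once you have the strict isomorphism between fibrant objects, the homotopy-group verification in your second step is redundant; either half of your argument suffices on its own.
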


\begin{proof}
  The natural map (at any basepoint)
  \[
    \colim \pi_n(X_i) \to \pi_n(\hocolim_i X_i)
  \]
  is an isomorphism if the diagram is filtered, and filtered colimits preserve images. Therefore, for such a diagram $X_i$ of fibrant filtered cubical sets, the natural functor $\hocolim_I X^\der_i \to (\hocolim_i X_i)^\der$ is an isomorphism on $\pi_n$ (at any basepoint) by Theorem~\ref{thm:Dhomotopy}, and hence is an equivalence.
\end{proof}

\subsection{Filtered holims}

It is best to be realistic in hopes that successors could preserve a filtered homotopy limit.

\begin{exam}
  \label{ex:Dholimfailure}
  For any pointed space $Z$ and any $n \geq 0$, we can construct a filtered space
  \[
    \dots \to \ast \to \ast \to Z \xrightarrow{=} Z \to \ast \to \dots,
  \]
  given by $Z$ in degrees $n$ and $(n+1)$ and trivial otherwise. After taking successors, Theorem~\ref{thm:Dhomotopy} shows that we get an Eilenberg--Mac Lane space $K(\pi_n Z, n)$.

  Now fix such an $n$ and consider any tower $\dots \to Z_2 \to Z_1 \to Z_0$ of pointed spaces. If we applying the above construction, we get a tower of filtered spaces.

  If we first take homotopy limits and then take successors, we get an Eilenberg--Mac Lane space $K(\pi_n \holim Z_i, n)$.

  If we first take successors, we get a tower of Eilenberg--Mac Lane spaces $K(\pi_n Z_i, n)$. Then taking homotopy limits, Milnor's $\lim^1$-sequence implies that we get a space whose homotopy groups are $\lim \pi_n Z_i$ in degree $n$ and $\lim^1 \pi_n Z_i$ in degree $(n-1)$.
\end{exam}

\subsection{Nearly constant spaces}

\begin{defn}
  \label{def:nearlyconstantspace}
  We say that an object $X \in \Fil^+(\mc C)$ is \emph{nearly constant} if there is a nonnegatively filtered object $Z$ and a diagram
  \[
    \begin{tikzcd}
      Z \ar[rr,"\tau"] \ar[dr] && \sh Z\\
      & X \ar[ur]
    \end{tikzcd}
  \]
  such that $\tau\co Z \to \sh Z$ is an equivalence. We refer to $Z^0$ as the \emph{value} of the nearly constant object.
\end{defn}

\begin{rmk}
  This is equivalent to a collection of objects $Z^n \in \mc C$ together with factorizations
  \[
    Z^0 \to X^0 \to Z^1 \to X^1 \to Z^2 \to X^2 \to \dots
  \]
  such that the double-composites $Z^n \to X^n \to Z^{n+1}$ are equivalences.
\end{rmk}

\begin{prop}
  \label{prop:Dnearlyconstant}
  Suppose that $X$ is a fibrant (nonnegatively) filtered cubical set which is nearly constant with value $Z^0$. Then the composite map $Z^0 \to X^0 \to X^\der$ is an equivalence.
\end{prop}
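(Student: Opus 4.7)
The plan is to reduce the statement to a $\pi_*$ comparison via Theorem~\ref{thm:Dhomotopy}, and then exploit the telescoping tower of factorizations $Z^0 \to X^0 \to Z^1 \to X^1 \to Z^2 \to \dots$ furnished by the nearly-constant hypothesis. Since $X$ is levelwise fibrant, Corollary~\ref{cor:Dfibrancy} tells us $X^\der$ is fibrant, so it suffices to show that $Z^0 \to X^\der$ induces a bijection on $\pi_0$ and an isomorphism on $\pi_m$ at every basepoint for $m \geq 1$. (If needed, I can replace $Z^0$ by a levelwise-compatible fibrant model without altering the statement, since the claim is homotopy-invariant.)

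Fix a basepoint $z \in Z^0$ and propagate it through the entire tower. By Theorem~\ref{thm:Dhomotopy}, the map $X^0 \to X^\der$ identifies $\pi_m(X^\der,z)$ with $\im(\pi_m(X^m,z) \to \pi_m(X^{m+1},z))$, via the comparison $\pi_m X^0 \to \pi_m X^m \to \pi_m X^{m+1}$ induced by the structure maps. Consequently the map $\pi_m Z^0 \to \pi_m X^\der$ induced by the composite $Z^0 \to X^0 \to X^\der$ factors as
\[
  \pi_m Z^0 \to \pi_m X^0 \to \pi_m X^m \to \pi_m X^{m+1},
\]
with image landing in $\pi_m X^\der \subset \pi_m X^{m+1}$.

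Now I would use the nearly-constant structure to rewrite this factorization through the $Z^n$. Since $\tau\co Z \to \sh Z$ is an equivalence, each map $Z^n \to Z^{n+1}$ is a weak equivalence, so $\pi_m Z^0 \to \pi_m Z^n$ is an isomorphism for every $n \geq 0$. The map on $\pi_m$ above then factors as
\[
  \pi_m Z^0 \xrightarrow{\cong} \pi_m Z^m \to \pi_m X^m \to \pi_m Z^{m+1} \to \pi_m X^{m+1},
\]
where the double-composites $\pi_m Z^m \to \pi_m Z^{m+1}$ and $\pi_m Z^{m+1} \to \pi_m Z^{m+2}$ are isomorphisms. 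This forces $\pi_m X^m \to \pi_m Z^{m+1}$ to be surjective and $\pi_m Z^{m+1} \to \pi_m X^{m+1}$ to be injective, so the image of $\pi_m X^m \to \pi_m X^{m+1}$ agrees with the (injective) image of $\pi_m Z^{m+1} \to \pi_m X^{m+1}$, which is in bijection with $\pi_m Z^{m+1} \cong \pi_m Z^0$. The displayed composite therefore identifies $\pi_m Z^0$ bijectively with $\pi_m X^\der$, as required. The same chase, using the $\pi_0$ clause of Theorem~\ref{thm:Dhomotopy}, handles $\pi_0$.

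The only subtle point is making sure that the isomorphism of Theorem~\ref{thm:Dhomotopy} really is induced by $X^0 \to X^\der$ in the sense used above, so that the factorization through the $Z^n$'s genuinely computes the map induced by $Z^0 \to X^\der$; but this is built into the naturality of that theorem. Everything else is bookkeeping with basepoints and the telescoping property.
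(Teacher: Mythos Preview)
Your proof is correct and follows exactly the same strategy as the paper's: reduce to a $\pi_m$-computation via Theorem~\ref{thm:Dhomotopy}, then use the telescoping zigzag $Z^0 \to X^0 \to Z^1 \to X^1 \to \dots$ from the Remark after Definition~\ref{def:nearlyconstantspace} to identify $\im(\pi_m X^m \to \pi_m X^{m+1})$ with $\pi_m Z^0$. The paper draws the relevant commutative diagram and leaves the identification as a ``straightforward check''; you have simply made that check explicit by noting that $\pi_m X^m \to \pi_m Z^{m+1}$ is surjective and $\pi_m Z^{m+1} \to \pi_m X^{m+1}$ is injective.
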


\begin{proof}
  Choose a sequence
  \[
    Z^0 \to X^0 \to Z^1 \to X^1 \to Z^2 \to X^2 \to \dots
  \]
  with the maps $Z^n \to Z^{n+1}$ being equivalences. For any $n$ (and at any basepoint if $n > 0$), we get a factorization:
  \[
    \begin{tikzcd}
      \pi_n Z^n \ar[rr,"\sim"] \ar[dr] && \pi_n Z^{n+1} \ar[dr] \ar[rr,"\sim"] && \pi_n Z^{n+2}\\
      & \pi_n X^n \ar[rr] \ar[ur] && \pi_n X^{n+1} \ar[ur]
    \end{tikzcd}
  \]
  A straightforward check then shows that map $\pi_n Z^n \to \im(\pi_n X^n \to \pi_n X^{n+1})$ is an isomorphism (at any basepoint if $n > 0$). Therefore, the composite $Z^0 \to X^\der \to X^\der$ is an equivalence.
\end{proof}

\begin{cor}
  \label{cor:nearlycontractible}
  If $X$ is a fibrant (nonnegatively) filtered cubical set such that $X^0$ is nonempty and the maps $X^n \to X^{n+1}$ are homotopic to constant maps, then $X^\der$ is contractible.
\end{cor}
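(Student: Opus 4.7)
The plan is to reduce directly to the homotopy group computation of Theorem~\ref{thm:Dhomotopy}. Since $X$ is fibrant, Corollary~\ref{cor:Dfibrancy} tells us that $X^\der$ is fibrant, so to show $X^\der$ is contractible it suffices to verify that it is path-connected and has trivial homotopy groups at some basepoint.

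For path-connectedness, I would apply Theorem~\ref{thm:Dhomotopy} to compute $\pi_0 X^\der \cong \im(\pi_0 X^0 \to \pi_0 X^1)$. Since $X^0$ is nonempty this image is nonempty, and since $X^0 \to X^1$ is nullhomotopic its image consists of a single path-component. Pick a basepoint $x \in X^0$; then $X^\der$ is connected with basepoint (the image of) $x$.

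For the higher homotopy groups, I would again appeal to Theorem~\ref{thm:Dhomotopy}, which provides a natural isomorphism
\[
  \pi_m(X^\der,x) \cong \im\bigl(\pi_m(X^m,x) \to \pi_m(X^{m+1},x)\bigr)
\]
for every $m \geq 1$. The hypothesis that $X^m \to X^{m+1}$ is homotopic to a constant map forces the induced homomorphism on $\pi_m$ to be zero, so the image is trivial. Combined with path-connectedness, this shows $X^\der$ is weakly contractible and hence contractible.

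There is no serious obstacle: everything reduces to plugging the nullhomotopy hypothesis into the image description of Theorem~\ref{thm:Dhomotopy}. The only subtlety worth flagging is that Theorem~\ref{thm:Dhomotopy} only identifies homotopy groups at basepoints that come from $X^0$, but this is harmless because $X^\der$ is path-connected by the $\pi_0$ computation, so triviality of $\pi_m$ at the single basepoint $x$ suffices.
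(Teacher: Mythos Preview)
Your argument is correct: plugging the nullhomotopy hypothesis into Theorem~\ref{thm:Dhomotopy} immediately kills all the images and hence all homotopy groups of $X^\der$, and the $\pi_0$ computation handles path-connectedness so a single basepoint suffices.

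The paper takes a slightly different route. Rather than invoking Theorem~\ref{thm:Dhomotopy} directly, it observes that a choice of basepoint and nullhomotopies assembles into a factorization
\[
  \ast \to X^0 \to CX^0 \to X^1 \to CX^1 \to \dots
\]
through cones, exhibiting $X$ as \emph{nearly constant with value $\ast$} in the sense of Definition~\ref{def:nearlyconstantspace}; the result then follows from Proposition~\ref{prop:Dnearlyconstant}. Of course the proof of Proposition~\ref{prop:Dnearlyconstant} is itself just a diagram chase with Theorem~\ref{thm:Dhomotopy}, so the underlying computation is the same as yours. The paper's phrasing buys a conceptual placement of the corollary inside the ``nearly constant'' framework (which is reused later, e.g.\ in Proposition~\ref{prop:shiftnull} and Proposition~\ref{prop:nearlyconstanttoconstant}); your direct argument buys brevity and avoids constructing the cone factorization.
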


\begin{proof}
  Any point of $X^0$, and chosen homotopies of the maps to constant maps, explicitly determine a sequence
  \[
    \ast \to X^0 \to CX^0 \to X^1 \to CX^1 \to \dots
  \]
  where $CX^n$ is the cone. Therefore, $X$ is nearly constant with value $\ast$.
\end{proof}

\section{Model-independent description}
\label{sec:homotopical}

It will be convenient to have an alternative descriptions of $X^\der$ which are manifestly homotopy-invariant; we will give two. The first is not point-set functorial, but is purely homotopy-theoretic, and is built from the same ideas as the Beilinson $t$-structure. The second shows that in the construction of $X^\der$ we may either take function sets or function spaces.

\subsection{Weight for filtered spaces}

\begin{defn}
  \label{def:filteredinfty}
  Write $\Spaces$ for the $\infty$-category of spaces, and $\Fil(\Spaces)$ for the functor $\infty$-category $\Fun(\mb Z, \Spaces)$ of filtered spaces.
\end{defn}

\begin{defn}
  \label{def:nonnegative}
  A filtered space $X$ is \emph{weight at least $w$} if, for all $m$, the map $X^m \to X^{m+1}$ is $(m+w)$-connected. We use
  \[
    \Fil(\Spaces)^{\geq w} \subset \Fil(\Spaces)
  \]
  to denote the full subcategory of filtered spaces that are of weight at least $w$. (We sometimes also say that $X \geq w$.)

  In particular, a filtered space has \emph{nonnegative weight} if it is weight at least $0$.
\end{defn}

\begin{exam}
  \label{ex:sspace-skel}
  If $Y\co \Delta^\op \to \Spaces$ is a simplicial space with homotopy colimit $|Y|$, then the skeletal filtration produces a filtered space $\sk |Y|$ with nonnegative weight.
\end{exam}

\begin{rmk}
  These definitions automatically extend to nonnegatively filtered spaces.
\end{rmk}

\begin{rmk}
  \label{rmk:relativeCW}
  Given a filtered space of weight at least $w$ represented by a sequence $\dots \to X^0 \to X^1 \to X^2 \to \dots$, we can apply a relative version of CW approximation. We find that, up to equivalence, we may assume that each $X^m$ is a CW-complex, that each map $X^m \to X^{m+1}$ is formed by attaching cells of dimension at least $w+m+1$, and that the intersection of the $X^m$ is empty.

  Alternatively, we could interpret such an $X$ as a CW-complex with an integer degree assigned to each cell, with the degree of a cell being at least as large as the degree of any cell on its boundary, that satisfies $\mathrm{degree}(e) \leq \mathrm{dim}(e) - w$.
\end{rmk}

\begin{rmk}
  Note that $X$ is weight at least $w$ if and only if $\sh X$ is weight at least $w+1$. (The forward implication is still true for nonnegatively filtered spaces.) In particular, statements about an individual weight can be typically be reduced to objects of weight zero.
\end{rmk}

\begin{prop}
  \label{prop:Dnonneg}
  If $X$ is a fibrant filtered cubical set that represents a filtered space of weight at least $0$, the map $X^\der \to \hocolim_{\mb Z} X$ of Remark~\ref{rmk:Dhocolim} is an equivalence.
\end{prop}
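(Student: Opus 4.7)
The plan is to check that the map $X^\der \to \hocolim_{\mb Z} X$ induces an isomorphism on all homotopy groups, by computing both sides via Theorem~\ref{thm:Dhomotopy} and the usual compatibility of homotopy groups with filtered homotopy colimits.

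First, I would unpack the weight condition: by Definition~\ref{def:nonnegative} with $w = 0$, each structure map $X^m \to X^{m+1}$ is $m$-connected, so at any basepoint the induced map $\pi_k(X^m) \to \pi_k(X^{m+1})$ is an isomorphism for $k < m$ and a surjection for $k = m$. In particular, for fixed $m$ the tower
\[
\pi_m(X^{m+1}) \to \pi_m(X^{m+2}) \to \pi_m(X^{m+3}) \to \dots
\]
consists of isomorphisms, since each later map is at least $(m+1)$-connected. Because filtered homotopy colimits of spaces commute with homotopy groups, the canonical map $\pi_m(X^{m+1}) \to \pi_m(\hocolim_{\mb Z} X)$ is therefore an isomorphism.

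Next, I would plug this into Theorem~\ref{thm:Dhomotopy}: at any basepoint coming from a point of $X^0$,
\[
\pi_m(X^\der) \cong \im\bigl(\pi_m(X^m) \to \pi_m(X^{m+1})\bigr).
\]
By surjectivity of the map on $\pi_m$ (the weight condition applied with $k = m$), this image is all of $\pi_m(X^{m+1})$. Combined with the previous step, this gives an abstract bijection (and group isomorphism for $m \geq 1$) $\pi_m(X^\der) \cong \pi_m(\hocolim_{\mb Z} X)$. The same argument handles $\pi_0$ after confirming that every path component of $\hocolim_{\mb Z} X$ is hit by some $X^0$, which again follows from $X^0 \to X^1$ being $0$-connected and the subsequent maps being isomorphisms on $\pi_0$.

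The remaining, and most delicate, point is naturality: one must verify that the abstract isomorphism just constructed is induced by the map $X^\der \to \hocolim_{\mb Z} X$ from Remark~\ref{rmk:Dhocolim}. This map comes from applying the successor functor to the comparison $X \to \mathrm{const}(\hocolim_{\mb Z} X)$, together with the fact that the successor of a constant filtered object recovers its value. Tracing the identification in the proof of Theorem~\ref{thm:Dhomotopy}, a class in $\pi_m(X^\der)$ represented by $\alpha\co \square^m \to X^m$ is sent to the image of $[\alpha] \in \pi_m(X^m)$ in $\pi_m(\hocolim_{\mb Z} X)$, which factors through the surjection onto $\pi_m(X^{m+1})$ and then through the isomorphism to $\pi_m(\hocolim_{\mb Z} X)$. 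This is exactly the composite computed above, so the natural map is an equivalence. The main obstacle is this last naturality check; the homotopy-group computation itself is essentially automatic from Theorem~\ref{thm:Dhomotopy} once the weight condition is translated into connectivity statements.
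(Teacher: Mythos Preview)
Your proposal is correct and follows exactly the approach the paper intends: the paper's proof is the single sentence ``This follows immediately from the calculation of the homotopy groups of $X^\der$ in Theorem~\ref{thm:Dhomotopy},'' and you have simply spelled out the connectivity bookkeeping that makes this immediate. Your naturality discussion is more careful than the paper bothers to be, but it is the same argument.
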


\begin{proof}
  This follows immediately from the calculation of the homotopy groups of $X^\der$ in Theorem~\ref{thm:Dhomotopy}.
\end{proof}

\subsection{Categorical properties of weight}
\begin{prop}
  \label{prop:nonneghocolim}
  For any $w$, the subcategory $\Fil(\Spaces)^{\geq w} \subset \Fil(\Spaces)$ is closed under homotopy colimits.
\end{prop}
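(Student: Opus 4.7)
My plan is to reduce the proposition to a connectivity-preservation fact for homotopy colimits of maps in $\Spaces$. Since $\Fil(\Spaces) = \Fun(\mb Z, \Spaces)$, homotopy colimits are computed pointwise. Thus for a small diagram $\{X_i\}_{i \in I}$ in $\Fil(\Spaces)^{\geq w}$, setting $X = \hocolim_i X_i$, I have $X^m \simeq \hocolim_i X_i^m$ for each integer $m$, and the structure map $X^m \to X^{m+1}$ is equivalent to $\hocolim_i(X_i^m \to X_i^{m+1})$. By the weight hypothesis each of the latter maps is $(m+w)$-connected, so the proposition reduces to showing that for every integer $n$, the class of $n$-connected maps in $\Spaces$ is closed under $I$-indexed homotopy colimits in the arrow category.

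For this I would appeal to the orthogonal factorization system on $\Spaces$ whose right class is the $n$-truncated maps (those with $n$-truncated homotopy fibers) and whose left class is exactly the $n$-connected maps. The right class is closed under homotopy limits in $\Fun(\Delta^1, \Spaces)$ because homotopy limits of $n$-truncated spaces are $n$-truncated and the homotopy fiber functor commutes with homotopy limits of cospans. A standard fact about orthogonal factorization systems in presentable $\infty$-categories then gives the dual statement: the left class is closed under homotopy colimits of arrows. Applied with $n = m+w$ at each integer $m$, this shows every structure map of $X$ is $(m+w)$-connected, placing $X$ in $\Fil(\Spaces)^{\geq w}$.

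I do not expect a substantial obstacle. The only mild subtlety is that $n$-connectedness of a map is nominally a statement about homotopy groups at every basepoint in the source, but the factorization-system formulation is basepoint-free and bypasses this. If one prefers to avoid factorization-system language, an equivalent route is to use that the Postnikov truncation $\tau_{\leq n}\co \Spaces \to \Spaces$ is a left adjoint and therefore preserves homotopy colimits, and to translate $n$-connectedness of a map into a condition on its image under $\tau_{\leq n}$; the same closure under hocolims then follows immediately.
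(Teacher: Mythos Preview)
Your reduction is identical to the paper's: both compute homotopy colimits pointwise and reduce to the closure of $n$-connected maps under homotopy colimits in the arrow category of $\Spaces$. For that closure, the paper spells out the orthogonality criterion by hand---a map $X \to Y$ is $m$-connected iff for every map $Z \to W$ with $\pi_n(W,Z)=0$ for $n>m$ the map $\Map(Y,Z) \to \Map(X,Z)\times^h_{\Map(X,W)}\Map(Y,W)$ is an equivalence, and this condition visibly sends hocolims in $(X\to Y)$ to holims---whereas you invoke the same mechanism packaged as ``left class of the $(n\text{-connected},\,n\text{-truncated})$ factorization system is closed under colimits.'' These are the same argument at different levels of abstraction; your version is slightly slicker but relies on the reader knowing the factorization-system fact, while the paper's is self-contained. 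Your alternative via $\tau_{\le n}$ would also work but needs a little more care to state precisely which truncation condition characterizes $n$-connectedness of a map.
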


\begin{proof}
  In a functor category, homotopy colimits are calculated pointwise \cite[5.1.2.3]{lurie-htt}. Therefore, this reduces to the fact that for a diagram $X_i \to Y_i$ of $m$-connected maps of spaces, the induced map $\hocolim_I X_i \to \hocolim_I Y_i$ is also $m$-connected.

  This can be proven, for example, by using an obstruction-theoretic criterion. A map $X \to Y$ of spaces is $m$-connected if and only if, for any map $Z \to W$ such that $\pi_n(W,Z)$ identically vanishes for $n > m$, the map
  \[
    \Map(Y,Z) \to \Map(X,Z) \times^h_{\Map(X,W)} \Map(Y,W)
  \]
  is a weak equivalence. This criterion is closed under homotopy colimits: both sides take homotopy colimits in $X \to Y$ to homotopy limits.
\end{proof}

\begin{thm}
  \label{thm:nonnegpresentable}
  The $\infty$-category $\Fil(\Spaces)^{\geq w}$ is compactly generated. In particular, it is presentable.
\end{thm}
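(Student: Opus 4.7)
The plan is to reduce to $w=0$ via the shift $\sh$, which is an auto-equivalence of $\Fil(\Spaces)$ restricting to equivalences $\Fil(\Spaces)^{\geq w} \simeq \Fil(\Spaces)^{\geq w+1}$. With $w=0$ established, the remaining cases follow. For $w=0$, I would realize $\Fil(\Spaces)^{\geq 0}$ as an accessible reflective subcategory of the presentable $\infty$-category $\Fil(\Spaces) = \Fun(\mb Z, \Spaces)$, and then transport compact generators across the reflection.

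To realize $\Fil(\Spaces)^{\geq 0}$ as a reflective subcategory, I would produce a small set $S$ of morphisms in $\Fil(\Spaces)$ whose local objects are precisely the weight $\geq 0$ filtered spaces. A natural choice, using the adjoints $F_n$ to the evaluation functors of Definition~\ref{def:freeeval}, is as follows: for each $m \in \mb Z$ and each $i$ with $0 \leq i \leq m+1$, include the cell-lifting morphism
\[
  F_{m+1}(D^i) \sqcup_{F_{m+1}(S^{i-1})} F_m(S^{i-1}) \to F_m(D^i),
\]
assembled from the boundary inclusion $S^{i-1} \hookrightarrow D^i$ together with the shift-down natural transformation $F_{m+1}(-) \to F_m(-)$. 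Via the adjunction $F_n \dashv \ev_n$, being local with respect to this family translates into the cellular-lifting formulation of the condition ``$X^m \to X^{m+1}$ is $m$-connected'' as $i$ ranges over the relevant dimensions. By \cite[5.5.4.15]{lurie-htt}, the $S$-local objects then form a presentable accessible reflective subcategory; write $L \dashv \iota$ for the localization-inclusion adjunction.

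For compact generation, the inclusion $\iota$ preserves all small colimits by Proposition~\ref{prop:nonneghocolim}, and in particular preserves filtered colimits, so the left adjoint $L$ carries compact objects to compact objects. Since $\Fil(\Spaces)$ is compactly generated by $\mc G = \{F_n(K) : n \in \mb Z,\, K \text{ a finite CW complex}\}$, the image $L(\mc G) \subset \Fil(\Spaces)^{\geq 0}$ is a set of compact objects. For any $Y \in \Fil(\Spaces)^{\geq 0}$ we have $Y \simeq LY$ and an expression $Y \simeq \colim_\alpha F_{n_\alpha}(K_\alpha)$ in $\Fil(\Spaces)$; applying the colimit-preserving $L$ yields $Y \simeq \colim_\alpha L F_{n_\alpha}(K_\alpha)$, so $L(\mc G)$ generates $\Fil(\Spaces)^{\geq 0}$ under colimits. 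Compact generation, and in particular presentability, follow.

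The main obstacle is pinning down the correct set $S$. Naive choices give the wrong condition: localizing only at the shift maps $F_{m+1}(S^i) \to F_m(S^i)$ forces $\Omega^i X^m \simeq \Omega^i X^{m+1}$, i.e.\ isomorphism on all $\pi_{\geq i}$, whereas weight $\geq 0$ asks only the asymmetric condition of isomorphism on $\pi_{<m}$ and surjection on $\pi_m$. The cell-lifting morphisms above should strike the right balance, but verifying the equivalence between ``$S$-local'' and ``weight $\geq 0$'' requires careful bookkeeping of which lifts must exist and which must be unique, essentially the standard reformulation of $m$-connectedness as a collection of cellular lifting conditions.
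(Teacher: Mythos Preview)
Your strategy has a structural obstruction that no choice of $S$ can overcome: $\Fil(\Spaces)^{\geq 0}$ is \emph{not} closed under limits in $\Fil(\Spaces)$, so it cannot be the full subcategory of $S$-local objects for any set of maps $S$ (such subcategories are always reflective, hence closed under limits). For a concrete counterexample, take $Z$ with $Z^m = \ast$ for $m \leq 0$ and $Z^m = S^1$ for $m \geq 1$; this is weight $\geq 0$ since $\ast \to S^1$ is $0$-connected. The pullback of $\ast \to Z \leftarrow \ast$ is the levelwise loop space $\Omega Z$, which has $(\Omega Z)^0 = \ast$ and $(\Omega Z)^1 \simeq \mb Z$; the map $\ast \to \mb Z$ is not surjective on $\pi_0$, so $\Omega Z$ is not weight $\geq 0$. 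Your own calculation in the final paragraph is detecting this: the locality condition for your cell-lifting map at $i=0$ demands $X^m \simeq X^{m+1}$, and no tweaking of the range of $i$ will produce the asymmetric ``surjective on $\pi_m$'' condition from an $S$-locality statement, because $S$-locality always imposes that certain maps of mapping spaces are \emph{equivalences}.

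What is true is that the inclusion preserves colimits (Proposition~\ref{prop:nonneghocolim}), so $\Fil(\Spaces)^{\geq w}$ is \emph{coreflective} rather than reflective. The paper does not try to invoke localization machinery at all; instead it argues directly that $\Fil(\Spaces)^{\geq w} \simeq \Ind(\mc J)$, where $\mc J$ is the small category of finite filtered CW-complexes (sequences eventually empty below, eventually constant above, with $X^{m-1} \to X^m$ a relative CW-inclusion attaching only cells of dimension $> w+m$). These objects are compact in $\Fil(\Spaces)$ by induction on the number of cells, and by the relative CW-approximation of Remark~\ref{rmk:relativeCW} every weight $\geq w$ object is a filtered colimit of its finite filtered subcomplexes. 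Your idea of transporting compact generators across an adjunction can be salvaged in the coreflective direction, but at that point one is essentially reproving the paper's argument: one needs an explicit small family of weight $\geq w$ objects that are compact and generate under colimits, which is exactly the content of identifying $\mc J$.
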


\begin{proof}
  Let $\mc J \subset \Fil(\Spaces)^{\geq w}$ be the full subcategory spanned by filtrations of a finite CW-complex by subcomplexes. Namely, objects of $\mc J$ are represented by sequences of spaces $\dots \to X^0 \to X^1 \to X^2 \to \dots$ with the following properties:
  \begin{itemize}
  \item $X^m$ is empty for $m$ sufficiently small;
  \item each map $X^m \to X^{m+1}$ is a relative CW-inclusion formed by attaching finitely many cells, each of dimension greater than $w+m$; and
  \item the maps $X^m \to X^{m+1}$ are isomorphisms for sufficiently large $m$.
  \end{itemize}
  Such a filtration is always obtained from a finite CW-complex $X$ with a degree assigned to each cell, such that the degree on the boundary of a cell is no larger than the degree of the cell.
  
  By an inductive argument on the number of cells, we find that the objects of $\mc J$ are compact in $\Fil(\Spaces)$. Moreover, by Remark~\ref{rmk:relativeCW}, every object in $\Fil(\Spaces)^{\geq w}$ is represented by a certain type of sequence of relative CW-inclusions, and these are formally filtered colimits of their finite filtered CW-subcomplexes. This shows that, as $\infty$-categories, $\Fil(\Spaces)^{\geq w} \simeq \Ind(\mc J)$.
\end{proof}

\subsection{Truncation and weight}

\begin{prop}
  \label{prop:nonnegadjoint}
  The forgetful functor $\Fil(\Spaces)^{\geq w} \to \Fil(\Spaces)$ has a left adjoint $\tau_{\geq w}$. The map $\tau_{\geq w} X \to X$ is characterized by knowing that
  \begin{itemize}
  \item the space $\tau_{\geq w} X$ is of weight at least $w$, and
  \item for any $m$ and at any basepoint, the maps $(\tau_{\geq w} X)^m \to X^m$ are isomorphisms on $\pi_k$ for $k \geq m+w$ and injections on $\pi_{m+w-1}$.
  \end{itemize}

\end{prop}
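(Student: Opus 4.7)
The plan is to produce $\tau_{\geq w}$ by an adjoint-functor argument and then verify the stated characterization of the natural map $\tau_{\geq w} X \to X$.

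For existence, I would apply the adjoint functor theorem. By Theorem~\ref{thm:nonnegpresentable} the subcategory $\Fil(\Spaces)^{\geq w}$ is presentable, and by Proposition~\ref{prop:nonneghocolim} the inclusion into $\Fil(\Spaces)$ preserves homotopy colimits. Since $\Fil(\Spaces) = \Fun(\mb Z, \Spaces)$ is presentable as well, this produces the adjoint $\tau_{\geq w}$, together with a universal comparison map $\tau_{\geq w} X \to X$ corresponding to the counit, characterized by the universal property that $\Map(Y, \tau_{\geq w} X) \simeq \Map(Y, X)$ for all $Y \in \Fil(\Spaces)^{\geq w}$.

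For the $\pi_k$-characterization, first reduce to $w = 0$ via the shift autoequivalence $\sh$, which carries $\Fil(\Spaces)^{\geq w}$ onto $\Fil(\Spaces)^{\geq w+1}$ and reindexes both bullet conditions. I would then probe $\tau_{\geq 0} X$ against the compact generators of $\Fil(\Spaces)^{\geq 0}$ used in the proof of Theorem~\ref{thm:nonnegpresentable}. For $k \geq m$, the filtered sphere $S^k_{\geq m}$ (with value $\ast$ below level $m$ and $S^k$ thereafter) lies in $\Fil(\Spaces)^{\geq 0}$ because the attaching map $\ast \to S^k$ at level $m$ is $(k-1)$-connected, and one checks directly that $\Map_*(S^k_{\geq m}, X) \simeq \Omega^k X^m$; combined with the adjunction this gives $\pi_k(\tau_{\geq 0} X)^m \cong \pi_k X^m$ for $k \geq m$. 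For the injectivity on $\pi_{m-1}$, probe instead with a weight-$\geq 0$ cellular model of the relative pair $(D^m, S^{m-1})$, placing its $0$-cell at level $0$, its $(m-1)$-cell at level $m-1$, and its top $m$-cell at level $m$: a class in the kernel of $\pi_{m-1}(\tau_{\geq 0} X)^m \to \pi_{m-1} X^m$ would yield a nullhomotopy across this filtered pair in $X$, which by universality lifts to $\tau_{\geq 0} X$ and kills the class.

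For sharpness, any $f\co Y \to X$ with $Y \in \Fil(\Spaces)^{\geq 0}$ satisfying both bullets factors as $Y \to \tau_{\geq 0} X \to X$ by universality. A levelwise diagram chase in $X^m$ shows that $Y^m \to (\tau_{\geq 0} X)^m$ is iso on $\pi_k$ for $k \geq m$ and injective on $\pi_{m-1}$, and the weight-$\geq 0$ conditions on both sides then propagate isomorphism down to smaller $k$ via the stabilization $\pi_k(-)^m = \pi_k(-)^{k+1}$ for $m \geq k+1$, together with the matching of the kernels of $\pi_k(-)^k \twoheadrightarrow \pi_k(-)^{k+1}$ under the level-$k$ isomorphism. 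The main obstacle is the injectivity-on-$\pi_{m-1}$ step: setting up the correct weight-$\geq 0$ cellular model of $(D^m, S^{m-1})$ and lifting the nullhomotopy coherently through the filtration; once this is in place, everything else is formal from the adjunction and a pair of diagram chases.
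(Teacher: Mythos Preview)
Your existence argument via the adjoint functor theorem is the same as the paper's. For the characterization, however, you take a genuinely different route. The paper proceeds constructively: it builds an explicit candidate $Y \to X$ by setting $Y^m = X^m$ for $m < -w$ and then, inductively, factoring $Y^m \to X^{m+1}$ as a relative-Postnikov-style step $Y^m \to Y^{m+1} \to X^{m+1}$, attaching only cells of dimension greater than $m+w$; the two bullet properties are then immediate from the construction, and the universal property is verified by a standard obstruction-theory argument (any $Z$ of weight $\geq w$ admits a cellular model with cells placed so that $\Map(Z,Y) \to \Map(Z,X)$ is an equivalence). You instead take the abstractly-defined $\tau_{\geq w} X$ and read off its levelwise homotopy groups by probing with filtered spheres and disk--sphere pairs, then run a separate uniqueness argument. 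The paper's approach is shorter and yields a concrete model that is reused later; your approach avoids building anything by hand and makes clear exactly which test objects detect each bullet.

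One point in your injectivity step deserves tightening. Your filtered $(D^m,S^{m-1})$ probe carries $S^{m-1}$ at level $m-1$, not level $m$, so a class $\alpha$ in the kernel of $\pi_{m-1}(\tau_{\geq 0}X)^m \to \pi_{m-1}X^m$ does not directly produce a map from your probe into $X$. You must first lift $\alpha$ along the surjection $\pi_{m-1}(\tau_{\geq 0}X)^{m-1} \twoheadrightarrow \pi_{m-1}(\tau_{\geq 0}X)^m$ (using weight $\geq 0$), push the lift to $\beta \in \pi_{m-1}X^{m-1}$, observe that $\beta$ dies in $X^m$, and only then build the probe map. After factoring through $\tau_{\geq 0}X$ by universality, you still have to argue that the resulting class at level $m-1$ is your original lift; this uses the already-established isomorphism $\pi_{m-1}(\tau_{\geq 0}X)^{m-1} \cong \pi_{m-1}X^{m-1}$, which pins it down by its image $\beta$. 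With that bookkeeping in place your argument goes through, but as written the step ``would yield a nullhomotopy across this filtered pair in $X$'' skips exactly this lift-and-identify maneuver.
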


\begin{proof}
  Both $\infty$-categories are presentable, and the inclusion functor preserves homotopy colimits, so the existence of the adjoint follows from the adjoint functor theorem \cite[5.5.2.9]{lurie-htt}. Moreover, the value of the adjoint functor on $X$ is homotopy terminal among filtered spaces of weight at least $w$ with a map to $X$.

  Fix a filtered space $X$. We claim that there is a filtered space $Y$, with a map $Y \to X$, satisfying the properties in the proposition. More specifically, we first set $Y^m = X^m$ for $m < -w$. Then, inductively, we factor the map $Y^m \to X^m \to X^{m+1}$ through a map $Y^m \to Y^{m+1} \to X^{m+1}$ by attaching cells of dimension greater than $m+w$ to correct the higher homotopy groups---a relative Postnikov factorization.

  We now claim that $Y$ is equivalent to the value of $\tau_{\geq w} X$. Given any $Z$ of weight at least $w$ with a map $Z \to X$, represent $Z$ by a sequence of relative CW-inclusions $Z^m \to Z^{m+1}$ formed by attaching cells of dimension greater than $m+w$. Then the restriction $\Map(Z,Y) \to \Map(Z,X)$ is an equivalence by obstruction theory, and hence $Y$ is homotopy terminal among objects of weight at least $W$ mapping to $X$.
\end{proof}

This description makes it clear that these functors interact well with the shift operator.
\begin{cor}
  \label{cor:weightadjoint}
  For a filtered space $X$, there is an equivalence $\tau_{\geq w+1} \sh X \to \sh \tau_{\geq w} X$.
\end{cor}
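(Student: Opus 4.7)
The plan is to verify that $\sh\tau_{\geq w}X$, equipped with its natural map to $\sh X$, satisfies the universal property of $\tau_{\geq w+1}\sh X$. First, I would recall from the remark preceding Proposition~\ref{prop:Dnonneg} that $X \in \Fil(\Spaces)^{\geq w}$ if and only if $\sh X \in \Fil(\Spaces)^{\geq w+1}$. Because $\sh$ is an autoequivalence of $\Fil(\Spaces)$ (arising from precomposition with the invertible endofunctor $n \mapsto n+1$ of $\mb Z$) with inverse $\sh^{-1}$, this observation says that $\sh$ restricts to inverse equivalences $\Fil(\Spaces)^{\geq w} \simeq \Fil(\Spaces)^{\geq w+1}$. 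In particular, $\sh\tau_{\geq w}X$ lies in $\Fil(\Spaces)^{\geq w+1}$, and applying $\sh$ to the comparison map of Proposition~\ref{prop:nonnegadjoint} produces a natural map $\sh\tau_{\geq w}X \to \sh X$.

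Next, I would check that this map is homotopy terminal in the slice $\Fil(\Spaces)^{\geq w+1}_{/\sh X}$, which is exactly the characterization of $\tau_{\geq w+1}\sh X$ from Proposition~\ref{prop:nonnegadjoint}. For any $Y \in \Fil(\Spaces)^{\geq w+1}$, the object $\sh^{-1}Y$ has weight at least $w$, so the universal property of $\tau_{\geq w}X$ yields
\begin{align*}
\Map(Y, \sh\tau_{\geq w}X) &\simeq \Map(\sh^{-1}Y, \tau_{\geq w}X) \\
&\simeq \Map(\sh^{-1}Y, X) \\
&\simeq \Map(Y, \sh X),
\end{align*}
where the outer equivalences use that $\sh$ is an equivalence of $\infty$-categories. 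This identifies $\sh\tau_{\geq w}X \to \sh X$ as terminal among weight-$\geq w+1$ objects over $\sh X$, producing the desired equivalence $\tau_{\geq w+1}\sh X \xrightarrow{\sim} \sh\tau_{\geq w}X$.

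There is essentially no substantial obstacle here beyond the bookkeeping of weight indices across the shift; the whole argument is a formal consequence of $\sh$ intertwining the inclusions $\iota_w$ and $\iota_{w+1}$. As an independent sanity check one could instead directly verify the connectivity criterion from Proposition~\ref{prop:nonnegadjoint}: the map $(\sh\tau_{\geq w}X)^m = (\tau_{\geq w}X)^{m+1} \to X^{m+1} = (\sh X)^m$ is an isomorphism on $\pi_k$ for $k \geq m+(w+1)$ and an injection on $\pi_{m+w}$, which are precisely the conditions characterizing $\tau_{\geq w+1}\sh X \to \sh X$.
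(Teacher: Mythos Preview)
Your argument is correct and is essentially the same as the paper's, just run in the opposite direction: the paper applies $\sh^{-1}$ to the universal map $\tau_{\geq w+1}\sh X \to \sh X$ and observes that the result $\sh^{-1}(\tau_{\geq w+1}\sh X) \to X$ satisfies the characterization of $\tau_{\geq w}X$ from Proposition~\ref{prop:nonnegadjoint}, whereas you apply $\sh$ to $\tau_{\geq w}X \to X$ and verify the characterization of $\tau_{\geq w+1}\sh X$. Both amount to the observation that $\sh$ intertwines the two weight subcategories, and your closing sanity check via the connectivity criterion is exactly the verification the paper leaves implicit.
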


\begin{proof}
  The map $\tau_{\geq w+1} \sh X \to \sh X$ becomes a map $\sh^{-1}(\tau_{\geq w+1} \sh X) \to X$ which exhibits the source as a representative for $\tau_{\geq w} X$.
\end{proof}

\subsection{Successors and weight}
\begin{thm}
  \label{thm:Dhomotopical}
  Given a filtered space $X$ and a representative of the map $\tau_{\geq 0} X \to X$ by a map of fibrant filtered cubical sets, the induced map $(\tau_{\geq 0} X)^\der \to X^\der$ is an equivalence.
\end{thm}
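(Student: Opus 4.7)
The plan is to reduce everything to Theorem~\ref{thm:Dhomotopy} (which calculates homotopy groups of a successor as the image of a map on homotopy groups) combined with the characterization of $\tau_{\geq 0}$ in Proposition~\ref{prop:nonnegadjoint}. Since both $(\tau_{\geq 0} X)^\der$ and $X^\der$ are fibrant cubical sets by Corollary~\ref{cor:Dfibrancy}, it suffices to prove that the map induces a bijection on $\pi_0$ and an isomorphism on $\pi_m$ at every basepoint.

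First I would observe that the map $(\tau_{\geq 0} X)^0 \to X^0$ is an equivalence: by Proposition~\ref{prop:nonnegadjoint} with $w=0$ and $m=0$, it induces isomorphisms on $\pi_k$ for all $k \geq 0$. In particular, any basepoint of $(\tau_{\geq 0} X)^\der$ arises (uniquely up to path in $X^0$) from a basepoint compatible under the comparison. It therefore suffices to fix a basepoint $x \in (\tau_{\geq 0} X)^0$ with image in $X^0$ and compare homotopy groups at that basepoint for every $m \geq 0$.

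Next, for each $m \geq 0$, consider the commutative square
\[
  \begin{tikzcd}
    \pi_m((\tau_{\geq 0} X)^m) \ar[r] \ar[d] & \pi_m((\tau_{\geq 0} X)^{m+1}) \ar[d] \\
    \pi_m(X^m) \ar[r] & \pi_m(X^{m+1}).
  \end{tikzcd}
\]
By Proposition~\ref{prop:nonnegadjoint}, the left vertical map is an isomorphism (it is an iso on $\pi_k$ for $k \geq m$), and the right vertical map is an injection (it is an iso on $\pi_k$ for $k \geq m+1$ and an injection on $\pi_m$). A general elementary observation about such squares then says that the right vertical restricts to a bijection from the image of the top horizontal to the image of the bottom horizontal: commutativity implies that the restriction lands in the image of the bottom, surjectivity onto that image is automatic from commutativity together with the left vertical being surjective, and injectivity is inherited from the right vertical.

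Applying Theorem~\ref{thm:Dhomotopy} identifies these images with $\pi_m((\tau_{\geq 0} X)^\der)$ and $\pi_m(X^\der)$ respectively, and one checks that the bijection is the one induced by the comparison map; for $m \geq 1$ the same image description also identifies the group structures. The same argument with $m=0$ handles $\pi_0$ as sets. Together these show that $(\tau_{\geq 0} X)^\der \to X^\der$ induces isomorphisms on all homotopy groups at each basepoint and a bijection on $\pi_0$, hence is a weak equivalence. The only mild subtlety is bookkeeping the basepoint issue using that $(\tau_{\geq 0} X)^0 \to X^0$ is already an equivalence, which makes the argument basepoint-uniform; no deeper obstacle appears.
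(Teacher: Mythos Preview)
Your proof is correct and follows essentially the same approach as the paper's own proof, which is simply a one-line appeal to Proposition~\ref{prop:nonnegadjoint} together with Theorem~\ref{thm:Dhomotopy}. You have spelled out in detail the image-comparison argument that the paper leaves implicit, including the basepoint bookkeeping via the equivalence $(\tau_{\geq 0} X)^0 \to X^0$; this is exactly the content the paper intends.
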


\begin{proof}
  This follows directly from Proposition~\ref{prop:nonnegadjoint}, using the natural identification of the homotopy groups of $Y^\der$ in Theorem~\ref{thm:Dhomotopy}.
\end{proof}

Combining this with Proposition~\ref{prop:Dnonneg}, we find the following.
\begin{cor}
  \label{cor:Dhomotopical}
  Given a filtered space $X$ represented by a fibrant filtered cubical set, the space $X^\der$ is naturally equivalent to $\hocolim_{\mb Z} \tau_{\geq 0} X$.

  More generally, the sequence of maps
  \[
    \dots \to X \to \sh X \to \sh^2 X \to \dots
  \]
  becomes, upon taking successors, naturally equivalent to the sequence of maps
  \[
    \dots \to \hocolim_{\mb Z} \tau_{\geq 0} X \to \hocolim_{\mb Z} \tau_{\geq (-1)} X \to  \hocolim_{\mb Z} \tau_{\geq (-2)} X \to \dots
  \]
\end{cor}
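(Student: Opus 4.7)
The strategy is to combine Theorem~\ref{thm:Dhomotopical} with Proposition~\ref{prop:Dnonneg}, then bootstrap to the sequence of shifts using Corollary~\ref{cor:weightadjoint}. For the first statement, I would fix a representative of the unit $\tau_{\geq 0} X \to X$ by a map of fibrant filtered cubical sets. Theorem~\ref{thm:Dhomotopical} supplies an equivalence $(\tau_{\geq 0} X)^\der \simeq X^\der$, and since $\tau_{\geq 0} X$ has nonnegative weight, Proposition~\ref{prop:Dnonneg} identifies $(\tau_{\geq 0} X)^\der$ with $\hocolim_{\mb Z} \tau_{\geq 0} X$. Composing these two equivalences yields the desired $X^\der \simeq \hocolim_{\mb Z} \tau_{\geq 0} X$.

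For the sequence of maps, I would apply the same reasoning to each $\sh^k X$ in place of $X$, giving $(\sh^k X)^\der \simeq \hocolim_{\mb Z} \tau_{\geq 0} \sh^k X$. By iterated application of Corollary~\ref{cor:weightadjoint}, we have $\tau_{\geq 0} \sh^k X \simeq \sh^k \tau_{\geq -k} X$, and since $\hocolim_{\mb Z}$ is invariant under shifting (it merely reindexes a colimit over $\mb Z$), the right-hand side is naturally equivalent to $\hocolim_{\mb Z} \tau_{\geq -k} X$. This produces the claimed objects.

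The main obstacle is to upgrade these pointwise equivalences to an equivalence of sequences: one must check that the transition map $(\sh^k X)^\der \to (\sh^{k+1} X)^\der$ induced by $\tau\co \sh^k X \to \sh^{k+1} X$ corresponds, under the identifications above, to the canonical map $\hocolim_{\mb Z} \tau_{\geq -k} X \to \hocolim_{\mb Z} \tau_{\geq -(k+1)} X$ coming from the inclusion of truncations. This is a naturality check that follows from the functoriality of the successor, truncation, shift, and homotopy colimit constructions, together with the naturality of the equivalences in Theorem~\ref{thm:Dhomotopical} and Proposition~\ref{prop:Dnonneg} in their filtered-space input. The only mild technical point is that these functors live in different models, so one must choose fibrant cubical-set representatives for the whole tower $\dots \to \tau_{\geq 0} X \to \tau_{\geq -1} X \to \dots$ simultaneously, which is routine because filtered colimits and Postnikov-style factorizations are available in the cubical setting.
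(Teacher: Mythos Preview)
Your proposal is correct and follows essentially the same approach as the paper: the paper's entire proof is the one-line remark ``Combining this with Proposition~\ref{prop:Dnonneg}, we find the following,'' and you have simply unpacked that combination, together with the shift compatibility from Corollary~\ref{cor:weightadjoint}, in the expected way. Your additional care about naturality of the transition maps and choice of fibrant representatives is appropriate elaboration rather than a different argument.
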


\begin{exam}
  \label{ex:Dsspace}
  Given a simplicial space $Y$ with the associated filtration $\sk|Y|$ on its geometric realization as in Example~\ref{ex:sspace-skel}, we find that there is an equivalence
  \[
    (\sk|Y|)^\der \simeq |Y|.
  \]
  In particular, this gives a factorization of the geometric realization functor through filtered spaces:
  \[
    \begin{tikzcd}
      \Spaces^{\Delta^\op} \ar[dr, "\hocolim"']  \ar[r, "\sk"] &
      \Fil(\Spaces) \ar[d,"\der"]\\
      & \Spaces
    \end{tikzcd}
  \]
\end{exam}

\subsection{Relation to the Beilinson $t$-structure}

Our goal in this section is to show that the weight truncations $\tau_{\geq w}$ on filtered spaces are compatible with the truncation functors from the Beilinson $t$-structure.

\begin{prop}
  \label{prop:beilinsonweight}
  If we have a filtered spectrum $Y$ such that $Y \geq w$ in the Beilinson $t$-structure, then the filtered space $\Omega_\infty Y$ is of weight at least $w$.

  More generally, we have an equivalence
  \[
    \Omega^\infty \tau^B_{\geq w} Y \to \tau_{\geq w} \Omega^\infty Y
  \]
  where $\tau^B_{\geq w}$ denotes truncation in the Beilinson $t$-structure on filtered spectra.
\end{prop}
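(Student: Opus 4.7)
The plan is to deduce both halves from a Yoneda argument built on the levelwise adjunction $\Sigma^\infty_+ \dashv \Omega^\infty$ between $\Fil(\Spaces)$ and $\Fil(\Sp)$. The bridge between the Beilinson $t$-structure and the weight filtration from Definition~\ref{def:nonnegative} is the standard characterization: $Y$ has $Y \geq w$ in the Beilinson $t$-structure exactly when, for each $m$, the structure map $Y^m \to Y^{m+1}$ is $(m+w)$-connected as a map of spectra (equivalently, $\gr^{m+1} Y$ is $(m+w+1)$-connective). This directly parallels Definition~\ref{def:nonnegative}.

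For the first assertion, I would use that $\Omega^\infty$ preserves $n$-connectedness of maps in the range $n \geq 0$: if $Y^m \to Y^{m+1}$ is $(m+w)$-connected as a spectrum map, then so is $\Omega^\infty Y^m \to \Omega^\infty Y^{m+1}$ whenever $m+w \geq 0$, while in the range $m+w < 0$ the space-level condition is vacuous. Hence $\Omega^\infty Y$ has weight at least $w$.

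For the equivalence I would establish the dual property for the left adjoint: $\Sigma^\infty_+$ sends $\Fil(\Spaces)^{\geq w}$ into the subcategory of Beilinson-$\geq w$ filtered spectra. The nonnegative range is the Hurewicz-style observation that $\Sigma^\infty_+$ preserves $n$-connectedness of maps; the negative range holds automatically because $\Sigma^\infty_+$ takes values in connective spectra, so any required negative-degree cofiber connectivity is free. Given this, for any $X \in \Fil(\Spaces)^{\geq w}$ we obtain a chain of natural equivalences
\[
  \Map(X, \Omega^\infty \tau^B_{\geq w} Y) \simeq \Map(\Sigma^\infty_+ X, \tau^B_{\geq w} Y) \simeq \Map(\Sigma^\infty_+ X, Y) \simeq \Map(X, \Omega^\infty Y) \simeq \Map(X, \tau_{\geq w} \Omega^\infty Y),
\]
using the adjunction, the universal property of $\tau^B_{\geq w}$ (valid because $\Sigma^\infty_+ X$ is Beilinson-$\geq w$), the adjunction again, and the universal property of $\tau_{\geq w}$ from Proposition~\ref{prop:nonnegadjoint}. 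Combined with the first assertion, both $\Omega^\infty \tau^B_{\geq w} Y$ and $\tau_{\geq w} \Omega^\infty Y$ lie in $\Fil(\Spaces)^{\geq w}$ and corepresent the same functor there, with the canonical map to $\Omega^\infty Y$, so the natural comparison map is an equivalence.

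The main obstacle I expect is the asymmetric connectivity bookkeeping between spaces and spectra in the negative-index range, which as noted dissolves once one remembers that $\Sigma^\infty_+$ lands in connective spectra; everything else is a direct Yoneda computation.
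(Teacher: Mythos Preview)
Your argument is correct and takes a genuinely different route from the paper. The paper proves the second assertion by direct verification of the characterizing homotopy-group conditions in Proposition~\ref{prop:nonnegadjoint}: using the cofiber sequence $\tau^B_{\geq w} Y \to Y \to \tau^B_{<w} Y$ and the fact that $(\tau^B_{<w} Y)^m$ lies in degrees below $m+w$, it checks levelwise that $(\Omega^\infty \tau^B_{\geq w} Y)^m \to (\Omega^\infty Y)^m$ is an isomorphism on $\pi_k$ for $k \geq m+w$ and an injection on $\pi_{m+w-1}$, which are exactly the conditions singling out $\tau_{\geq w} \Omega^\infty Y$.

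By contrast, you run a Yoneda argument through the adjunction $\Sigma^\infty_+ \dashv \Omega^\infty$, after first showing that $\Sigma^\infty_+$ carries $\Fil(\Spaces)^{\geq w}$ into the Beilinson-$\geq w$ part. Your approach is more categorical and emphasizes that the two coreflectors $\tau_{\geq w}$ and $\tau^B_{\geq w}$ are intertwined by the adjoint pair precisely because each half of the adjunction respects the respective connective subcategories; it would transplant cleanly to any setting with such compatible coreflective subcategories. The paper's approach is more hands-on and makes the homotopy-group content explicit, which meshes with how the characterization of $\tau_{\geq w}$ in Proposition~\ref{prop:nonnegadjoint} is actually stated and used elsewhere. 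Your handling of the negative-index range (where space-level connectivity is vacuous and the spectrum-level condition follows from connectivity of $\Sigma^\infty_+$) is fine.
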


\begin{proof}
  Consider a filtered spectrum $Y$ represented by $\dots \to Y^0 \to Y^1 \to \dots$, with associated cofiber sequences
  \[
    Y^m \to Y^{m+1} \to \gr^{m+1} Y.
  \]
  We have $Y \geq w$ in the Beilinson $t$-structure if and only if in the associated graded $\gr(Y)$, we have $\gr^m Y \geq m+w$ in the standard $t$-structure on spectra. However, $\gr^m Y \geq m+w$ for all $m$ if and only if the maps $Y^m \to Y^{m+1}$ are all $(m+w)$-connected, and this connectivity is preserved by taking $\Omega^\infty$.

  Similarly, $Z < w$ in the Beilinson $t$-structure if and only if we have $Z^m < m+w$ in the standard $t$-structure on spectra. For a general filtered spectrum $Y$, there is a cofiber sequence $\tau^B_{\geq w} Y \to Y \to \tau^B_{< w}Y$ in the Beilinson $t$-structure. Term-by-term, we have cofiber sequences
  \[
    (\tau^B_{\geq w} Y)^m \to Y^m \to (\tau^B_{< w} Y)^m
  \]
  where the third term has trivial homotopy in degrees $(m+w)$ and above. Therefore, the map $(\tau^B_{\geq w} Y)^m \to Y^m$ is an isomorphism on $\pi_k$ for $k \geq m+w$ and an injection on $\pi_{m+w-1}$. This is preserved by applying $\Omega^\infty$. By Proposition~\ref{prop:nonnegadjoint}, then, $\Omega^\infty \tau^B_{\geq w} Y \to \Omega^\infty Y$ is a representative for $\tau_{\geq w} \Omega^\infty Y$.
\end{proof}

\begin{rmk}
  Recall that the generalized Dold--Kan correspondence of \cite[1.2.3]{lurie-higheralgebra} establishes an equivalence between nonnegatively filtered spectra and simplicial spectra. Under this correspondence, the Beilinson connective cover $\tau_{\geq w}^B$ is equivalent to the levelwise connective cover $\tau_{\geq w}$ on simplicial objects.

  For a simplicial spectrum $Y$, this and Example~\ref{ex:Dsspace} imply that there is a natural equivalence
  \[
    |\Omega^\infty Y| \simeq (\Omega^\infty \sk |Y|)^\der.
  \]
  In this sense, the successor functor allows the interchange of geometric realization with $\Omega^\infty$.
\end{rmk}

\subsection{Function sets and function spaces}

In this section, we will first outline how the successor functor can be equivalently described using simplicial sets rather than cubical ones, and then show that it can also use \emph{spaces} of functions rather than sets.

\begin{prop}
  For a levelwise fibrant filtered simplicial set, we have a natural weak equivalence
  \[
    X^\der \simeq \Hom_{\Fil(\sSet)}(\sk(\Delta^\bullet), X)
  \]
  of spaces.
\end{prop}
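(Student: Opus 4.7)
The plan is to transcribe the cubical theory from Section~\ref{sec:successor} into the simplicial setting and then compare the two constructions. Write $Y := \Hom_{\Fil(\sSet)}(\sk(\Delta^\bullet), X)$. By construction $Y$ is the value at $X$ of the right adjoint to $\sk \co \sSet \to \Fil(\sSet)$, so an exact analog of Proposition~\ref{prop:Dadjunction} holds: there is a natural isomorphism $\Hom_{\Fil(\sSet)}(\sk K, X) \cong \Hom_{\sSet}(K, Y)$.

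First I would run the proof of Lemma~\ref{lem:Dfibration} verbatim after replacing the cap inclusions $\sqcap^m_{k,\epsilon} \hookrightarrow \square^m$ by horn inclusions $\Lambda^m_k \hookrightarrow \Delta^m$ and invoking right properness of the Kan--Quillen model structure on $\sSet$. Applied to the terminal map $X \to \ast$, for which no defibration condition is required, this shows that $Y$ is a Kan complex whenever $X$ is levelwise fibrant. Next, I would rerun the argument of Theorem~\ref{thm:Dhomotopy} with $(\Delta^m, \partial \Delta^m)$ in place of $(\square^m, \partial \square^m)$, using the standard representability of simplicial homotopy groups, to obtain the natural identification
\[
\pi_m(Y, x) \cong \im\bigl(\pi_m(X^m, x) \to \pi_m(X^{m+1}, x)\bigr).
\]

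For the comparison with $X^\der$, both constructions have the same homotopy groups by Theorem~\ref{thm:Dhomotopy} combined with the previous step; equivalently, both represent $\hocolim_{\mb Z} \tau_{\geq 0} X$ via Corollary~\ref{cor:Dhomotopical}. To promote this to a natural weak equivalence, I would construct a comparison zigzag through the filtered topological realization $|X|$ and its filtered singular cubical set $\Sing^\square |X|$, where both $Y$ and the cubical successor $(\Sing^\square |X|)^\der$ receive compatible natural maps. Checking that each leg of this zigzag induces the identity on the homotopy groups computed above then gives a weak equivalence of Kan complexes. The main obstacle is the bookkeeping for a canonical comparison map; the substantive homotopy-theoretic content has already been done in Theorem~\ref{thm:Dhomotopy}, which transfers directly to the simplicial framework.
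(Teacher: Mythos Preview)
Your proposal is correct and follows essentially the same approach as the paper: establish the simplicial analogs of fibrancy (Lemma~\ref{lem:Dfibration}) and the homotopy-group computation (Theorem~\ref{thm:Dhomotopy}), then conclude that both the cubical and simplicial constructions model $\hocolim_{\mb Z}\tau_{\geq 0}X$. The paper stops precisely there---the natural zigzag $X^{\der'} \leftarrow (\tau_{\geq 0}X)^{\der'} \to \colim_{\mb Z}\tau_{\geq 0}X$ already supplies the required natural weak equivalence, so your additional detour through $\Sing^\square|X|$ is unnecessary.
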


\begin{proof}[Proof sketch.]
  Just as in the cubical case, the assignment
  \[
    X \mapsto X^{\der'} = \Hom_{\Fil(\sSet)}(\sk(\Delta^\bullet), X)
  \]
  takes fibrant filtered simplicial sets to fibrant simplicial sets, takes constant objects to themselves, and has a natural isomorphism $\pi_n X^{\der'}  \cong \im(\pi_n X^n \to \pi_n X^{n+1})$. We therefore also get equivalences
  \[
    X^{\der'} \leftarrow (\tau_{\geq 0} X)^{\der'} \to \colim_{\mb Z} \tau_{\geq 0} X.
  \]
  This implies that $X^{\der'} \simeq X^\der$.
\end{proof}

We now prove that we can equally well use function spaces rather than function sets in the definition of $X^\der$.

\begin{prop}
  For a levelwise fibrant filtered simplicial set, we have a natural weak equivalence
  \[
    |\Hom_{\Fil(\sSet)}(\sk(\Delta^\bullet), X)| \to
    |\Map_{\Fil(\sSet)}(\sk(\Delta^\bullet), X)|
  \]
  of spaces.
\end{prop}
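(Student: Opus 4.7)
The plan is to identify both sides with realizations of a common bisimplicial set. Define
\[
  B_{m,n} = \Hom_{\Fil(\sSet)}\bigl(\sk(\Delta^n) \times \Delta^m,\, X\bigr),
\]
where $\Delta^m$ is regarded as a constant filtered simplicial set, so that $B$ is a bisimplicial set. The tensor/cotensor adjunction with the constant filtered object $\Delta^m$ yields a natural identification
\[
  B_{m,n} \cong \Hom_{\Fil(\sSet)}\bigl(\sk(\Delta^n),\, X^{\Delta^m}\bigr) = (X^{\Delta^m})^\der_n,
\]
so $B_{m,\bullet} = (X^{\Delta^m})^\der$ is a simplicial set, while $B_{\bullet,n} = \Map_{\Fil(\sSet)}(\sk(\Delta^n), X)$. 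The left-hand side of the proposition is $|X^\der| = |B_{0,\bullet}|$, and the right-hand side is the geometric realization of the bisimplicial set $B$; the natural map in the proposition is the inclusion of the constant-in-$m$ sub-bisimplicial set $B_{0,\bullet}$, given by iterated degeneracies $s_0^m\co B_{0,n} \to B_{m,n}$.

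The heart of the argument is to show that for each fixed $m$, the induced map
\[
  X^\der = B_{0,\bullet} \longrightarrow B_{m,\bullet} = (X^{\Delta^m})^\der
\]
is a weak equivalence of simplicial sets. Under the adjunction this map is obtained by applying $(-)^\der$ to the map $X \to X^{\Delta^m}$ of filtered simplicial sets induced by the collapse $\Delta^m \to \Delta^0$. Since each $X^k$ is a Kan complex and $\Delta^m$ is contractible, $X^k \to (X^k)^{\Delta^m}$ is a weak equivalence for every $k$; thus $X \to X^{\Delta^m}$ is a levelwise weak equivalence of levelwise fibrant filtered simplicial sets. The successor functor preserves such equivalences, as is immediate from the calculation $\pi_m(-)^\der = \im\bigl(\pi_m(-)^m \to \pi_m(-)^{m+1}\bigr)$ in Theorem~\ref{thm:Dhomotopy} (or its simplicial counterpart sketched in the preceding proposition).

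With the one-sided levelwise equivalence of $B$ with the constant bisimplicial set $[m \mapsto B_{0,\bullet}]$ in hand, the realization lemma for bisimplicial sets---equivalently, the fact that the diagonal takes a row-wise weak equivalence to a weak equivalence, which is applicable because each row $B_{m,\bullet}$ is a Kan complex by the simplicial analogue of Corollary~\ref{cor:Dfibrancy}---shows that the natural map on geometric realizations is a weak equivalence. The main technical obstacle is to identify carefully the map in the proposition with the one produced by the degeneracies $s_0^m$ under the adjunction above; this is a purely formal check once one commits to the convention that $\Delta^m$ is a constant filtered simplicial set, and then the proof reduces entirely to the contractibility of $\Delta^m$ and the homotopical content of Theorem~\ref{thm:Dhomotopy}.
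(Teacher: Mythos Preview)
Your proof is correct and follows essentially the same approach as the paper's: both introduce the same bisimplicial set $(p,q)\mapsto \Hom_{\Fil(\sSet)}(\sk(\Delta^p)\times\Delta^q,X)$, identify the two sides of the statement as the $q=0$ edge and the full realization respectively, and then use that $X\to X^{\Delta^q}$ is a levelwise weak equivalence between levelwise fibrant filtered simplicial sets (so that $(-)^\der$ preserves it) to apply the realization lemma. The only cosmetic difference is that the paper phrases the identification of the full realization with the right-hand side by ``realizing in $q$ first,'' whereas you state it directly; the content is identical.
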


\begin{proof}
  We have a bisimplicial set which in degree $(p,q)$ is
  \begin{align*}
    \Hom_{\Fil(\sSet)}(\sk(\Delta^p) \times \Delta^q, X)
    &\cong \Hom_{\sSet}(\Delta^q, \Map_{\Fil(\sSet)}(\sk(\Delta^p), X))\\
    &\cong \Hom_{\Fil(\sSet)}(\sk(\Delta^p), \Map_{\sSet}(\Delta^q, X)).
  \end{align*}
  The edge with $q=0$ is the simplicial set $\Hom_{\Fil(\sSet)}(\sk(\Delta^p), X)$ whose realization models $X^\der$, and the geometric realizations in either order are isomorphic.

  If we take geometric realization in $q$ first, we obtain the simplicial space
  \[
    |\Hom_{\sSet}(\Delta^\bullet, \Map_{\Fil(\sSet)}(\sk(\Delta^p), X)| \cong \Map_{\Fil(\sSet)}(\sk(\Delta^p), X).
  \]
  Therefore, the full geometric realization is isomorphic to the realization of the simplicial space $|\Map_{\Fil(\sSet)}(\sk(\Delta^\bullet), X)|.$

  In the other direction, first taking realizations in $p$ we get
  \[
    |\Hom_{\Fil(\sSet)}(\sk(\Delta^\bullet), \Map_{\sSet}(\Delta^q, X))| = \Map_{\sSet}(\Delta^q, X)^\der.
  \]
  The edge inclusion maps $X \to \Map_{\sSet}(\Delta^q, X)$ are equivalences of filtered simplicial sets by levelwise fibrancy of $X$, and so the natural maps
  \[
    X^\der \to \Map_{\sSet}(\Delta^q,X)^\der
  \]
  are also equivalences because they are levelwise equivalences between fibrant filtered objects. Taking realization in $q$ we get an equivalence
  \[
    X^\der \xrightarrow{\sim} |\Map_{\Fil(\sSet)}(\sk(\Delta^\bullet), X)|,
  \]
  as desired.
\end{proof}

\begin{rmk}
  For a filtered space $X$, this gives an alternative model
  \[
    |\Map_{\Fil(\Spaces)}(\sk(\Delta^\bullet), X)|
  \]
  for $X^\der$.
\end{rmk}

\begin{rmk}
  The previous proposition also goes through in the cubical case, modulo taking some care with left and right mapping spaces and finding references for the realization of bicubical objects.
\end{rmk}

\section{Monoidality}
\label{sec:monoidality}

If $\mc C$ is a (symmetric) monoidal category with compatible colimits, the categories $\Fil(\mc C)$ and $\Fil^+(\mc C)$ of filtered objects of $\mc C$ are (symmetric) monoidal under the Day convolution. In particular, this is true for the category of filtered spaces. In this section we would like to examine the interaction of successors with this monoidal structure.

\subsection{Point-set monoidality of successors}

\begin{prop}
  \label{prop:Dlax}
  The successor functor $(-)^\der \co \Fil(\cSet) \to \cSet$ is lax monoidal for the Day convolution on the source.
\end{prop}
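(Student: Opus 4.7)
The plan is to exhibit the left adjoint $\sk \colon \cSet \to \Fil(\cSet)$ of Proposition~\ref{prop:Dadjunction} as \emph{strong} monoidal, where $\cSet$ carries $\boxtimes$ and $\Fil(\cSet)$ carries the Day convolution; the right adjoint $(-)^\der$ will then inherit a canonical lax monoidal structure by doctrinal adjunction.

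The main computation will be to identify the Day convolution of two skeletal filtrations with the skeleton of a cubical product. For filtered cubical sets $A$ and $B$, the Day convolution reads
$$(A \otimes B)^n = \colim_{p+q \leq n} A^p \boxtimes B^q,$$
and when $A = \sk(K)$ and $B = \sk(L)$, every structure map is a subcomplex inclusion and $\boxtimes$ preserves monomorphisms, so this colimit is simply the union $\bigcup_{p+q \leq n} \sk^p(K) \boxtimes \sk^q(L) \subset K \boxtimes L$. By monotonicity in $(p,q)$, this equals $\bigcup_{p+q=n} \sk^p(K) \boxtimes \sk^q(L)$, which is precisely the formula for $\sk^n(K \boxtimes L)$ already invoked in the proof of Theorem~\ref{thm:Dpullback}. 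I would also note that the monoidal unit $\sk(\square^0)$ agrees with the Day-convolution unit $F_0(\square^0)$: both are constant at $\square^0$ in nonnegative degrees and empty in negative degrees.

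With these natural isomorphisms in hand, and the routine coherence check that they are compatible with the associativity and unit constraints of $\boxtimes$, the functor $\sk$ is strong monoidal. The lax structure on $(-)^\der$ is then described explicitly by the map $X^\der \boxtimes Y^\der \to (X \otimes Y)^\der$ adjoint to the composite $\sk(X^\der \boxtimes Y^\der) \cong \sk(X^\der) \otimes \sk(Y^\der) \to X \otimes Y$ induced by the counits. The only substantive technical point is the colimit-as-union identification; once that is in place, the lax monoidal structure on $(-)^\der$ is essentially forced.
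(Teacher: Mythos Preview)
Your proposal is correct and follows essentially the same route as the paper: show that $\sk$ is strong monoidal via the identity $\sk^n(K \boxtimes L) \cong \bigcup_{p+q=n} \sk^p(K) \boxtimes \sk^q(L)$, and then invoke the general fact that the right adjoint of a strong monoidal functor is lax monoidal. Your version spells out a few points the paper leaves implicit (why the Day-convolution colimit computes as a union of subcomplexes, and the unit check), but the argument is the same.
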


\begin{proof}
  The successor functor is right adjoint to the skeleton functor $\sk\co \cSet \to \Fil(\cSet)$ by Proposition~\ref{prop:Dadjunction}. However, the natural isomorphism
  \[
    \sk^n(K \boxtimes L) \cong \bigcup_{p+q=n} \sk^p(K) \boxtimes \sk^q(L)
  \]
  for cubical sets expresses precisely that $\sk$ is strong monoidal: it takes the product $\boxtimes$ on $\cSet$ to the corresponding Day convolution on $\Fil(\cSet)$. The right adjoint to a strong monoidal functor is automatically lax monoidal.
\end{proof}

\begin{rmk}
  \label{rmk:simplicialmonoidality}
  What works for cubes fails for simplices. While we can give a simplicial version of the successor functor $(-)^\der\co \Fil(\sSet) \to \sSet$, it is not lax monoidal. Additionally, the simplicial versions of Proposition~\ref{prop:Dlax} (and Theorem~\ref{thm:Dpullback}) seem to be considerably more difficult.

  The switch to cubes comes at a cost, however. Cubical sets are monoidal, but not symmetric monoidal, and so there is no question of upgrading this to a lax symmetric monoidal functor on filtered cubical sets. To obtain it, we will need to work homotopically.
\end{rmk}

\subsection{Products and weight}

\begin{prop}
  \label{prop:weightmonoidal}
  The Day convolution on $\Fil(\Spaces)$ is compatible with weight: it induces functors
  \[
    \Fil(\Spaces)^{\geq w_1} \times \Fil(\Spaces)^{\geq w_2} \to \Fil(\Spaces)^{\geq (w_1 + w_2)}
  \]
  for all $w_1$ and $w_2$.
\end{prop}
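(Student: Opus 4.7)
The plan is to combine the cellular description of weight from Remark~\ref{rmk:relativeCW} with the bi-cocontinuity of the Day convolution. The Day convolution on $\Fil(\Spaces) = \Fun(\mb Z, \Spaces)$ is the left Kan extension of $(X,Y) \mapsto X \times Y$ along $+\co \mb Z \times \mb Z \to \mb Z$, so it preserves homotopy colimits in each variable. By Theorem~\ref{thm:nonnegpresentable}, every object of $\Fil(\Spaces)^{\geq w_i}$ is a filtered colimit of finite filtered CW-complexes from $\mc J_{w_i}$. Together with the closure of $\Fil(\Spaces)^{\geq (w_1+w_2)}$ under homotopy colimits (Proposition~\ref{prop:nonneghocolim}), this reduces the claim to the case where $X \in \mc J_{w_1}$ and $Y \in \mc J_{w_2}$ are finite filtered CW-complexes with cells $e_\alpha$, $e_\beta$ of dimensions $d_\alpha$, $d_\beta$ and filtration degrees $f_\alpha \leq d_\alpha - w_1$, $f_\beta \leq d_\beta - w_2$.

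For such $X$ and $Y$, I would build a product filtered CW-structure on $X \otimes Y$ whose cells are pairs $(e_\alpha, e_\beta)$ of dimension $d_\alpha + d_\beta$ and filtration $f_\alpha + f_\beta$. The essential input is the identity $F_f(A) \otimes F_{f'}(B) \simeq F_{f+f'}(A \times B)$ for the free filtered functors of Definition~\ref{def:freeeval}, which is immediate from the pointwise formula for the convolution. Applying bi-cocontinuity of $\otimes$ to each pair of cell-attachment pushouts $F_f(S^{d-1}) \to F_f(D^d)$ and $F_{f'}(S^{d'-1}) \to F_{f'}(D^{d'})$ yields the pushout-product $F_{f+f'}(\partial(D^d \times D^{d'})) \to F_{f+f'}(D^d \times D^{d'})$, which up to homeomorphism is the cell attachment $F_{f+f'}(S^{d+d'-1}) \to F_{f+f'}(D^{d+d'})$. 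Iterating in both variables cell-by-cell produces the desired product CW-structure.

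The weight bound is then immediate: for each product cell, the assumptions $f_\alpha \leq d_\alpha - w_1$ and $f_\beta \leq d_\beta - w_2$ give $f_\alpha + f_\beta \leq (d_\alpha + d_\beta) - (w_1 + w_2)$, so $X \otimes Y$ satisfies the filtration-degree condition of Remark~\ref{rmk:relativeCW} for weight $\geq w_1 + w_2$.

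The hard part is the cell-by-cell bookkeeping: bi-cocontinuity of $\otimes$ must be applied compatibly to iterated cell-attachment pushouts in both variables to give a coherent product CW-structure. A slicker alternative, at the cost of passing to pointed filtered spaces (e.g.\ by working with $X_+$, $Y_+$), is to identify the associated graded piece $(X \otimes Y)^{n+1}/(X \otimes Y)^n$ with $\bigvee_{p+q=n+1}(X^p/X^{p-1}) \wedge (Y^q/Y^{q-1})$, whose wedge summands are $(n+w_1+w_2)$-connected because smash products add connectivities; this directly yields $(n+w_1+w_2)$-connectivity of the structure map $(X \otimes Y)^n \to (X \otimes Y)^{n+1}$, as required.
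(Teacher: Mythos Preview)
Your proof is correct and follows essentially the same strategy as the paper's: represent $X$ and $Y$ by filtered CW-complexes via Remark~\ref{rmk:relativeCW}, observe that the Day convolution inherits a product CW-structure, and verify the dimension--filtration inequality. The only difference is that the paper skips your reduction to finite complexes (via Theorem~\ref{thm:nonnegpresentable} and Proposition~\ref{prop:nonneghocolim}) and works directly with possibly infinite CW models, writing $(X \circledast Y)^n = \bigcup_{p+q=n} X^p \times Y^q$ and reading off the cell dimensions immediately.
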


\begin{proof}
  Suppose $X$ is weight at least $w_1$ and $Y$ is weight at least $w_2$. We can again use Remark~\ref{rmk:relativeCW} to represent $X$ by a sequence $\dots \to X^0 \to X^1 \to \dots$ of CW-inclusions, with $X^m$ formed by attaching cells of dimension at least $w_1+m$, and similarly $Y^m$ is formed by attaching cells of dimension at least $w_2+m$. For such models, the $\infty$-categorical Day convolution $X \circledast Y$ is the filtered space which, in degree $n$, is modeled by the point-set Day convolution:
  \[
    (X \circledast Y)^n = \bigcup_{p+q=n} X^p \times Y^q \simeq \hocolim_{p+q \leq n} X^p \times Y^q.
  \]
  This filtered space $X \circledast Y$ is then again a sequence of relative CW-inclusions, with filtration $m$ only attaching cells of dimension at least $w_1+w_2+m$. Therefore, $X \circledast Y$ is weight at least $w_1+w_2$.
\end{proof}

\subsection{Coherent symmetric monoidality of successors}

By Corollary~\ref{cor:Dhomotopical}, $(-)^\der\co \Fil(\cSet) \to \cSet$ represents a functor of $\infty$-categories equivalent to the composite $\hocolim_{\mb Z} \tau_{\geq 0}(-)$. The lax monoidality of the cubical functor tells us that the induced successor functor is a lax monoidal functor of $\infty$-categories. However, because the product $\boxtimes$ on cubical sets is not symmetric monoidal, we cannot use it to understand the symmetric monoidal properties on filtered spaces. As mentioned in Remark~\ref{rmk:simplicialmonoidality}, switching from cubical sets to simplicial sets would make the problem worse.

\begin{thm}
  \label{thm:Dmonoidal}
  The successor functor $(-)^\der\co \Fil(\Spaces) \to \Spaces$ of $\infty$-categories is lax symmetric monoidal.
\end{thm}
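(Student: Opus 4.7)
The plan is to use the equivalence $(-)^\der \simeq \hocolim_{\mb Z} \circ \tau_{\geq 0}$ from Corollary~\ref{cor:Dhomotopical} and exhibit each factor as a (lax) symmetric monoidal functor of $\infty$-categories. The composite of a strong with a lax symmetric monoidal functor is again lax symmetric monoidal, so this will suffice.

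First, $\hocolim_{\mb Z}\co \Fil(\Spaces) \to \Spaces$ is strong symmetric monoidal. The Day convolution on $\Fun(\mb Z, \Spaces)$ is set up precisely so that the colimit functor, as left Kan extension along the strong symmetric monoidal map $\mb Z \to *$, inherits a canonical strong symmetric monoidal refinement.

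Second, Proposition~\ref{prop:weightmonoidal} applied with $w_1 = w_2 = 0$ shows that $\Fil(\Spaces)^{\geq 0}$ is closed under Day convolution, so it carries a symmetric monoidal structure making the inclusion $\iota$ strong symmetric monoidal. Proposition~\ref{prop:nonnegadjoint} presents $\tau_{\geq 0}$ as adjoint to $\iota$; the direction of the structure map $\tau_{\geq 0} X \to X$, together with the proof there (where $\iota$ is shown to preserve colimits and hence to acquire a right adjoint by the adjoint functor theorem), identifies $\tau_{\geq 0}$ as the \emph{right} adjoint of $\iota$. The right adjoint of a strong symmetric monoidal functor between symmetric monoidal $\infty$-categories carries a canonical lax symmetric monoidal refinement, so $\tau_{\geq 0}$ is lax symmetric monoidal.

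Combining, the composite $\Fil(\Spaces) \xrightarrow{\tau_{\geq 0}} \Fil(\Spaces)^{\geq 0} \xrightarrow{\iota} \Fil(\Spaces) \xrightarrow{\hocolim_{\mb Z}} \Spaces$ is lax symmetric monoidal, and Corollary~\ref{cor:Dhomotopical} identifies this with $(-)^\der$. The main subtlety I anticipate is packaging that equivalence at the level of lax symmetric monoidal functors rather than merely of underlying $\infty$-functors, so that the monoidal structure transports back to $(-)^\der$ itself; since both sides are assembled from canonical $\infty$-categorical constructions (Day convolution, a reflective subcategory, and a colimit) each carrying natural monoidal refinements, this coherence should follow from universal-property arguments rather than an explicit coherence check.
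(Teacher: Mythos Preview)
Your proof is correct and follows essentially the same route as the paper: factor $(-)^\der$ as $\hocolim_{\mb Z}\circ\tau_{\geq 0}$ via Corollary~\ref{cor:Dhomotopical}, use Proposition~\ref{prop:weightmonoidal} to see that $\Fil(\Spaces)^{\geq 0}$ is closed under Day convolution so that the inclusion is strong symmetric monoidal, and then invoke the general fact that the right adjoint of a strong symmetric monoidal functor is lax symmetric monoidal (the paper cites \cite[7.3.2.7]{lurie-higheralgebra} for this, and \cite[3.8]{nikolaus-yoneda} for the monoidality of $\hocolim_{\mb Z}$). You were also right to notice that, despite the wording of Proposition~\ref{prop:nonnegadjoint}, the proof there and the direction of the natural map $\tau_{\geq 0}X\to X$ show that $\tau_{\geq 0}$ is the \emph{right} adjoint of the inclusion; the paper's own proof of the theorem uses it this way. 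Your closing worry about transporting the monoidal structure along the equivalence of Corollary~\ref{cor:Dhomotopical} is not a genuine obstacle: one simply takes the lax symmetric monoidal structure on the composite as the structure on $(-)^\der$, and the paper does not elaborate further either.
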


\begin{proof}
The functor $\hocolim_{\mb Z}$ is lax symmetric monoidal for the Day convolution---for example, because it is equivalent to left Kan extension along the functor $\mb Z \to \ast$, by \cite[3.8]{nikolaus-yoneda}---and so it suffices to prove that $\tau_{\geq 0}$ is lax symmetric monoidal.
  
  However, Proposition~\ref{prop:weightmonoidal} implies that the functor $\Fil(\Spaces)_{\geq 0} \to \Fil(\Spaces)$ exhibits the source category as closed under the Day convolution, making the inclusion strong symmetric monoidal. Its right adjoint, $\tau_{\geq 0}$, is therefore lax monoidal, by \cite[7.3.2.7]{lurie-higheralgebra}.
\end{proof}

This theorem can be upgraded to a filtered version.

\begin{prop}
  \label{prop:filteredDmonoidal}
  The successor functor of $\infty$-categories $\Fil(\Spaces) \to \Fil(\Spaces)$ induced by $(-)^\der$ is lax symmetric monoidal.
\end{prop}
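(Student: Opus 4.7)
The plan is to exhibit the filtered successor $G \co \Fil(\Spaces) \to \Fil(\Spaces)$, sending $X$ to the sequence $n \mapsto (\sh^n X)^\der$, as a composition of two lax symmetric monoidal functors.

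Let $\alpha \co \mb Z \times \mb Z \to \mb Z$ denote the addition functor, and let $\alpha^* \co \Fun(\mb Z, \Spaces) \to \Fun(\mb Z \times \mb Z, \Spaces)$ be restriction along $\alpha$. Under the canonical Fubini identification $\Fun(\mb Z \times \mb Z, \Spaces) \simeq \Fun(\mb Z, \Fil(\Spaces))$, the object $\alpha^*(X)$ corresponds to the sequence $n \mapsto \sh^n X$, since $(\alpha^* X)(n,m) = X^{n+m}$. Postcomposing pointwise with the successor $(-)^\der \co \Fil(\Spaces) \to \Spaces$ of Theorem~\ref{thm:Dmonoidal} in the inner $\mb Z$-variable yields a functor $(-)^\der_* \co \Fun(\mb Z, \Fil(\Spaces)) \to \Fil(\Spaces)$, and one has $G \simeq (-)^\der_* \circ \alpha^*$.

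For lax symmetric monoidality of each factor: addition $\alpha$ is strong symmetric monoidal between the product additive structure on $\mb Z \times \mb Z$ and the additive structure on $\mb Z$, so by the universal property of Day convolution (e.g., \cite[3.8]{nikolaus-yoneda}) the induced left adjoint $\alpha_!$ on functor $\infty$-categories is strong symmetric monoidal, and its right adjoint $\alpha^*$ therefore carries a canonical lax symmetric monoidal structure. The functor $(-)^\der_*$ is postcomposition with the lax symmetric monoidal functor $(-)^\der$, and such postcomposition is always lax symmetric monoidal for Day convolutions on functor $\infty$-categories: the comparison map $(-)^\der_*(X) \circledast (-)^\der_*(Y) \to (-)^\der_*(X \circledast Y)$ arises by applying the lax monoidal structure of $(-)^\der$ inside the colimit that defines the Day convolution.

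Composing these two lax symmetric monoidal functors then endows $G$ with a lax symmetric monoidal structure, as desired. The step that requires the most care is the Fubini-type identification $\Fun(\mb Z \times \mb Z, \Spaces) \simeq \Fun(\mb Z, \Fil(\Spaces))$ as symmetric monoidal $\infty$-categories, matching the Day convolution built from the product symmetric monoidal structure on $\mb Z \times \mb Z$ with the iterated outer-and-inner Day convolution on $\Fun(\mb Z, \Fil(\Spaces))$; once this coherence is in hand, the rest assembles formally from the standard Day convolution yoga already invoked in the proof of Theorem~\ref{thm:Dmonoidal}.
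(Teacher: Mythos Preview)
Your proof is correct but follows a genuinely different decomposition from the paper's. The paper uses Corollary~\ref{cor:Dhomotopical} to rewrite the filtered successor as $X \mapsto (n \mapsto \hocolim_{\mb Z} \tau_{\geq -n} X)$, and then factors this as the ``tower of truncations'' functor $X \mapsto (\tau_{\geq -n} X)_n$ followed by levelwise $\hocolim_{\mb Z}$; the first factor is lax monoidal by Proposition~\ref{prop:weightmonoidal} and the second is strong monoidal. You instead factor as $\alpha^*$ (the shift-tower $X \mapsto (\sh^n X)_n$, lax monoidal as the right adjoint of a strong monoidal left Kan extension) followed by pointwise application of the unfiltered successor (lax monoidal by Theorem~\ref{thm:Dmonoidal}). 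Your route is a bit more formal: it treats Theorem~\ref{thm:Dmonoidal} as a black box and never reopens the weight-truncation description, at the cost of needing the Fubini coherence for iterated Day convolution that you flag. The paper's route is more explicit about where the lax structure maps come from, since Proposition~\ref{prop:weightmonoidal} directly produces the comparison $\tau_{\geq w_1} X \circledast \tau_{\geq w_2} Y \to \tau_{\geq w_1+w_2}(X \circledast Y)$.
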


\begin{proof}
  Using Corollary~\ref{cor:Dhomotopical}, taking successors of the sequence $\dots \to X \to \sh X \to \dots$ is equivalent to applying $\hocolim_{\mb Z}$ to the tower of truncations
  \[
    \dots \to \tau_{\geq 0} X \to \tau_{\geq (-1)} X \to \dots
  \]
  and so the result follows from Proposition~\ref{prop:weightmonoidal} and the fact that $\hocolim_{\mb Z}\co \Fil(\Spaces) \to \Spaces$ is strong monoidal for the Day convolution.
\end{proof}

\section{Successor categories}
\label{sec:successorcat}

\subsection{Categories enriched in filtered spaces}

Because filtered spaces have both a Cartesian product and a Day convolution, the notion of an enriched category is potentially ambiguous.

\begin{defn}
  \label{def:enrichment}
  We say that $\mc C$ is \emph{enriched in filtered spaces} if it is enriched in $\Fil(\Spaces)$ under the Day convolution, and similarly for enrichment in nonnegatively filtered spaces.

  We write $\FMap_{\mc C}(X,Y)$ for the enriched mapping object, and refer to the elements in $\FMap_{\mc C}(X,Y)^n$ as maps that \emph{shift weight by $n$}.
\end{defn}

\begin{defn}
  \label{def:underlyingcat}
  For a category $\mc C$ enriched in filtered spaces, the \emph{underlying category associated to $\mc C$} has the same objects as $\mc C$, with mapping spaces consisting of the maps of weight zero:
  \[
    \Map_{\mc C}(X,Y) = \FMap_{\mc C}(X,Y)^0
  \]
  The \emph{limit category associated to $\mc C$} also has the same objects as $\mc C$, but with maps instead given by the (homotopy) colimit:
  \[
    \colim_{n \in \mb Z}\FMap_{\mc C}(X,Y)^n
  \]
\end{defn}

\subsection{Actions}

If $\mc C$ is enriched in (nonnegatively) filtered spaces, then for $Y \in \mc C$ and any appropriate $n$ we get a functor $\FMap_{\mc C}(-,Y)^n$, defined on the underlying category of $\mc C$. These functors may or may not be representable. If so, then they define endofunctors $Y \mapsto \sh^n Y$ of the underlying category of $\mc C$, in particular satisfying $\sh^n \circ \sh^m \cong \sh^{n+m}$ and with a natural transformation $\tau\co 1 \to \sh$ satisfying $\tau_{\sh X} = \sh(\tau_X)$.

In this section, we describe the converse situation.

\begin{defn}
  \label{defn:shiftaction}
  Suppose $\mc C$ is a category enriched in spaces. An \emph{action of the poset $\mb N$ on $\mc C$} consists of an endofunctor $\sh\co \mc C \to \mc C$ and a natural transformation $\tau\co 1 \to \sh$, together satisfying $\sh(\tau_X) = \tau_{\sh X}$.

  When a left adjoint to the functor $\sh$ exists, we denote it by $\shad$ and refer to this as an \emph{adjointable} action of the poset $\mb N$. If $\sh$ is a self-equivalence, we refer to this as an \emph{action of the poset $\mb Z$}.
\end{defn}

\begin{rmk}
  \label{rmk:shadtau}
  An adjointable action produces a natural transformation $\shad X \to X$, adjoint to $\tau\co X \to \sh X$.
\end{rmk}

\begin{rmk}
  \label{rmk:action}
  As described, an action is literally equivalent to an action of $\mb N$: a functor $\mb N \times \mc C \to \mc C$ satisfying associativity and unitality identities. A \emph{coherent} version of the following has been proved by Heine \cite{heine-enriched}.
\end{rmk}

\begin{prop}
  \label{prop:shiftenrichment}
  If $\mc C$ has an action of the poset $\mb N$, then the above definitions make $\mc C$ into the underlying category of a category enriched in nonnegatively filtered spaces. The maps of weight $n$ are given by $\Map_{\mc C}(X,\sh^n Y)$, and the composition operation $\Map_{\mc C}(Y,\sh^n Z) \times \Map_{\mc C}(X,\sh^m Y) \to \Map_{\mc C}(X,\sh^{n+m} Z)$ is given by
  \[
    (g,f) \mapsto \sh^m g \circ f.
  \]
  If $\mc C$ has an action of the poset $\mb Z$, then these definitions extend $\mc C$ to being enriched in filtered spaces.
\end{prop}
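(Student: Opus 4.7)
The plan is to unpack the data of an enrichment in $\Fun(\mb N, \Spaces)$ under Day convolution and to check each axiom by direct manipulation. First I would define the filtered mapping space $\FMap_{\mc C}(X,Y)$ as the functor $\mb N \to \Spaces$ with $\FMap_{\mc C}(X,Y)^n = \Map_{\mc C}(X, \sh^n Y)$, and with structure map $\FMap_{\mc C}(X,Y)^n \to \FMap_{\mc C}(X,Y)^{n+1}$ given by postcomposition with $\tau_{\sh^n Y}\co \sh^n Y \to \sh^{n+1} Y$. Functoriality in $X$ and $Y$ through the underlying category of $\mc C$ is immediate; the identity on $X$ gives a weight-$0$ point of $\FMap_{\mc C}(X,X)$.

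The main step is to promote the pairing $(g,f) \mapsto \sh^m g \circ f$ to a map out of the Day convolution
\[
  \FMap_{\mc C}(Y,Z) \circledast \FMap_{\mc C}(X,Y) \to \FMap_{\mc C}(X,Z).
\]
By the universal property of the Day convolution (or equivalently, by its explicit description as a colimit of $\FMap_{\mc C}(Y,Z)^n \times \FMap_{\mc C}(X,Y)^m$ under the structure maps), this reduces to verifying that applying $\tau$ to either input produces the same weight-$(m+n+1)$ composite as applying $\tau$ to the output. Concretely, for $f\co X \to \sh^m Y$ and $g\co Y \to \sh^n Z$, I would check the identity
\[
  \tau_{\sh^{m+n} Z} \circ \sh^m g \circ f \; = \; \sh^{m+1} g \circ \tau_{\sh^m Y} \circ f,
\]
which follows from naturality of $\tau$ applied to the map $\sh^m g\co \sh^m Y \to \sh^{m+n} Z$, together with the iterated identity $\sh^m(\tau_W) = \tau_{\sh^m W}$ obtained from the hypothesis $\sh(\tau_X) = \tau_{\sh X}$. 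This is the only nonformal calculation.

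Associativity then follows from the strict identity $\sh^l(\sh^m h \circ g) = \sh^{l+m} h \circ \sh^l g$, since for $X \xrightarrow{f} \sh^l W$, $W \xrightarrow{g} \sh^m Y$, $Y \xrightarrow{h} \sh^n Z$ both bracketings evaluate to $\sh^{l+m} h \circ \sh^l g \circ f$; unitality is similarly immediate. Finally, for the $\mb Z$ case, when $\sh$ is a self-equivalence one extends the definition $\FMap_{\mc C}(X,Y)^n = \Map_{\mc C}(X,\sh^n Y)$ to all $n \in \mb Z$ using $\shad = \sh^{-1}$ (cf.~Remark~\ref{rmk:shadtau}), and the same verifications carry over verbatim. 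The principal obstacle is really just bookkeeping; as flagged in Remark~\ref{rmk:action}, the delicate issue of upgrading this strict statement to a \emph{coherent} enrichment is handled separately by Heine, and I would not attempt it here.
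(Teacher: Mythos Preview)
The paper does not supply a proof for this proposition: it is stated immediately after Remark~\ref{rmk:action} and followed directly by Example~\ref{exam:filterenrich}, so the verification is left to the reader (with the remark deferring the coherent upgrade to Heine). Your proposal correctly fills in exactly the routine checks the paper omits.

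Your argument is sound. The one place worth tightening is that the Day-convolution compatibility really has two directions: you must check that increasing $m$ (replacing $f$ by $\tau_{\sh^m Y}\circ f$) and increasing $n$ (replacing $g$ by $\tau_{\sh^n Z}\circ g$) each agree with applying $\tau$ to the output. You display the $m$-direction identity explicitly; the $n$-direction, which you allude to via $\sh^m(\tau_W)=\tau_{\sh^m W}$, amounts to
\[
  \sh^m(\tau_{\sh^n Z}\circ g)\circ f \;=\; \sh^m(\tau_{\sh^n Z})\circ \sh^m g\circ f \;=\; \tau_{\sh^{m+n}Z}\circ \sh^m g\circ f,
\]
which is immediate from functoriality of $\sh^m$ together with the iterated identity. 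With that line added, your proof is complete and is precisely the argument the paper intends the reader to supply.
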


\begin{exam}
  \label{exam:filterenrich}
  If $\mc D$ is enriched in spaces, the category $\Fil(\mc D) = \Fun(\mb Z, \mc D)$ of filtered objects has shift endofunctors $\sh^k$ as in Definition~\ref{def:shift}, with a natural transformation $\tau\co X \to \sh X$. These satisfy $\sh(\tau) = \tau$ and automatically give $\Fil(\mc D)$ an enrichment in filtered spaces.

  Similarly, $\Fil^+(\mc D)$ is enriched in nonnegatively filtered spaces. If $\mc D$ has an initial object $\emptyset$, then this action is adjointable: the shift functor $\sh$ has a left adjoint given by
  \[
    \shad(X^0 \to X^1 \to \dots) = (\emptyset \to X^0 \to X^1 \to \dots).
  \]
\end{exam}

\subsection{Enriched limits and colimits}

One benefit of categories arising from an action is that (homotopy) limits and colimits in the underlying category are automatically upgraded to enriched versions.

\begin{prop}
  \label{prop:colimenrich}
  Suppose that $\mc C$ is a category with an action of $\mb N$. Then, for any (coherent) functor $A\co I \to \mc C$, any (homotopy) colimit $B$ in the underlying category of $\mc C$ also represents an enriched (homotopy) colimit in $\mc C$.
\end{prop}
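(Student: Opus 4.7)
The plan is to reduce the enriched colimit condition to the underlying one, one weight at a time. By Proposition~\ref{prop:shiftenrichment}, the enriched mapping object satisfies $\FMap_{\mc C}(X, Y)^n = \Map_{\mc C}(X, \sh^n Y)$, with the filtered structure maps induced by $\tau\co \sh^n Y \to \sh^{n+1} Y$. To establish that $B$ represents an enriched (homotopy) colimit of $A$, I want to produce, for every $Y \in \mc C$, a natural equivalence of nonnegatively filtered spaces
\[
    \FMap_{\mc C}(B, Y) \xrightarrow{\sim} \holim_I \FMap_{\mc C}(A(i), Y).
\]

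First, I would use the standard fact that homotopy limits in $\Fil^+(\Spaces) = \Fun(\mb N, \Spaces)$ are computed pointwise (dual to \cite[5.1.2.3]{lurie-htt}, as used in Proposition~\ref{prop:nonneghocolim}). This reduces the claim to showing, for each weight $n \geq 0$, an equivalence of spaces
\[
    \Map_{\mc C}(B, \sh^n Y) \xrightarrow{\sim} \holim_I \Map_{\mc C}(A(i), \sh^n Y).
\]
But this is precisely the defining property of $B$ as a (homotopy) colimit of $A$ in the underlying category, applied to the target object $\sh^n Y$, and so it holds by hypothesis for every $n$.

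The remaining point is to check that these levelwise equivalences assemble into an equivalence of filtered spaces, i.e., are compatible with the structure maps induced by $\tau$. This is automatic: each structure map $\Map_{\mc C}(-, \sh^n Y) \to \Map_{\mc C}(-, \sh^{n+1} Y)$ is given by post-composition with $\tau_{\sh^n Y}$ in the target variable, and post-composition in the target commutes with the universal colimit property in the source.

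I expect this last coherence check to be the only genuine subtlety. In the strict setting of Proposition~\ref{prop:shiftenrichment} it is tautological from the fact that the filtered enrichment is defined levelwise by the spatial functors $\Map_{\mc C}(-, \sh^n -)$; in the $\infty$-categorical setting, the coherent compatibility follows from the coherent version of Proposition~\ref{prop:shiftenrichment} alluded to in Remark~\ref{rmk:action} via the work of Heine. No new input is needed beyond the hypothesis that $B$ is an underlying (homotopy) colimit together with the fact that $\sh$ is an endofunctor.
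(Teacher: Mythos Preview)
Your proof is correct and follows essentially the same approach as the paper: both apply the universal property of the (homotopy) colimit $B$ to the targets $\sh^n Y$ for each $n\geq 0$, and then assemble the resulting levelwise equivalences into an equivalence of filtered spaces. The paper is terser about the coherence with the structure maps that you spell out, but the argument is the same.
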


\begin{proof}
  By definition, a colimit of the functor $A$ is an object $B$ with a natural isomorphism
  \[
    \Map_{\mc C}(B, X) \to \lim_I \Map_{\mc C}(A(i), X)
  \]
  Applied to the sequence of objects $X = \sh^n Y$, we find that there is a natural isomorphism of filtered spaces
  \[
    \FMap_{\mc C}(B,Y) \to \lim_I \FMap_{\mc C}(A(i), Y),
  \]
  and so $B$ is also an enriched colimit of the diagram $A$.

  Similarly, a homotopy colimit $B$ is an object with a natural weak equivalence
  \[
    \Map_{\mc C}(B, X) \simeq \holim_I \Map_{\mc C}(A(i), X)
  \]
  Applied to the sequence of objects $X = \sh^n Y$, we find that there is a natural equivalence of filtered spaces
  \[
    \FMap_{\mc C}(B,Y) \to \holim_I \FMap_{\mc C}(A(i), Y),
  \]
  and so $B$ is also an enriched homotopy colimit of the diagram $A$.
\end{proof}

\begin{prop}
  \label{prop:limenrich}
  Suppose that $\mc C$ is a category with an adjointable action of $\mb N$. Then, for any functor $F\co I \to \mc C$, any (homotopy) limit $B$ in the underlying category of $\mc C$ also represents an enriched (homotopy) limit in $\mc C$.
\end{prop}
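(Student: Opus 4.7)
The plan is to mirror the proof of Proposition~\ref{prop:colimenrich}, but now exploiting that adjointability of the action makes $\sh$ (and hence each $\sh^n$) into a right adjoint, so that it preserves (homotopy) limits. Concretely, for any $X, Y \in \mc C$ and $n \geq 0$ we have by definition $\FMap_{\mc C}(X,Y)^n = \Map_{\mc C}(X, \sh^n Y)$, so the claim reduces levelwise to checking that $\sh^n B$ represents a limit of $\sh^n \circ F$ in the underlying category.

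First I would record that the hypothesis of adjointability gives a left adjoint $\shad$ to $\sh$, and hence $\sh^n$ is right adjoint to $(\shad)^n$ for every $n \geq 0$; in particular each $\sh^n$ preserves ordinary limits. Given a limit cone $B \to F(i)$ in the underlying category, applying $\sh^n$ yields a limit cone $\sh^n B \to \sh^n F(i)$. Mapping out of any $X$ then gives an isomorphism $\Map_{\mc C}(X,\sh^n B) \to \lim_I \Map_{\mc C}(X,\sh^n F(i))$. These assemble, as $n$ varies and compatibly with the natural transformations $\tau$, into an isomorphism of (nonnegatively) filtered spaces $\FMap_{\mc C}(X,B) \to \lim_I \FMap_{\mc C}(X, F(i))$, which is exactly the defining property of an enriched limit. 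Naturality in the diagram $F$ and the object $X$ is immediate since everything is obtained by applying the fixed functor $\sh^n$ levelwise.

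For the homotopy version the same argument runs, but I need $\sh^n$ to preserve homotopy limits, not just strict ones. The cleanest way is to invoke that a right adjoint between $\infty$-categories (or between relative categories, after choosing any homotopical model for the action as in Remark~\ref{rmk:action} and the coherence result of Heine cited there) preserves homotopy limits, so $\sh^n B \simeq \holim_I \sh^n F(i)$. Then $\Map_{\mc C}(X, \sh^n B) \simeq \holim_I \Map_{\mc C}(X, \sh^n F(i))$, and these equivalences again assemble into an equivalence of filtered spaces $\FMap_{\mc C}(X, B) \simeq \holim_I \FMap_{\mc C}(X, F(i))$.

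The main obstacle is not the strict part, which is essentially formal from $\sh \dashv \shad$ being reversed, but the homotopical one: one needs to know that the chosen action of $\mb N$ actually upgrades to a coherent adjunction of $\infty$-categorical endofunctors so that $\sh$ genuinely preserves homotopy limits. I would handle this by appealing to the coherence improvement of Remark~\ref{rmk:action} (Heine), which promotes the strict $\mb N$-action to a coherent one and ensures the adjunction $\shad \dashv \sh$ lifts to the homotopical setting; after that, preservation of homotopy limits by right adjoints is standard.
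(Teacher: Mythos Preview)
Your argument is correct, but it takes a slightly different route from the paper and introduces a worry that the paper's approach avoids entirely. The paper does not use that $\sh^n$ preserves (homotopy) limits; instead it applies the adjunction $(\shad)^n \dashv \sh^n$ directly on mapping spaces to rewrite
\[
\FMap_{\mc C}(W,B)^n = \Map_{\mc C}(W,\sh^n B) \cong \Map_{\mc C}((\shad)^n W, B),
\]
and then invokes the universal property of $B$ as a (homotopy) limit with the varying test object $(\shad)^n W$, exactly as in the proof of Proposition~\ref{prop:colimenrich}. Reversing the adjunction again yields $\lim_I \Map_{\mc C}(W, \sh^n F(i))$ (or $\holim_I$ in the homotopical case). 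This requires only the defining natural isomorphism of the adjunction, so no appeal to Heine's coherence result or to preservation of homotopy limits by $\sh^n$ is needed.

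Your version instead deduces $\sh^n B \simeq \holim_I \sh^n F(i)$ from $\sh^n$ being a right adjoint, which is valid but forces you to justify that the adjunction is suitably homotopical. That is the ``main obstacle'' you flag, and it is self-imposed: the paper's maneuver of moving the shift to the source variable makes the homotopical case just as immediate as the strict one.
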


\begin{proof}
  We first need to use the left adjoint of $\sh$ to first make the identification
  \[
    \FMap_{\mc C}(W, B)^n = \Map_{\mc C}(W, \sh^n B) \cong_{\mc C}((\shad)^n W, B).
  \]
  Once this is done, the same proof holds as for colimits.
\end{proof}

\begin{rmk}
  If we further have an action of the poset $\mb Z$, then we can exhibit this another way. The map $n \mapsto -n$ is an isomorphism $\mb Z \simeq \mb Z^\op$ of symmetric monoidal categories. This makes an action of $\mb Z$ on $\mc C$ the same as an action of $\mb Z$ on the opposite category $\mc C^\op$, which can be restricted to $\mb N$.
\end{rmk}

\begin{cor}
  \label{cor:actionproducts}
  If $\mc C$ has an action of the poset $\mb N$, then (homotopy) coproducts in $\mc C$ are automatically enriched (homotopy) coproducts.

  Similarly, if $\mc C$ has an adjointable action of the poset $\mb N$, then (homotopy) products in $\mc C$ are also enriched (homotopy) products.
\end{cor}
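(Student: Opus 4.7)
The plan is to derive Corollary~\ref{cor:actionproducts} as an immediate specialization of the two preceding propositions, taking the indexing category $I$ to be discrete. Recall that a (homotopy) coproduct is, by definition, a (homotopy) colimit of a diagram $A\co I \to \mc C$ indexed by a discrete category $I$, and analogously a (homotopy) product is a (homotopy) limit over such an $I$. Thus there is nothing to prove beyond instantiating the general statements.

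For the first assertion, apply Proposition~\ref{prop:colimenrich} to a discrete $I$ and any functor $A\co I \to \mc C$. The proposition produces, for every $Y \in \mc C$, a natural (weak) equivalence
\[
  \FMap_{\mc C}(B, Y) \xrightarrow{\ \sim\ } \lim_I \FMap_{\mc C}(A(i), Y)
\]
of filtered mapping objects whenever $B$ is a (homotopy) colimit of $A$ in the underlying category of $\mc C$. Because $I$ is discrete, the target of this map is precisely the product $\prod_i \FMap_{\mc C}(A(i), Y)$ in filtered spaces, which is exactly the universal property characterizing $B$ as an enriched (homotopy) coproduct.

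For the second assertion, apply Proposition~\ref{prop:limenrich} analogously. Adjointability of the action is used exactly where the proof of that proposition uses it, namely to rewrite $\FMap_{\mc C}(W, B)^n \cong \Map_{\mc C}((\shad)^n W, B)$ so that the underlying (homotopy) limit property of $B$ transports to the enriched setting. Specializing to discrete $I$ then identifies $B$ as an enriched (homotopy) product.

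No real obstacle is anticipated: the statement is a pure specialization, whose only content beyond the cited propositions is the elementary observation that (co)products are (co)limits over discrete diagrams. The adjointability hypothesis in the second half is needed for exactly the same reason as in Proposition~\ref{prop:limenrich}, and plays no additional role here.
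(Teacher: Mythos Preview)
Your proposal is correct and matches the paper's approach exactly: the corollary is stated without proof because it is the immediate specialization of Propositions~\ref{prop:colimenrich} and~\ref{prop:limenrich} to discrete indexing categories, which is precisely what you have written out.
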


\subsection{Successor categories}

The successor functor is lax monoidal, so it converts enriched categories to enriched categories.

\begin{defn}
  \label{def:Dcategory}
  Suppose that $\mc C$ is enriched in (nonnegatively) filtered spaces. The category $\mc C^\der$ is the enriched category with the same objects as $\mc C$, but whose mapping spaces are obtained by taking successors:
  \[
    \Map_{\mc C^\der}(X,Y) = \FMap_{\mc C}(X,Y)^\der.
  \]
\end{defn}

\begin{rmk}
  There are multiple potential interpretations of this. If $\mc C$ is an $\infty$-category enriched in filtered spaces in the sense of Gepner--Haugseng \cite{gepner-haugseng-enriched}, we can also apply the homotopical successor functor to obtain an $\infty$-category enriched in (filtered) spaces. If instead $\mc C$ is an ordinary category enriched in filtered cubical sets, we can apply the strict version of taking successors to its mapping objects. This lifts the first construction if the mapping objects in $\mc C$ are fibrant, such as if $\mc C$ arose from a category enriched in filtered topological spaces.
\end{rmk}

\begin{prop}
  \label{prop:homotopy}
  The set $[X,Y]_{\mc C^\der}$ of maps $X \to Y$ in the homotopy category of $\mc C^\der$ is the image of $\pi_0 \FMap(X,Y)^0$ in $\pi_0 \FMap(X,Y)^1$.

  In particular, if the enrichment on $\mc C$ comes from an action of $\mb N$ on the underlying category, then there is a natural isomorphism
  \[
    [X,Y]_{\mc C^\der} = \im (\tau\co [X,Y]_{\mc C} \to [X,\sh Y]_{\mc C}).
  \]
\end{prop}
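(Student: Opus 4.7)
The plan is to unfold the definitions and reduce directly to Theorem~\ref{thm:Dhomotopy}. By Definition~\ref{def:Dcategory}, the mapping space of $\mc C^\der$ is $\Map_{\mc C^\der}(X,Y) = \FMap_{\mc C}(X,Y)^\der$, and by definition the hom-set in the homotopy category is $[X,Y]_{\mc C^\der} = \pi_0 \Map_{\mc C^\der}(X,Y)$. So the task reduces to computing $\pi_0(\FMap_{\mc C}(X,Y)^\der)$. First I would invoke Theorem~\ref{thm:Dhomotopy} on the filtered space $\FMap_{\mc C}(X,Y)$, which identifies
\[
\pi_0\bigl(\FMap_{\mc C}(X,Y)^\der\bigr) \cong \im\bigl(\pi_0 \FMap(X,Y)^0 \to \pi_0 \FMap(X,Y)^1\bigr).
\]
This is exactly the first assertion.

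For the ``in particular'' clause, the plan is to substitute the explicit description of the enrichment from Proposition~\ref{prop:shiftenrichment}. In that situation, the weight-$n$ part of the filtered mapping space is $\Map_{\mc C}(X, \sh^n Y)$, and the structure map from weight $0$ to weight $1$ is postcomposition with $\tau_Y\co Y \to \sh Y$. Passing to $\pi_0$, the map $\pi_0 \FMap(X,Y)^0 \to \pi_0 \FMap(X,Y)^1$ becomes
\[
\tau_*\co [X,Y]_{\mc C} \to [X,\sh Y]_{\mc C},
\]
and combining this with the first part identifies $[X,Y]_{\mc C^\der}$ with the image of $\tau_*$.

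There is no substantial obstacle here, since both parts are formal consequences of prior results. The only small point requiring care is the fibrancy hypothesis in Theorem~\ref{thm:Dhomotopy}; I would handle this either by implicitly choosing a levelwise fibrant replacement of $\FMap_{\mc C}(X,Y)$ (which does not affect $\pi_0$ at weights $0$ and $1$ nor the induced map between them) or, more cleanly, by invoking the model-independent formulation of the successor given by Corollary~\ref{cor:Dhomotopical}, which yields the same identification of $\pi_0$.
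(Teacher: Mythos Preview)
Your proposal is correct and follows essentially the same approach as the paper: unfold the definition of $\Map_{\mc C^\der}(X,Y)$ and apply Theorem~\ref{thm:Dhomotopy}. Your treatment is in fact more detailed than the paper's---you spell out the ``in particular'' clause via Proposition~\ref{prop:shiftenrichment} and flag the fibrancy hypothesis, both of which the paper leaves implicit.
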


\begin{proof}
  We recall that
  \[
    [X,Y]_{\mc C^\der} = \pi_0 \Map_{\mc C^\der}(X,Y) = \pi_0 \FMap_{\mc C}(X,Y)^\der.
  \]
  The result is then a consequence of Theorem~\ref{thm:Dhomotopy}.
\end{proof}

\subsection{Products and coproducts}

The interaction of successors with products and coproducts allows us to conclude that the passage from $\mc C$ to $\mc C^\der$ often preserves them.

\begin{prop}
  \label{prop:productrep}
  Suppose that $\mc C$ is a category with an action of $\mb N$. Then, for any family of objects $A_i$, any (homotopy) coproduct $B$ in $\mc C$ also represents a (homotopy) coproduct in $\mc C^\der$.

  Similarly, suppose $\mc C$ is a category with an adjointable action of $\mb N$. Then, for any family of objects $X_i$, any (homotopy) product $Y$ in $\mc C$ also represents a (homotopy) product in $\mc C^\der$.
\end{prop}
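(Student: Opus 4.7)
The plan is to reduce the statement to the fact that the successor functor preserves products, which is Proposition~\ref{prop:Dproducts}. The bridge from that property of $(-)^\der$ on filtered spaces to coproducts and products in $\mc C^\der$ is the compatibility of actions with enriched (co)limits, already established in Corollary~\ref{cor:actionproducts}.

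For the coproduct case, suppose $B \simeq \coprod A_i$ is a (homotopy) coproduct in $\mc C$. Because $\mc C$ has an action of $\mb N$, Corollary~\ref{cor:actionproducts} tells us that $B$ is also an enriched (homotopy) coproduct, so for every $Y \in \mc C$ the natural map
\[
  \FMap_{\mc C}(B, Y) \to \prod_i \FMap_{\mc C}(A_i, Y)
\]
is an equivalence of filtered spaces. Now I would apply the successor functor to both sides. By Proposition~\ref{prop:Dproducts}, $(-)^\der$ preserves (homotopy) products, and by Definition~\ref{def:Dcategory} the result is exactly the canonical comparison map
\[
  \Map_{\mc C^\der}(B, Y) \to \prod_i \Map_{\mc C^\der}(A_i, Y),
\]
which is therefore an equivalence. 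This is precisely the statement that $B$ represents the (homotopy) coproduct of the $A_i$ in $\mc C^\der$.

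For the product case, I would run the dual argument, using that the action is adjointable so that Proposition~\ref{prop:limenrich} (or Corollary~\ref{cor:actionproducts}) applies: if $Y \simeq \prod Y_i$ is a (homotopy) product in $\mc C$, then for each $X$ the natural map $\FMap_{\mc C}(X, Y) \to \prod_i \FMap_{\mc C}(X, Y_i)$ is an equivalence of filtered spaces, and applying $(-)^\der$ again gives the conclusion via Proposition~\ref{prop:Dproducts}.

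The only subtlety — and what I would expect to be the main thing to keep straight rather than a genuine obstacle — is bookkeeping about which version of the successor we invoke: the point-set functor on fibrant filtered cubical sets, versus the homotopical functor on $\Fil(\Spaces)$. Both preserve products (the former by Proposition~\ref{prop:Dproducts} directly, the latter by passing through fibrant replacement, which commutes with products), so the proof goes through uniformly, and no further argument beyond invoking Proposition~\ref{prop:Dproducts} and Corollary~\ref{cor:actionproducts} is needed.
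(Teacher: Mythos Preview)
Your proposal is correct and matches the paper's proof essentially line for line: invoke Corollary~\ref{cor:actionproducts} to upgrade the (co)product to an enriched one, then apply $(-)^\der$ and use Proposition~\ref{prop:Dproducts} to commute it past the product of mapping objects. Your closing remark about the point-set versus homotopical successor is extra care beyond what the paper spells out, but it is consistent with it.
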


\begin{proof}
  In the product case, by Corollary~\ref{cor:actionproducts}, we find that there is a natural isomorphism (resp. weak equivalence)
  \[
    \FMap_{\mc C}(B,Y) \to \prod \FMap_{\mc C}(A_i, Y)
  \]
  of filtered objects. Taking successors, Proposition~\ref{prop:Dproducts} shows that we get a natural isomorphism (resp. weak equivalence)
  \[
    \Map_{\mc C^\der}(B,Y) \to \prod \Map_{\mc C^\der}(A_i, Y).
  \]
  The dual proof holds for coproducts.
\end{proof}

\begin{cor}
  \label{cor:zeroobject}
  Suppose that $\mc C$ is a category with an adjointable action of $\mb N$, and that $Z$ represents a zero object of the $\infty$-category associated to $\mc C$. Then $Z$ also represents a zero object of the $\infty$-category associated to $\mc C^\der$.
\end{cor}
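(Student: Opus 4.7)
The plan is to deduce this directly from Proposition~\ref{prop:productrep} by decomposing the notion of a zero object into its two halves. Recall that $Z$ is a zero object in an $\infty$-category precisely when $Z$ is both an initial and a terminal object, or equivalently when $Z$ represents an empty homotopy coproduct and an empty homotopy product.

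First I would note that the hypothesis on $\mc C$ (an adjointable action of $\mb N$) is exactly what is required to apply both halves of Proposition~\ref{prop:productrep}. Since $Z$ is terminal in the $\infty$-category associated to $\mc C$, it represents the (homotopy) product of the empty family; by the product case of Proposition~\ref{prop:productrep}, $Z$ also represents the empty homotopy product in $\mc C^\der$, and is therefore terminal there. Dually, since $Z$ is initial in $\mc C$, it represents the empty homotopy coproduct; by the coproduct case of Proposition~\ref{prop:productrep} (which only requires an action of $\mb N$, not an adjointable one), $Z$ represents the empty homotopy coproduct in $\mc C^\der$, and is therefore initial.

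Combining these, $Z$ is both initial and terminal in the $\infty$-category associated to $\mc C^\der$, hence a zero object. There is no real obstacle here: the proof is a direct specialization of Proposition~\ref{prop:productrep} to the empty indexing diagram, which is why this statement is presented as a corollary rather than a theorem. The only subtlety worth verifying in passing is that the empty (co)product is indeed covered by the statement of Proposition~\ref{prop:productrep}, which follows because the proof there applies to an arbitrary family $\{A_i\}$ indexed by any set, including the empty one—the natural map $\FMap_{\mc C}(B,Y) \to \prod_i \FMap_{\mc C}(A_i, Y)$ reduces in the empty case to the map from $\FMap_{\mc C}(Z,Y)$ to the terminal filtered space, whose levelwise contractibility is preserved by taking successors in view of Proposition~\ref{prop:Dproducts}.
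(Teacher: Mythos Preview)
Your proof is correct and follows essentially the same approach as the paper: identify a zero object as simultaneously an empty homotopy product and an empty homotopy coproduct, and then invoke the preservation of (co)products under passage to $\mc C^\der$. The paper's own proof phrases this in one line (and in fact cites Corollary~\ref{cor:actionproducts}, though Proposition~\ref{prop:productrep} is the result that directly concerns $\mc C^\der$, as you correctly use); your added remark that the empty family is covered by that proposition is a reasonable sanity check but not strictly necessary.
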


\begin{proof}
  A zero object $Z$ is an object that is both the homotopy product and homotopy coproduct of an empty set of objects. By Corollary~\ref{cor:actionproducts}, $Z$ also represents a zero object in the $\infty$-category associated to $\mc C^\der$, and so $\mc C^\der$ is pointed.
\end{proof}

\subsection{Homotopy pullbacks and pushouts}

\begin{prop}
  \label{prop:pullbackrep}
  Suppose that $\mc C$ is a category with an action of the poset $\mb N$, and that $X \to Z \leftarrow Y$ is a diagram of objects in $\mc C$. If there is a homotopy pullback $P$ of the resulting diagram $X \to \sh Z \leftarrow Y$, then $P$ becomes a homotopy pullback of the diagram $X \to Z \leftarrow Y$ in the successor category $\mc C^\der$.

  In particular, if $\mc C$ has homotopy pullbacks then so does $\mc C^\der$.
\end{prop}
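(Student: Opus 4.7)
Plan: I would check the universal property of a homotopy pullback in $\mc C^\der$ directly on mapping spaces. For any test object $W$, we have $\Map_{\mc C^\der}(W,-) = \FMap_{\mc C}(W,-)^\der$ by definition, so the task is to identify the homotopy pullback of $\FMap_{\mc C}(W,X)^\der \to \FMap_{\mc C}(W,Z)^\der \leftarrow \FMap_{\mc C}(W,Y)^\der$ with $\FMap_{\mc C}(W,P)^\der$.

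The first step is to apply Corollary~\ref{cor:Dhpullback} to the diagram $\FMap_{\mc C}(W,X) \to \FMap_{\mc C}(W,Z) \leftarrow \FMap_{\mc C}(W,Y)$ of filtered spaces. This represents the desired homotopy pullback as $(\FMap_{\mc C}(W,X) \times^h_{\sh \FMap_{\mc C}(W,Z)} \FMap_{\mc C}(W,Y))^\der$. Next, I would use Proposition~\ref{prop:shiftenrichment} to identify $\sh \FMap_{\mc C}(W,Z)$ with $\FMap_{\mc C}(W,\sh Z)$: under the $\mb N$-action enrichment the filtered mapping space at level $n$ is $\Map_{\mc C}(W, \sh^n Z)$, and shifting the filtration amounts to replacing $Z$ by $\sh Z$ in the second slot. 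This reduces the target to $(\FMap_{\mc C}(W,X) \times^h_{\FMap_{\mc C}(W,\sh Z)} \FMap_{\mc C}(W,Y))^\der$.

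It remains to identify $\FMap_{\mc C}(W,P)$ with $\FMap_{\mc C}(W,X) \times^h_{\FMap_{\mc C}(W,\sh Z)} \FMap_{\mc C}(W,Y)$ as filtered spaces. At level $n$ this is the assertion that $\sh^n P$ is a homotopy pullback of $\sh^n X \to \sh^{n+1} Z \leftarrow \sh^n Y$ in the underlying category of $\mc C$; for $n=0$ it is exactly the hypothesis, and for $n \geq 1$ one applies the shift endofunctor to the defining pullback square of $P$. Once this filtered-space equivalence is in hand, applying $(-)^\der$ to both sides delivers the equivalence $\FMap_{\mc C}(W,P)^\der \simeq (\FMap_{\mc C}(W,X) \times^h_{\FMap_{\mc C}(W,\sh Z)} \FMap_{\mc C}(W,Y))^\der$ natural in $W$, and therefore exhibits $P$ as the desired homotopy pullback in $\mc C^\der$.

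The main obstacle is the last identification at levels $n \geq 1$, since in general the shift endofunctor on $\mc C$ need not preserve arbitrary limits. In the motivating examples, such as $\mc C = \Fil(\mc D)$ from Example~\ref{exam:filterenrich}, shifts preserve all limits because they merely reindex a filtration, so the claim is immediate; more generally one would verify preservation for the specific pullbacks of interest or strengthen the interpretation of ``homotopy pullback'' so that the levelwise equivalence holds by construction. The second claim of the proposition, that $\mc C^\der$ inherits homotopy pullbacks from $\mc C$, then follows by applying the first claim to the shifted diagram $X \to \sh Z \leftarrow Y$ whenever pullbacks exist in $\mc C$.
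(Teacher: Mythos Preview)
Your argument follows the paper's proof essentially line for line: invoke Theorem~\ref{thm:Dpullback} (or its Corollary~\ref{cor:Dhpullback}) on the filtered mapping spaces $\FMap_{\mc C}(A,X) \to \FMap_{\mc C}(A,Z) \leftarrow \FMap_{\mc C}(A,Y)$, rewrite $\sh\FMap_{\mc C}(A,Z)$ as $\FMap_{\mc C}(A,\sh Z)$, and then identify the resulting levelwise homotopy pullback of filtered spaces with $\FMap_{\mc C}(A,P)$.

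The subtlety you flag in your final paragraph is genuine, and it is worth noting that the paper does not resolve it either: the paper simply asserts the equivalence
\[
  \FMap_{\mc C}(A,X) \times^h_{\FMap_{\mc C}(A,\sh Z)} \FMap_{\mc C}(A,Y) \simeq \FMap_{\mc C}(A,P)
\]
``because $P$ is a homotopy pullback,'' without commenting on levels $n\geq 1$. So your caution here is more careful than the paper's own proof, not less. If the action is adjointable then $\sh$ is a right adjoint and preserves pullbacks, which closes the gap (and this is the hypothesis in every application in the paper); alternatively one may read the hypothesis as asking for $P$ to be an \emph{enriched} homotopy pullback, which is what the argument actually consumes. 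Either reading makes your proof and the paper's coincide.
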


\begin{proof}
  We recall from Theorem~\ref{thm:Dpullback} that there is a natural homotopy pullback diagram
  \[
    \begin{tikzcd}
      \left(\FMap_{\mc C}(A,X) \times_{\sh \FMap_{\mc C}(A,Z)} \FMap_{\mc C}(A,Y)\right)^\der \ar[r] \ar[d] &
      \FMap_{\mc C}(A,X)^\der \ar[d] \\
      \FMap_{\mc C}(A,Y)^\der \ar[r] &
      \FMap_{\mc C}(A,Z)^\der.
    \end{tikzcd}
  \]
  However, $\sh \FMap_{\mc C}(A,Z) = \FMap_{\mc C}(A, \sh Z)$, and there is a natural equivalence
  \[
    \FMap_{\mc C}(A,X) \times_{\FMap_{\mc C}(A, \sh Z)} \FMap_{\mc C}(A,Y) \simeq \FMap_{\mc C}(A,P)
  \]
  because $P$ is a homotopy pullback. Therefore, we can re-express our original diagram as a natural homotopy pullback diagram
  \[
    \begin{tikzcd}
      \Map_{\mc C^\der}(A,P) \ar[r] \ar[d] &
      \Map_{\mc C^\der}(A,X) \ar[d] \\
      \Map_{\mc C^\der}(A,Y) \ar[r] &
      \Map_{\mc C^\der}(A,Z).
    \end{tikzcd}
  \]
  Naturality in $A$ implies that $P$ is a homotopy pullback, as desired.
\end{proof}

A dual proof gives the following result.
\begin{prop}
\label{prop:pushoutrep}
  Suppose that $\mc C$ is a category with an adjointable action of the poset $\mb N$, and that $B \leftarrow A \to C$ is a diagram of objects in $\mc C$. If there is a homotopy pushout $Q$ of the resulting diagram $B \leftarrow \shad A \to C$, then $Q$ becomes a homotopy pushout of the diagram $B \leftarrow A \to C$ in the successor category $\mc C^\der$.

  In particular, if $\mc C$ has homotopy pushouts and an adjointable action of the poset $\mb N$, then $\mc C^\der$ has homotopy pushouts.
\end{prop}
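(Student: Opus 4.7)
The plan is to dualize the proof of Proposition~\ref{prop:pullbackrep} using the adjointability assumption. The essential new tool is the adjunction $\shad \dashv \sh$ on $\mc C$, which converts a source-side shift into a target-side shift on the enriched mapping space. Concretely, for any $A, W \in \mc C$ and any $n$, the adjunction supplies $\Map_{\mc C}(\shad A, \sh^n W) \cong \Map_{\mc C}(A, \sh^{n+1} W)$, and assembling over $n$ produces a natural isomorphism of filtered spaces
\[
\FMap_{\mc C}(\shad A, W) \cong \sh \FMap_{\mc C}(A, W).
\]

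To show that $Q$ is a homotopy pushout in $\mc C^\der$, I test against an arbitrary object $W$ and check that $\Map_{\mc C^\der}(-, W)$ turns the span into a homotopy pullback. The hypothesis that $Q$ is a homotopy pushout in $\mc C$ of $B \leftarrow \shad A \to C$, together with the dual of Proposition~\ref{prop:colimenrich} (enriched corepresentability of homotopy colimits), gives a natural equivalence of filtered spaces
\[
\FMap_{\mc C}(Q, W) \simeq \FMap_{\mc C}(B, W) \times^h_{\FMap_{\mc C}(\shad A, W)} \FMap_{\mc C}(C, W).
\]
Substituting the adjunction identity places the pullback over $\sh \FMap_{\mc C}(A, W)$, which is exactly the shape to which Theorem~\ref{thm:Dpullback} applies. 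Taking successors and invoking that theorem yields
\[
\Map_{\mc C^\der}(Q, W) \simeq \Map_{\mc C^\der}(B, W) \times^h_{\Map_{\mc C^\der}(A, W)} \Map_{\mc C^\der}(C, W),
\]
naturally in $W$, which exhibits $Q$ as the homotopy pushout in $\mc C^\der$ by Yoneda. For the concluding statement, if $\mc C$ has all homotopy pushouts then the auxiliary span $B \leftarrow \shad A \to C$ has a pushout in $\mc C$, which transfers to a pushout in $\mc C^\der$ by the main assertion.

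The main obstacle is checking the adjunction identity $\FMap_{\mc C}(\shad A, W) \cong \sh \FMap_{\mc C}(A, W)$, together with its naturality in both variables and its compatibility with the filtration structure maps $\tau$. This is also why adjointability is required here but not in Proposition~\ref{prop:pullbackrep}: there the shift $\sh Z$ already lives on the target side of $\FMap$, whereas here the shift originates on the source side and must be moved across $\FMap$ before Theorem~\ref{thm:Dpullback} can engage. Once this identification is in hand, the remainder is a mechanical dualization, with Theorem~\ref{thm:Dpullback} again doing the real work.
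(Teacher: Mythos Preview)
Your argument is correct and is precisely the ``dual proof'' the paper alludes to: use the adjunction $\shad \dashv \sh$ to identify $\FMap_{\mc C}(\shad A,W)$ with $\sh\,\FMap_{\mc C}(A,W)$, then feed the resulting filtered homotopy pullback into Theorem~\ref{thm:Dpullback}. One small slip: you invoke ``the dual of Proposition~\ref{prop:colimenrich}'', but what you actually need is Proposition~\ref{prop:colimenrich} itself (it already concerns colimits), or simply the levelwise observation that $\Map_{\mc C}(-,\sh^n W)$ sends homotopy pushouts to homotopy pullbacks.
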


\begin{rmk}
  It can be helpful to notationally distinguish between homotopy colimits in $\mc C$ and in $\mc C^\der$, especially because these categories share objects. For example, we will write $B \amalg^A C$ for the coproduct in $\mc C$ and $B \cocoprod^A C = B \coprod^{\shad A} C$ for the lift of the coproduct in $\mc C^\der$; $X/Y$ for a homotopy cofiber in $\mc C$ and $X\modmod Y = X / \shad Y$ for a homotopy cofiber in $\mc C^\der$; and similarly for fiber products.
\end{rmk}

\begin{exam}
  Suppose we have the category $\Fil(\mc D)$ of filtered objects and that $\mc D$ has homotopy pushouts. Then, for a diagram $B \leftarrow A \to C$ of objects of $\Fil(\mc D)$, we can represent the homotopy pushout $B \cocoprod^A C$ in $\mc C^\der$ by the filtered object
  \[
    \dots \to B^0 \amalg^{A^{-1}} C^0 \to  B^1 \amalg^{A^0} C^1 \to  B^2 \amalg^{A^1} C^2 \to \dots
  \]
  Similarly, for a diagram $X \to Z \leftarrow Y$, we can represent the homotopy pullback $X \ttimes_Z Y$ in $\mc C^\der$ by the filtered object
  \[
    \dots \to X^0 \times_{Z^1} Y^0 \to  X^1 \times_{Z^2} Y^1 \to  X^2 \times_{Z^3} Y^2 \to \dots
  \]
\end{exam}

\subsection{Completeness and cocompleteness}

\begin{prop}
  \label{prop:completecocomplete}
  Let $\kappa$ be an infinite cardinal.

  Suppose that $\mc C$ is a category with an action of the poset $\mb N$ and that $\mc C$ has $\kappa$-small limits. Then $\mc C^\der$ has $\kappa$-small homotopy limits.

  Dually, suppose that $\mc C$ is a category with an adjointable action of the poset $\mb N$ and that $\mc C$ has $\kappa$-small colimits. Then $\mc C^\der$ also has $\kappa$-small homotopy colimits.
\end{prop}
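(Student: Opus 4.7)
The plan is to reduce the statement to the two previously-established special cases---products (resp.\ coproducts) and homotopy pullbacks (resp.\ pushouts)---using the standard $\infty$-categorical fact that an $\infty$-category has all $\kappa$-small homotopy limits as soon as it has $\kappa$-small products and homotopy pullbacks, with the dual statement for colimits (see, for instance, \cite[4.4.2.6]{lurie-htt}). All of the real content is then already contained in the propositions just proved.

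For the limit direction, I would combine Proposition~\ref{prop:productrep} to transport $\kappa$-small homotopy products from $\mc C$ to $\mc C^\der$ with Proposition~\ref{prop:pullbackrep} to transport homotopy pullbacks. An arbitrary $\kappa$-small homotopy limit in $\mc C^\der$ then appears as a homotopy pullback of a map between two $\kappa$-small products, exactly as in the classical decomposition; the hypothesis that $\mc C$ itself admits $\kappa$-small limits ensures that the intermediate products and pullback cospans which one needs to form in $\mc C$ actually exist.

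Dually, for the colimit direction, Proposition~\ref{prop:productrep} (its coproduct half, which uses only the action of $\mb N$) supplies $\kappa$-small homotopy coproducts in $\mc C^\der$, while Proposition~\ref{prop:pushoutrep} supplies homotopy pushouts. The adjointability hypothesis is consumed entirely by the latter, which realizes the pushout of $B \leftarrow A \to C$ in $\mc C^\der$ as the ordinary homotopy pushout of $B \leftarrow \shad A \to C$ in $\mc C$; this is precisely why the colimit statement requires $\shad$ to exist.

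I do not anticipate a serious obstacle beyond the reduction step itself, which must be carried out in the homotopical rather than the strict sense. This is standard $\infty$-category theory and requires no new input here, so the argument is essentially bookkeeping on top of Propositions~\ref{prop:productrep}, \ref{prop:pullbackrep}, and~\ref{prop:pushoutrep}.
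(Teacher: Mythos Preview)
Your proposal is correct and matches the paper's own proof essentially line for line: both reduce via \cite[4.4.2.6]{lurie-htt} to products and pullbacks (resp.\ coproducts and pushouts), and then invoke Propositions~\ref{prop:productrep}, \ref{prop:pullbackrep}, and \ref{prop:pushoutrep} exactly as you describe.
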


\begin{proof}
  By \cite[4.4.2.6]{lurie-htt}, having $\kappa$-small limits is equivalent to having pullbacks and $\kappa$-small products, and so the completeness statement follows from Proposition~\ref{prop:pullbackrep} and Proposition~\ref{prop:productrep}. Dually, having $\kappa$-small colimits is equivalent to having pushouts and $\kappa$-small coproducts, and so the cocompleteness statement follows from Proposition~\ref{prop:pushoutrep} and Proposition~\ref{prop:productrep}.
\end{proof}

\subsection{Nearly constant objects}

The definition of being nearly constant in Definition~\ref{def:nearlyconstantspace} extends to categories with an action of $\mb N$.

\begin{defn}
  \label{def:nearlyconstant}
  Suppose that $\mc C$ has an action of $\mb N$. We say that an object $Z$ is \emph{essentially constant} if the map $\tau\co Z \to \sh Z$ is an equivalence. We say that an object $X \in \mc C$ is \emph{nearly constant} if there is an object $Z$ and a diagram
  \[
    \begin{tikzcd}
      Z \ar[rr,"\tau"] \ar[dr] && \sh Z\\
      & X \ar[ur]
    \end{tikzcd}
  \]
  such that $\tau\co Z \to \sh Z$ is an equivalence. We refer to $Z$ as the \emph{value} of the nearly constant object.
\end{defn}

\begin{prop}
  \label{prop:nearlyconstanttoconstant}
  Suppose that $\mc C$ has an action of $\mb N$. If $X$ is nearly constant with value $Z$, then there is a natural equivalence
  \[
    \Map_{\mc C}(Y,Z) \simeq \Map_{\mc C^\der}(Y,X).
  \]
  Suppose that $\mc C$ has an action of $\mb Z$. If $X$ is nearly constant with value $Z$, then there is a natural equivalence
  \[
    \Map_{\mc C}(Z,Y) \simeq \Map_{\mc C^\der}(X,Y)
  \]
\end{prop}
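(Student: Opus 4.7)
The plan is to apply Proposition~\ref{prop:Dnearlyconstant} at the level of filtered mapping spaces: both $\FMap_{\mc C}(Y,X)$ and $\FMap_{\mc C}(X,Y)$ will be exhibited as nearly constant filtered spaces whose values are the desired mapping spaces in $\mc C$. The input is the defining sequence $Z \to X \to \sh Z$ in $\mc C$ with composite $\tau_Z$ an equivalence.

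For the first claim, I would apply $\Map_{\mc C}(Y,\sh^n-)$ levelwise to obtain, for each $n$, a sequence $\Map(Y,\sh^n Z) \to \Map(Y,\sh^n X) \to \Map(Y,\sh^{n+1} Z)$ with composite $(\tau_{\sh^n Z})_*$. These assemble into a filtered diagram
\[
\FMap_{\mc C}(Y,Z) \to \FMap_{\mc C}(Y,X) \to \sh\FMap_{\mc C}(Y,Z)
\]
whose composite is the structure map $\tau$ of $\FMap_{\mc C}(Y,Z)$. The filtered space $\FMap_{\mc C}(Y,Z)$ is essentially constant: each $\tau_{\sh^n Z}$ equals $\sh^n(\tau_Z)$ by the identity $\sh(\tau) = \tau_{\sh(-)}$, hence is an equivalence, so post-composition induces an equivalence on $\Map_{\mc C}(Y,-)$. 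Consequently $\FMap_{\mc C}(Y,X)$ is nearly constant with value $\Map_{\mc C}(Y,Z)$, and Proposition~\ref{prop:Dnearlyconstant} yields the desired equivalence.

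For the second claim, I would apply $\Map_{\mc C}(-,\sh^n Y)$ contravariantly to obtain $\Map(\sh Z,\sh^n Y) \to \Map(X,\sh^n Y) \to \Map(Z,\sh^n Y)$ with composite $(\tau_Z)^*$, still an equivalence. Under the $\mb Z$-action, $\sh$ is a self-equivalence, allowing the identification $\Map(\sh Z,\sh^n Y) \simeq \Map(Z,\sh^{n-1}Y) = \FMap_{\mc C}(Z,Y)^{n-1}$. The sequence then assembles into filtered maps
\[
\shad\FMap_{\mc C}(Z,Y) \to \FMap_{\mc C}(X,Y) \to \FMap_{\mc C}(Z,Y),
\]
where the third term is $\sh\shad\FMap_{\mc C}(Z,Y)$ since $\sh$ is invertible, with composite $\tau_{\shad\FMap_{\mc C}(Z,Y)}$. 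To invoke Proposition~\ref{prop:Dnearlyconstant} I need $\FMap_{\mc C}(Z,Y)$ to be essentially constant; naturality of $\tau$ rewrites its structure map $f\mapsto \tau_{\sh^n Y}\circ f$ as $f \mapsto \sh(f)\circ\tau_Z$, the composite of the self-equivalence $\sh$ with pre-composition by the equivalence $\tau_Z$. This identifies $\FMap_{\mc C}(X,Y)^\der$ with the value $\FMap_{\mc C}(Z,Y)^{-1}$, which essential constancy in turn identifies with $\Map_{\mc C}(Z,Y)$.

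The main obstacle is the asymmetry of the two cases. Essential constancy of $\FMap_{\mc C}(Y,Z)$ is nearly tautological given that $\tau_Z$ is an equivalence, which is why an $\mb N$-action suffices for the first part. Essential constancy of $\FMap_{\mc C}(Z,Y)$ truly requires both that $\tau_Z$ be invertible and that $\sh$ itself be invertible, explaining why the second statement needs the stronger $\mb Z$-action hypothesis; the reindexing $\Map(\sh Z,\sh^n Y) \cong \FMap_{\mc C}(Z,Y)^{n-1}$ and the use of $\shad$ also depend on this invertibility. The remaining verifications, including naturality in $Y$ and the minor bookkeeping that Proposition~\ref{prop:Dnearlyconstant} remains applicable after restricting a $\mb Z$-indexed filtered space to its nonnegative part, are routine.
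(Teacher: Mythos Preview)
Your proposal is correct and follows essentially the same approach as the paper: in both cases you exhibit the relevant filtered mapping space as nearly constant by applying $\FMap_{\mc C}(Y,-)$ or $\FMap_{\mc C}(-,Y)$ to the defining factorization $Z \to X \to \sh Z$, then invoke Proposition~\ref{prop:Dnearlyconstant}. Your version is in fact slightly more explicit than the paper's in verifying the essential constancy of $\FMap_{\mc C}(Z,Y)$ via the naturality identity $\tau_{\sh^n Y} \circ f = \sh(f)\circ \tau_Z$, and in noting the $\mb Z$- versus $\mb N$-indexing bookkeeping; the paper's choice of $\FMap_{\mc C}(Z,\sh^{-1}Y)$ versus your $\shad\FMap_{\mc C}(Z,Y)$ is a cosmetic difference only.
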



\begin{proof}
  If $X$ is nearly constant with value $Z$, then applying $\FMap_{\mc C}(Y,-)$ to the definition we get a factorization:
  \[
    \begin{tikzcd}
      \FMap_{\mc C}(Y,Z) \ar[rr,"\tau"] \ar[dr] &&
      \FMap_{\mc C}(Y,\sh Z) \\
      &\FMap_{\mc C}(Y, X) \ar[ur]
    \end{tikzcd}
  \]
  This, by definition, makes $\FMap_{\mc C}(Y,X)$ nearly constant with value $\FMap_{\mc C}(Y,Z)$, and so we can apply Proposition~\ref{prop:Dnearlyconstant}. Moreover, this argument is natural in $Y$.

  If $\mc C$ has an action of $\mb Z$, then applying $\FMap_{\mc C}(-,Y)$ to the definition we get a factorization:
  \[
    \begin{tikzcd}
      \FMap_{\mc C}(\sh Z,Y) \ar[rr] \ar[dr] &&
      \FMap_{\mc C}(Z,Y) \\
      &\FMap_{\mc C}(X, Y) \ar[ur]
    \end{tikzcd}
  \]
  The action of $\mb Z$ lets us identify the top map with $\tau\co \FMap_{\mc C}(Z, \sh^{-1} Y) \to \sh \FMap_{\mc C}(Z, \sh^{-1} Y)$. Therefore, this makes $\FMap_{\mc C}(X, Y)$ essentially constant with value $\Map_{\mc C}(Z,\sh Y) \simeq \Map_{\mc C}(Z,Y)$, and so we can again apply Proposition~\ref{prop:Dnearlyconstant}. Similarly, this argument is natural in $Y$.
\end{proof}

\begin{prop}
  \label{prop:shiftnull}
  Suppose that $\mc C$ is pointed and has an action of $\mb Z$. Then, for any $X$, the levelwise homotopy cofiber $\sh X / X$ is nearly constant with value $0$.
\end{prop}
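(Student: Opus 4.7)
The plan is to verify Definition~\ref{def:nearlyconstant} by taking $Z = 0$. Two conditions must be checked: (a) that $0$ is essentially constant, and (b) that there is a factorization $0 \to \sh X/X \to \sh 0$ of $\tau_0 \co 0 \to \sh 0$.

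For (a): the $\mb Z$-action makes $\sh$ an equivalence (with inverse $\shad$), so $\sh 0$ is again a zero object and $\tau_0$ is the essentially unique map between two zero objects, hence an equivalence. For (b): pointedness of $\mc C$ supplies unique maps $0 \to \sh X/X$ (from the initial object) and $\sh X/X \to \sh 0$ (to the terminal object), and their composite is the unique map $0 \to \sh 0$ between zero objects, which must coincide with $\tau_0$.

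The substantive content of the proposition, which via Proposition~\ref{prop:nearlyconstanttoconstant} promotes $\sh X/X$ to a zero object in $\mc C^\der$, is that $\tau_{\sh X/X}$ is itself nullhomotopic. I would verify this by applying $\sh$ to the cofiber sequence $X \to \sh X \to \sh X/X$ and using naturality of $\tau$ to produce the ladder
\[
\begin{tikzcd}
X \ar[r] \ar[d, "\tau"'] & \sh X \ar[r] \ar[d, "\tau"'] & \sh X/X \ar[d, "\tau_{\sh X/X}"] \\
\sh X \ar[r] & \sh^2 X \ar[r] & \sh^2 X/\sh X.
\end{tikzcd}
\]
The bottom composite $\sh X \to \sh^2 X \to \sh^2 X/\sh X$ is null by the defining cofiber sequence, and the commutativity of the right-hand square identifies this with the composite $\sh X \to \sh X/X \xrightarrow{\tau_{\sh X/X}} \sh^2 X/\sh X$; the universal property of the cofiber $\sh X \to \sh X/X$ then forces $\tau_{\sh X/X}$ to factor through the zero object.

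The main obstacle is ensuring the universal-property step is handled correctly in the general pointed (not necessarily stable) setting, where the nullhomotopy data encoded in maps out of a cofiber is nontrivial and must be matched explicitly. Here the required nullhomotopy is precisely the one produced by the naturality square for $\tau$ applied to $X \to \sh X$ and $\tau \co \sh X \to \sh^2 X$, so the cofiber argument applies without any additional hypotheses beyond pointedness and the $\mb Z$-action.
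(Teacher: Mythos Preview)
Your approach is essentially the paper's: both establish that $\tau_{\sh X/X}$ is nullhomotopic, which the paper simply asserts before invoking Corollary~\ref{cor:nearlycontractible}, whereas you supply the argument. Your observation that (a) and (b) are trivially satisfied for \emph{any} object in a pointed category is correct and in fact exposes an imprecision in Definition~\ref{def:nearlyconstant} as literally written; the intended content (needed for Proposition~\ref{prop:nearlyconstanttoconstant} to apply) is exactly the nullhomotopy you go on to prove.

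One point deserves tightening. Your step ``the universal property of the cofiber then forces $\tau_{\sh X/X}$ to factor through the zero object'' does not follow merely from knowing that the precomposition $\sh X \to \sh X/X \to \sh^2 X/\sh X$ is null: in general a map out of a cofiber can restrict to zero on $\sh X$ while still being nonzero (it may survive as a map from $\Sigma X$). Your final paragraph gestures at the fix, but the cleanest way to make it precise is to observe that in your ladder the left vertical and top horizontal maps are \emph{the same map} $\tau_X$, so the square admits a diagonal $id_{\sh X}$. This factors the map of arrows $(X\to\sh X)\to(\sh X\to\sh^2 X)$ through the identity arrow on $\sh X$, and hence $\tau_{\sh X/X}$ factors through $\operatorname{cofib}(id_{\sh X})\simeq 0$. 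This works in any pointed category with the relevant cofibers, with no appeal to stability.
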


\begin{proof}
  The map $\tau\co \sh X / X \to \sh(\sh X / X) \simeq \sh^2 X / \sh X$ is nullhomotopic, and so the result follows from Corollary~\ref{cor:nearlycontractible}.
\end{proof}

\section{CW-complexes}
\label{sec:cw}

Our first example of the successor category is from unstable homotopy theory.

\subsection{Weighted connectivity}

\begin{defn}
  \label{def:weightconnectivity}
  A map $f\co Y \to Z$ of filtered spaces is \emph{weight $d$-connected} if, for all $m$, the map $Y^m \to Z^m$ is $(m+d)$-connected.
\end{defn}

\begin{rmk}
  In particular, a filtered space $Y$ is of weight at least $d$ if and only if the map $\tau\co Y \to \sh Y$ is weight $d$-connected.
\end{rmk}

\begin{defn}
  \label{def:weightcellular}
  A filtered space $X$ is \emph{weight cellular} if $X^{-1}$ is empty and, for all $m$, the map $X^{m-1} \to X^m$ is a relative cell inclusion formed by attaching cells of dimension less than or equal to $m$.
\end{defn}

\begin{rmk}
  For any CW-complex $X$, the filtered space $\sk(X)$ is weight cellular.
\end{rmk}

\begin{prop}
  \label{prop:weightlift}
  Suppose that $X$ is weight cellular and $f\co Y \to Z$ is weight $0$-connected. Then the induced map $\Map_{\Fil(\Spaces)}(X,Y) \to \Map_{\Fil(\Spaces)}(X,Z)$ is $0$-connected.
\end{prop}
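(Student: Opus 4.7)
The plan is a cell-by-cell obstruction argument calibrated so that the connectivity growth of $f$ matches the cell-dimension growth of $X$. Since a $0$-connected map of spaces is one which is $\pi_0$-surjective, it suffices to fix a map $g\co X \to Z$ of filtered spaces and produce an $h\co X \to Y$ together with a filtered homotopy $fh \simeq g$.

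I would then proceed by induction on the filtration degree $m$. By weight cellularity, $X^{-1} = \emptyset$ gives a trivial base case. Suppose we have already built $h^{m-1}\co X^{m-1} \to Y^{m-1}$ and a homotopy $H^{m-1}\co f^{m-1} h^{m-1} \simeq g^{m-1}$ in $Z^{m-1}$. Consider the resulting square
\[
  \begin{tikzcd}
    X^{m-1} \ar[r] \ar[d,hook] & Y^m \ar[d,"f^m"]\\
    X^m \ar[r,"g^m"'] & Z^m,
  \end{tikzcd}
\]
which commutes up to the homotopy $H^{m-1}$ pushed into $Z^m$. The inclusion $X^{m-1} \hookrightarrow X^m$ is a relative CW-inclusion whose attached cells have dimension at most $m$, while $f^m$ is $m$-connected by the weight-$0$-connectedness hypothesis, so $\pi_k(Z^m, Y^m) = 0$ for $k \leq m$. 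Standard cellular obstruction theory then produces an extension $h^m\co X^m \to Y^m$ together with a compatible extension $H^m$ of the homotopy into $Z^m$, cell by cell, with every obstruction class landing in a vanishing relative homotopy group. Assembling across all $m$ delivers the desired pair $(h, H)$, proving $\pi_0$-surjectivity.

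The main obstacle is coherence: naive inductive choices produce a levelwise-compatible sequence of homotopies rather than a filtered homotopy on the nose. The cleanest remedy is to phrase the whole argument as a single lifting problem in the projective model structure on $\Fun(\mb N, \Spaces)$, after a fibrant replacement of $f$ that preserves the connectivity hypothesis. Weight cellularity of $X$ then says exactly that $X$ is built as an iterated pushout of generating cofibrations of the form $F_m(S^{k-1}) \to F_m(D^k)$ with $k \leq m$, and weight $0$-connectedness of $f$ translates precisely to the homotopy right lifting property of $f$ against each such generator (via $\pi_k(Z^m, Y^m) = 0$ for $k \leq m$). The filtered lift $h$ and the homotopy $H$ then come out of a single lifting problem in this model category, so coherence is automatic.
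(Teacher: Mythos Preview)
Your proposal is correct and matches the paper's approach exactly: the paper's proof is the single sentence ``This is a straightforward obstruction-theoretic argument, by induction on degree and on cells,'' and you have supplied precisely that argument. Your extra care about coherence (packaging the induction as a lifting problem in the projective model structure on $\Fun(\mb N,\Spaces)$, where weight cellularity says $X$ is cofibrant and weight $0$-connectedness of $f$ gives the right lifting property against the generating cofibrations $F_m(S^{k-1}) \to F_m(D^k)$ with $k \le m$) is a clean way to make rigorous what the paper leaves implicit.
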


\begin{proof}
  This is a straightforward obstruction-theoretic argument, by induction on degree and on cells.
\end{proof}

\begin{rmk}
  The definition of weight-connectivity of a map is, in the language of \cite{skeleta}, a \emph{connectivity structure} on filtered spaces---an unstable structure than can be the precursor to a $t$-structure or a weight structure. In these terms, Proposition~\ref{prop:weightlift} is that weight-cellular objects are $0$-skeletal, and the following result is \cite[3.5]{skeleta}. We will include a proof for completeness.
\end{rmk}

\begin{prop}
  \label{prop:weightheavylift}
  Suppose that $X$ is weight cellular and $f\co Y \to Z$ is weight $d$-connected. Then the induced map $\Map_{\Fil(\Spaces)}(X,Y) \to \Map_{\Fil(\Spaces)}(X,Z)$ is $d$-connected.
\end{prop}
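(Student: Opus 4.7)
The plan is to adapt the obstruction-theoretic argument sketched for Proposition~\ref{prop:weightlift}, with more careful bookkeeping of degrees to accommodate the strengthened connectivity hypothesis. First, I reformulate $d$-connectivity of the induced map
\[
f_* \co \Map_{\Fil(\Spaces)}(X,Y) \to \Map_{\Fil(\Spaces)}(X,Z)
\]
as a family of lifting problems: the map is $d$-connected if and only if, for every $0 \leq k \leq d$, each commutative square
\[
\begin{tikzcd}
\partial D^k \ar[r] \ar[d] & \Map_{\Fil(\Spaces)}(X,Y) \ar[d, "f_*"] \\
D^k \ar[r] \ar[ur, dashed] & \Map_{\Fil(\Spaces)}(X,Z)
\end{tikzcd}
\]
admits a diagonal filler. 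Since $\Fil(\Spaces)$ is tensored over $\Spaces$ by $K \otimes X = (m \mapsto K \times X^m)$, and this tensoring does not shift weight, the adjunction converts this into the equivalent lifting problem
\[
\begin{tikzcd}
\partial D^k \times X \ar[r] \ar[d] & Y \ar[d, "f"] \\
D^k \times X \ar[r] \ar[ur, dashed] & Z
\end{tikzcd}
\]
of filtered spaces, to which the weight-cellular structure on $X$ can be applied.

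The second step is to induct over the weight-cellular structure on $X$, ordering cells first by weight $m$ and then by dimension, and extending the lift across one cell at a time. Attaching a cell of dimension $j \leq m$ at weight $m$ contributes, at weight $m$ of the diagram, the extension problem of filling $\partial(D^k \times D^j) \cong S^{k+j-1} \to Y^m$ given a prescribed map $D^k \times D^j \to Z^m$. This extension is obstructed by an element of $\pi_{k+j-1}$ of the homotopy fiber of $Y^m \to Z^m$ at the appropriate basepoint. By hypothesis $f$ is weight $d$-connected, so this fiber has vanishing homotopy in degrees up to $m+d-1$; combined with $k \leq d$ and $j \leq m$, which give $k+j-1 \leq m+d-1$, the obstruction lies in a vanishing group. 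The lift therefore extends across each cell, and assembling over the weight-cellular structure produces the required diagonal filler.

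The main obstacle is simply the careful bookkeeping: one must check that the pushout-product of $\partial D^k \hookrightarrow D^k$ with the weight-cellular inclusion $\sk^{m-1} X \hookrightarrow \sk^m X$ is governed by cells of the correct dimension and weight, so that the connectivity budget closes at every stage of the induction, and one must be careful about basepoints and the natural transformation into the fiber. This is essentially the content of \cite[3.5]{skeleta} in the language of connectivity structures; beyond that, the argument is a mild elaboration of the $d = 0$ case already treated in Proposition~\ref{prop:weightlift}.
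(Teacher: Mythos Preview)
Your argument is correct, but it takes a genuinely different route from the paper's. You run the obstruction theory directly: adjoint the lifting problem into filtered spaces, peel off one cell of $X$ at a time, and check that for a $j$-cell in weight $m$ (so $j \leq m$) and $k \leq d$, the obstruction sits in $\pi_{k+j-1}$ of the fiber of $Y^m \to Z^m$, which vanishes since $k+j-1 \leq m+d-1$. This is exactly the $d=0$ argument of Proposition~\ref{prop:weightlift} with the degree bookkeeping made explicit.

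The paper instead inducts on $d$. It uses the recursive characterization of connectivity---a map is $d$-connected iff it is $0$-connected and the diagonal into the homotopy pullback $Y \to \holim(Y \to Z \leftarrow Y)$ is $(d-1)$-connected---together with the fact that $\Map_{\Fil(\Spaces)}(X,-)$ preserves homotopy limits. Since the diagonal $Y \to \holim(Y \to Z \leftarrow Y)$ is weight $(d-1)$-connected whenever $Y \to Z$ is weight $d$-connected, the inductive hypothesis applies and the cell-by-cell work is confined entirely to the base case $d=0$. Your approach is more hands-on and makes the numerology visible; the paper's approach is slicker, avoids redoing the cell induction for each $d$, and isolates exactly which formal property of $\Map(X,-)$ is doing the work.
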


\begin{proof}
  We recall the inductive characterization of connectedness: a map $U \to V$ of spaces is $0$-connected if it is surjective on $\pi_0$, and it is $d$-connected for $d > 0$ if it is $0$-connected and the map from $U$ to the homotopy pullback $\holim(U \to V \leftarrow U)$ is $(d-1)$-connected.

  We now prove the proposition by induction on $d$. The base case, when $d=0$, is precisely Proposition~\ref{prop:weightlift}. We need to show that if $Y \to Z$ is weight $d$-connected, the diagonal map
  \[
    \Map(X,Y) \to \holim\big(\Map(X,Y) \to \Map(X,Z) \leftarrow \Map(X,Y)\big)
  \]
  is $(d-1)$-connected. However, the functor $\Map(X,-)$ preserves homotopy limits; therefore, this is equivalent to asking that the map
  \[
    \Map(X,Y) \to \Map(X, \holim (Y \to Z \leftarrow Y)),
  \]
  induced by the diagonal $\Delta\co Y \to \holim(Y \to Z \leftarrow Y)$, is $(d-1)$-connected. However, the map $Y \to \holim(Y \to Z \leftarrow Y)$ is weight $(d-1)$-connected, and so the inductive hypothesis completes the proof.
\end{proof}

\subsection{CW-mapping spaces}
\begin{thm}
  \label{thm:CWmodel}
  For CW-complexes $X$ and $Y$, the map
  \[
    \Map_{\Fil(\Spaces)^\der}(\sk X, \sk Y) \to \Map_{\Spaces}(X,Y)
  \]
  is an equivalence.
\end{thm}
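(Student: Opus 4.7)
The plan is to use the homotopical description of the successor as a homotopy colimit (Proposition~\ref{prop:Dnonneg}) together with the weighted connectivity criterion of Proposition~\ref{prop:weightheavylift}. Recall from Example~\ref{exam:filterenrich} that $\Fil(\Spaces)$ is enriched in filtered spaces via the shift action of $\mb Z$ on itself, so that $\FMap_{\Fil(\Spaces)}(\sk X, \sk Y)^n \simeq \Map_{\Fil(\Spaces)}(\sk X, \sh^n \sk Y)$; then $\Map_{\Fil(\Spaces)^\der}(\sk X, \sk Y) = \FMap(\sk X, \sk Y)^\der$ is the object we must identify with $\Map_{\Spaces}(X, Y)$.

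First, I would verify that $\FMap(\sk X, \sk Y)$ is of nonnegative weight. The natural transformation $\tau \co \sh^n \sk Y \to \sh^{n+1} \sk Y$ is, at level $m$, the CW-inclusion $\sk^{m+n}(Y) \hookrightarrow \sk^{m+n+1}(Y)$, which is $(m+n)$-connected; hence $\tau$ is weight $n$-connected. Since $\sk X$ is weight cellular, Proposition~\ref{prop:weightheavylift} implies that $\FMap(\sk X, \sk Y)^n \to \FMap(\sk X, \sk Y)^{n+1}$ is $n$-connected. By Proposition~\ref{prop:Dnonneg} this gives
\[
\FMap(\sk X, \sk Y)^\der \simeq \hocolim_{n \in \mb Z} \Map_{\Fil(\Spaces)}(\sk X, \sh^n \sk Y).
\]

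Next, the natural map from $\sh^n \sk Y$ to the constant filtered space on $Y$ is, at level $m$, the inclusion $\sk^{m+n}(Y) \hookrightarrow Y$, which is $(m+n)$-connected because $Y$ is built from its $(m+n)$-skeleton by attaching cells of dimension at least $m+n+1$. Hence this map of filtered spaces is weight $n$-connected, and a second application of Proposition~\ref{prop:weightheavylift} gives that
\[
\Map_{\Fil(\Spaces)}(\sk X, \sh^n \sk Y) \to \Map_{\Fil(\Spaces)}(\sk X, Y) \simeq \Map_{\Spaces}(X, Y)
\]
is $n$-connected, where the final equivalence uses $X \simeq \hocolim_m \sk^m X$ to re-express mapping out of a homotopy colimit as a homotopy limit of mapping spaces. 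Taking the homotopy colimit in $n$ of this tower of increasingly connected maps yields an equivalence, and naturality identifies the resulting composite with the map in the theorem.

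The main obstacle will be the careful bookkeeping of weighted connectivities in the two applications of Proposition~\ref{prop:weightheavylift}, so that the connectivity of the comparison maps grows without bound and drives the colimit to an equivalence. The identification of $\Map_{\Fil(\Spaces)}(\sk X, Y)$ (with $Y$ viewed as a constant filtered space) with $\Map_{\Spaces}(X, Y)$ is routine.
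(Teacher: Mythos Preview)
Your proposal is correct and follows essentially the same argument as the paper: establish that $\FMap(\sk X,\sk Y)$ has nonnegative weight via Proposition~\ref{prop:weightheavylift}, invoke Proposition~\ref{prop:Dnonneg} to identify the successor with the homotopy colimit, and then use a second application of Proposition~\ref{prop:weightheavylift} comparing $\sh^n\sk Y$ to the constant filtered space $cY$ to conclude. The paper's proof differs only in presentation, not in substance.
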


\begin{proof}
  The maps $\sh^m \sk Y \to \sh^{m+1} \sk Y$ are weight $m$-connected. By Proposition~\ref{prop:weightheavylift} the map
  \[
    \Map_{\Fil(\Spaces)}(\sk X, \sh^m \sk Y) \to 
    \Map_{\Fil(\Spaces)}(\sk X, \sh^{m+1} \sk Y)
  \]
  is $m$-connected, and so $\FMap_{\Fil(\Spaces)}(\sk X, \sk Y)$ is weight at least $0$ (Definition~\ref{def:nonnegative}). By Proposition~\ref{prop:Dnonneg}, the natural map
  \[
    \FMap_{\Fil(\Spaces)}(\sk X, \sk Y)^\der \to \hocolim_{\mb Z} \Map_{\Fil(\Spaces)}(\sk X, \sh^m \sk Y)
  \]
  is an equivalence.

  Let $cY$ be the constant filtered space with value $Y$. The maps $\sh^m Y \to cY$ are weight $m$-connected, and so by Proposition~\ref{prop:weightheavylift} the map
  \[
    \Map_{\Fil(\Spaces)}(\sk X, \sh^m \sk Y) \to 
    \Map_{\Fil(\Spaces)}(\sk X, c Y) \simeq \Map_{\Spaces}(X,Y)
  \]
  is $m$-connected. Taking homotopy colimits in $m$, we find that the map
  \[
    \hocolim_{\mb Z} \Map_{\Fil(\Spaces)}(\sk X, \sh^m \sk Y) \to 
    \Map_{\Spaces}(X,Y)
  \]
  is an equivalence, and hence so is the composite
  \[
    \FMap_{\Fil(\Spaces)}(\sk X, \sk Y)^\der \to \Map_{\Spaces}(X,Y)
  \]
  as desired.
\end{proof}

\section{Diagrams of successor categories}
\label{sec:diagrams}

\subsection{Functoriality and monoidality}

The following is relatively clear from functoriality of successors.

\begin{prop}
  \label{prop:Dfunctor}
  Suppose that $\mc C$ and $\mc D$ are categories enriched in filtered spaces and that $f\co \mc C \to \mc D$ is an enriched functor. Then there is an induced functor $f^\der\co \mc C^\der \to \mc D^\der$, and this preserves composition.
\end{prop}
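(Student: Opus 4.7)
The plan is to invoke the general change-of-enrichment principle: any lax (symmetric) monoidal functor $F\co \mc V \to \mc W$ induces a 2-functor from $\mc V$-enriched categories to $\mc W$-enriched categories, carrying enriched functors to enriched functors and respecting composition. The relevant lax monoidal functor here is the successor, whose lax symmetric monoidality for the Day convolution is exactly Theorem~\ref{thm:Dmonoidal} (or, in the filtered variant, Proposition~\ref{prop:filteredDmonoidal}).

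Concretely, I would set $f^\der(X) = f(X)$ on objects, and define the mapping-space components by applying $(-)^\der$ to the enriched components $f_{X,Y}\co \FMap_{\mc C}(X,Y) \to \FMap_{\mc D}(fX,fY)$ of $f$, yielding
\[
  f^\der_{X,Y}\co \FMap_{\mc C}(X,Y)^\der \to \FMap_{\mc D}(fX,fY)^\der.
\]
Composition in $\mc C^\der$ is, by the definition of $\mc C^\der$, the composite of the lax structure map
\[
 \FMap_{\mc C}(Y,Z)^\der \times \FMap_{\mc C}(X,Y)^\der \to \left(\FMap_{\mc C}(Y,Z) \circledast \FMap_{\mc C}(X,Y)\right)^\der
\]
with the successor applied to the composition in $\mc C$, and similarly for $\mc D^\der$. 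Compatibility of $f^\der$ with composition then reduces to a naturality square: the lax constraint for $(-)^\der$ is natural in both arguments, and $f$ intertwines the compositions in $\mc C$ and $\mc D$ by assumption, so the two paths around the square agree. Unit preservation follows in the same way from the unit part of the lax monoidal structure on $(-)^\der$.

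Since Theorem~\ref{thm:Dmonoidal} is already in hand, there is essentially no obstacle; the argument is a formal consequence of change of enrichment along a lax monoidal functor. The only subtlety worth flagging, per the remark after Definition~\ref{def:Dcategory}, is the distinction between the strict cubical successor and its homotopical incarnation. In either interpretation the same argument applies, since both the homotopical successor (Theorem~\ref{thm:Dmonoidal}) and the point-set cubical successor (Proposition~\ref{prop:Dlax}) are lax monoidal for the Day convolution; one simply selects the matching flavor of $(-)^\der$ and Day convolution in which the enrichments of $\mc C$ and $\mc D$ live.
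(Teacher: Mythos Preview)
Your proposal is correct and matches the paper's approach: the paper offers no formal proof, only the remark ``The following is relatively clear from functoriality of successors,'' which is precisely the change-of-enrichment principle along the lax monoidal successor that you spell out. Your write-up is in fact more explicit than the paper's, correctly pinpointing that lax monoidality (Proposition~\ref{prop:Dlax} or Theorem~\ref{thm:Dmonoidal}) is what makes the composition squares commute.
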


Our goal is to improve this result to more complicated diagrams of enriched $\infty$-categories. However, by Theorem~\ref{thm:Dmonoidal}, the successor functor is lax symmetric monoidal, and by Proposition~\ref{prop:filteredDmonoidal} it extends to a lax symmetric monoidal endofunctor of filtered spaces. This allows us to appeal to \cite[10.6]{laxmonoidality} for the following result.

\begin{thm}
  \label{thm:mainfunctoriality}
  There is a lax symmetric monoidal functor
  \[
    (-)^\der\co \Cat_\infty^{\Fil(\Spaces)} \to \Cat_\infty^{\Spaces} \simeq \Cat_\infty
  \]
  sending an enriched $\infty$-category to its successor. Further, this extends to a functor
  \[
    (-)^\der\co \Cat_\infty^{\Fil(\Spaces)} \to \Cat_\infty^{\Fil(\Spaces)}.
  \]
\end{thm}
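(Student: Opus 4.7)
The plan is to reduce the theorem to an application of the general machinery that lax symmetric monoidal functors between symmetric monoidal $\infty$-categories induce lax symmetric monoidal functors between their $\infty$-categories of enriched categories, namely \cite[10.6]{laxmonoidality}.

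First, I would recall that by Theorem~\ref{thm:Dmonoidal} the successor functor $(-)^\der\co \Fil(\Spaces) \to \Spaces$ is lax symmetric monoidal with respect to the Day convolution on the source and the Cartesian product on the target. Applying the cited general result, this induces a lax symmetric monoidal functor
\[
  (-)^\der_*\co \Cat_\infty^{\Fil(\Spaces)} \to \Cat_\infty^{\Spaces}
\]
on the $\infty$-categories of enriched categories. At the level of objects, this functor sends an enriched category $\mc C$ to the category with the same objects and mapping spaces $\FMap_{\mc C}(X,Y)^\der$, which is precisely $\mc C^\der$ as defined in Definition~\ref{def:Dcategory}; and at the level of enriched functors it agrees with the construction of Proposition~\ref{prop:Dfunctor}. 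Combined with the standard equivalence $\Cat_\infty^{\Spaces} \simeq \Cat_\infty$, this gives the first part of the theorem.

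For the filtered extension, I would run the same argument, but replace Theorem~\ref{thm:Dmonoidal} with Proposition~\ref{prop:filteredDmonoidal}, which asserts that $(-)^\der\co \Fil(\Spaces) \to \Fil(\Spaces)$ is itself lax symmetric monoidal for the Day convolution on both sides. Applying \cite[10.6]{laxmonoidality} again produces a lax symmetric monoidal functor
\[
  (-)^\der\co \Cat_\infty^{\Fil(\Spaces)} \to \Cat_\infty^{\Fil(\Spaces)},
\]
and the composite with the symmetric monoidal forgetful functor $\Cat_\infty^{\Fil(\Spaces)} \to \Cat_\infty^{\Spaces}$ induced by $\ev_0\co \Fil(\Spaces) \to \Spaces$ (or rather, the lax monoidal comparison built in) recovers the first functor, giving the required compatibility.

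The main obstacle is essentially already handled by Theorem~\ref{thm:Dmonoidal} and Proposition~\ref{prop:filteredDmonoidal}; the only remaining subtlety is ensuring that the hypotheses of \cite[10.6]{laxmonoidality} genuinely apply in our setting, i.e., that $\Fil(\Spaces)$ and $\Spaces$ qualify as enriching symmetric monoidal $\infty$-categories in the sense of Gepner--Haugseng \cite{gepner-haugseng-enriched}. Both are presentable symmetric monoidal $\infty$-categories (for $\Fil(\Spaces)$ with the Day convolution, this is standard Day convolution theory; for $\Spaces$ it is immediate), so they meet the hypotheses, and the argument goes through with no further work beyond citing the general principle.
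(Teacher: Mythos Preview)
Your proposal is correct and takes essentially the same approach as the paper: the paper's entire argument is the sentence preceding the theorem, which invokes Theorem~\ref{thm:Dmonoidal} and Proposition~\ref{prop:filteredDmonoidal} and then appeals to \cite[10.6]{laxmonoidality}. Your write-up is simply a more detailed unpacking of that same citation.
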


\begin{cor}
  \label{cor:Dstraightfunctorial}
  Suppose that $\mc J$ is an $\infty$-category. Any $\mc J$-indexed diagram $\{\mc C_j\}$ of $\infty$-categories enriched in filtered spaces gives rise to a $\mc J$-indexed diagram $\{(\mc C_j)^\der\}$.
\end{cor}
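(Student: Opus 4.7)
The plan is to reduce this immediately to Theorem~\ref{thm:mainfunctoriality}. By definition, a $\mc J$-indexed diagram of $\infty$-categories enriched in filtered spaces is precisely a functor of $\infty$-categories $F \co \mc J \to \Cat_\infty^{\Fil(\Spaces)}$ sending each $j \in \mc J$ to the enriched $\infty$-category $\mc C_j$. Theorem~\ref{thm:mainfunctoriality} provides a functor
\[
  (-)^\der \co \Cat_\infty^{\Fil(\Spaces)} \to \Cat_\infty^{\Fil(\Spaces)},
\]
so I simply form the composite $(-)^\der \circ F \co \mc J \to \Cat_\infty^{\Fil(\Spaces)}$, which sends $j \mapsto (\mc C_j)^\der$ and assembles the required diagram.

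There is essentially no additional content to verify: functoriality of the assignment $j \mapsto (\mc C_j)^\der$ in morphisms of $\mc J$, and coherence of composition, are guaranteed by the fact that composition of functors of $\infty$-categories is itself functorial. In particular, given an edge $j \to j'$ in $\mc J$, the enriched functor $\mc C_j \to \mc C_{j'}$ produced by $F$ yields an enriched functor $(\mc C_j)^\der \to (\mc C_{j'})^\der$ via Theorem~\ref{thm:mainfunctoriality} (which extends the pointwise construction of Proposition~\ref{prop:Dfunctor} to a functorial one), and higher coherences are built in.

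There is no real obstacle here; the entire content of the corollary has been placed into Theorem~\ref{thm:mainfunctoriality}, which in turn rests on the lax symmetric monoidality of $(-)^\der$ established in Theorem~\ref{thm:Dmonoidal} and Proposition~\ref{prop:filteredDmonoidal}, together with the machinery of \cite{laxmonoidality} that promotes a lax symmetric monoidal endofunctor of an enriching base to a functor on enriched $\infty$-categories. If one wanted a self-contained argument avoiding that citation, the alternative would be to invoke the Gepner--Haugseng model of enriched $\infty$-categories and check that post-composition with $(-)^\der$ preserves the relevant Segal/completeness conditions; but as stated, the corollary is simply an instance of composing two functors of $\infty$-categories.
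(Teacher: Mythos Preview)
Your proposal is correct and matches the paper's approach exactly: the corollary is stated in the paper with no proof at all, as it is an immediate consequence of Theorem~\ref{thm:mainfunctoriality} by composing the given diagram $\mc J \to \Cat_\infty^{\Fil(\Spaces)}$ with the functor $(-)^\der$. Your additional commentary on what underlies Theorem~\ref{thm:mainfunctoriality} is accurate but unnecessary for the corollary itself.
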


Lax symmetric monoidality further implies that taking successors respects monoidal structures on enriched categories.

\begin{prop}
  \label{prop:Dcoherentmonoidal}
  Suppose that $\mc O$ is an $\infty$-operad and that $\mc C$ has the structure of an $\mc O$-monoidal $\infty$-category enriched in $\Fil(\Spaces)$. Then $\mc C^\der$ has the structure of an $\mc O$-monoidal $\infty$-category, and this is natural in $\mc C$.
\end{prop}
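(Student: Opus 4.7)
The plan is to deduce the proposition directly from Theorem~\ref{thm:mainfunctoriality}, using the standard principle that lax symmetric monoidal functors between symmetric monoidal $\infty$-categories transport $\mc O$-algebras to $\mc O$-algebras.

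The first step is to re-express the hypothesis internally: an $\mc O$-monoidal $\infty$-category enriched in $\Fil(\Spaces)$ is, by definition, an $\mc O$-algebra in the symmetric monoidal $\infty$-category $\Cat_\infty^{\Fil(\Spaces)}$. This is the usual reformulation of an $\mc O$-monoidal structure as an algebra in an ambient symmetric monoidal $\infty$-category of categories, as in \cite[2.4.2]{lurie-higheralgebra}. Under this identification, $\mc C$ corresponds to a map of $\infty$-operads $\mc O^\otimes \to \Cat_\infty^{\Fil(\Spaces),\otimes}$ over $\mathrm{Fin}_\ast$.

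The second step is to invoke Theorem~\ref{thm:mainfunctoriality}, which supplies a lax symmetric monoidal endofunctor
\[
  (-)^\der\co \Cat_\infty^{\Fil(\Spaces)} \to \Cat_\infty^{\Fil(\Spaces)}.
\]
A lax symmetric monoidal functor between symmetric monoidal $\infty$-categories is, by definition, a morphism of the associated $\infty$-operads, and composition with such a morphism induces a functor on $\mc O$-algebras for any $\infty$-operad $\mc O$ (see \cite[2.1.3]{lurie-higheralgebra}). Applying this principle to the $\mc O$-algebra corresponding to $\mc C$ produces an $\mc O$-algebra in $\Cat_\infty^{\Fil(\Spaces)}$ whose underlying enriched $\infty$-category is $\mc C^\der$; this is the claimed $\mc O$-monoidal structure. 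Naturality in $\mc C$ is automatic, since the construction factors through the induced functor
\[
  \Alg_{\mc O}\bigl(\Cat_\infty^{\Fil(\Spaces)}\bigr) \to \Alg_{\mc O}\bigl(\Cat_\infty^{\Fil(\Spaces)}\bigr).
\]

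The only nontrivial piece of bookkeeping is the translation from ``lax symmetric monoidal functor'' to ``map of $\infty$-operads'' and the resulting induced functor on algebras, which is a standard manipulation. I do not expect this to be a genuine obstacle: once Theorem~\ref{thm:mainfunctoriality} is in place, the present proposition is a formal consequence, and indeed one could just as well state it as a corollary by applying $\Alg_{\mc O}(-)$ to the lax symmetric monoidal functor produced there.
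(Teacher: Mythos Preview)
Your proposal is correct and takes essentially the same approach as the paper: reinterpret an $\mc O$-monoidal enriched $\infty$-category as an $\mc O$-algebra in $\Cat_\infty^{\Fil(\Spaces)}$, and then push it forward along the lax symmetric monoidal functor $(-)^\der$ from Theorem~\ref{thm:mainfunctoriality}. The only cosmetic difference is that the paper uses the functor landing in $\Cat_\infty$ rather than the endofunctor of $\Cat_\infty^{\Fil(\Spaces)}$, but both are supplied by Theorem~\ref{thm:mainfunctoriality} and either suffices.
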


\begin{proof}
  Because the successor functor $(-)^\der\co \Cat_\infty^{\Fil(\Spaces)} \to \Cat_\infty$ is lax symmetric monoidal, we get an induced functor
  \[
    \Alg_{\mc O}(\Cat_\infty^{\Fil(\Spaces)}) \to \Alg_{\mc O}(\Cat_\infty).
  \]
  However, $\mc O$-algebras in (enriched) $\infty$-categories are equivalent to $\mc O$-monoidal $\infty$-categories \cite[2.4.2.6]{lurie-higheralgebra}.
\end{proof}

\subsection{(Co)limit-preservation}

\begin{prop}
  \label{prop:functoriallimits}
  Suppose that $\mc C$ and $\mc D$ have actions of $\mb Z$, and that $f\co \mc C \to \mc D$ is a shift-preserving functor: there are natural, compatible equivalences
  \[
    f(\sh^n X) \to \sh^n f(X).
  \]
  Then the construction $f \mapsto f^\der$ respects limits and colimits as follows.
  \begin{enumerate}
  \item Any coproduct in $\mc C$ that is preserved by $f$ is also preserved by $f^\der$.
  \item Any product in $\mc C$ that is preserved by $f$ is also preserved by $f^\der$.
  \item If $\mc C$ has pushouts and $f$ preserves them, then so does $f^\der$.
  \item If $\mc C$ has pullbacks and $f$ preserves them, then so does $f^\der$.
  \item If $\mc C$ has $\kappa$-small colimits and $f$ preserves them, then so does $f^\der$.
  \item If $\mc C$ has $\kappa$-small limits and $f$ preserves them, then so does $f^\der$.
  \end{enumerate}
\end{prop}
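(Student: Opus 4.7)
The plan is to observe that $f^\der$ agrees with $f$ on objects, so each of the six items reduces to checking that when $f$ takes a particular (co)limit-representing object in $\mc C$ to a (co)limit-representing object in $\mc D$, the same object continues to represent the analogous (co)limit in $\mc D^\der$. The earlier results of this section already describe those representations explicitly, so the proof should be a fairly short bookkeeping exercise. I would set up the $\mb Z$-action first: because $\sh$ is a self-equivalence, it is automatically adjointable with $\shad \simeq \sh^{-1}$, and the hypothesized natural equivalences $f \circ \sh^n \simeq \sh^n \circ f$ specialize to $f(\shad X) \simeq \shad f(X)$.

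For items (1) and (2), take a coproduct $B = \coprod A_i$ preserved by $f$. By Proposition~\ref{prop:productrep}, $B$ also represents the coproduct of the $A_i$ in $\mc C^\der$, and $f(B) \simeq \coprod f(A_i)$ likewise represents their coproduct in $\mc D^\der$, so $f^\der$ preserves the coproduct. The product argument is identical. For item (3), suppose $Q$ represents the pushout of $B \leftarrow A \to C$ in $\mc C^\der$; by Proposition~\ref{prop:pushoutrep}, $Q$ is the ordinary $\mc C$-pushout of $B \leftarrow \shad A \to C$. Since $f$ preserves pushouts and $\shad$, we get
\[
f(Q) \simeq f(B) \amalg^{f(\shad A)} f(C) \simeq f(B) \amalg^{\shad f(A)} f(C),
\]
and a second application of Proposition~\ref{prop:pushoutrep} identifies this as the $\mc D^\der$-pushout of $f(B) \leftarrow f(A) \to f(C)$. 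Item (4) is formally dual, using Proposition~\ref{prop:pullbackrep} in place of Proposition~\ref{prop:pushoutrep}.

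Items (5) and (6) then follow from (1)--(4) via \cite[4.4.2.6]{lurie-htt}, which decomposes $\kappa$-small (co)limits into $\kappa$-small (co)products together with pullbacks or pushouts, exactly as in the proof of Proposition~\ref{prop:completecocomplete}. The main nuisance I anticipate is tracking that the natural equivalence $f \circ \sh \simeq \sh \circ f$ really does upgrade to a compatible equivalence of the full $\mb Z$-actions (so that $f^\der$ is a well-defined morphism in $\Cat_\infty^{\Fil(\Spaces)}$ before one asks about its preservation properties). Once this coherence is in hand---or once one cites Theorem~\ref{thm:mainfunctoriality} to provide it---each of the six items collapses to the one-line verification above.
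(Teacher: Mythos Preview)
Your proposal is correct and follows essentially the same approach as the paper: the paper's proof simply cites Proposition~\ref{prop:productrep} for (co)products, Propositions~\ref{prop:pullbackrep} and~\ref{prop:pushoutrep} for pullbacks and pushouts, and then reduces general $\kappa$-small (co)limits to these. Your writeup spells out more explicitly the role of the shift-compatibility $f(\shad A) \simeq \shad f(A)$ in the pushout/pullback step, which the paper leaves implicit, but the underlying argument is the same.
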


\begin{proof}
  The functor $f^\der$ preserves any (co)products by Proposition~\ref{prop:productrep}. If $f$ preserves pushouts or pullbacks, then so does $f^\der$ by Propositions~\ref{prop:pullbackrep} and \ref{prop:pushoutrep}. Finally, for any infinite cardinal $\kappa$, all $\kappa$-small colimits are generated by pushouts and $\kappa$-small coproducts; similarly limits.
\end{proof}



\subsection{(Co)cartesian fibrations}

To show that successors respect more complicated adjunction structures, it will be convenient to show compatibility with (co)Cartesian fibrations.

\begin{defn}
  \label{def:enrichedcocart}
Suppose that $p\co \mc C \to \mc D$ is a functor between categories enriched in filtered spaces. We say that a morphism $g \co X \to Y$ in $\mc C$ is \emph{enriched $p$-cocartesian} if, for all $Z \in \mc C$, the diagram
\[
  \begin{tikzcd}
    \FMap_{\mc C}(Y, Z) \ar[r] \ar[d] &
    \FMap_{\mc C}(X, Z) \ar[d] \\
    \FMap_{\mc D}(pY, pZ) \ar[r] &
    \FMap_{\mc D}(pX, pZ) 
  \end{tikzcd}
\]
is a homotopy pullback diagram of filtered spaces. We say that the functor $p$ is \emph{essentially enriched cocartesian} if, for every $X$ and every map $\bar g\co pX \to Z$, there exists an enriched $p$-cocartesian map $g\co X \to Y$ whose image under $p$ is equivalent (under $pX$) to $\bar g$.

Similarly, we have a dual notion of enriched $p$-cartesian maps and essentially enriched cartesian functors.
\end{defn}

\begin{prop}
  \label{prop:Dcocartesianmap}
  Suppose $p\co \mc C \to \mc D$ is a functor between categories enriched in filtered spaces. If $g\co X \to Y$ in $\mc C$ is enriched $p$-cocartesian and, for all $Z$, the map $\FMap_{\mc C}(X,Z) \to \FMap_{\mc D}(pX,pZ)$ satisfies the defibration condition (Definition~\ref{def:defibrationcond}), then the image of $g$ in $\mc C^\der$ is (essentially) $p^\der$-cocartesian.
\end{prop}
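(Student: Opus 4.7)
The plan is to translate the $p^\der$-cocartesian condition into a homotopy-pullback statement about filtered mapping spaces, then invoke the pullback-preservation property of the successor under the defibration hypothesis.

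By the definition of $\mc C^\der$ and $\mc D^\der$, the image of $g$ in $\mc C^\der$ is $p^\der$-cocartesian precisely when, for every object $Z$, the square
\[
\begin{tikzcd}
\FMap_{\mc C}(Y, Z)^\der \ar[r, "g^*"] \ar[d] & \FMap_{\mc C}(X, Z)^\der \ar[d] \\
\FMap_{\mc D}(pY, pZ)^\der \ar[r] & \FMap_{\mc D}(pX, pZ)^\der
\end{tikzcd}
\]
is a homotopy pullback of spaces. The hypothesis that $g$ itself is enriched $p$-cocartesian says that the analogous square before taking successors is a homotopy pullback of filtered spaces, i.e., that $\FMap_{\mc C}(Y,Z)$ is the homotopy pullback of the cospan $\FMap_{\mc D}(pY,pZ) \to \FMap_{\mc D}(pX,pZ) \leftarrow \FMap_{\mc C}(X,Z)$.

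The central step is therefore to argue that the successor functor sends this homotopy pullback of filtered spaces to a homotopy pullback of spaces. This is exactly what Proposition~\ref{prop:Ddefibrationpullback} provides, applied to the right-hand map $\FMap_{\mc C}(X,Z) \to \FMap_{\mc D}(pX,pZ)$, which satisfies the defibration condition by assumption. For the essentially cocartesian assertion, a morphism $\bar g\co pX \to Z$ in $\mc D^\der$ corresponds by Definition~\ref{def:Ddefinition} to a point of $\FMap_{\mc D}(pX,Z)^0$, i.e., a weight-zero morphism in $\mc D$; assuming $p$ is itself essentially enriched cocartesian, its lift to an enriched $p$-cocartesian $g\co X \to Y$ in $\mc C$ becomes a $p^\der$-cocartesian lift of $\bar g$ by the argument just given.

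The main obstacle is the point-set to homotopy-theoretic translation: Proposition~\ref{prop:Ddefibrationpullback} is stated for fibrant filtered cubical sets with an honest fibration satisfying the defibration condition, whereas the input here is an abstract homotopy pullback of filtered spaces. Applying it requires choosing a fibrant cubical model for the cospan and then factoring the map $\FMap_{\mc C}(X,Z) \to \FMap_{\mc D}(pX,pZ)$ as a trivial cofibration followed by a fibration, while verifying that the defibration condition (being defined through geometric realization and hence invariant under weak equivalence) is preserved by this replacement. Once such a strict model is in place, Proposition~\ref{prop:Ddefibrationpullback} applies directly and the remaining comparisons are formal.
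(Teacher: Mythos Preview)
Your proof is correct and follows essentially the same approach as the paper: both arguments start from the enriched-cocartesian homotopy pullback square of filtered mapping spaces, invoke Proposition~\ref{prop:Ddefibrationpullback} on the right-hand vertical map (which satisfies the defibration condition by hypothesis), and conclude that the successor square is a homotopy pullback, hence that $g$ is $p^\der$-cocartesian. Your additional care in the final paragraph about replacing by a fibrant cubical model and a genuine fibration is a point the paper leaves implicit.
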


\begin{proof}
  For all objects $Z$, we have a homotopy pullback diagram
  \[
    \begin{tikzcd}
      \FMap_{\mc C}(Y,Z) \ar[r] \ar[d] &
      \FMap_{\mc C}(X,Z) \ar[d] \\
      \FMap_{\mc D}(pY,pZ) \ar[r] &
      \FMap_{\mc D}(pX,pZ) 
    \end{tikzcd}
  \]
  of filtered spaces, and by assumption the right-hand map always satisfies the defibration condition. By Proposition~\ref{prop:Ddefibrationpullback}, for any $Z$ we have a homotopy pullback diagram
  \[
    \begin{tikzcd}
      \Map_{\mc C^\der}(Y,Z) \ar[r] \ar[d] &
      \Map_{\mc C^\der}(X,Z) \ar[d] \\
      \Map_{\mc D^\der}(p^\der(Y),p^\der(Z)) \ar[r] &
      \Map_{\mc D^\der}(p^\der(X),p^\der(Z)).
    \end{tikzcd}
  \]
  By \cite[2.4.4.3]{lurie-htt}, the map $g\co X \to Y$ is essentially $p^\der$-cocartesian.
\end{proof}

By considering opposite categories, we will arrive at a corresponding result for cartesian maps.

\begin{prop}
  \label{prop:Dcocartesian}
  Suppose $p\co \mc C \to \mc D$ is a functor between categories enriched in filtered spaces. If $p$ is enriched cocartesian and, for all $X$ and $Y$, the map $\FMap_{\mc C}(X,Y) \to \FMap_{\mc D}(pX,pY)$ satisfies the defibration condition (Definition~\ref{def:defibrationcond}), then $p^\der\co \mc C^\der \to \mc D^\der$ is an essentially cocartesian functor of $\infty$-categories.
\end{prop}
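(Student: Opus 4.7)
The plan is to reduce the statement to the pointwise result already proved as Proposition~\ref{prop:Dcocartesianmap}, by using the enriched cocartesian hypothesis to produce raw lifts and the defibration hypothesis to upgrade each one to a $p^\der$-cocartesian morphism in $\mc C^\der$. First I would observe that $\mc C^\der$ and $\mc C$ share the same objects, as do $\mc D^\der$ and $\mc D$, and $p^\der$ agrees with $p$ on objects. So an object of $\mc C^\der$ with image $pX$ in $\mc D^\der$ is literally the same data as an object $X \in \mc C$ with image $pX \in \mc D$.

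Next, I would unpack what a morphism out of $p^\der X$ in $\mc D^\der$ amounts to: it is a vertex of $\Map_{\mc D^\der}(pX,Z) = \FMap_{\mc D}(pX, Z)^\der$, which by the very construction of the successor cubical set is just a vertex of $\FMap_{\mc D}(pX,Z)^0 = \Map_{\mc D}(pX,Z)$. In other words, every morphism $\bar g\co p^\der X \to Z$ in $\mc D^\der$ is represented by a weight-zero morphism $\bar g\co pX \to Z$ in the underlying category of $\mc D$, which is precisely the type of map for which the essentially enriched cocartesian hypothesis applies.

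Given such a $\bar g$, the hypothesis that $p$ is essentially enriched cocartesian produces an enriched $p$-cocartesian morphism $g\co X \to Y$ in $\mc C$ whose image $p(g)$ is equivalent to $\bar g$ under $pX$. Applying Proposition~\ref{prop:Dcocartesianmap}, and using the hypothesized defibration condition on $\FMap_{\mc C}(X,Z) \to \FMap_{\mc D}(pX,pZ)$ for all $Z$, the image of $g$ in $\mc C^\der$ is $p^\der$-cocartesian. Functoriality of the successor on mapping filtered spaces then carries the equivalence $p(g) \simeq \bar g$ over $pX$ in $\mc D$ to an equivalence $p^\der(g) \simeq \bar g$ over $p^\der X$ in $\mc D^\der$, so $g$ is a $p^\der$-cocartesian lift of $\bar g$. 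This verifies the defining lifting property of an essentially cocartesian functor of $\infty$-categories.

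The main obstacle is really only a bookkeeping one: ensuring the identification ``morphism in $\mc D^\der$ out of $p^\der X$ = weight-zero morphism in $\mc D$ out of $pX$'' is on solid footing, and that the equivalence ``under $pX$'' in the underlying category is strong enough to witness the same lift property after applying $(-)^\der$. Both of these follow directly from the construction of $(-)^\der$ in terms of $\FMap^0 = \Map$ together with the naturality and functoriality statements already established; no genuinely new input is required beyond Proposition~\ref{prop:Dcocartesianmap}.
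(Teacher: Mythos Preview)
Your proposal is correct and is exactly the argument the paper has in mind: the paper states this proposition without proof, treating it as an immediate consequence of Proposition~\ref{prop:Dcocartesianmap} together with the definition of ``essentially enriched cocartesian,'' and your write-up unpacks precisely that deduction.
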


The defibration condition is trivial for constant filtered objects, giving rise to the following result. In unstraightened terms, it again says that a $\mc J$-indexed diagram of enriched categories becomes a $\mc J$-indexed diagram after taking successors.

\begin{cor}
  \label{cor:Dunstraightfunctorial}
  Suppose that $\mc J$ is a category enriched in spaces, regarded as enriched in constant filtered spaces. Given an enriched (co)cartesian functor $p\co \mc C \to \mc J$, the resulting functor $p^\der\co \mc C^\der \to \mc J$ is (co)cartesian.
\end{cor}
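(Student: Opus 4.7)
The plan is to reduce the corollary to Proposition~\ref{prop:Dcocartesian} (and, for the cartesian case, its stated opposite-category analogue). Two preliminaries are needed before that can be applied. First, because $\mc J$ is enriched in constant filtered spaces, each filtered mapping space $\FMap_{\mc J}(j,j')$ is constant, so Remark~\ref{rmk:Dhocolim} yields $\FMap_{\mc J}(j,j')^\der \cong \FMap_{\mc J}(j,j')^0$. Consequently the successor construction fixes $\mc J$ on the nose, $\mc J^\der \simeq \mc J$, so the functor $p^\der$ genuinely has codomain $\mc J$ as stated.

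Second, I need to verify the defibration condition on every map of the form
\[
  \FMap_{\mc C}(X,Y) \to \FMap_{\mc J}(pX,pY).
\]
Since the target is constant, the structure maps $\FMap_{\mc J}(pX,pY)^{m-1} \to \FMap_{\mc J}(pX,pY)^m$ are identities (or at worst equivalences after fibrant replacement), so the relative homotopy sets $\pi_m(\FMap_{\mc J}(pX,pY)^m, \FMap_{\mc J}(pX,pY)^{m-1})$ vanish at every basepoint and every $m \geq 1$. Surjectivity of a map into a trivial target is automatic, so the defibration condition holds trivially.

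With these two points in hand, Proposition~\ref{prop:Dcocartesian} applies verbatim to give the cocartesian case: if $p$ is enriched cocartesian, then $p^\der\co \mc C^\der \to \mc J^\der \simeq \mc J$ is cocartesian as a functor of $\infty$-categories. The cartesian case follows by the same argument applied to $p^{\op}$, using the opposite-category version of Proposition~\ref{prop:Dcocartesian} mentioned just before it.

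There is no real obstacle here; the entire content is the observation that constancy of the target filtration forces the defibration condition vacuously. The only point one must be a bit careful about is the identification $\mc J^\der \simeq \mc J$, which is why it is worth invoking Remark~\ref{rmk:Dhocolim} explicitly, rather than treating $\mc J$ and $\mc J^\der$ as literally equal.
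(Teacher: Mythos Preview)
Your proposal is correct and follows the same route as the paper, which simply remarks that ``the defibration condition is trivial for constant filtered objects'' and then invokes Proposition~\ref{prop:Dcocartesian}. Your version is more explicit about why constancy forces the relative homotopy sets of the target to vanish and about the identification $\mc J^\der \simeq \mc J$, but the underlying argument is the same.
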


For the purposes of applying these results, it will be convenient to be able to identify when maps are automatically enriched (co)cartesian.
\begin{prop}
  \label{prop:enrichedcartesianshift}
  Suppose that $p\co \mc C \to \mc D$ is a shift-preserving functor between categories with an action of $\mb Z$, and $g\co X \to Y$ is a map in $\mc C$.

  If $g$ is $p$-cocartesian, then $\sh(g)$ is $p$-cocartesian and $g$ is enriched $p$-cocartesian. 
  Similarly, if $g$ is $p$-cartesian, then $\sh(g)$ is $p$-cartesian and $g$ is enriched $p$-cartesian. 
\end{prop}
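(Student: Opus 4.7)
The plan is to unpack both conclusions directly from the identification $\FMap_{\mc C}(X,Y)^n \cong \Map_{\mc C}(X, \sh^n Y)$ provided by Proposition~\ref{prop:shiftenrichment}, combined with the shift-preservation $p \sh \simeq \sh p$. Everything will reduce to applying the ordinary (co)cartesian pullback square at a suitably shifted object.

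First, for enriched $p$-cocartesianness of $g$: by Definition~\ref{def:enrichedcocart} we must check that for each $Z \in \mc C$ the square
\[
  \begin{tikzcd}
    \FMap_{\mc C}(Y, Z) \ar[r] \ar[d] & \FMap_{\mc C}(X, Z) \ar[d] \\
    \FMap_{\mc D}(pY, pZ) \ar[r] & \FMap_{\mc D}(pX, pZ)
  \end{tikzcd}
\]
is a levelwise homotopy pullback of filtered spaces. In filtration degree $n$, using Proposition~\ref{prop:shiftenrichment} and the shift-preservation equivalences $p \sh^n Z \simeq \sh^n p Z$, this square is identified with the square of mapping spaces
\[
  \begin{tikzcd}
    \Map_{\mc C}(Y, \sh^n Z) \ar[r] \ar[d] & \Map_{\mc C}(X, \sh^n Z) \ar[d] \\
    \Map_{\mc D}(pY, p \sh^n Z) \ar[r] & \Map_{\mc D}(pX, p \sh^n Z).
  \end{tikzcd}
\]
This is a homotopy pullback for every $n$ because $g$ is $p$-cocartesian and hence gives such a pullback against the test object $\sh^n Z$. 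So $g$ is enriched $p$-cocartesian.

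Next, for $p$-cocartesianness of $\sh(g) \co \sh X \to \sh Y$: since $\mc C$ and $\mc D$ carry actions of $\mb Z$, the functor $\sh$ is an equivalence on both sides, with quasi-inverse $\sh^{-1}$, and $p$ commutes with both. For any test object $W \in \mc C$, the adjunction-style equivalence $\Map_{\mc C}(\sh A, W) \simeq \Map_{\mc C}(A, \sh^{-1} W)$ (and similarly in $\mc D$) identifies the square testing cocartesianness of $\sh(g)$ at $W$ with the square testing cocartesianness of $g$ at $\sh^{-1} W$. The latter is a pullback by hypothesis, hence so is the former, proving $\sh(g)$ is $p$-cocartesian.

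The cartesian half is formally dual: passing to $\mc C^\op \to \mc D^\op$ exchanges cartesian and cocartesian morphisms, and the $\mb Z$-action on the opposite category is again shift-compatible (with shift by $-1$ interchanged with shift by $1$), so the two arguments above translate verbatim. I expect no genuine obstacle here, since once the identification $\FMap_{\mc C}(X,Y)^n \cong \Map_{\mc C}(X,\sh^n Y)$ is in hand, both claims are reindexings of a single pullback condition; the only mild subtlety is making sure the shift-preservation coherences for $p$ are good enough to identify the relevant squares, which is precisely the hypothesis on $p$.
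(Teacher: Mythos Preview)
Your proposal is correct and takes essentially the same approach as the paper: both arguments reduce everything to substituting shifted test objects $\sh^n Z$ (respectively $\sh^{-1} W$) into the ordinary cocartesian pullback square for $g$, using shift-preservation of $p$, and both dispatch the cartesian case by passing to opposite categories. The only cosmetic difference is that the paper treats $\sh(g)$ cocartesian first and then the enriched statement, while you do them in the reverse order.
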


\begin{proof}
  The notion of an action of $\mb Z$ is dualizable, and by considering the opposite category we find that it is sufficient to prove the cocartesian case.
  
  If $g$ is $p$-cocartesian, then for any $Z$ we get a homotopy pullback diagram
  \[
    \begin{tikzcd}
      \Map_{\mc C}(Y, Z) \ar[r] \ar[d] &
      \Map_{\mc C}(X, Z) \ar[d] \\
      \Map_{\mc D}(pY, pZ) \ar[r] &
      \Map_{\mc D}(pX, pZ) 
    \end{tikzcd}
  \]
  Substituting $Z = \sh^{-1} W$ for an arbitrary $W$, and using the fact that $p$ is shift-preserving, we find that we have a homotopy pullback diagram expressing that $\sh g$ is $p$-cocartesian. Applying this to arbitrary shifts $Z = \sh^m W$, we find that $g$ is also enriched cocartesian.
\end{proof}

\begin{rmk}
  There are some mild weakenings of the hypotheses above which still make the result above hold. However, these are more complicated to state, and in the absence of need they are left to the reader.
\end{rmk}

\subsection{Adjunctions}

The following are straightforward from the definitions.

\begin{prop}
  \label{prop:enrichedadjunction}
  Suppose that $\mc C$ and $\mc D$ are categories enriched in filtered spaces, and that we have an enriched adjunction between them: functors $f\co \mc C \to \mc D$ and $g\co \mc D \to \mc C$ with a natural isomorphism
  \[
    \FMap_{\mc C}(X,gY) \to \FMap_{\mc D}(fX,Y).
  \]
  Then there is an induced adjunction between the associated successor functors $f^\der\co \mc C^\der \to \mc D^\der$ and $g^\der\co \mc D^\der \to \mc C^\der$.
\end{prop}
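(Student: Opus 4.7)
The plan is to transport the enriched adjunction directly across the successor functor, since $(-)^\der$ is functorial on mapping objects and the enrichment in $\mc C^\der$ (resp.\ $\mc D^\der$) is by definition obtained by applying $(-)^\der$ to the enriched mapping filtered spaces of $\mc C$ (resp.\ $\mc D$).

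First I would invoke Proposition~\ref{prop:Dfunctor} to produce the induced functors $f^\der\co \mc C^\der \to \mc D^\der$ and $g^\der\co \mc D^\der \to \mc C^\der$; these have the same underlying assignments on objects as $f$ and $g$. Next I would apply the successor functor to the given enriched adjunction isomorphism of filtered spaces
\[
  \FMap_{\mc C}(X, gY) \xrightarrow{\cong} \FMap_{\mc D}(fX, Y),
\]
natural in $X$ and $Y$. Since $(-)^\der$ is a functor on filtered (cubical, or more generally by Theorem~\ref{thm:mainfunctoriality} on enriched) objects, it carries this natural isomorphism to a natural isomorphism
\[
  \FMap_{\mc C}(X, gY)^\der \xrightarrow{\cong} \FMap_{\mc D}(fX, Y)^\der.
\]

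By Definition~\ref{def:Dcategory}, the left-hand side is precisely $\Map_{\mc C^\der}(X, g^\der Y)$ and the right-hand side is $\Map_{\mc D^\der}(f^\der X, Y)$, so we obtain a natural equivalence
\[
  \Map_{\mc C^\der}(X, g^\der Y) \simeq \Map_{\mc D^\der}(f^\der X, Y),
\]
which witnesses the adjunction $f^\der \dashv g^\der$. The unit and counit are obtained by applying successors to the unit and counit of the original adjunction, and the triangle identities follow because $(-)^\der$ is a functor and preserves composition of the enriched mapping objects.

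The only potentially subtle point is coherence: in the $\infty$-categorical setting one should verify that the equivalence of mapping spaces assembles into an adjunction in the sense of, e.g., \cite[5.2.2]{lurie-htt}. This however follows from Theorem~\ref{thm:mainfunctoriality}, which says $(-)^\der\co \Cat_\infty^{\Fil(\Spaces)} \to \Cat_\infty$ is a lax symmetric monoidal $\infty$-functor, and in particular preserves the internal structure (composition, units, and natural transformations) needed to encode an adjunction; applying this functor to an adjunction diagram in $\Cat_\infty^{\Fil(\Spaces)}$ therefore yields an adjunction diagram in $\Cat_\infty$. No step here presents a genuine obstacle, and the result is essentially a formal consequence of the definitions together with the functoriality and (co)limit-preservation results already established.
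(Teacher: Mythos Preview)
Your proposal is correct and matches the paper's approach: the paper simply declares this result ``straightforward from the definitions'' without giving an explicit proof, and your argument is exactly the elaboration one would write---apply the functor $(-)^\der$ to the natural isomorphism $\FMap_{\mc C}(X,gY)\cong\FMap_{\mc D}(fX,Y)$ and use Definition~\ref{def:Dcategory} to identify the resulting spaces as the mapping spaces in the successor categories. Your added remarks about coherence via Theorem~\ref{thm:mainfunctoriality} are a reasonable gloss, though the paper treats the coherent version separately in Proposition~\ref{prop:coherentadjunction}.
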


\begin{cor}
  \label{cor:shiftadjunction}
  Suppose that $\mc C$ and $\mc D$ have actions of $\mb Z$ and that we have a shift-preserving functor $g\co \mc D \to \mc C$ with a left adjoint $f$. Then there is an induced adjunction between the associated successor functors $f^\der\co \mc C^\der \to \mc D^\der$ and $g^\der\co \mc D^\der \to \mc C^\der$.
\end{cor}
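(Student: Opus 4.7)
The plan is to reduce the corollary to Proposition~\ref{prop:enrichedadjunction}, by verifying that under the hypotheses the adjunction $(f,g)$ is automatically enriched in the filtered-space enrichments provided by Proposition~\ref{prop:shiftenrichment}. Once that is established, the successor adjunction is immediate.

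First, I would invoke Proposition~\ref{prop:shiftenrichment} to view both $\mc C$ and $\mc D$ as categories enriched in filtered spaces, with $\FMap_{\mc C}(X,Y)^n = \Map_{\mc C}(X,\sh^n Y)$ and $\FMap_{\mc D}(X,Y)^n = \Map_{\mc D}(X,\sh^n Y)$; the structure map from level $n$ to level $n+1$ is post-composition with $\tau_{\sh^n Y}$. For each $n$, the ordinary adjunction gives a natural equivalence
\[
  \Map_{\mc D}(fX, \sh^n Y) \simeq \Map_{\mc C}(X, g\sh^n Y),
\]
and the shift-preserving structure on $g$ supplies compatible natural equivalences $g\sh^n Y \simeq \sh^n gY$. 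Composing yields level-wise natural equivalences
\[
  \FMap_{\mc D}(fX,Y)^n \simeq \FMap_{\mc C}(X,gY)^n.
\]

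The next step is to check that these level-wise equivalences assemble into an equivalence of filtered spaces. The structure maps on both sides are induced by post-composition with $\tau$; on $\mc D$-side this is $\tau_{\sh^n Y}$, and on $\mc C$-side it becomes, via the ordinary adjunction, post-composition with $g(\tau_{\sh^n Y})$. The shift-preserving structure on $g$ is, by definition, compatible with $\tau$: the equivalence $g\sh^n Y \simeq \sh^n g Y$ intertwines $g(\tau_{\sh^n Y})$ with $\tau_{\sh^n gY}$. Thus the triangles commute and the assembled map is a natural equivalence of filtered mapping objects, natural in both $X$ and $Y$.

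Having established an enriched adjunction, I would then apply Proposition~\ref{prop:enrichedadjunction} directly to conclude that $f^\der\dashv g^\der$ is an adjunction between the successor categories. The main technical point is the compatibility of the shift-preserving equivalences with the natural transformations $\tau$; in the 1-categorical setting this is essentially tautological, while in the $\infty$-categorical setting the necessary coherent compatibility is part of the shift-preserving data (cf.\ the coherent version of actions noted in Remark~\ref{rmk:action} via Heine's work). Apart from this compatibility check, no further obstacle arises: once the adjunction is enriched, Proposition~\ref{prop:enrichedadjunction} does all the work of producing the induced adjunction between $\mc C^\der$ and $\mc D^\der$.
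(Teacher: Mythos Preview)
Your proposal is correct and follows precisely the route the paper intends: the corollary is stated immediately after Proposition~\ref{prop:enrichedadjunction} with no separate proof, so the implicit argument is exactly the reduction you carry out---using the shift-preserving structure on $g$ to upgrade the ordinary adjunction isomorphism to a natural equivalence $\FMap_{\mc C}(X,gY)\simeq\FMap_{\mc D}(fX,Y)$ of filtered spaces, and then invoking Proposition~\ref{prop:enrichedadjunction}. You have simply made explicit the levelwise and $\tau$-compatibility checks that the paper leaves tacit.
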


The following is a coherent version of this statement.

\begin{prop}
  \label{prop:coherentadjunction}
  An adjunction of $\infty$-categories enriched in filtered spaces gives rise to an adjunction of successor $\infty$-categories.
\end{prop}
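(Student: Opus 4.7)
The plan is to encode the given enriched adjunction as a bifibration and then apply Corollary~\ref{cor:Dunstraightfunctorial}. An enriched adjunction $f\co \mc C \rightleftarrows \mc D : g$ may be represented by a single enriched $\infty$-category $\mc E$ equipped with an enriched functor $p\co \mc E \to \Delta^1$, where $\Delta^1$ is regarded as enriched in constant filtered spaces. Concretely, $\mc E$ has objects $\mathrm{ob}(\mc C) \amalg \mathrm{ob}(\mc D)$, with $\FMap_{\mc E}$ restricted to $\FMap_{\mc C}$ and $\FMap_{\mc D}$ on the two fibers, cross-morphisms $\FMap_{\mc E}(X,Y) \simeq \FMap_{\mc D}(fX,Y) \simeq \FMap_{\mc C}(X,gY)$ for $X \in \mc C$ and $Y \in \mc D$, and no morphisms in the opposite direction. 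The projection $p$ is then simultaneously an enriched cocartesian fibration classifying $f$ and an enriched cartesian fibration classifying $g$.

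Applying Corollary~\ref{cor:Dunstraightfunctorial} twice to $p$ then gives that $p^\der\co \mc E^\der \to \Delta^1$ is at once a cocartesian and a cartesian fibration of $\infty$-categories. Its fibers over $0$ and $1$ are $\mc C^\der$ and $\mc D^\der$, and the resulting bifibration encodes the desired adjunction $f^\der \dashv g^\der$; its unit and counit extract directly from the universal properties of the (co)cartesian lifts, which are transported by Proposition~\ref{prop:Dcocartesianmap}.

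The main obstacle is justifying that an enriched adjunction corresponds to such an enriched bifibration, which is a foundational statement in the theory of categories enriched in filtered spaces. For this one can appeal to the enriched (co)cartesian fibration formalism developed by Gepner--Haugseng and Heine. Alternatively, one can observe that the lax symmetric monoidal successor functor of Theorem~\ref{thm:mainfunctoriality} should promote to a functor of $(\infty,2)$-categories, and invoke the general fact that any functor of $(\infty,2)$-categories preserves adjunctions; this yields an independent route to the conclusion that sidesteps spelling out the explicit bifibration.
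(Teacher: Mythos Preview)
Your approach is essentially identical to the paper's: both encode the enriched adjunction as an enriched functor $p\co \mc E \to \Delta^1$ that is simultaneously enriched cartesian and cocartesian, and then invoke Corollary~\ref{cor:Dunstraightfunctorial} to conclude that $p^\der$ is a bifibration over $\Delta^1$, hence an adjunction. The paper's proof is terser and simply asserts the enriched bifibration correspondence (citing \cite[5.2.2.1]{lurie-htt} for the unenriched case), whereas you spell out the cograph construction and flag the foundational input more explicitly, but the substance is the same.
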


\begin{proof}
  An adjunction determines and is determined by a functor $p\co \mc E \to \Delta^1$ that is both cocartesian and cartesian \cite[5.2.2.1]{lurie-htt}, with $\mc C = p^{-1}(0)$ and $\mc D = p^{-1}(1)$. An enriched adjunction correspondingly determines and is determined by a lift of this to an enriched functor that is enriched cartesian and cocartesian, and thus we can apply Corollary~\ref{cor:Dunstraightfunctorial}.
\end{proof}

\begin{rmk}
  If the adjunction between $\mc C$ and $\mc D$ is monadic, it is often tempting to try and immediately conclude that the adjunction between $\mc C^\der$ and $\mc D^\der$ is also monadic. However, even if the right adjoint $g\co \mc D \to \mc C$ is conservative, the induced functor $g^\der\co \mc D^\der \to \mc C^\der$ is often not necessarily so. We do, however, have the following.
\end{rmk}

\begin{prop}
  \label{prop:weakmonadicity}
  Suppose that $\mc C$ and $\mc D$ have actions of $\mb Z$ and that we have a shift-preserving functor $g\co \mc D \to \mc C$ that admits a left adjoint $f$. If $\mc D$ admits, and $g$ preserves, countable limits, then the induced functor
  \[
    \mc D^\der \to \Alg_{(gf)^\der}(\mc C^\der)
  \]
  has a fully faithful left adjoint.
\end{prop}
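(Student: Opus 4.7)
The plan is to use the induced adjunction $f^\der \dashv g^\der$ from Corollary~\ref{cor:shiftadjunction}, combined with the identification $(gf)^\der \simeq g^\der \circ f^\der$ from compositional functoriality of successors (Proposition~\ref{prop:Dfunctor}), which exhibits the comparison in question as the standard Eilenberg--Moore comparison $K$ for $f^\der \dashv g^\der$. The hypothesis that $\mc D$ has countable limits preserved by $g$, together with Proposition~\ref{prop:completecocomplete} applied to the invertible (hence adjointable) action of $\mb Z$ on $\mc D$, ensures that $\mc D^\der$ admits countable homotopy limits and that $g^\der$ preserves them.

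Next I would construct the candidate left adjoint $L$ via a totalization. Given $(X, h) \in \Alg_{(gf)^\der}(\mc C^\der)$, form the cosimplicial object $C^\bullet$ in $\mc D^\der$ with $C^n = f^\der (g^\der f^\der)^n X$, whose coface and codegeneracy maps are assembled from the adjunction unit $\eta \colon 1 \to g^\der f^\der$ and from the algebra structure $h$. Define $L(X, h) := \mathrm{Tot}\,C^\bullet$, a countable homotopy limit in $\mc D^\der$. Adjointness $L \dashv K$ then follows from
\[
  \Map_{\mc D^\der}(L(X, h), Y) \simeq \mathrm{Tot}\,\Map_{\mc D^\der}(C^\bullet, Y),
\]
where commuting the $\Map$ through the adjunction $f^\der \dashv g^\der$ identifies the right-hand side with the limit expressing the space of algebra maps $\Map_{\Alg}((X, h), K(Y)) = \Map_{\Alg}((X,h),(g^\der Y, g^\der \epsilon_Y))$.

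Finally, fully faithfulness of $L$---equivalently, the unit $(X, h) \to KL(X, h)$ being an equivalence---follows by applying $g^\der$, which preserves countable limits. The resulting cosimplicial object $g^\der C^\bullet$ in $\mc C^\der$ admits a natural augmentation by $X$, and the algebra structure $h$ provides the extra codegeneracy that splits this augmented cosimplicial object; the totalization of a split augmented cosimplicial object recovers its augmentation, giving $g^\der L(X, h) \simeq X$ equipped with the original algebra structure $h$. The main obstacle will be the $\infty$-categorical construction of the cosimplicial object and its splitting from the coherent algebra data, which requires the lax symmetric monoidality of the successor functor (Theorem~\ref{thm:Dmonoidal}, Proposition~\ref{prop:Dcoherentmonoidal}) and careful use of the homotopical description of $\mc C^\der$ from Corollary~\ref{cor:Dhomotopical} to ensure that the iterated composites and shifts behave as expected in the enriched setting.
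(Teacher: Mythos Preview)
Your construction of $L$ has the wrong variance. The left adjoint to the Eilenberg--Moore comparison is built as a \emph{colimit}: one forms the \emph{simplicial} bar construction $B_\bullet$ with $B_n = f^\der(g^\der f^\der)^n X$ and sets $L(X,h) = |B_\bullet|$, its geometric realization. The identity you invoke,
\[
  \Map_{\mc D^\der}(L(X,h), Y) \simeq \mathrm{Tot}\,\Map_{\mc D^\der}(C^\bullet, Y),
\]
is false when $L$ is defined as a totalization, since $\Map_{\mc D^\der}(-,Y)$ takes colimits to limits, not limits to limits. With the geometric realization in place the adjunction does follow as you outline, and full faithfulness then uses that $g^\der$ preserves geometric realizations so that $g^\der|B_\bullet| \simeq |g^\der B_\bullet|$, together with the extra \emph{degeneracy} (not codegeneracy) supplied by the algebra structure $h\co TX \to X$, which splits the augmented simplicial object $g^\der B_\bullet \to X$.

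The paper's proof is a one-line appeal to \cite[4.7.3.13]{lurie-higheralgebra}, which packages precisely this bar-construction argument once one knows that $\mc D^\der$ admits geometric realizations and $g^\der$ preserves them; that is obtained from Proposition~\ref{prop:functoriallimits}. Note that what is actually needed is that $\mc D$ admits, and $g$ preserves, countable \emph{colimits}---the word ``limits'' in the hypothesis appears to be a slip, and your attempt to honor it literally is probably what pushed you toward the cosimplicial/totalization framework. That framework would be appropriate for constructing a \emph{right} adjoint (or for a comonadic comparison), not a left adjoint.
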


\begin{proof}
  The functor $\mc D^\der \to \mc C^\der$ preserves geometric realizations by Proposition~\ref{prop:functoriallimits}, and so this follows from \cite[4.7.3.13]{lurie-higheralgebra}.
\end{proof}

\begin{rmk}
  The essential image of $\Alg_{(gf)^\der}(\mc C^\der)$ is contained in the closure of the image of $\mc C^\der$ under geometric realizations. If $\mc C^\der$ has coproducts and $\mc D$ is cocomplete, this implies that the essential image of $\Alg_{(gf)^\der}(\mc C^\der)$ is the colimit-closure of the essential image of $\mc C^\der$.
\end{rmk}

\section{Stable categories}
\label{sec:stable}

\subsection{Zero objects, cofibers, and suspensions}

By Corollary~\ref{cor:zeroobject}, if $\mc C$ has a zero object, so does $\mc C^\der$. The homotopy pushout formula of Proposition~\ref{prop:pushoutrep} allows us to explicitly identify homotopy cofibers and suspensions in $\mc C^\der$.

\begin{prop}
  \label{prop:Dcofiber}
  Suppose that $\mc C$ is pointed and has an adjointable action of $\mb N$. Given a map $f\co X \to Y$ in $\mc C$, the object $X / \shad Y$ in $\mc C$ represents the homotopy cofiber $X \modmod Y$ of $f$ in $\mc C^\der$.
\end{prop}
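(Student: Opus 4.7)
The plan is to deduce this directly as a specialization of Proposition~\ref{prop:pushoutrep} to the pushout square that defines a cofiber.

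First, I would observe that since $\mc C$ is pointed with zero object $0$ and has an adjointable action of $\mb N$, Corollary~\ref{cor:zeroobject} says that $0$ remains a zero object in the $\infty$-category associated to $\mc C^\der$. Hence the homotopy cofiber of $f$ in $\mc C^\der$ is, by definition, a homotopy pushout in $\mc C^\der$ of a span consisting of $f$ together with the unique map from its source to $0$.

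Next I would apply Proposition~\ref{prop:pushoutrep}, which says that a homotopy pushout in $\mc C^\der$ of a diagram $B \leftarrow A \to C$ is represented by the homotopy pushout in $\mc C$ of the modified diagram $B \leftarrow \shad A \to C$, whose outgoing legs are precomposed through the counit $\shad A \to A$ of Remark~\ref{rmk:shadtau}. Specializing to the cofiber span, and using the notational convention $X \modmod Y = X/\shad Y$ recorded in the earlier remark, the resulting pushout in $\mc C$ is the cofiber of the composite $\shad Y \to Y \to X$, which is exactly $X/\shad Y$. This is the asserted identification.

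There is no genuine obstacle: the content lies entirely in Proposition~\ref{prop:pushoutrep} and Corollary~\ref{cor:zeroobject}. The adjointability of the action is needed precisely to invoke Proposition~\ref{prop:pushoutrep}, and the pointedness of $\mc C$ is what makes the cofiber span (and its image in $\mc C^\der$) available. The only thing one needs to keep straight is that the direction of $f$ is fixed by the convention so that the source of $f$ is the object to which $\shad$ gets applied when passing to the pushout in $\mc C$.
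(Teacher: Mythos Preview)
Your proposal is correct and matches the paper's own approach: the paper states this proposition without proof, having just remarked that ``the homotopy pushout formula of Proposition~\ref{prop:pushoutrep} allows us to explicitly identify homotopy cofibers and suspensions in $\mc C^\der$.'' Your invocation of Corollary~\ref{cor:zeroobject} to pin down the zero object in $\mc C^\der$ is a sensible extra step, and your closing caveat about the direction of $f$ is apt---the paper's statement has a minor variable swap relative to its own convention (compare Proposition~\ref{prop:gradedcofiber}), which you have effectively navigated.
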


\begin{cor}
  \label{cor:Dsuspension}
  Suppose that $\mc C$ is pointed and has an adjointable action of $\mb N$. For any $X \in \mc C$, the object $\Sigma \shad X$ in $\mc C$ represents the suspension $\sumsum X$ of $X$ in $\mc C^\der$.
\end{cor}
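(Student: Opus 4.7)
The plan is to realize the suspension $\sumsum X$ in $\mc C^\der$ as a homotopy pushout and then invoke Proposition~\ref{prop:pushoutrep} to transport the computation back to $\mc C$. In any pointed $\infty$-category the suspension of an object is by definition the homotopy pushout of the diagram $0 \leftarrow X \to 0$. Since Corollary~\ref{cor:zeroobject} tells us that the zero object $0 \in \mc C$ remains a zero object of $\mc C^\der$, this description is unambiguous: we may compute $\sumsum X$ as the homotopy pushout of $0 \leftarrow X \to 0$ inside $\mc C^\der$, and the two $0$'s refer to the same object whether interpreted in $\mc C$ or in $\mc C^\der$.

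The next step is to apply Proposition~\ref{prop:pushoutrep} to the cospan $B \leftarrow A \to C$ with $B = C = 0$ and $A = X$. That proposition identifies the homotopy pushout in $\mc C^\der$ of the original diagram with the homotopy pushout of the ``shifted'' diagram $0 \leftarrow \shad X \to 0$ taken in $\mc C$; but this last pushout is by definition the suspension $\Sigma \shad X$ in $\mc C$. Hence $\Sigma \shad X$ represents $\sumsum X$ in $\mc C^\der$, as claimed.

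There is no real obstacle here: the statement is essentially a one-line specialization of Proposition~\ref{prop:pushoutrep}. The only bookkeeping point worth mentioning is that the two copies of $0$ in the pushout diagram do not need to be modified by $\shad$, since Proposition~\ref{prop:pushoutrep} only applies $\shad$ to the apex of the cospan. (Equivalently, one could deduce the corollary from Proposition~\ref{prop:Dcofiber} applied to the unique map $X \to 0$.)
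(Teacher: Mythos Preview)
Your proof is correct and matches the paper's intended argument: the corollary is stated without proof immediately after Proposition~\ref{prop:Dcofiber}, and your derivation---specializing Proposition~\ref{prop:pushoutrep} to the span $0 \leftarrow X \to 0$ (or, equivalently, applying Proposition~\ref{prop:Dcofiber} to $X \to 0$)---is exactly the one-line specialization the paper has in mind.
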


\subsection{Stability}

Recall from \cite[1.4.2.27]{lurie-higheralgebra} that an $\infty$-category $\mi C$ is \emph{stable} if and only if it is pointed, has finite homotopy colimits, and that the suspension functor $\Sigma\co \mi C \to \mi C$ is a self-equivalence. (We will refer to a category enriched in spaces as \emph{stable} if the associated $\infty$-category is also stable.)

\begin{thm}
  \label{thm:stabletostable}
  Suppose that $\mc C$ has an action of $\mb Z$, and that the underlying category of $\mc C$ is stable. Then $\mc C^\der$ is stable.
\end{thm}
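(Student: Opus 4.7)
The plan is to verify, in order, the three conditions characterizing a stable $\infty$-category: that $\mc C^\der$ is pointed, has finite homotopy colimits, and has an equivalence $\sumsum\co \mc C^\der \to \mc C^\der$. Since an action of $\mb Z$ is in particular an adjointable action of $\mb N$ (with $\shad = \sh^{-1}$), the first two conditions are almost immediate from what has already been developed: Corollary~\ref{cor:zeroobject} transports a zero object of $\mc C$ to a zero object of $\mc C^\der$, and Proposition~\ref{prop:completecocomplete} applied with $\kappa = \aleph_0$ gives $\mc C^\der$ all finite homotopy colimits (and limits), using that stability provides both in $\mc C$.

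The substantive step is to show that the suspension $\sumsum$ on $\mc C^\der$ is an equivalence. By Corollary~\ref{cor:Dsuspension}, $\sumsum X \simeq \Sigma \shad X = \Sigma \sh^{-1} X$, so it suffices to see that each of $\Sigma$ and $\sh^{-1}$ descends to a self-equivalence of $\mc C^\der$. Both are self-equivalences of $\mc C$: $\sh^{-1}$ because $\sh$ is by the $\mb Z$-action hypothesis, and $\Sigma$ because $\mc C$ is stable. Both are also shift-preserving, $\sh^{-1}$ trivially, and $\Sigma$ because $\sh$ preserves the cofiber sequence $X \to 0 \to \Sigma X$ (as any self-equivalence preserves all homotopy colimits and the zero object).

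Once both functors are shift-preserving equivalences, we observe that any shift-preserving equivalence $F\co \mc C \to \mc C$ induces an equivalence $F^\der\co \mc C^\der \to \mc C^\der$. Indeed, $F$ induces levelwise equivalences
\[
  \FMap_{\mc C}(X,Y)^n = \Map_{\mc C}(X, \sh^n Y) \xrightarrow{\sim} \Map_{\mc C}(FX, \sh^n FY) = \FMap_{\mc C}(FX, FY)^n,
\]
and applying the successor functor to these levelwise equivalences of filtered spaces yields equivalences $\Map_{\mc C^\der}(X,Y) \to \Map_{\mc C^\der}(FX, FY)$; essential surjectivity of $F$ on $\mc C$ transfers to $F^\der$ since $\mc C$ and $\mc C^\der$ have the same objects. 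Applying this to $F = \Sigma$ and $F = \sh^{-1}$ and composing completes the proof.

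The main obstacle is purely bookkeeping: verifying that a shift-preserving equivalence on the underlying category descends to an equivalence on $\mc C^\der$. Everything else, including $\sh$-equivariance of $\Sigma$ and the passage to successors, is a direct consequence of the machinery already set up.
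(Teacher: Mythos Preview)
Your proof is correct and follows essentially the same route as the paper: verify a zero object via Corollary~\ref{cor:zeroobject}, finite homotopy colimits via Proposition~\ref{prop:completecocomplete}, and then identify $\sumsum$ with $\Sigma\circ\sh^{-1}$ via Corollary~\ref{cor:Dsuspension} and argue that this composite is an autoequivalence. The only difference is that the paper phrases the last step as ``$\sh$ and $\Sigma$ are autoequivalences of the enriched $\infty$-category, hence so is their composite,'' implicitly invoking the functoriality of $(-)^\der$ on enriched categories, whereas you spell out directly why a shift-preserving self-equivalence of $\mc C$ induces levelwise equivalences on filtered mapping objects and hence an equivalence on $\mc C^\der$.
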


\begin{proof}
  We need to prove that the $\infty$-category associated to $\mc C^\der$ has finite homotopy colimits, a zero object, and that the suspension functor $\sumsum$ of $\mc C^\der$ is an autoequivalence.

  The category $\mc C^\der$ has a zero object by Corollary~\ref{cor:zeroobject}, and finite homotopy colimits by Proposition~\ref{prop:completecocomplete}.

  By Corollary~\ref{cor:Dsuspension}, the suspension functor $\sumsum$ for $\mc C^\der$ lifts to the composite 
  \[
    X \mapsto \Sigma (\sh^{-1} X)
  \]
  of enriched functors on $\mc C$. Because $\sh$ and $\Sigma$ are both autoequivalences of the associated enriched $\infty$-category of $\mc C$ by assumption, so is $\sumsum$.
\end{proof}

\subsection{Preserved cofibers}

Certain special cofiber sequences in $\mc C$ may be preserved by the passage to $\mc C^\der$.

\begin{prop}
  \label{prop:cofiberpres}
  Let $\mc C$ be stable with an action of the poset $\mb Z$. Suppose $P \to Q \to R \to \Sigma P$ is a cofiber sequence in $\mc C$, and the map $R \to \Sigma P$ lifts to a map $R \to \sh^{-1} \Sigma P$. Then $P \to Q \to R \to \Sigma P$ is still a cofiber sequence in $\mc C^\der$.
\end{prop}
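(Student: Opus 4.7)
The plan is to identify the cofiber of $P \to Q$ in $\mc C^\der$ through Proposition~\ref{prop:pushoutrep}, measure its discrepancy with $R$ by a cofiber sequence in $\mc C$, and then use the hypothesis on the connecting map to split this extension, leaving $R$ up to a summand that vanishes in $\mc C^\der$. First, by Proposition~\ref{prop:pushoutrep}, a cofiber of $P \to Q$ in $\mc C^\der$ is represented by $C := \mathrm{cof}_{\mc C}(\shad P \to Q)$, where the map $\shad P \to Q$ is the composite through the counit $\shad P \to P$.

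Next, I would apply the octahedron axiom in the stable category $\mc C$ to the composition $\shad P \to P \to Q$, producing a cofiber sequence
\[
  P/\shad P \to C \to R \xrightarrow{\delta} \Sigma(P/\shad P),
\]
in which the connecting map $\delta$ is naturally identified with the composite $R \xrightarrow{\partial} \Sigma P \to \Sigma(P/\shad P)$, where the last map is the suspension of the cofiber projection $P \to P/\shad P$. The hypothesis says precisely that $\partial$ factors through $\tau\co \Sigma\shad P \to \Sigma P$ (since $\sh^{-1}\Sigma P = \Sigma\shad P$ when $\shad = \sh^{-1}$), and the composite $\Sigma\shad P \to \Sigma P \to \Sigma(P/\shad P)$ is null by the defining cofiber sequence. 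Hence $\delta$ is null, and by stability of $\mc C$ the sequence splits as $C \simeq (P/\shad P) \oplus R$, with the natural map $C \to R$ identified with projection onto the second summand.

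To conclude, I would show that $P/\shad P$ represents a zero object in $\mc C^\der$. Applying Proposition~\ref{prop:shiftnull} with $X = \shad P$ gives that $P/\shad P = \sh(\shad P)/\shad P$ is nearly constant with value $0$, and then Proposition~\ref{prop:nearlyconstanttoconstant} implies that the mapping spaces into and out of $P/\shad P$ in $\mc C^\der$ are contractible, so it is a zero object there. Since direct sums are preserved by the passage from $\mc C$ to $\mc C^\der$ by Proposition~\ref{prop:productrep}, the splitting $C \simeq (P/\shad P) \oplus R$ becomes $C \simeq R$ in $\mc C^\der$, via the natural projection identified above; this identifies $R$ with the cofiber of $P \to Q$ in $\mc C^\der$, as required.

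I expect the main obstacle to be the identification of $\delta$ in the second step: one must carefully trace through the octahedron to see that the natural connecting map factors as $\partial$ followed by the quotient $\Sigma P \to \Sigma(P/\shad P)$, and then match this with the lift provided by the hypothesis to extract nullity. The remaining ingredients are routine applications of previously established results.
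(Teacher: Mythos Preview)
Your proposal is correct and follows essentially the same approach as the paper. Both arguments identify the $\mc C^\der$-cofiber with $C = Q/\sh^{-1}P$, exhibit a splitting $C \simeq R \oplus (P/\sh^{-1}P)$, and then invoke Proposition~\ref{prop:shiftnull} and Proposition~\ref{prop:productrep} to discard the extra summand; the only cosmetic difference is that the paper obtains the splitting by describing $C$ as the pullback $R \times^h_{\Sigma P} \sh^{-1}\Sigma P$ and using the lift to produce a section, whereas you use the octahedral axiom and show the connecting map is null---these are equivalent ways of seeing the same decomposition.
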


\begin{proof}
  Let $\tilde R$ denote the homotopy pullback of the diagram $R \to \Sigma P \leftarrow \sh^{-1} \Sigma P$ in $\mc C$, so that we have a diagram of fiber sequences
  \[
    \begin{tikzcd}
      \sh^{-1} P \ar[r] \ar[d] &
      Q \ar[r] \ar[d,equals] &
      \tilde R \ar[r] \ar[d] &
      \sh^{-1} \Sigma P \ar[r] \ar[d] &
      \Sigma Q \ar[d,equals]\\
      P \ar[r] &
      Q \ar[r] &
      R \ar[r] \ar[ur,dotted]&
      \Sigma P \ar[r] &
      \Sigma Q
    \end{tikzcd}
  \]
  in $\mc C$. The lift $R \to \sh^{-1} \Sigma P$ lifts to a splitting of the map $\tilde R \to R$, so that $\tilde R \simeq R \oplus Y$ where $Y$ is the homotopy fiber. By definition of $\tilde R$, $Y \simeq fib(\sh^{-1} \Sigma P \to \Sigma P)$, and hence $Y \simeq P / \sh^{-1} P$. By Proposition~\ref{prop:shiftnull}, $Y^\der \simeq 0$, and so $\tilde R^\der \to R^\der$ is an equivalence by Proposition~\ref{prop:productrep}.

  The map $\tilde R \to R$ therefore becomes an equivalence in $\mc C^\der$. However, $\tilde R$ is the cofiber of $\sh^{-1} P \to Q$ in $\mc C$, and therefore in $\mc C^\der$ it represents the cofiber of the map $P \to Q$ by Proposition~\ref{prop:pushoutrep}.
\end{proof}

\subsection{Compact objects}

\begin{prop}
  \label{prop:compactpres}
  Let $\mc C$ be stable with an action of $\mb Z$, with filtered homotopy colimits that are preserved by $\sh$. If $X$ is a compact object in $\mc C$, then the image of $X$ is also compact in $\mc C^\der$.
\end{prop}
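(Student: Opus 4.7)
The plan is to use the model-independent formula of Corollary~\ref{cor:Dhomotopical},
\[
    \Map_{\mc C^\der}(X, Y) \;\simeq\; \hocolim_{\mb Z}\, \tau_{\geq 0}\, \FMap_{\mc C}(X, Y),
\]
and show that each of the three factors in this composite preserves filtered homotopy colimits of diagrams of objects of $\mc C$. For the innermost factor $\FMap_{\mc C}(X, -)\co \mc C \to \Fil(\Spaces)$, filtered homotopy colimits in $\Fil(\Spaces)$ are computed pointwise, so it suffices to check that each level $\Map_{\mc C}(X, \sh^k(-))$ preserves them; this combines the compactness of $X$ in $\mc C$ with the fact that every $\sh^k$ (for $k \in \mb Z$) preserves filtered homotopy colimits, since $\sh$ does by hypothesis and $\sh^{-1}$ is an equivalence under the $\mb Z$-action. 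For the outer factors, Proposition~\ref{prop:nonnegadjoint} exhibits $\tau_{\geq 0}$ as a left adjoint, and $\hocolim_{\mb Z}$ is left adjoint to the constant embedding $\Spaces \to \Fil(\Spaces)$, so both preserve colimits. Composing shows that $\Map_{\mc C^\der}(X, -)$ preserves filtered homotopy colimits of diagrams indexed in $\mc C$.

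The main obstacle is then to bridge this to filtered homotopy colimits computed in $\mc C^\der$ itself. My plan is to argue that, under these hypotheses, a filtered homotopy colimit in $\mc C^\der$ of a diagram from $\mc C$ is represented by the corresponding colimit in $\mc C$. After a cofinality reduction (using stability) to sequential colimits, the colimit of $Y_0 \to Y_1 \to \cdots$ in either stable category is the cofiber of the telescope map $1 - \sigma\co \bigoplus_n Y_n \to \bigoplus_n Y_n$. Coproducts in $\mc C^\der$ agree with those in $\mc C$ by Proposition~\ref{prop:productrep}, while by Proposition~\ref{prop:Dcofiber} the $\mc C^\der$-cofiber is the $\mc C$-cofiber of the precomposed map $(1-\sigma)\tau\co \bigoplus_n \shad Y_n \to \bigoplus_n Y_n$. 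An octahedral argument in $\mc C$ fits these two cofibers into an exact triangle whose remaining vertex is $A/\shad A$ for $A = \bigoplus_n Y_n$; Proposition~\ref{prop:shiftnull} shows this vertex is nearly constant with value $0$, and Proposition~\ref{prop:nearlyconstanttoconstant} identifies it with the zero object of $\mc C^\der$. Hence the two sequential colimits agree in $\mc C^\der$, and together with the first step this yields the desired compactness.
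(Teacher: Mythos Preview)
Your first paragraph is fine and closely parallels what the paper does (the paper invokes Proposition~\ref{prop:Dfilteredhocolim} rather than decomposing through $\hocolim_{\mb Z} \circ \tau_{\geq 0}$, but these are equivalent). The difficulties are all in your second paragraph.

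First, the cofinality reduction to sequential diagrams is not valid: an arbitrary filtered $\infty$-category need not admit a cofinal functor from $\mb N$, and stability does not change this. Second, and more seriously, the octahedral step does not go through. The octahedral axiom produces a cofiber sequence $A/\shad A \to C_1 \to C_2$ in $\mc C$, and you correctly observe that $A/\shad A$ becomes zero in $\mc C^\der$. But cofiber sequences in $\mc C$ are \emph{not} cofiber sequences in $\mc C^\der$ in general --- that is exactly the point of Proposition~\ref{prop:cofiberpres}, which imposes an extra lifting condition on the boundary map. Here the boundary is $\partial\co C_2 \to \Sigma(A/\shad A)$, and since $\tau$ is nullhomotopic on $A/\shad A$, a lift of $\partial$ along $\tau$ exists only if $\partial$ itself vanishes, which you have no reason to expect. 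So knowing $A/\shad A \simeq 0$ in $\mc C^\der$ does not let you conclude $C_1 \simeq C_2$ in $\mc C^\der$.

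The paper avoids both problems by passing to the exact, spectrum-valued functor $\map_{\mc C^\der}(X,-)$: by stability, this preserves filtered colimits if and only if it preserves arbitrary coproducts. Since coproducts in $\mc C$ and $\mc C^\der$ coincide (Proposition~\ref{prop:productrep}), it then suffices to apply your first-paragraph argument to the single filtered diagram of finite sub-coproducts of $\bigoplus Z_j$, taken in $\mc C$; no comparison of general filtered colimits between $\mc C$ and $\mc C^\der$ is ever required.
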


\begin{proof}
  As $\mc C^\der$ is stable by Theorem~\ref{thm:stabletostable}, we can write $\Map_{\mc C^\der}(X,-) = \Omega^\infty \circ \map_{\mc C^\der}(X,-)$. Because $\Omega^\infty$ preserves filtered homotopy colimits, it will suffice to prove that the functor
  \[
    \map_{\mc C^\der}(X,-)\co \mc C^\der \to \Sp
  \]
  preserves all hocolims. By \cite[4.4.2.7]{lurie-htt}, this functor preserves all hocolims if and only if it preserves homotopy pushouts and coproducts.

  Since $\mc C$ is stable, the functor $\map_{\mc C^\der}(X,-)$ automatically preserves homotopy pushouts and finite coproducts.

  Compactness of $X$ implies that, for any filtered diagram $Y_i$ of objects, the natural map
  \[
    \hocolim \Map_{\mc C}(X, Y_i) \to \Map_{\mc C}(X, \hocolim Y_i)
  \]
  is an equivalence. The same is true for the diagrams $\sh^m Y_i$, so we get an equivalence
  \[
    \hocolim \FMap_{\mc C}(X, Y_i) \to \FMap_{\mc C}(X, \hocolim Y_i)
  \]
  of filtered spaces. By Proposition~\ref{prop:Dfilteredhocolim}, upon taking successors we get an equivalence
  \[
    \hocolim \Map_{\mc C^\der}(X, Y_i) \to \Map_{\mc C^\der}(X, \hocolim Y_i),
  \]
  and hence the same is true for the mapping spectrum $\map_{\mc C^\der}(X,-)$. Any coproduct $\oplus Z_j$ can be written as a filtered colimit of its finite summands, and this then specializes to an equivalence
  \[
    \oplus \map_{\mc C^\der}(X, Z_j) \to \map_{\mc C^\der}(X, \oplus Z_j).
  \]
  Because coproducts in $\mc C$ and in $\mc C^\der$ coincide by Proposition~\ref{prop:productrep}, we are done.
\end{proof}


\subsection{Associated graded}

We pause here to record the interaction between homotopy colimits in $\Fil(\mc D)^\der$ and the associated graded.

\begin{prop}
  \label{prop:gradedcofiber}
  Suppose that $\mc D$ is stable. Given a map $f\co P \to Q$ in $\Fil(\mc D)$, with representative $Q / \sh^{-1} P$ for the cofiber $Q \modmod P$ in $\Fil(\mc D)^\der$, there is an equivalence of associated gradeds:
  \[
    \gr(Q / \sh^{-1} P) \simeq \gr Q \oplus \Sigma \sh^{-1} \gr(P).
  \]
  Similarly, given a map $f\co P \to Q$ in $\Fil^+(\mc D)$, with representative $Q / \shad P$ for the cofiber $Q \modmod P$ in $\Fil^+(\mc D)^\der$, there is an equivalence of associated gradeds:
  \[
    \gr(Q / \shad P) \simeq \gr Q \oplus \Sigma \shad \gr(P).
  \]
\end{prop}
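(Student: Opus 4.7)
By Proposition~\ref{prop:pushoutrep}, the cofiber $Q\modmod P$ in $\Fil(\mc D)^\der$ is represented by the $\Fil(\mc D)$-cofiber of the map $\sh^{-1}P\to Q$, which is by construction the composite $\sh^{-1}P\to P\xrightarrow{f}Q$ using the canonical comparison $\shad\to\mathrm{id}$ of Remark~\ref{rmk:shadtau}. Since $\mc D$ is stable, each $\gr^n$ is a cofiber functor and hence the associated graded $\gr\co\Fil(\mc D)\to\prod_{\mb Z}\mc D$ preserves cofiber sequences. A direct inspection of structure maps identifies $\gr(\sh^{-1}P)\simeq\sh^{-1}\gr(P)$, so applying $\gr$ to the defining cofiber sequence $\sh^{-1}P\to Q\to Q/\sh^{-1}P$ yields a cofiber sequence
\[
\sh^{-1}\gr(P)\to\gr(Q)\to\gr(Q/\sh^{-1}P)
\]
of graded objects in $\mc D$. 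The problem reduces to exhibiting a canonical nullhomotopy of the first map.

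This map factors through $\gr(f)\co\gr(P)\to\gr(Q)$, so it suffices to produce a canonical nullhomotopy of the map $\sh^{-1}\gr(P)\to\gr(P)$ induced by $\sh^{-1}P\to P$. In degree $n$, this is the map on vertical cofibers induced by the square of structure maps
\[
\begin{tikzcd}
P^{n-2}\ar[r]\ar[d] & P^{n-1}\ar[d]\\
P^{n-1}\ar[r] & P^n,
\end{tikzcd}
\]
which factors vertically through the intermediate row $P^{n-1}\xrightarrow{=}P^{n-1}$. The upper subsquare has vertical cofibers $\gr^{n-1}(P)\to 0$ and the lower has $0\to\gr^n(P)$, exhibiting the composite as factoring canonically through the zero object. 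Since graded objects of $\mc D$ have no morphisms between distinct degrees, these natural pointwise nullhomotopies automatically assemble into a genuine nullhomotopy in the functor category of graded objects.

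In a stable $\infty$-category, a chosen nullhomotopy of $g\co A\to B$ identifies $\mathrm{cofib}(g)$ with $B\oplus\Sigma A$ by pasting the pushout $B\leftarrow A\to 0$ through the intermediate pushout $0\leftarrow A\to 0$ defining $\Sigma A$. Applied to our cofiber sequence, this gives $\gr(Q/\sh^{-1}P)\simeq\gr Q\oplus\Sigma\sh^{-1}\gr(P)$ as required. The $\Fil^+(\mc D)$ statement follows by the identical argument with $\shad$ in place of $\sh^{-1}$; the one novelty is that $\shad$ inserts a zero object at the bottom of the filtration, but this does not alter the factorization-through-the-intermediate-row used to produce the nullhomotopy. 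The main sticking point I anticipate is verifying that the pointwise nullhomotopy genuinely determines a splitting of a cofiber sequence in the functor $\infty$-category rather than merely asserting pointwise nullity; this is manageable precisely because graded objects carry no between-degree structure, so no additional coherence data is required.
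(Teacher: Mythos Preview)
Your proposal is correct and follows essentially the same approach as the paper: apply $\gr$ to the defining cofiber sequence, factor the first map as $\gr(\sh^{-1}P)\to\gr(P)\to\gr(Q)$, observe that the first factor is nullhomotopic, and conclude the splitting. The paper states this more tersely, while you supply the explicit degree-wise factorization through the intermediate row and the remark that no coherence issue arises because graded objects are a product category; this extra detail is sound but not a different method.
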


\begin{proof}
  The associated graded functor preserves cofibers, and so there is a cofiber sequence
  \[
    \gr(\shad P) \to \gr(Q) \to \gr(Q / \sh^{-1} P)
  \]
  of $\mb Z$-graded objects in $\mc C$. However, in the sequence
  \[
    \gr(\shad P) \to \gr(P) \to \gr(Q),
  \]
  the first map is nullhomotopic, and so the cofiber of the composite map decomposes as desired. The same proof holds for nonnegatively filtered objects.
\end{proof}

\begin{cor}
  \label{cor:generatedgraded}
  Suppose that $\mc P$ is a class of objects of $\Fil(\mc D)$, and $X$ any object of $\Fil(\mc D)^\der$ which can be generated by $\mc P$ under homotopy colimits. Then $X$ has a representative whose associated graded is a sum of terms of the form $\Sigma^i \sh^{-j} \gr(P)$ for $P \in \mc P$.

  Similarly, suppose that $\mc P$ is a class of objects of $\Fil^+(\mc D)$, and $X$ any object of $\Fil^+(\mc D)^\der$ which can be generated by $\mc P$ under homotopy colimits. Then $X$ has a representative whose associated graded is a sum of terms of the form $\Sigma^i (\shad)^j \gr(P)$ for $P \in \mc P$.
\end{cor}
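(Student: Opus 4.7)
The plan is to define $\mc S \subseteq \Fil(\mc D)^\der$ to be the class of objects admitting a representative whose associated graded is a sum of terms of the form $\Sigma^i \sh^{-j} \gr(P)$ with $P \in \mc P$ and $i,j \geq 0$, and then to show that $\mc P \subseteq \mc S$ and that $\mc S$ is closed under all homotopy colimits in $\Fil(\mc D)^\der$. The base case is trivial: each $P \in \mc P$ represents itself, with $\gr P = \Sigma^0 \sh^0 \gr P$. Since every homotopy colimit decomposes into coproducts, pushouts, and filtered colimits, I would check closure of $\mc S$ under each of these three separately.

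Coproducts in $\Fil(\mc D)^\der$ agree with coproducts in $\Fil(\mc D)$ by Proposition~\ref{prop:productrep}, and $\gr$ preserves coproducts, so $\mc S$ is closed under coproducts. Filtered homotopy colimits are preserved by the successor functor by Proposition~\ref{prop:Dfilteredhocolim}, and $\gr$ preserves filtered colimits because colimits in $\Fil(\mc D)$ are computed pointwise and cofibers commute with filtered colimits. This leaves the pushout case as the essential content.

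For pushouts, I would take a span $B \leftarrow A \to C$ in $\Fil(\mc D)^\der$ whose terms lie in $\mc S$, lift the two morphisms to honest maps of filtered objects using Proposition~\ref{prop:homotopy}, and invoke Proposition~\ref{prop:pushoutrep} to represent the resulting pushout as the cofiber in $\Fil(\mc D)^\der$ of the canonical map $A \to B \oplus C$. By Proposition~\ref{prop:gradedcofiber} this cofiber has associated graded
\[
  \gr B \oplus \gr C \oplus \Sigma \sh^{-1} \gr A,
\]
which is again of the prescribed form (the $\Sigma \sh^{-1}$ simply advances the indices $i$ and $j$ of each summand by one). The $\Fil^+(\mc D)$ version is identical, substituting $\shad$ for $\sh^{-1}$ throughout.

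The main subtlety is the lifting step: a priori different lifts of the span to $\Fil(\mc D)$ could produce different representatives of the pushout. This is harmless since the formula in Proposition~\ref{prop:gradedcofiber} only sees $\gr A$, $\gr B$, $\gr C$ up to equivalence, so any lift yields a representative whose associated graded has the required form. The result then follows because $X$, by hypothesis, lies in the closure of $\mc P$ under homotopy colimits, and hence in $\mc S$.
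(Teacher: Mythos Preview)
Your proof is correct and follows the same approach as the paper: define the class of objects admitting such a representative, note it contains $\mc P$, and check closure under the building blocks of homotopy colimits using Proposition~\ref{prop:gradedcofiber} for the cofiber step. The paper's version is slightly leaner---since $\Fil(\mc D)^\der$ is stable, it checks only coproducts and cofibers, so your separate treatment of filtered colimits is redundant (and your citation of Proposition~\ref{prop:Dfilteredhocolim} there is not quite on target, as that result concerns the successor functor on filtered spaces rather than colimits in successor categories).
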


\begin{proof}
  Let $G(\mc P)$ be the class of objects of $\mc C^\der$ which admit such a representative. Clearly $\mc P \subset G(\mc P)$. We find that $G(\mc P)$ is closed under coproducts because $\gr(\oplus X_i) \simeq \oplus \gr(X_i)$, and $G(\mc P)$ is closed under cofibers by Proposition~\ref{prop:gradedcofiber}. Therefore, $G(\mc P)$ contains the full subcategory of $\mc C^\der$ generated by $\mc P$, as desired. The same proof holds for nonnegatively filtered objects.
\end{proof}

\section{Filtered spectra}
\label{sec:filteredspectra}

\subsection{Spectral sequences}
\label{sec:spectral-sequences}

Associated to a filtered spectrum $X$, there is a bigraded collection of homotopy groups $\pi_* X^*$. There are also long exact sequences
\[
  \begin{tikzcd}
    \pi_t X^{n-1} \ar[rr] && \pi_t X^n \ar[dl]\\
    & \pi_t (X^n / X^{n-1}) \ar[ul,"\circ" marking]
  \end{tikzcd}
\]
with the bottom-left arrow with circle indicating a degree-changing map from $\pi_t$ to $\pi_{t-1}$. Assembled over $n$, these form an \emph{unrolled exact couple} in the sense of Boardman \cite{boardman-conditional}. Accordingly, we have a spectral sequence whose $E_1$-term consists of the bigraded groups $\pi_t (X^n / X^{n-1})$, roughly attempting to compute the difference between $\pi_* \lim(X^n)$ and $\pi_* \colim X^n$.

In this section our goal is to show that the successor category of filtered spectra also has bigraded homotopy groups, and these compute the derived exact couple.

\begin{rmk}
  \label{rmk:indexing}
  The indexing in the following sections is chosen to match the synthetic indexing from \cite{pstragowski-synthetic}, even though it has less immediacy in the filtered setting.
\end{rmk}

\subsection{The successor category of filtered spectra}

The category $\Fil(\Sp)$ of filtered spectra has an associated successor category $\Fil(\Sp)^\der$.

\begin{defn}
  The functor $\nu\co \Sp \to \Fil(\Sp)^\der$ is the composite of the functor $F_0$ (Definition~\ref{def:freeeval}) with the functor $\Fil(\Sp) \to \Fil(\Sp)^\der$.
\end{defn}

In particular,
\[
  (\nu X)^n =
  \begin{cases}
    X &\text{if }n \geq 0,\\
    0 &\text{otherwise},
  \end{cases}
\]
with nontrivial structure maps being the identity maps of $X$.

\begin{defn}
  \label{def:bishift}
  For an object $X \in \Fil(\Sp)^\der$, we define the \emph{bigraded suspension} by
  \[
    \Sigma^{t,w} X \simeq \sumsum^{t} \sh^w X.
  \]
\end{defn}

\begin{rmk}
  \label{rmk:bishift}
  By Corollary~\ref{cor:Dsuspension}, $\sumsum^{t}$ is represented by the functor $\Sigma^{t} \circ \sh^{-t}$, and so $\Sigma^{t,w} X$ lifts to the filtered spectrum
  \[
    \Sigma^t \sh^{w-t} X.
  \]
  At more length:
  \begin{align*}
    \sumsum X &= \Sigma^{1,0} X\\
    \sh X &= \Sigma^{0,1} X\\
    \Sigma X &= \Sigma^{1,1} X
  \end{align*}
\end{rmk}

\subsection{Spheres and natural homotopy groups}

\begin{defn}
  \label{def:bisphere}
  The \emph{sphere $S^{0,0}$} is the filtered spectrum $\nu \mb S$, and the \emph{bigraded sphere} $S^{t,w}$ is the filtered spectrum $\Sigma^{t,w} S^{0,0}$.
\end{defn}

\begin{rmk}
  In particular, $\sumsum S^{t,w} \simeq S^{t+1, w}$, while $\sh S^{t,w} \simeq S^{t,w+1}$.
\end{rmk}

\begin{prop}
  \label{prop:biformulations}
  There are equivalences
  \[
    S^{t,w} \simeq F_{t-w} S^t \simeq \sh^{w-t} \nu S^t \simeq \sumsum^{t-w} \nu S^w.
  \]
\end{prop}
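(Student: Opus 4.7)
The plan is to unravel the definitions of $\Sigma^{t,w}$, $\nu$, and $\sh^k$ and reduce each of the three asserted equivalences to the single identity $\sh^k F_n \mb S \simeq F_{n-k} \mb S$ in $\Fil(\Sp)$, combined with the fact that $F_n$ commutes with the ordinary spectrum suspension $\Sigma$.

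First I would expand $S^{t,w}$. By Definition~\ref{def:bisphere} and Definition~\ref{def:bishift}, $S^{t,w} = \Sigma^{t,w}\nu\mb S = \sumsum^{t}\sh^{w}\,\nu\mb S$. Remark~\ref{rmk:bishift} (which itself invokes Corollary~\ref{cor:Dsuspension}) identifies $\sumsum^{t}$ in $\Fil(\Sp)^\der$ with the functor $\Sigma^t\circ\sh^{-t}$ lifted from $\Fil(\Sp)$. Therefore $S^{t,w}$ is represented in $\Fil(\Sp)$ by the filtered spectrum $\Sigma^t\sh^{w-t}\nu\mb S = \Sigma^t\sh^{w-t}F_0\mb S$.

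Next I would observe that the free functor and shift interact by $\sh^k F_n \simeq F_{n-k}$: this is immediate from Definitions~\ref{def:freeeval} and \ref{def:shift}, since $(\sh^k F_n X)^m = (F_n X)^{m+k}$ equals $X$ exactly when $m+k\ge n$, i.e., when $m\ge n-k$. Since $\Sigma$ (applied levelwise) commutes with $F_{t-w}$, we obtain
\[
  S^{t,w}\;\simeq\;\Sigma^t\sh^{w-t}F_0\mb S\;\simeq\;\Sigma^t F_{t-w}\mb S\;\simeq\;F_{t-w}S^t,
\]
which is the first equivalence. The second equivalence is then just the same calculation read backwards: $\sh^{w-t}\nu S^t = \sh^{w-t}F_0 S^t \simeq F_{t-w}S^t$.

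For the third equivalence, I apply Remark~\ref{rmk:bishift} again to write $\sumsum^{t-w}\nu S^w$ as the lift $\Sigma^{t-w}\sh^{w-t}\nu S^w = \Sigma^{t-w}\sh^{w-t}F_0 S^w$, and again use $\sh^{w-t}F_0\simeq F_{t-w}$ and the commutation of $\Sigma^{t-w}$ with $F_{t-w}$ to get $F_{t-w}\Sigma^{t-w}S^w \simeq F_{t-w}S^t$. The whole argument is bookkeeping, and the only real obstacle is being careful about the sign conventions for $\sh$ versus $\sumsum$; Remark~\ref{rmk:bishift} already records the two worked examples I need ($\sumsum = \Sigma^{1,0}$, $\sh = \Sigma^{0,1}$, $\Sigma = \Sigma^{1,1}$), so there is no room for ambiguity.
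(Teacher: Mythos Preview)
Your proof is correct and is exactly the unpacking of the paper's one-line justification ``This follows from Remark~\ref{rmk:bishift}'': you use that remark to identify $\Sigma^{t,w}$ with $\Sigma^t\sh^{w-t}$ on filtered spectra and then reduce everything to the elementary identities $\sh^k F_n \simeq F_{n-k}$ and $\Sigma^m F_n \simeq F_n \Sigma^m$. There is no substantive difference in approach, only in level of detail.
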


\begin{proof}
  This follows from Remark~\ref{rmk:bishift}.
\end{proof}

\begin{defn}
  \label{def:binatural}
  For an object $X \in \Fil(\Sp)^\der$, the \emph{bigraded homotopy groups} (or \emph{natural homotopy groups}) by
  \[
    \pi_{t,w} X \simeq [S^{t,w}, X].
  \]
\end{defn}

\subsection{Quotients by $\tau$}

\begin{defn}
  \label{def:modtau}
  For a filtered spectrum $X$, the \emph{quotient by $\tau$}, denoted $X\modmod\tau$, is the cofiber of the map
  \[
    \Sigma^{0,-1} X \to X
  \]
  in $\Fil(\Sp)^\der$ induced by the natural map $\tau\co X \to \sh X$.

  In particular, we write $C\tau$ for $S^{0,0}\modmod\tau$.
\end{defn}

\begin{rmk}
  \label{rmk:Ctaurepresentative}
  In particular, $X\modmod \tau$ lifts to the filtered spectrum $X / sh^{-2} X$. The filtered spectrum $C\tau$ is equivalent to $S^0$ in filtrations $0$ and $1$, and $0$ in all other degrees.
\end{rmk}

\begin{defn}
  \label{def:e2homotopy}
  We refer to the bigraded homotopy groups $\pi_{t,w}(X\modmod\tau)$ as the \emph{$E_2$-homotopy groups} of $X$.
\end{defn}

Applying $\pi_{\ast,\ast}$ to the cofiber sequence $\Sigma^{0,-1} X \to X \to X \modmod \tau$, we obtain the following.
\begin{prop}
  \label{prop:spiralexact}
  There exists a natural long exact sequence
  \[
    \dots \to \pi_{t,w+1} X \to \pi_{t,w} X \to \pi_{t,w} (X\modmod\tau) \to \pi_{t-1,w+1} X \to \dots
  \]
  called the \emph{spiral exact sequence}.
\end{prop}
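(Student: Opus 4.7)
The plan is to extract the spiral exact sequence directly from the defining cofiber sequence $\Sigma^{0,-1}X \to X \to X\modmod\tau$ in $\Fil(\Sp)^\der$ by applying the graded functor $[S^{\ast,\ast},-]$, so the real work is just bookkeeping on bigraded spheres.

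First I would observe that $\Fil(\Sp)$ is a stable $\infty$-category with an action of $\mb Z$ by the shift functor $\sh$ (Example~\ref{exam:filterenrich}), so Theorem~\ref{thm:stabletostable} implies that $\Fil(\Sp)^\der$ is stable as well. Consequently every cofiber sequence extends to a two-sided Puppe sequence, and applying the representable functor $[S^{t,w},-]$ converts it into a long exact sequence of abelian groups. Applied to Definition~\ref{def:modtau}, the cofiber sequence $\Sigma^{0,-1}X \to X \to X\modmod\tau$ therefore yields
\[
  \dots \to [S^{t,w},\Sigma^{0,-1}X] \to [S^{t,w},X] \to [S^{t,w},X\modmod\tau] \to [S^{t,w},\sumsum\Sigma^{0,-1}X] \to \dots,
\]
which is natural in $X$ because the defining cofiber sequence is.

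Next I would identify each group. The middle term is $\pi_{t,w}X$ by Definition~\ref{def:binatural}, and the right term is $\pi_{t,w}(X\modmod\tau)$ by definition. For the left term, $\Sigma^{0,-1}X = \sh^{-1}X$, and adjunction (or shifting the sphere via Proposition~\ref{prop:biformulations}) gives $[S^{t,w},\sh^{-1}X] \cong [\sh S^{t,w},X] = [S^{t,w+1},X] = \pi_{t,w+1}X$. For the connecting term, $\sumsum\Sigma^{0,-1}X = \Sigma^{1,-1}X$, so $[S^{t,w},\Sigma^{1,-1}X] \cong [S^{t,w}\otimes S^{-1,1},X]$; using $\Sigma^{-1,1}S^{t,w} \simeq S^{t-1,w+1}$ from the formulas $\sumsum S^{t,w} = S^{t+1,w}$ and $\sh S^{t,w} = S^{t,w+1}$, this becomes $\pi_{t-1,w+1}X$.

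Substituting these identifications back into the long exact sequence yields exactly
\[
  \dots \to \pi_{t,w+1}X \to \pi_{t,w}X \to \pi_{t,w}(X\modmod\tau) \to \pi_{t-1,w+1}X \to \dots,
\]
with naturality inherited from the cofiber sequence. There is no real obstacle here beyond keeping the bigraded indexing consistent; the only step where one must pay attention is verifying that the boundary in the Puppe sequence shifts bidegree by $(-1,+1)$ rather than $(+1,-1)$, which follows because $\sumsum\circ\sh^{-1}$ corresponds to $(t,w)\mapsto(t+1,w-1)$ on sphere indices and therefore to $(t,w)\mapsto(t-1,w+1)$ on $\pi_{\ast,\ast}$.
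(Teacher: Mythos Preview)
Your proposal is correct and follows exactly the paper's approach: the paper's entire proof is the one-line remark ``Applying $\pi_{\ast,\ast}$ to the cofiber sequence $\Sigma^{0,-1} X \to X \to X \modmod \tau$, we obtain the following,'' and you have simply unpacked this with the bidegree bookkeeping made explicit. Your identifications of the terms and of the connecting-map shift are all correct.
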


\begin{prop}
  \label{prop:Ctaurepresents}
  The $E_2$-homotopy groups are representable in $\Fil(\Sp)^\der$: there are natural isomorphisms
  \[
    \pi_{t,w}(X\modmod\tau) \cong [\Sigma^{t-1,w+1} C\tau, X]_{\Fil(\Sp)^\der}.
  \]
\end{prop}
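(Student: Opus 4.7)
The plan is to realize both sides as $\pi_0$ of the cofiber of the same map between mapping spectra, using the stability of $\Fil(\Sp)^\der$ (Theorem~\ref{thm:stabletostable}) so that fiber and cofiber sequences coincide. First I start from the defining cofiber sequence $\Sigma^{0,-1}S^{0,0} \xrightarrow{\tau} S^{0,0} \to C\tau$ in $\Fil(\Sp)^\der$. Shifting by $\Sigma^{t-1,w+1}$ and rotating once produces a cofiber sequence
\[
  \Sigma^{t-1,w+1}C\tau \to S^{t,w} \xrightarrow{\tau} S^{t,w+1}.
\]
Applying the contravariant functor $\map_{\Fil(\Sp)^\der}(-,X)$ turns this into a cofiber sequence of mapping spectra
\[
  \map(S^{t,w+1},X) \xrightarrow{\tau^*} \map(S^{t,w},X) \to \map(\Sigma^{t-1,w+1}C\tau, X),
\]
so $\map(\Sigma^{t-1,w+1}C\tau, X)$ is the cofiber of $\tau^*$.

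Next I apply $\map_{\Fil(\Sp)^\der}(S^{t,w},-)$ to the defining cofiber sequence $\Sigma^{0,-1}X \xrightarrow{\tau} X \to X\modmod\tau$, which yields a cofiber sequence
\[
  \map(S^{t,w},\sh^{-1}X) \xrightarrow{\tau_*} \map(S^{t,w},X) \to \map(S^{t,w},X\modmod\tau).
\]
The shift adjunction $\map(S^{t,w},\sh^{-1}X) \simeq \map(\sh S^{t,w},X) = \map(S^{t,w+1},X)$ matches the first term with the one appearing above. If the induced maps $\tau^*$ and $\tau_*$ from $\map(S^{t,w+1},X)$ to $\map(S^{t,w},X)$ agree, then the two cofibers are naturally equivalent, and applying $\pi_0$ gives the desired isomorphism $[\Sigma^{t-1,w+1}C\tau,X] \cong \pi_{t,w}(X\modmod\tau)$, with naturality in $X$ clear from the construction.

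The main delicate point I expect is therefore the identification of the two maps $\tau^*$ and $\tau_*$ on $\map(S^{t,w+1},X) \to \map(S^{t,w},X)$: one arises from precomposition with $\tau\co S^{t,w} \to S^{t,w+1}$ on the source, and the other from postcomposition with $\tau\co \sh^{-1}X \to X$ on the target. This follows from naturality of the transformation $\tau\co \sh^{-1} \to 1$ obtained as the adjoint of $\tau\co 1 \to \sh$, where the equality of the two candidate adjoint natural transformations relies on the coherence $\sh(\tau) = \tau$ built into Definition~\ref{def:shift}. Explicitly, for $f\co S^{t,w+1} \to X$, the image under $\tau_*$ transported across the adjunction is $\tau_X \circ \sh^{-1}(f)$, while $\tau^*(f) = f \circ \tau_{S^{t,w}}$, and these agree by the naturality square for $\tau\co \sh^{-1} \to 1$ applied to $f$.
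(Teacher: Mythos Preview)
Your argument is correct and follows essentially the same route as the paper: both apply the represented functor to the rotated cofiber sequence $\Sigma^{t-1,w+1}C\tau \to S^{t,w} \to S^{t,w+1}$ (the paper uses the unrotated form with $S^{t-1,w} \to S^{t-1,w+1}$ and mapping spaces rather than mapping spectra), and both reduce the identification to the naturality of $\tau$ turning precomposition on the source into postcomposition on the target. Your explicit justification of $\tau^* = \tau_*$ via the adjoint naturality square is exactly what the paper records as ``naturality of $\tau$ implies that the second of these maps is equivalent to the map \dots induced by $\tau\co \Sigma^{0,-1}X \to X$.''
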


\begin{proof}
  The defining cofiber sequence $S^{0,-1} \to S^{0,0} \to C\tau$ determines cofiber sequences
  \[
    S^{t-1,w} \xrightarrow{\tau} S^{t-1,w+1} \to \Sigma^{t-1,w+1} C\tau.
  \]
  Applying $\Map_{\Fil(\Sp)^\der}(-,X)$ to this gives a natural fiber sequence
  \[
    \Map_{\Fil(\Sp)^\der}(\Sigma^{t-1,w+1} C\tau, X) \to \Map_{\Fil(\Sp)^\der}(S^{t-1,w+1}, X) \xrightarrow{\tau} \Map_{\Fil(\Sp)^\der}(S^{t-1,w}, X)
  \]
  However, naturality of $\tau$ implies that the second of these maps is equivalent to the map $\Map(S^{t-1,w}, \Sigma^{0,-1} X) \to \Map(S^{t-1,w}, X)$ induced by $\tau\co \Sigma^{0,-1} X \to X$. Therefore, the fiber is equivalent to $\Map(S^{t,w}, X\modmod\tau)$, and taking $\pi_0$ implies the desired result.
\end{proof}

\begin{rmk}
  The spiral exact sequence is alternatively induced by a cofiber sequence $\Sigma^{t-1,w+1} C\tau \to S^{t,w} \to S^{t,w+1}.$
\end{rmk}

\subsection{The derived couple}

In this section we will identify the natural and $E_2$-homotopy groups of $X$ in terms of the homotopy groups of the underlying filtered spectrum. In particular, the spiral exact sequence for $X$ can be identified with the derived exact couple associated to its homotopy spectral sequence from Section~\ref{sec:spectral-sequences}.

\begin{prop}
  \label{prop:naturalidentify}
  For a filtered spectrum $X$, we have a natural identification
  \[
    \pi_{t,w} X \cong \im\left(\pi_t X^{t-w} \to \pi_t X^{t-w+1}\right).
  \]
\end{prop}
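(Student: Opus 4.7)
The plan is to chase the definition of $\pi_{t,w}$ through the identifications made earlier in the paper, reducing everything to the general formula for maps in a successor category (Proposition~\ref{prop:homotopy}) combined with the adjunction between the free functor $F_n$ and evaluation $\ev_n$ (Definition~\ref{def:freeeval}).

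First I would unfold the definition: by Definition~\ref{def:binatural},
\[
\pi_{t,w} X = [S^{t,w}, X]_{\Fil(\Sp)^\der},
\]
and by Proposition~\ref{prop:biformulations} we may replace $S^{t,w}$ by $F_{t-w} S^t$. The category $\Fil(\Sp)$ carries the natural action of the poset $\mb Z$ coming from the shift $\sh$ (Example~\ref{exam:filterenrich}), so Proposition~\ref{prop:homotopy} applies and yields
\[
[F_{t-w} S^t, X]_{\Fil(\Sp)^\der}
\;\cong\; \im\!\bigl([F_{t-w} S^t, X]_{\Fil(\Sp)} \xrightarrow{\tau_\ast} [F_{t-w} S^t, \sh X]_{\Fil(\Sp)}\bigr).
\]

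Next I would use the adjunction $F_{t-w} \dashv \ev_{t-w}$ to rewrite each side: for any filtered spectrum $Y$, $[F_{t-w} S^t, Y]_{\Fil(\Sp)} \cong [S^t, Y^{t-w}]_{\Sp} = \pi_t Y^{t-w}$. Applied to $Y = X$ and $Y = \sh X$, and using $(\sh X)^{t-w} = X^{t-w+1}$, this identifies the displayed map with the structure map
\[
\pi_t X^{t-w} \longrightarrow \pi_t X^{t-w+1}
\]
of the filtered spectrum $X$. Taking images gives the desired isomorphism.

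The only thing that requires minor care is checking that the map $\tau_\ast$ appearing in Proposition~\ref{prop:homotopy} really is induced under the $F_{t-w} \dashv \ev_{t-w}$ adjunction by the structure map $X^{t-w} \to X^{t-w+1}$; this is immediate from the definition of $\tau\co X \to \sh X$ as the functor $n\mapsto n+1$ precomposition (Definition~\ref{def:shift}) together with naturality of the adjunction isomorphism. There is no real obstacle here: everything is a direct unwinding once the homotopy-group formula of Proposition~\ref{prop:homotopy} is in hand and the sphere is recognized as a free object on $S^t$ in filtration $t-w$.
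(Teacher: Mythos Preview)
Your proof is correct and follows essentially the same route as the paper: invoke Proposition~\ref{prop:homotopy} to express $\pi_{t,w}X$ as an image of $[S^{t,w},-]_{\Fil(\Sp)}$ under $\tau$, identify $S^{t,w}$ with $F_{t-w}S^t$, and use the $F_{t-w}\dashv \ev_{t-w}$ adjunction to rewrite both sides as $\pi_t X^{t-w}\to \pi_t X^{t-w+1}$. Your added remark verifying that $\tau_\ast$ matches the structure map under the adjunction is a welcome bit of care, but there is no substantive difference in strategy.
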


\begin{proof}
  By Proposition~\ref{prop:homotopy} we have
  \[
    \pi_{t,w} X \cong \im \left([S^{t,w}, X]_{\Fil(\Sp)} \to [S^{t,w}, \sh X]_{\Fil(\Sp)}\right).
  \]
  Since $S^{t,w}$ is identified with $F_{t-w} S^t$, we have $\Map_{\Fil(\Sp)}(S^{t,w}, Y) \simeq \Map_{\Sp}(S^t, Y^{t-w})$, and thus $[S^{t,w}, Y]_{\Fil(\Sp)} \cong \pi_t Y^{t-w}$. Applying this identification to the map $X \to \sh X$ gives the desired result.
\end{proof}

\begin{prop}
  \label{prop:indexinghell}
  For a filtered spectrum $X$, there is an isomorphism between the $E_2$-homotopy group $\pi_{t,w} (X\modmod\tau)$ and the homology of the chain complex
  \[
    \pi_{t+1} (X^{t-w+1}/X^{t-w})
    \xrightarrow{\partial} \pi_t (X^{t-w}/X^{t-w-1})
    \xrightarrow{\partial} \pi_{t-1} (X^{t-w-1}/X^{t-w-2})
  \]
  at the middle displayed term.
\end{prop}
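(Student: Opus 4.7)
The plan is to compute both sides as subquotients of $\pi_t(X^n/X^{n-1})$, where $n := t-w$, using an explicit filtered representative of $X \modmod \tau$.

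By Remark~\ref{rmk:Ctaurepresentative}, the successor-category cofiber $X\modmod\tau$ is represented in $\Fil(\Sp)$ by the filtered spectrum $X/\sh^{-2}X$, whose $n$-th level is $X^n/X^{n-2}$. Applying Proposition~\ref{prop:naturalidentify} to this representative yields
\[
  \pi_{t,w}(X\modmod\tau) \cong \im\bigl(\pi_t(X^n/X^{n-2}) \longrightarrow \pi_t(X^{n+1}/X^{n-1})\bigr).
\]

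Next I would factor this map through $\pi_t(X^n/X^{n-1})$ using the evident commutative square of projections and inclusions, reducing the problem to identifying the image of the composite $\pi_t(X^n/X^{n-2}) \xrightarrow{g} \pi_t(X^n/X^{n-1}) \xrightarrow{f} \pi_t(X^{n+1}/X^{n-1})$. The long exact sequence of the cofiber sequence $X^{n-1}/X^{n-2} \to X^n/X^{n-2} \to X^n/X^{n-1}$ identifies $\im(g)$ with $\ker\bigl(\partial'\co \pi_t(X^n/X^{n-1}) \to \pi_{t-1}(X^{n-1}/X^{n-2})\bigr)$; the long exact sequence of $X^n/X^{n-1} \to X^{n+1}/X^{n-1} \to X^{n+1}/X^n$ identifies $\ker(f)$ with $\im\bigl(\partial\co \pi_{t+1}(X^{n+1}/X^n) \to \pi_t(X^n/X^{n-1})\bigr)$. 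Putting these together, the image of the composite $f \circ g$ is $g(\ker\partial')/(\ker\partial' \cap \ker f) = \ker\partial'/\im\partial$, provided $\im\partial \subseteq \ker\partial'$, i.e., $\partial'\circ\partial = 0$. This last vanishing is the identity $d_1^2 = 0$ for the exact couple of $X$, which holds tautologically.

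The main obstacle is verifying that the two boundary maps $\partial$ and $\partial'$ above genuinely coincide with the differential $d_1$ of the spectral sequence described in Section~\ref{sec:spectral-sequences}. This amounts to an unwinding of the unrolled exact couple: by definition $d_1 = j \circ k$, where $k\co \pi_t(X^n/X^{n-1}) \to \pi_{t-1} X^{n-1}$ is the connecting map of the cofiber sequence $X^{n-1} \to X^n \to X^n/X^{n-1}$ and $j\co \pi_{t-1} X^{n-1} \to \pi_{t-1}(X^{n-1}/X^{n-2})$ is induced by the projection. An octahedral diagram applied to the chain of inclusions $X^{n-2} \to X^{n-1} \to X^n$ identifies the composite $j \circ k$ with the boundary of the cofiber sequence $X^{n-1}/X^{n-2} \to X^n/X^{n-2} \to X^n/X^{n-1}$, which is $\partial'$; the analogous identification applies to $\partial$. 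Once this bookkeeping is in place, the chain of subquotient identifications in the previous paragraph gives the desired isomorphism.
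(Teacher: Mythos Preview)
Your argument is correct and is essentially the same diagram chase the paper carries out: the paper draws the two long exact sequence ``triangles'' linked by the map $\pi_t(X^{n}/X^{n-2}) \to \pi_t(X^{n+1}/X^{n-1})$ and asserts that a diagram chase identifies its image with the homology at $\pi_t(X^{n}/X^{n-1})$, which is exactly your factorization through $g$ and $f$ and the identifications $\im g = \ker\partial'$, $\ker f = \im\partial$. One typo: in ``the image of the composite $f\circ g$ is $g(\ker\partial')/(\ker\partial'\cap\ker f)$'' the $g$ should be $f$ (you want $f(\im g) = f(\ker\partial') \cong \ker\partial'/(\ker\partial'\cap\ker f)$). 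Your final paragraph, checking via the octahedral axiom that $\partial$ and $\partial'$ really are the $d_1$-differentials, is a nice piece of bookkeeping that the paper leaves implicit.
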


\begin{proof}
  We recall that $X\modmod\tau$ is modeled in $\Fil(\Sp)$ by the cofiber $X / \sh^2 X$, which is $X^m / X^{m-2}$ in degree $m$.

  We now consider the following diagram.
  \[
    \begin{tikzcd}[column sep=tiny]
       \pi_t(X^{t-w-1}/X^{t-w-2}) \ar[dr]\\
      & \pi_t(X^{t-w}/X^{t-w-2}) \ar[dd] \ar[dl]\\
       \pi_t(X^{t-w}/X^{t-w-1}) \ar[uu, "\circ" marking, "\partial"] \ar[dr]\\
      & \pi_t(X^{t-w+1}/X^{t-w-1}) \ar[dl] \\
       \pi_t(X^{t-w+1}/X^{t-w}) \ar[uu, "\circ" marking, "\partial"]
    \end{tikzcd}
  \]
  Here the bottom and top triangles are long exact sequences (with circled arrows indicating a degree shift). A diagram chase with these exact sequences indicates that the image of the right-hand vertical map is isomorphic to the homology of the complex on the left-hand side.

  However, by the previous identification, the right-hand vertical map is the map
  \[
    [S^{t,w}, X\modmod\tau]_{\Fil(\Sp)} \to [S^{t,w}, \sh (X \modmod\tau)]_{\Fil(\Sp)}
  \]
  and so its image is isomorphic to $\pi_{t,w} (X\modmod\tau)$, by Proposition~\ref{prop:homotopy}.
\end{proof}

\begin{prop}
  \label{prop:spiralisderived}
  The spiral exact sequence of Proposition~\ref{prop:spiralexact} is the derived couple for the exact couple of the filtered spectrum $X$.
\end{prop}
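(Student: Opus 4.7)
The plan is to reduce everything to identifications already established, then match the three maps of the spiral exact sequence with the three maps $i^2, j^2, k^2$ of the derived couple. By Proposition~\ref{prop:naturalidentify} the group $\pi_{t,w}X$ is the image of $i\co \pi_t X^{t-w} \to \pi_t X^{t-w+1}$, which is precisely the derived $D^2$-term at bidegree $(t,w)$ in the indexing of the unrolled exact couple. By Proposition~\ref{prop:indexinghell} the $E_2$-homotopy group $\pi_{t,w}(X\modmod\tau)$ is the homology of $\pi_{t+1}(X^{t-w+1}/X^{t-w}) \xrightarrow{\partial} \pi_t(X^{t-w}/X^{t-w-1}) \xrightarrow{\partial} \pi_{t-1}(X^{t-w-1}/X^{t-w-2})$ at the middle term; but the boundary $\partial$ is exactly the differential $d^1 = j \circ k$ of the original exact couple, so this is the $E^2$-term. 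Thus the underlying bigraded groups in the spiral sequence already match those of the derived couple.

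The next step is to identify the three maps. First I would handle $i^2$: the map $\tau\co \pi_{t,w+1}X \to \pi_{t,w}X$ is, by Proposition~\ref{prop:naturalidentify} and naturality of the identification $[S^{t,w},-]_{\Fil(\Sp)} \cong \pi_t(-)^{t-w}$, induced by the structure map $X^{t-w-1}\to X^{t-w}$, restricted to the relevant images. By definition this is $i^2$ on $D^2$. For $k^2$, the map $\pi_{t,w}(X\modmod\tau) \to \pi_{t-1,w+1}X$ comes from the boundary in the cofiber sequence $\Sigma^{0,-1}X \to X \to X\modmod\tau$; tracing through $X\modmod\tau \simeq X/\sh^{-2}X$ and the long exact sequences that define $\pi_{t,w}(X\modmod\tau)$ in Proposition~\ref{prop:indexinghell}, a representative class in $\pi_t(X^{t-w}/X^{t-w-1})$ is lifted against the middle cofiber and sent via $\partial$ to a class in $\pi_{t-1}X^{t-w-1}$ that survives one further pushforward; this is precisely the formula $k^2 = k \circ i^{-1}$ for the derived couple.

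The remaining map $j^2\co \pi_{t,w}X \to \pi_{t,w}(X\modmod\tau)$ should be $j \circ i^{-1}$ on representatives. I would verify this by chasing through the cofiber sequence $\Sigma^{t-1,w+1}C\tau \to S^{t,w} \to S^{t,w+1}$ (the remark just before this proposition) together with the representability statement in Proposition~\ref{prop:Ctaurepresents}: a class $\alpha\in\pi_{t,w}X$ is realized as a map $S^{t,w}\to X$ that factors through $\sh X$, and its image in $[\Sigma^{t-1,w+1}C\tau\ttimes S^{t-1,w+1}, X]$ corresponds, under Proposition~\ref{prop:Ctaurepresents} and the identification of Proposition~\ref{prop:indexinghell}, to $j\circ i^{-1}\alpha$ in the $E^2$-term. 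Finally, exactness of the spiral exact sequence combined with the identifications above forces compatibility with the exactness of the derived couple, since both are genuine long exact sequences determined by the same cofiber sequence.

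The main obstacle I expect is purely bookkeeping: matching the sign and indexing conventions of Boardman's derived couple against the $(t,w)$ indexing dictated by Proposition~\ref{prop:biformulations} and Remark~\ref{rmk:indexing}, and making sure the ``lift then apply $\partial$'' description of $k^2$ is read off correctly from the diagram chase used in the proof of Proposition~\ref{prop:indexinghell}. Once the conventions are pinned down, every step is an unwinding of definitions plus naturality of the adjunction $[S^{t,w},-]_{\Fil(\Sp)}\cong \pi_t(-)^{t-w}$.
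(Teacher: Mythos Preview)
Your proposal is correct and follows essentially the same approach as the paper's own proof sketch: identify the groups via Propositions~\ref{prop:naturalidentify} and~\ref{prop:indexinghell}, then verify map-by-map that the spiral sequence maps coincide with those of the derived couple. The paper compresses this last step by writing down the single sequence $\sh^{-1}X \to X \to X/\sh^{-2}X \to \Sigma\sh^{-2}X$ in filtered spectra and declaring the verification ``straightforward,'' whereas you spell out each of the three maps separately; one small caution is that your displayed formula $k^2 = k\circ i^{-1}$ is not the standard one (your verbal description of $k^2$ is in fact the usual $k' = k$ restricted to cycles), but this is exactly the bookkeeping you already flag as the main obstacle.
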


\begin{proof}[Proof sketch.]
  The previous two propositions have identified the groups in the spiral exact sequence with the groups in the derived exact couple. Moreover, on the level of filtered spectra, the maps in the spiral exact sequence are induced by the sequence of maps:
  \[
    \sh^{-1} X \to X \to X / \sh^{-2} X \to \Sigma \sh^{-2} X
  \]
  However, the above identifications on $\pi_{t,w}$ make this the sequence of natural maps
  \[
    \begin{tikzcd}
      \im(\pi_t X^{t-w-1} \to \pi_t X^{t-w}) \ar[d] \\ 
      \im(\pi_t X^{t-w} \to \pi_t X^{t-w+1}) \ar[d] \\ 
      \im(\pi_t X^{t-w}/X^{t-w-2} \to \pi_t X^{t-w+1}/\pi_t X^{t-w-1}) \ar[d] \\ 
      \im(\pi_{t-1} X^{t-w-2} \to \pi_t X^{t-w-1})
    \end{tikzcd}
  \]
  which, while not immediate, are straightforward to verify as those defining the derived exact couple.
\end{proof}

\section{Semisynthetic spectra}
\label{sec:ssyn}

\subsection{Semisynthetic categories}

\begin{defn}
  \label{def:semisynthetic}
  Suppose that $\mc C$ is a monoidal $\infty$-category and that $Q$ is a monoid in $\Fil(\mc C)$. The \emph{semisynthetic category} associated to $Q$ is the successor of the category of left $Q$-modules:
  \[
    \SSyn_Q \simeq \LMod_Q(\Fil(\mc C))^\der.
  \]
  Here $\LMod_Q(\Fil(\mc C))$ inherits an action of the poset $\mb Z$ from $\Fil(\mc C)$.
\end{defn}

\begin{prop}
  \label{prop:ssynadjunction}
  Suppose that $\mc C$ is presentably monoidal. For a map $P \to Q$ of monoids in $\Fil(\mc C)$, the forgetful functor $\LMod_Q \to \LMod_P$ induces an adjunction
  \[
    \SSyn_Q \leftrightarrows \SSyn_P
  \]
  of semisynthetic categories.
\end{prop}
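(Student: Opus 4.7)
The plan is to invoke Corollary~\ref{cor:shiftadjunction} applied to the standard restriction/extension of scalars adjunction
\[
  Q \otimes_P (-) \co \LMod_P(\Fil(\mc C)) \leftrightarrows \LMod_Q(\Fil(\mc C)) \co g,
\]
where $g$ denotes restriction along the algebra map $P \to Q$. This reduces the proposition to two tasks: (i) establishing the underlying adjunction, and (ii) verifying that $g$ is shift-preserving with respect to the $\mb Z$-action invoked in Definition~\ref{def:semisynthetic}.

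For (i): presentable monoidality of $\mc C$ promotes, via Day convolution, to presentable monoidality of $\Fil(\mc C)$. For any algebra $R$ in $\Fil(\mc C)$, the category $\LMod_R(\Fil(\mc C))$ is then presentable and the forgetful functor to $\Fil(\mc C)$ preserves small limits and colimits. Consequently $g$ preserves limits and filtered colimits and admits a left adjoint, computed by the relative tensor product $Q \otimes_P (-)$. For (ii): the $\mb Z$-action on $\LMod_R(\Fil(\mc C))$ is the lift of the shift action on $\Fil(\mc C)$ described in Example~\ref{exam:filterenrich}, with $\sh$ acting by shifting the underlying filtered object and $\tau$ inherited from $\Fil(\mc C)$. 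Because restriction of scalars alters only the module action and commutes with the forgetful functor to $\Fil(\mc C)$, there is a natural equivalence $g \circ \sh \simeq \sh \circ g$ compatible with the $\tau$'s on both sides. Corollary~\ref{cor:shiftadjunction} then yields the adjunction $(Q \otimes_P -)^\der \co \SSyn_P \leftrightarrows \SSyn_Q \co g^\der$, which is the claim.

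The main technical point to address is the coherence embedded in step (ii): formally verifying that the $\mb Z$-action on $\LMod_R(\Fil(\mc C))$ is natural in the algebra $R$ in the $\infty$-categorical sense. One clean way to do this is to observe that $\sh$ defines a natural endotransformation of the functor $R \mapsto \LMod_R(\Fil(\mc C))$, since the operation of shifting a filtered object is oblivious to the module structure and so commutes automatically with restriction along any algebra map. Once this naturality is in hand, the shift-preservation of $g$ is immediate and the rest of the argument is formal.
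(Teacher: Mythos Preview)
Your argument is correct and follows essentially the same route as the paper: both observe that restriction of scalars is shift-preserving, that it admits the extension-of-scalars left adjoint, and then invoke the adjunction-transfer results of Section~\ref{sec:diagrams}. The only cosmetic difference is that you cite Corollary~\ref{cor:shiftadjunction} while the paper cites the closely related Proposition~\ref{prop:coherentadjunction}; your version is slightly more detailed in justifying the existence of the left adjoint and the shift-compatibility, but the strategy is the same.
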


\begin{proof}
  The forgetful functor $\LMod_Q \to \LMod_P$ strictly preserves the action of $\mb Z$, and it has a shift-preserving left adjoint $X \mapsto Q \otimes_P X$. Therefore, we get an adjunction on successor categories by Proposition~\ref{prop:coherentadjunction}.
\end{proof}

\begin{defn}
  \label{def:ssynanalogue}
  The functor $\nu\co \mc C \to \SSyn_Q$ is the composite
  \[
    \mc C \xrightarrow{Q \otimes -} \LMod_Q(\Fil(\mc C)) \to \SSyn_Q.
  \]
  We refer to $\nu(X)$ as the \emph{semisynthetic analogue} of $X$.
\end{defn}

\subsection{Preservation of cofibers}

\begin{prop}
  \label{prop:tauliftshift}
  Suppose that we have a map $f\co X \to Y$ in $\mc C$ that admits a lift in the following diagram:
  \[
    \begin{tikzcd}
      X \ar[d] \ar[r,dotted] & Q^{-1} \otimes Y \ar[d] \\
      Y \ar[r] & Q^0 \otimes Y
    \end{tikzcd}
  \]
  Here the map $Y \to Q^0 \otimes Y$ is induced by the unit map of $Q$.

  Then the map $Q \otimes X \to Q \otimes Y$ lifts along $\tau$ to a map $Q \otimes X \to \sh^{-1} (Q \otimes Y)$ in $\LMod_Q$.
\end{prop}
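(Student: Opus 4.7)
The plan is to translate the problem through two adjunctions. First, the free--forgetful adjunction between $\LMod_Q(\Fil(\mc C))$ and $\Fil(\mc C)$ identifies $Q$-module maps out of $Q \otimes X = Q \otimes F_0(X)$ with maps of filtered objects out of $F_0(X)$. Composing with the $F_0 \dashv \ev_0$ adjunction between $\mc C$ and $\Fil(\mc C)$ identifies these further with ordinary maps out of $X$ in $\mc C$ landing in the zeroth filtration of the target.

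Next, I compute the relevant zeroth filtrations. A direct unwinding of Day convolution shows that $(Q \otimes F_0(Y))^n \simeq Q^n \otimes Y$ for every $n \in \mb Z$: the slice-category colimit defining the Day convolution at degree $n$ admits a cofinal projection onto the poset $\{i : i \le n\}$, which has $n$ as a terminal element. Consequently $(\sh^{-1}(Q \otimes Y))^0 \simeq Q^{-1} \otimes Y$ and $(Q \otimes Y)^0 \simeq Q^0 \otimes Y$. Under the composite adjunction, the natural $Q$-module map $Q \otimes X \to Q \otimes Y$ induced by $f$ corresponds to the composite $X \to Y \to Q^0 \otimes Y$ (with the second map the unit of $Q$ tensored with $\id_Y$); correspondingly, post-composition with $\tau\co \sh^{-1}(Q \otimes Y) \to Q \otimes Y$ on the module side is intertwined with post-composition with the structure map $Q^{-1} \otimes Y \to Q^0 \otimes Y$ of $Q$ on the $\mc C$-side.

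Assembling these identifications, providing a $Q$-module lift of the natural map $Q \otimes X \to Q \otimes Y$ along $\tau$ becomes exactly the same data as a filling of the square in the hypothesis, which is what is assumed, and I read off the desired lift from the given dotted arrow. The only place where care is genuinely needed --- and the anticipated main obstacle --- is verifying that under the composite adjunction the units and counits interact with $\tau$ and with the monoid unit of $Q$ correctly, so that post-composition with $\tau$ on the module side really does correspond to post-composition with the filtration structure map of $Q$ at degree zero. This is a short chase through the two adjunctions and the Day convolution formula, with no conceptual difficulty beyond bookkeeping.
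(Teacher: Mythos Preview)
Your proposal is correct and follows essentially the same approach as the paper: both arguments reduce a $Q$-module map $Q \otimes X \to M$ to a map $X \to M^0$ in $\mc C$ via the composite adjunction, then identify $(\sh^{-1}(Q \otimes Y))^0 \simeq Q^{-1} \otimes Y$ and $(Q \otimes Y)^0 \simeq Q^0 \otimes Y$ to match the hypothesized lift with the desired one. The paper's proof is terser, compressing the two adjunctions into one sentence and taking the Day convolution computation for granted, whereas you spell out the cofinality argument explicitly; but the content is the same.
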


\begin{proof}
  A map $Q \otimes X \to M$ of left $Q$-modules is, by adjunction, equivalent to a map $X \to M^0$. Therefore, the lift in the diagram determines a diagram of left $Q$-modules
  \[
    \begin{tikzcd}
      Q \otimes X \ar[r] \ar[d] & \sh^{-1} (Q \otimes Y) \ar[d,"\tau"] \\
      Q \otimes Y \ar[r,equals] & Q \otimes Y
    \end{tikzcd}
  \]
  as desired.
\end{proof}

\begin{cor}
  \label{cor:ssyncofiberpreservation}
  Suppose that $\mc C$ is stable, and that $X \to Y \to Z$ is a cofiber sequence in $\mc C$ such that the map $Z \to \Sigma X$ induces the trivial map \[
    Z \to Q^0 \otimes Z \to Q^0/Q^{-1} \otimes Z \to Q^0/Q^{-1} \otimes \Sigma X.
  \]
  Then $\nu X \to \nu Y \to \nu Z$ is still a cofiber sequence in $\SSyn_Q$.
\end{cor}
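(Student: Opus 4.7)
The plan is to apply Proposition~\ref{prop:cofiberpres} inside the stable category $\LMod_Q(\Fil(\mc C))$, equipped with its $\mb Z$-action by shift, and then push the conclusion forward to $\SSyn_Q$ by Definition~\ref{def:semisynthetic}. The free-module functor $F\co \mc C \to \LMod_Q(\Fil(\mc C))$ (left adjoint to $\ev_0$) preserves cofiber sequences between stable categories, so $X \to Y \to Z \xrightarrow{\partial} \Sigma X$ produces a cofiber sequence $FX \to FY \to FZ \xrightarrow{F\partial} \Sigma FX$, whose image in $\SSyn_Q$ is the sequence $\nu X \to \nu Y \to \nu Z \to \Sigma \nu X$.

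By Proposition~\ref{prop:cofiberpres}, it then suffices to show that $F\partial$ admits a $\tau$-lift to a map $FZ \to \sh^{-1}\Sigma FX$ in $\LMod_Q(\Fil(\mc C))$. Proposition~\ref{prop:tauliftshift}, applied to $\partial\co Z \to \Sigma X$, reduces this to finding a lift in $\mc C$ of the square
\[
\begin{tikzcd}
Z \ar[d,"\partial"'] \ar[r,dotted] & Q^{-1} \otimes \Sigma X \ar[d] \\
\Sigma X \ar[r] & Q^0 \otimes \Sigma X.
\end{tikzcd}
\]
Since $\mc C$ is stable and the right-hand vertical fits in a cofiber sequence $Q^{-1} \otimes \Sigma X \to Q^0 \otimes \Sigma X \to (Q^0/Q^{-1}) \otimes \Sigma X$, such a lift exists if and only if the composite $Z \xrightarrow{\partial} \Sigma X \to (Q^0/Q^{-1}) \otimes \Sigma X$ (the second map induced by the unit) is nullhomotopic.

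By naturality of the unit $1 \to Q^0/Q^{-1}$, viewed as a natural transformation $\mathrm{id}_{\mc C} \Rightarrow (Q^0/Q^{-1}) \otimes (-)$, this composite agrees with $Z \to (Q^0/Q^{-1}) \otimes Z \xrightarrow{1 \otimes \partial} (Q^0/Q^{-1}) \otimes \Sigma X$, which factors through $Z \to Q^0 \otimes Z \to (Q^0/Q^{-1}) \otimes Z \to (Q^0/Q^{-1}) \otimes \Sigma X$. This is exactly the composite the hypothesis declares trivial, so the required lift exists and Propositions~\ref{prop:tauliftshift} and~\ref{prop:cofiberpres} together give the desired cofiber sequence in $\SSyn_Q$. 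The only nontrivial content is the naturality identification at the end; the rest is bookkeeping to match conventions of the free-module functor with the hypotheses of Proposition~\ref{prop:tauliftshift}, and the main obstacle is simply keeping straight which composite is which.
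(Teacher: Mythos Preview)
Your proof is correct and follows essentially the same approach as the paper: construct the lift $Z \to Q^{-1} \otimes \Sigma X$ from the nullhomotopy hypothesis, feed it into Proposition~\ref{prop:tauliftshift} to obtain a $\tau$-lift of $Q \otimes Z \to Q \otimes \Sigma X$, and then invoke Proposition~\ref{prop:cofiberpres}. You spell out the naturality step identifying the two composites and the obstruction-theoretic reason the lift exists, which the paper leaves implicit, but the argument is the same.
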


\begin{proof}
  The nullhomotopy allows us to construct a lift $Z \to Q^{-1} \otimes \Sigma X$, and hence we get a lift to a map $Q \otimes Z \to \sh^{-1}(Q \otimes \Sigma X)$. Proposition~\ref{prop:cofiberpres} then shows that the cofiber sequence $Q \otimes X \to Q \otimes Y \to Q \otimes Z$ in $\LMod_Q$ becomes a cofiber sequence in the successor $\SSyn_Q$.
\end{proof}

\subsection{Adams--Rees rings}

Consider a map $R \to E$ of strictly associative ring spectra. The fiber $I$ has the structure of a \emph{Smith ideal} \cite{hovey-smithideals}: the two unit maps $I \otimes_R I \to I$ essentially coincide. The powers of $I$, forming the $E$-Adams tower, then have the structure of a filtered ring spectrum. The following is a general version of this result.

\begin{prop}
  \label{prop:reesring}
  Suppose $R \to E$ is a map of $\mb E_1$-algebras in a stable monoidal $\infty$-category $\mc C$, with homotopy fiber $I$. Then there is an $E_1$-algebra $Rees(I)$ in $\Fil(\mc C)$ whose underlying filtered object is an Adams tower:
  \[
    \dots \to I \otimes_R I \otimes_R I \to I \otimes_R I \to I \to R \to R \to \dots
  \]
\end{prop}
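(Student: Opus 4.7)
The plan is to exhibit $\Rees(I)$ as a lax monoidal functor from $(\mb Z, +)$ to $\mc C^\otimes$, which by Day convolution is the same data as an $E_1$-algebra in $\Fil(\mc C)$. First I would set out the underlying filtered object: in non-negative degrees $\Rees(I)$ is the constant filtered object at $R$; in negative degrees $\Rees(I)^{-n} = I^{\otimes_R n}$, with structure map $\Rees(I)^{-n-1} \to \Rees(I)^{-n}$ induced by applying $I \to R$ in the first tensor factor and collapsing $R \otimes_R I^{\otimes_R n} \simeq I^{\otimes_R n}$, while $I = \Rees(I)^{-1} \to R = \Rees(I)^0$ is the defining augmentation.

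Next I would supply the multiplication maps $\mu_{p,q} \co \Rees(I)^p \otimes \Rees(I)^q \to \Rees(I)^{p+q}$. Because $R$ is an $E_1$-algebra and $I$ is the fiber of an algebra map, $I$ and its iterated tensor powers $I^{\otimes_R n}$ carry natural $R$-bimodule structures, and concatenation yields equivalences $I^{\otimes_R p} \otimes_R I^{\otimes_R q} \simeq I^{\otimes_R (p+q)}$ that are associative in $p, q$. For $p, q \leq 0$ one then takes the composite of the canonical $\mc C$-map $X \otimes Y \to X \otimes_R Y$ (adjoint to the right $R$-action on the first factor and left $R$-action on the second) with this concatenation; if $p$ or $q$ is non-negative, one uses the $R$-multiplication and $R$-module structures directly. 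At the level of individual maps $\mu_{p,q}$, associativity and unitality reduce to the associativity of $\otimes_R$ and the unit axiom for $R$.

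To upgrade these compatibilities into an honest $E_1$-algebra structure in the $\infty$-category $\Fil(\mc C)$, I would invoke Hovey's Smith ideal framework \cite{hovey-smithideals}: the equivalence (or its $\infty$-categorical upgrade) between the $\infty$-category of arrows of $E_1$-algebras $R \to E$ in $\mc C$ and a suitable subcategory of $E_1$-algebras in $\Fil(\mc C)$, under which the image of an arrow is precisely its Rees algebra. Functoriality of this construction in the arrow $R \to E$ then delivers an $E_1$-algebra in $\Fil(\mc C)$ whose underlying filtered object is the prescribed Adams tower.

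The main obstacle is the homotopy-coherent bookkeeping: the individual multiplication maps and their low-level compatibilities are straightforward, but packaging them into a lax monoidal functor $\mb Z \to \mc C^\otimes$ in the $\infty$-categorical sense requires either Hovey's rectification or an $\infty$-operadic construction (for example, realising $\Rees(I)$ as an operadic left Kan extension along a morphism of $\infty$-operads encoding the passage from arrow data to filtered objects). Once such a packaging is in hand, the claim that the underlying filtered object matches the prescribed Adams tower follows by inspection.
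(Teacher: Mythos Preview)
Your outline is on the right track in spirit—you correctly identify that the individual multiplication maps are easy and that the entire content lies in the homotopy-coherent packaging—but the route you take diverges from the paper's and leaves a real gap at the key step.

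The paper's proof is a two-line reduction: first, an $\mb E_1$-algebra under $R$ is the same as an $\mb E_1$-algebra in the monoidal $\infty$-category of $R$-bimodules (Lurie, \emph{Higher Algebra} 4.4.3.12), which reduces the problem to the case where $R$ is the monoidal unit and $I$ is simply the fiber of an augmentation $R \to E$; second, the paper invokes \cite[1.5]{gammage-hilburn-mazelgee-schobers}, which supplies the coherent filtered algebra structure on the Adams tower directly in this $\infty$-categorical setting. No explicit description of the $\mu_{p,q}$ is needed, and the coherence is handled by the cited result.

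Your proposed route through Hovey's Smith ideal framework has a gap. Hovey's paper is written for model categories, and more importantly, the Smith ideal structure there is a monoid structure on the \emph{arrow} $I \to R$ in $\Fun([1],\mc C)$ with the pushout-product monoidal structure—it does not by itself produce an $\mb E_1$-algebra in $\Fil(\mc C)$ whose underlying object is the full Adams tower $\dots \to I^{\otimes_R 2} \to I \to R \to R \to \dots$. Passing from the arrow-category monoid to the $\mb Z$-filtered Rees algebra requires an additional step (roughly, a monoidal left Kan extension along an embedding of $[1]$ into $\mb Z_{\leq 0}$, together with an identification of the resulting tensor powers), and you acknowledge as much when you say one would need ``its $\infty$-categorical upgrade'' or an operadic left Kan extension. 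That extra step is exactly where the coherence difficulty lives, and you have not supplied it; the paper avoids it entirely by citing an off-the-shelf $\infty$-categorical result.
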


\begin{proof}
  An $\mb E_1$-algebra under $R$ is equivalent to an $\mb E_1$-algebra in $R$-bimodules \cite[4.4.3.12]{lurie-higheralgebra}, and so this is a direct consequence of \cite[1.5]{gammage-hilburn-mazelgee-schobers}.
\end{proof}

\begin{defn}
  \label{def:reesring}
  We refer to a tower $\Rees(I)$ as the previous proposition as a \emph{Adams--Rees algebra} associated to the map $R \to E$, and the category $\LMod_{R_I}(\Fil(\mc C))$ as the associated category of \emph{Adams--Rees modules}.
\end{defn}

\begin{rmk}
  In particular, an Adams--Rees module includes a filtered $R$-module $\dots \to M^0 \to M^1 \to M^2 \to \dots$ with compatible lifts
  \[
    \begin{tikzcd}
      & M^{n-1} \ar[dr] \\
      I \otimes_R M^n \ar[ur, dotted] \ar[rr] && M^n
    \end{tikzcd}
  \]
  of the action of ideal $I$.
\end{rmk}

\begin{defn}
  \label{def:Essyn}
  Suppose $R \to E$ is a map of $\mb E_1$-algebras in a stable monoidal $\infty$-category $\mc C$, with homotopy fiber $I$. We will abuse notation and write $\SSyn_E = \SSyn_{\Rees(I)}$ for the semisynthetic category of left modules over the Adams--Rees algebra, and refer to it as the $E$-semisynthetic category.
\end{defn}

\begin{prop}
  \label{prop:ARcofiberpres}
  Suppose $R \to E$ is a map of $\mb E_1$-algebras in a stable monoidal $\infty$-category $\mc C$. Let $X \to Y \to Z$ be a cofiber sequence of left $R$-modules such that the map $Z \to \Sigma X$ has positive $E$-Adams filtration: it becomes nullhomotopic after tensoring over $R$ with $E$. Then $\nu X \to \nu Y \to \nu Z$ is a cofiber sequence in $\SSyn_E$.
\end{prop}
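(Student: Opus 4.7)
The plan is to deduce this from Corollary~\ref{cor:ssyncofiberpreservation}, applied to the monoid $Q = \Rees(I)$. That corollary reduces the claim to checking that, for the boundary map $f\co Z \to \Sigma X$ of the given cofiber sequence, the composite
\[
  Z \to Q^0 \otimes_R Z \to (Q^0/Q^{-1}) \otimes_R Z \to (Q^0/Q^{-1}) \otimes_R \Sigma X
\]
is nullhomotopic.

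The first step is to identify these terms explicitly using Proposition~\ref{prop:reesring}. The underlying filtered object of $\Rees(I)$ has $Q^0 = R$ and $Q^{-1} = I$, with structure map the fiber inclusion $I \to R$. Since $\mc C$ is stable and $I$ was defined as the fiber of $R \to E$, the cofiber $Q^0/Q^{-1}$ is canonically equivalent to $E$. Under these identifications, the first map in the composite becomes the identity $Z \xrightarrow{=} R \otimes_R Z$, the second becomes the unit $\eta_Z\co Z \to E \otimes_R Z$ induced by $R \to E$, and the third becomes $E \otimes_R f$. Thus the composite equals $(E \otimes_R f) \circ \eta_Z$.

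The second step is then immediate from the hypothesis: the fact that $f$ becomes nullhomotopic after tensoring over $R$ with $E$ means exactly that $E \otimes_R f \simeq 0$, so the composite $(E \otimes_R f) \circ \eta_Z$ is nullhomotopic. Corollary~\ref{cor:ssyncofiberpreservation} then produces the desired cofiber sequence $\nu X \to \nu Y \to \nu Z$ in $\SSyn_E$.

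The main content is just the identification of $Q^0/Q^{-1}$ with $E$ via the defining fiber sequence $I \to R \to E$; beyond that, positivity of $E$-Adams filtration on $f$ is by definition the vanishing needed to feed into Corollary~\ref{cor:ssyncofiberpreservation}, so there is no substantial obstacle.
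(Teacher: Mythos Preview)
Your proposal is correct and follows essentially the same route as the paper: both reduce directly to Corollary~\ref{cor:ssyncofiberpreservation} by identifying $Q^0/Q^{-1}\simeq R/I\simeq E$ and observing that the required composite is null because $E\otimes_R f$ is. The paper's version is terser (it phrases the hypothesis as the composite $Z\to\Sigma X\to E\otimes_R\Sigma X$ being null and invokes the corollary immediately), but the substance is identical.
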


\begin{proof}
  Being $E$-null is the same as the composite map
  \[
    Z \to \Sigma X \to E \otimes_R \Sigma X
  \]
  being null, and so this is immediate from Corollary~\ref{cor:ssyncofiberpreservation}.
\end{proof}

\subsection{Modules over the cofiber of $\tau$}

If $\mb I$ is a monoidal unit in $\mc C$, the monoidal unit $\nu(\mb I)$ in $\Fil(\mc C)$ might be called $\mb I[\tau]$, and is represented by the filtered object
\[
  \dots \to 0 \to \mb I \to \mb I \to \mb I \to \mb I \to \dots
\]
The quotient $C\tau = \mb I[\tau] \modmod \tau $ might be called $\mb I[\tau] / \tau^2$, and is represented by a filtered object
\[
  \dots \to 0 \to \mb I \to \mb I \to 0 \to 0 \to \dots
\]
This has the structure of a ring object in $\Fil(\mc C)$, and modules over $C\tau$ can be thought of as ``coherent chain complex objects'' of $\mc C$ (cf. \cite{ariotta-chain}). Our goal in this section is to sketch how to make this analogy more precise.

We first recall some generalities. For a symmetric monoidal $\infty$-category $\mc D$ with an algebra object $A \in \mc D$, we define $\Gamma = A \otimes A$. The following are formal consequences:
\begin{itemize}
\item The pair $(A,\Gamma)$ has the structure of a \emph{Hopf algebroid} object in the homotopy category of $\mc D$.
\item For any $X \in \mc D$, the extension-of-scalars functor $X \mapsto A \otimes X$ from $\mc D$ to the homotopy category of $\LMod_A$ lifts to the category $\Comod_\Gamma$ of $\Gamma$-comodules.
\item The map $[X,Y] \to [A \otimes X, A \otimes Y]_{A}$ lifts to $[A \otimes X, A \otimes Y]_{\Comod_\Gamma}$.
\item If $A$ is commutative, $\Gamma$ becomes a commutative algebra in $A$-bimodules and the category of $\Gamma$-comodules obtains a symmetric monoidal structure. The extension-of-scalars functor $\mc D \to \Comod_\Gamma$ is then symmetric monoidal.
\item Any map $\theta\co A \to \Sigma^{p,q} A$, or equivalently right $A$-module map $\Gamma \to \Sigma^{p,q} A$, induces a natural operator $A \otimes X \to \Sigma^{p,q}(A \otimes X)$.
\item Identities on maps induce identities on operators.
\end{itemize}

To specialize, in $\Fil(\mc C)$, we can let $A = \nu(\mb I) / \tau$.
\begin{prop}
  In $\Fil(\mc C)$, there is an equivalence of right $A$-modules
  \[
    \Gamma = A \otimes A \simeq A \oplus \Sigma^{1,0} A.
  \]
\end{prop}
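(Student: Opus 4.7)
The plan is to use the defining cofiber sequence for $A = \nu(\mb I)/\tau$ in $\Fil(\mc C)$ to split $A\otimes A$. By construction, $A$ is the $\Fil(\mc C)$-cofiber of $\tau\co \sh^{-1}\nu(\mb I)\to \nu(\mb I)$, so we have the cofiber sequence
\[
  \sh^{-1}\nu(\mb I) \xrightarrow{\tau} \nu(\mb I) \to A
\]
in $\Fil(\mc C)$. Tensoring on the left with $A$ via the Day convolution, and using that $A\otimes \nu(\mb I)\simeq A$ (monoidal unit) together with $A\otimes \sh^{-1}\nu(\mb I)\simeq \sh^{-1} A$ (shift distributes over tensor), produces a cofiber sequence of right $A$-modules
\[
  \sh^{-1} A \to A \to A\otimes A.
\]

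The crucial step is to show that the first map $\sh^{-1} A \to A$ is nullhomotopic as a right $A$-module map. By the free-module adjunction, right $A$-module maps $\sh^{-1}A = A\otimes\sh^{-1}\nu(\mb I)\to A$ correspond to maps $\sh^{-1}\nu(\mb I)\to A$ in $\Fil(\mc C)$. Under this correspondence the map of interest is classified by the composite $\sh^{-1}\nu(\mb I)\xrightarrow{\tau}\nu(\mb I)\to A$ of the two consecutive arrows in the defining cofiber sequence for $A$, which vanishes by the standard cofiber-sequence identity. As a sanity check one can compute directly that $\Map_{\Fil(\mc C)}(\sh^{-1}A, A) \simeq \Map(A,\sh A)$ vanishes identically: since $A$ is concentrated in degree zero, both $(\sh A)^0 = A^1$ and $(\sh A)^1 = A^2$ are zero, so the fiber computing the mapping spectrum from $A = F_0/F_1$ is trivial.

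Given a nullhomotopic first map in a three-term cofiber sequence, stability of $\Fil(\mc C)$ produces a splitting
\[
  A \otimes A \simeq A \oplus \Sigma(\sh^{-1} A) \simeq A \oplus \Sigma^{1,0}A
\]
of right $A$-modules, where the identification $\Sigma\sh^{-1} \simeq \Sigma^{1,0}$ comes from Remark~\ref{rmk:bishift}. The splitting is automatically right-$A$-linear because the chosen nullhomotopy was exhibited within the $A$-linear mapping space rather than only in the underlying category; this coherence is the only point where any care is required and it follows essentially formally. The hard part, to the extent anything is hard, is the clean bookkeeping that translates ``$\tau$ acts by zero on $A$'' into a right-$A$-module splitting.
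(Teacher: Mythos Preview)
Your proof is correct and follows essentially the same approach as the paper: tensor the defining cofiber sequence for $A$ with $A$ and observe that the resulting sequence splits. The paper phrases the splitting via the ring structure on $A$ (the multiplication $A\otimes A\to A$ retracts the unit inclusion $A\to A\otimes A$), whereas you argue the equivalent fact that the first map $\sh^{-1}A\to A$ is null via the free-module adjunction; both yield the same right $A$-module decomposition.
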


\begin{proof}
  This follows by tensoring the cofiber sequence
  \[
    \Sigma^{0,-1} \nu(\mb I) \to \nu(\mb I) \to A
  \]
  with $A$, together with the fact that the ring structure on $A$ splits the result.
\end{proof}

\begin{defn}
  The map $\partial\co A \to \Sigma^{1,0} A$ is adjoint to projection map
  \[
    A \otimes A \simeq A \oplus \Sigma^{1,0} A \to \Sigma^{1,0} A
  \]
  of right $A$-modules.
\end{defn}

This induces a natural operator $\partial_X\co A \otimes X \to \Sigma^{1,0} A \otimes X$. Moreover, $\partial$ satisfies two main identities:
\begin{itemize}
\item $\Sigma^{1,0} \partial \circ \partial = 0$
\item $\partial \circ m \simeq m \circ (\partial \otimes 1 + \beta \circ 1 \otimes \partial)$
\end{itemize}
Here $m$ is the multiplication $A \otimes A \to A$, and $\beta$ the isomorphism $A \otimes \Sigma^{1,0} A \cong \Sigma^{1,0} (A \otimes A)$.

These have three major consequences.
\begin{itemize}
\item The composite map $A \otimes X \to \Sigma^{1,0} A \otimes X \to \Sigma^{2,0} A \otimes X$ is trivial.
\item Under the equivalence $(A \otimes X \otimes Y) \simeq (A \otimes X) \otimes_A (A \otimes Y)$, we get an identification
  \[
    \partial_{X \otimes Y} \simeq (\partial_X \otimes id) + \beta(id \otimes \partial_Y).
  \]
\item The identification $[X,A \otimes Y] \simeq [A \otimes X, A \otimes Y]_A$ carries the operator $g \mapsto \partial_Y \circ g$ on the left to the operator
  \[
    g \mapsto \partial_Y \circ g - \gamma(g \circ \Sigma^{-1,0} \partial_X)
  \]
  where $\gamma$ is a natural isomorphism
  \[
    [\Sigma^{-1,0} A \otimes X, A \otimes Y]_A \cong [A \otimes X, \Sigma^{1,0} A \otimes Y].
  \]
\end{itemize}

However, for an object $Y$ in $\Fil(\mc C)$, $A \otimes Y$ is the associated graded $\gr(Y)$. Thus, we have a natural differential
\[
  \partial\co \gr(Y) \to \Sigma^{1,0} \gr(Y)
\]
satisfying $\partial^2 = 0$ and a Leibniz rule. Moreover, for $X$ and $Y$, the isomorphism
\[
  [X,\gr Y] \cong [\gr X, \gr Y]_{\gr}
\]
takes the differential on $Y$ to the natural $\Hom$-complex differential.

\begin{prop}
  Suppose $Q$ is a ring in $\Fil(\mc C)$, $X$ and $Y$ are left $Q$-modules in $\Fil(\mc C)$, making the associated graded objects $\gr(X)$ and $\gr(Y)$ are differential graded modules over $\gr(Q)$ in the homotopy category of $\mb Z$-graded objects of $\mc C$. Then there exists an isomorphism between the bigraded groups
  \[
    [X,Y \modmod \tau]_{\SSyn_Q} \cong [X \modmod \tau, Y \modmod \tau]_{\SSyn_{Q \modmod \tau}}
  \]
  and the homology of the $\Hom$-complex
  \[
    [\gr X, \gr Y]_{\gr Q}.
  \]
\end{prop}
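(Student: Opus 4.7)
The plan is to establish the two isomorphisms in sequence: first $[X,Y \modmod \tau]_{\SSyn_Q} \cong [X \modmod \tau, Y \modmod \tau]_{\SSyn_{Q \modmod \tau}}$ by an adjunction argument, and then the identification with the homology of the $\Hom$-complex via the chain-complex interpretation of $(Q \modmod \tau)$-modules.

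For the first isomorphism, I would show that $Y \modmod \tau$ (represented in $\LMod_Q(\Fil(\mc C))$ by $Y/\sh^{-2}Y$) refines canonically to a $Q \modmod \tau$-module. Unraveling the Day convolution, the action of the submodule $\sh^{-2} Q \subset Q$ on $Y$ factors as
\[
  \sh^{-2} Q \otimes Y \;=\; \sh^{-2}(Q \otimes Y) \;\to\; \sh^{-2} Y \;\hookrightarrow\; Y,
\]
so it becomes nullhomotopic after quotienting by $\sh^{-2} Y$. Thus $Y \modmod \tau \simeq (Q \modmod \tau) \otimes_Q Y$, and applying Proposition~\ref{prop:ssynadjunction} to the ring map $Q \to Q \modmod \tau$---whose associated left adjoint is precisely the extension-of-scalars functor $X \mapsto X \modmod \tau$---produces the first isomorphism by evaluating the adjunction on $Y \modmod \tau$.

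For the second isomorphism, I would apply Proposition~\ref{prop:homotopy} to rewrite
\[
  [X \modmod \tau, Y \modmod \tau]_{\SSyn_{Q \modmod \tau}} \;\cong\; \im\bigl([X \modmod \tau, Y \modmod \tau] \to [X \modmod \tau, \sh(Y \modmod \tau)]\bigr),
\]
where the mapping groups on the right live in $\LMod_{Q \modmod \tau}(\Fil(\mc C))$. I would then invoke the operator formalism developed at the end of Section~\ref{sec:ssyn}: modules over $Q \modmod \tau$ are coherent chain-complex objects whose underlying data are the associated graded $\gr Q$-modules together with the differential $\partial$. Under the identification $[M, A \otimes N] \cong [A \otimes M, A \otimes N]_A$, combined with the Leibniz identity for $\partial$ and the composition formula relating $\partial_Y \circ g$ to $\partial_Y \circ g - \gamma(g \circ \Sigma^{-1,0} \partial_X)$, the natural shift map between the two mapping groups corresponds exactly to the standard Hom-complex differential $g \mapsto \partial_{\gr Y} \circ g \pm g \circ \partial_{\gr X}$ on $[\gr X, \gr Y]_{\gr Q}$. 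Its image is then $\ker/\im$, i.e. the homology.

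The main obstacle is the second step: one must pass between the two-term filtered representative of $Y \modmod \tau$ (whose associated graded is $\gr Y \oplus \Sigma \sh^{-2} \gr Y$ by Proposition~\ref{prop:gradedcofiber}) and the single chain complex $\gr Y$, and verify that the shift map produces the Hom-complex differential with the correct signs. This amounts to chasing the operator identities from the end of Section~\ref{sec:ssyn} in a $\gr Q$-equivariant setting, accounting for the extra summand $\Sigma \sh^{-2} \gr Y$, and confirming that its contribution to the Hom-complex collapses under the $\ker/\im$ passage---delicate bookkeeping that matches the sketch nature of the statement.
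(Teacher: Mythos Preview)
Your first step, reducing $[X, Y \modmod \tau]_{\SSyn_Q}$ to $[X \modmod \tau, Y \modmod \tau]_{\SSyn_{Q \modmod \tau}}$ via extension of scalars along $Q \to Q \modmod \tau$, is correct and the paper uses the same identification (recorded there as $[X, Z/\tau^2]_Q \cong [X \modmod \tau, Z \modmod \tau]_{Q \modmod \tau}$, together with the one-step version $[X, Z/\tau]_Q \cong [\gr X, \gr Z]_{\gr Q}$).

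Your second step has a gap. You assert that the shift map
\[
  \tau\colon [X \modmod \tau, Y \modmod \tau]_{Q \modmod \tau} \longrightarrow [X \modmod \tau, \sh(Y \modmod \tau)]_{Q \modmod \tau}
\]
``corresponds exactly'' to the Hom-complex differential on $[\gr X, \gr Y]_{\gr Q}$ and hence its image is the homology. But the source and target of $\tau$ are $(Q \modmod \tau)$-module Hom groups, not $\gr Q$-module Hom groups, and in any case the image of a differential is $\im(\partial)$, not $\ker(\partial)/\im(\partial)$. The operator formalism at the end of Section~\ref{sec:ssyn} establishes that $\partial$ is a differential on $[\gr X, \gr Y]_{\gr Q}$ satisfying a Leibniz rule; it does not say that $\tau$ on the $(Q \modmod \tau)$-level \emph{is} that differential. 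Something must mediate between the two-term object $Y/\tau^2$ and the one-term object $Y/\tau$, and your sketch defers exactly this to unspecified ``delicate bookkeeping''.

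The paper supplies that mediation concretely. It applies the octahedral axiom to the factorization $\tau^2 = \tau \circ \tau$ on $Y$, obtaining two interlocking cofiber triangles
\[
  \Sigma^{0,-1}(Y/\tau) \xrightarrow{\ i\ } Y/\tau^2 \xrightarrow{\ p\ } Y/\tau,
  \qquad
  Y/\tau \xrightarrow{\ \Sigma^{0,1} i\ } \Sigma^{0,1}(Y/\tau^2) \xrightarrow{\ \Sigma^{0,1} p\ } \Sigma^{0,1}(Y/\tau),
\]
with $\tau = (\Sigma^{0,1} i) \circ p$. After applying $[X,-]_Q$ and the adjunction $[X, Z/\tau]_Q \cong [\gr X, \gr Z]_{\gr Q}$, the two long exact sequences have $\partial$ as their boundary maps, and the standard diagram chase (the same one as in Proposition~\ref{prop:indexinghell}) identifies $\im(\tau)$ with the homology of $\partial$ on $[\gr X, \gr Y]_{\gr Q}$. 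This octahedral argument is what replaces your second step; it avoids the extra-summand bookkeeping entirely.
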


\begin{proof}
  There is a natural commutative diagram
  \[
    \begin{tikzcd}
      \Sigma^{0,-2} Y \ar[d,"\tau^2"'] \ar[r,"\tau"]&
      \Sigma^{0,-1} Y \ar[d,"\tau^2"] \ar[dl,"\tau"']\\
      Y \ar[r,"\tau"']&
      \Sigma^{0,1} Y.
    \end{tikzcd}
  \]
  The octahedral axiom turns this into a commutative diagram
  \[
    \begin{tikzcd}
      \Sigma^{0,-1} Y/\tau \ar[dr,"i"] & \\
      &Y/\tau^2 \ar[dl,"p"] \ar[dd,"\tau"]\\
      Y/\tau \ar[dr,"\Sigma^{0,1} i"] \ar[uu, "\circ" marking] & \\
      & \Sigma^{0,1} Y/\tau^2 \ar[dl,"\Sigma^{0,1} p"]\\
      \Sigma^{0,1} Y/\tau  \ar[uu, "\circ" marking]
    \end{tikzcd}
  \]
  where the top and bottom triangles are cofiber sequences in $\LMod_Q$.

  We then apply $[X,-]_{Q}$ to this diagram. Noting that
  \[
    [X,Z/\tau]_Q \cong [X/\tau, Z/\tau]_{Q/\tau} = [\gr X, \gr Z]_{\gr Q},
  \]
  and similarly
  \[
    [X,Z/\tau^2] \cong [X \modmod \tau, Z \modmod \tau]_{Q \modmod \tau},
  \]
  this becomes a diagram
  \[
    \begin{tikzcd}
      {[\gr X, \Sigma^{0,-1} \gr Y]_{\gr Q}} \ar[dr,"i"]  & \\
      & {[X \modmod \tau, Y \modmod \tau]_{Q \modmod \tau}} \ar[dl,"p"] \ar[dd,"\tau"]\\
      {[\gr X, \gr Y]_{\gr Q}} \ar[dr,"\Sigma^{0,1} i"] \ar[uu, "\circ" marking, "\partial"] & \\
      & {[X \modmod \tau, \Sigma^{0,1} Y \modmod \tau]_{Q \modmod \tau}} \ar[dl,"\Sigma^{0,1} p"]\\
      {[\gr X, \Sigma^{0,1} \gr Y]_{\gr Q}}  \ar[uu, "\circ" marking, "\Sigma^{0,1} \partial"]
    \end{tikzcd}
  \]
  where the top and bottom triangles are long exact sequences. As in the proof of Proposition~\ref{prop:spiralisderived}, this identifies the image 
  \[
    \im(\tau\co {[X \modmod \tau, Y \modmod \tau]_{Q \modmod \tau}} \to {[X \modmod \tau, \Sigma^{0,1} Y \modmod \tau]_{Q \modmod \tau}}) \cong [X \modmod \tau, Y \modmod \tau]_{\SSyn_{(Q \modmod \tau)}}
  \]
  of the right-hand vertical map with the homology of the differential $\partial$ on $[\gr X, \gr Y]_{\gr Q}$.
\end{proof}

\begin{rmk}
  We note that Proposition~\ref{prop:indexinghell} can be alternatively recovered from the above result.
\end{rmk}

\section{Comparison to synthetic spectra}
\label{sec:syn}

\begin{thm}
  \label{thm:syntheticcomparison}
  Suppose that $E$ is an associative ring spectrum. Then the category $\Syn_E$ of synthetic spectra is equivalent to a localizing subcategory of the $E$-semisynthetic category $\SSyn_E$.
\end{thm}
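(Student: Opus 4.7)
The plan is to construct a colimit-preserving stable functor $\Phi\co \Syn_E \to \SSyn_E$ via the universal property of synthetic spectra and then verify that it is fully faithful, so that its essential image is a localizing subcategory. Recall that for $E$ Adams-type, $\Syn_E$ is characterized as the presentable stable $\infty$-category generated under colimits by compact objects $\nu_s(P)$ as $P$ ranges over finite spectra with projective $E_*$-homology, subject to the defining property that any cofiber sequence $P \to Q \to R$ of such spectra which is short exact on $E_*$-homology yields a cofiber sequence after applying $\nu_s$. The candidate assignment sends $\nu_s(P)$ to its semisynthetic analogue $\nu(P) = \Rees(I) \otimes_{\mathbb{S}} P \in \SSyn_E$.

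First I would verify that this assignment respects the defining cofiber relation. If $P \to Q \to R$ is a cofiber sequence of finite $E$-projective spectra which is short exact on $E_*$-homology, then the connecting map $R \to \Sigma P$ becomes nullhomotopic after smashing with $E$, i.e.\ it has positive $E$-Adams filtration. Proposition~\ref{prop:ARcofiberpres} then guarantees that $\nu P \to \nu Q \to \nu R$ remains a cofiber sequence in $\SSyn_E$. The universal property of $\Syn_E$ therefore produces a colimit-preserving stable functor $\Phi\co \Syn_E \to \SSyn_E$ extending this assignment.

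Next I would establish fully faithfulness on the compact generators: for any finite $E$-projective $P, Q$ and all $t, w$, the induced map
\[
[\Sigma^{t,w} \nu_s(P), \nu_s(Q)]_{\Syn_E} \to [\Sigma^{t,w} \nu(P), \nu(Q)]_{\SSyn_E}
\]
should be an isomorphism. On the $\SSyn_E$ side, Proposition~\ref{prop:naturalidentify} expresses these groups as images $\im(\pi_t(\Rees(I)^{t-w} \otimes_{\mathbb{S}} M) \to \pi_t(\Rees(I)^{t-w+1} \otimes_{\mathbb{S}} M))$ where $M = \Map_{\mathbb{S}}(P, Q)$, and Proposition~\ref{prop:spiralisderived} identifies the ambient spiral exact sequence with the derived couple of the $E$-Adams spectral sequence for $M$. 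On the $\Syn_E$ side, the analogous groups are computed by Pstr\k{a}gowski's spiral exact sequence, whose $E_2$-term is $\Ext^{*,*}_{E_*E}(E_*P, E_*Q)$. Since both spectral sequences arise from the same $E$-Adams filtration of $M$, a termwise identification yields the required isomorphism.

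The principal obstacle is this spectral-sequence comparison together with care about compactness: $\Phi$ may not carry every compact generator $\nu_s(P)$ to a compact object of $\SSyn_E$, so fully faithfulness on compacts does not immediately propagate to all of $\Syn_E$. To circumvent this, I would verify that the $\nu(P)$ nevertheless satisfy the relevant exchange of mapping spaces with filtered colimits along images of $\Phi$---using that the Adams tower of $M$ has the correct filtered-colimit behavior, combined with Proposition~\ref{prop:Dfilteredhocolim}---and conclude that $\Phi$ itself is fully faithful. Its essential image is then automatically closed under colimits and bigraded suspensions, hence is a localizing subcategory of $\SSyn_E$ as claimed.
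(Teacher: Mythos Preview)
Your overall strategy matches the paper's: build a colimit-preserving functor $\Syn_E \to \SSyn_E$ sending $\nu_s(P)$ to $\nu(P)$, verify the defining cofiber sequences are preserved via Proposition~\ref{prop:ARcofiberpres}, and then establish full faithfulness on the generators. Two points of comparison are worth noting.

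First, your concern about compactness is unfounded. Since $P$ is a finite spectrum, the free module $\Rees(I) \otimes P$ is compact in $\LMod_{\Rees(I)}(\Fil(\Sp))$, and Proposition~\ref{prop:compactpres} shows that compactness is preserved upon passage to the successor category. So $\nu(P)$ is genuinely compact in $\SSyn_E$, and no workaround is needed: full faithfulness on compact generators propagates immediately to all of $\Syn_E$. The paper packages this by constructing the right adjoint $\Phi\co \SSyn_E \to \Syn_E$ explicitly as $(\Phi X)(P) = \map_{\SSyn_E}(\nu P, X)$, observing that $\Phi$ preserves colimits precisely because each $\nu(P)$ is compact, and then checking that the unit $Y \to \Phi\Psi Y$ is an equivalence on generators and hence everywhere.

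Second, your identification of mapping spaces on generators via matching spiral exact sequences is correct in spirit but roundabout, and as written it only gives an abstract isomorphism of bigraded groups rather than showing the functorial comparison map is an equivalence. The paper's argument is more direct: by Proposition~\ref{prop:beilinsonweight}, both $\Map_{\Syn_E}(\nu_s P, \nu_s Q)$ and $\Map_{\SSyn_E}(\nu P, \nu Q)$ are identified with $\Omega^\infty \tau^B_{\geq 0}\,\map(P, \Rees(I)\otimes Q)$, the Beilinson-connective cover of the same filtered mapping spectrum. This gives the equivalence at the level of spaces in one stroke and makes naturality transparent.
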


\subsection{The comparison functors}

In the following we will write $\nu_{\Syn}$ and $\nu_{\SSyn}$ to clarify the distinction between these two functors.

\begin{defn}
  \label{def:ssyncomparisonfunctor}
  Let $E$ be an associative ring spectrum, and let $\mc P$ be the category of finite $E$-projective spectra. We define a functor
  \[
    \Psi\co \Fun(\mc P^\op, \Sp)\to \SSyn_E
  \]
  to be the unique hocolim-preserving extension of the functor $\nu_{\SSyn} \co \mc P \to \SSyn_E$.
\end{defn}

\begin{prop}
  The functor $\Psi$ has a right adjoint
  \[
    \Phi\co \SSyn_E \to \Fun(\mc P^\op, \Sp),
  \]
  taking a semisynthetic spectrum to the presheaf of spectra on $\mc P$ given by
  \[
    (\Phi X) (P) = \map_{\SSyn_E}(\nu_{\SSyn} P, X).
  \]
\end{prop}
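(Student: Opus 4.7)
The plan is to define $\Phi$ by the given formula and verify the hom-equivalence directly, reducing to the case of representable presheaves via Yoneda and extending by colimit density. This avoids any delicate presentability considerations for $\SSyn_E$; both sides of the desired adjunction formula turn colimits in the first variable into limits, so once we check the adjunction on representables everything is forced.

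First I would note that the formula $(\Phi X)(P) = \map_{\SSyn_E}(\nu_{\SSyn} P, X)$ is functorial in $P \in \mc P^\op$ (using the spectral enrichment inherited by $\mc P$ as a subcategory of $\Sp$) and in $X \in \SSyn_E$, and so defines a functor $\Phi\co \SSyn_E \to \Fun(\mc P^\op, \Sp)$. For a representable spectral presheaf $y(P) = \map_{\mc P}(-,P)$, the stable Yoneda lemma gives
\[
  \map_{\Fun(\mc P^\op, \Sp)}(y(P), \Phi X) \simeq (\Phi X)(P) = \map_{\SSyn_E}(\nu_{\SSyn}(P), X).
\]
Since $\Psi$ is the unique hocolim-preserving extension of $\nu_{\SSyn}$ along the Yoneda embedding, we have $\Psi(y(P)) \simeq \nu_{\SSyn}(P)$, so the right-hand side agrees with $\map_{\SSyn_E}(\Psi(y(P)), X)$. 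This establishes the adjunction equivalence on representables.

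To extend to an arbitrary $F \in \Fun(\mc P^\op, \Sp)$, write $F \simeq \hocolim_I y(P_i)$ as a colimit of representables. The functor $\map_{\Fun(\mc P^\op, \Sp)}(-, \Phi X)$ converts this colimit into a limit, by general properties of mapping spectra in presheaf $\infty$-categories. On the other side, $\Psi$ preserves colimits by construction (Theorem~\ref{thm:stabletostable} also ensures that $\SSyn_E$ is stable, and Proposition~\ref{prop:completecocomplete} ensures it has the limits needed to form the mapping spectra), so $\Psi(F) \simeq \hocolim_I \nu_{\SSyn}(P_i)$, and hence $\map_{\SSyn_E}(\Psi(-), X)$ also converts colimits into limits. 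Passing to the limit over $I$ of the equivalence from the previous paragraph produces the desired natural equivalence $\map_{\Fun(\mc P^\op, \Sp)}(F, \Phi X) \simeq \map_{\SSyn_E}(\Psi F, X)$, establishing that $\Phi$ is right adjoint to $\Psi$.

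The step I would expect to be the main formal obstacle, if one tried instead a presentability-based argument (applying the adjoint functor theorem directly to the colimit-preserving $\Psi$), is confirming that $\SSyn_E$ is presentable---something the paper has only partially set up, since $(-)^\der$ is only shown to preserve cocompleteness (Proposition~\ref{prop:completecocomplete}), not accessibility. The colimit-density argument above sidesteps this entirely and is the route I would take.
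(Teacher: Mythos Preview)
Your proof is correct and is essentially the paper's primary argument---that the formula always defines a right adjoint to a left Kan extension along Yoneda---spelled out in full detail. The paper also mentions, as an alternative, applying the adjoint functor theorem \cite[5.5.2.10]{lurie-htt}; your closing remark correctly identifies that this alternative would require presentability of $\SSyn_E$, which the paper has not established, and your Yoneda-plus-colimit-density argument is the cleaner route here.
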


\begin{proof}
  The formula in question always defines a right adjoint functor to the presheaf category. Alternatively, we can apply \cite[5.5.2.10]{lurie-htt} to construct the right adjoint, and deduce the formula by composing $\Psi$ with the Yoneda embedding.
\end{proof}

\begin{prop}
  \label{prop:comparisonholim}
  The functor $\Phi$ is holim-preserving, and takes values in the full subcategory $\Syn_E \subset \Fun(\mc P^\op, \Sp)$ of synthetic spectra.
\end{prop}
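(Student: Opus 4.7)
The first claim is immediate: $\Phi$ is a right adjoint, hence preserves all small limits, and in particular all homotopy limits of presheaves (which are computed pointwise in $\Sp$).

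For the second claim, recall that Pstr\k{a}gowski's category $\Syn_E$ is the full subcategory of $\Fun(\mc P^\op, \Sp)$ spanned by presheaves $F$ which are (a) additive, i.e.\ take finite coproducts in $\mc P$ to products in $\Sp$, and (b) send every \emph{$E$-exact} cofiber sequence $A \to B \to C$ of finite $E$-projectives---those inducing a short exact sequence on $E_*$---to a fiber sequence in $\Sp$. Since $(\Phi X)(P) = \map_{\SSyn_E}(\nu_{\SSyn} P, X)$, it suffices to verify that $\nu_{\SSyn}\co \mc P \to \SSyn_E$ carries finite coproducts of $E$-projectives to coproducts in $\SSyn_E$, and carries $E$-exact cofiber sequences to cofiber sequences in $\SSyn_E$. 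Mapping out of these will then give the desired product and fiber sequence expressions.

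For condition (a): the functor $\nu_{\SSyn}$ factors as $P \mapsto \Rees(I) \otimes_{\mb S} P$ followed by the localization $\LMod_{\Rees(I)}(\Fil(\Sp)) \to \SSyn_E = \LMod_{\Rees(I)}(\Fil(\Sp))^\der$. The first step is a free-module construction and hence strongly preserves coproducts. For the second step, the underlying category $\LMod_{\Rees(I)}(\Fil(\Sp))$ carries the action of $\mb Z$ inherited from $\Fil(\Sp)$, so Proposition~\ref{prop:productrep} applies and coproducts are preserved under $(-)^\der$. This gives $\nu_{\SSyn}(P \vee Q) \simeq \nu_{\SSyn}(P) \vee \nu_{\SSyn}(Q)$, and mapping out converts this into the required product.

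For condition (b), suppose $A \to B \to C$ is a cofiber sequence of finite $E$-projectives that is short exact on $E$-homology. The main point is that the connecting map $C \to \Sigma A$ then has positive $E$-Adams filtration: since $E_* B \to E_* C$ is surjective, the map $E \otimes C \to E \otimes \Sigma A$ induces the zero map on homotopy, and since $A$ is finite $E$-projective this forces $E \otimes C \to E \otimes \Sigma A$ itself to be nullhomotopic. Proposition~\ref{prop:ARcofiberpres} then applies directly to produce a cofiber sequence $\nu_{\SSyn} A \to \nu_{\SSyn} B \to \nu_{\SSyn} C$ in $\SSyn_E$, which after $\map_{\SSyn_E}(-,X)$ yields the required fiber sequence $(\Phi X)(C) \to (\Phi X)(B) \to (\Phi X)(A)$. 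The one subtle point---and the step that requires the most care to articulate---is the passage from ``short exact on $E_*$'' to ``positive $E$-Adams filtration'' for $E$-projective spectra; with this in hand, both axioms for membership in $\Syn_E$ follow formally from the results already established.
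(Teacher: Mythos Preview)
Your proof is correct and follows essentially the same approach as the paper: reduce membership in $\Syn_E$ to the sheaf condition, then show that an $E_*$-exact cofiber sequence has connecting map of positive $E$-Adams filtration (via the universal coefficient argument for $E$-projectives) and apply Proposition~\ref{prop:ARcofiberpres}. The only differences are cosmetic: for limit-preservation the paper argues objectwise rather than citing the right-adjoint property, and the paper does not separately verify additivity (it is subsumed by the $E_*$-exact sheaf condition applied to split cofiber sequences $P \to P \vee Q \to Q$).
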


\begin{proof}
  Limits in a presheaf category are calculated objectwise, and objectwise $\Phi$ is the limit-preserving functor $\map_{\SSyn_E}(\nu_{\SSyn} P, -)$.
  
  By \cite[2.8]{pstragowski-synthetic}, an object $Y$ in the presheaf category is a synthetic spectrum if and only if, for any cofiber sequence $P \to Q \to R$ in $\mc P$ which is $E_*$-exact, we get a fiber sequence $Y(R) \to Y(Q) \to Y(P)$. However, if $P \to Q \to R$ is an $E_*$-exact sequence between finite $E_*$-projective spectra, the map $R \to \Sigma P$ is zero on $E_*$ and hence $E$-trivial by the universal coefficient isomorphism. Therefore, $\nu_{\SSyn} (P) \to \nu_{\SSyn}(Q) \to \nu_{\SSyn}(R)$ is a cofiber sequence in $\SSyn_E$ by Proposition~\ref{prop:ARcofiberpres}, and so for any $X \in \SSyn_E$ the sequence
  \[
    \map_{\SSyn_E}(\nu_{\SSyn} R, X) \to 
    \map_{\SSyn_E}(\nu_{\SSyn} Q, X) \to 
    \map_{\SSyn_E}(\nu_{\SSyn} P, X)
  \]
  is a fiber sequence, as desired.
\end{proof}

\begin{prop}
  \label{prop:comparisonhocolim}
  The functor $\Phi$ preserves homotopy colimits.
\end{prop}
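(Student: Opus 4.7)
The plan is to reduce via the pointwise formula for colimits in presheaves, and then invoke compactness together with Proposition~\ref{prop:compactpres} applied twice (once at the level of $\LMod_{\Rees(I)}(\Fil(\Sp))$, once at the level of $\SSyn_E$).

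Since homotopy colimits in $\Fun(\mc P^\op, \Sp)$ are computed pointwise, $\Phi$ preserves hocolims if and only if, for each $P \in \mc P$, the functor $\map_{\SSyn_E}(\nu_{\SSyn} P, -) \co \SSyn_E \to \Sp$ preserves hocolims. It thus suffices to show that each $\nu_{\SSyn} P$ is a compact object of $\SSyn_E$: the conclusion of Proposition~\ref{prop:compactpres} (once $\SSyn_E$ is known to be stable, via Theorem~\ref{thm:stabletostable}) is precisely that the mapping spectrum out of a compact object preserves all homotopy colimits, not merely the filtered ones.

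To produce compactness in $\SSyn_E = \LMod_{\Rees(I)}(\Fil(\Sp))^\der$, I would apply Proposition~\ref{prop:compactpres} one step lower: the underlying category $\LMod_{\Rees(I)}(\Fil(\Sp))$ is stable, carries the canonical action of $\mb Z$ by shift, and the shift is an autoequivalence and therefore preserves all filtered homotopy colimits. It thus remains to check that $\nu_{\SSyn} P$, viewed in $\LMod_{\Rees(I)}(\Fil(\Sp))$, is compact there. This is a standard adjunction argument: the functor $P \mapsto \Rees(I) \otimes P$ from $\Sp$ to $\LMod_{\Rees(I)}(\Fil(\Sp))$ is a left adjoint, with right adjoint given by the forgetful functor to $\Fil(\Sp)$ followed by evaluation at degree $0$. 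Filtered colimits in $\LMod_{\Rees(I)}(\Fil(\Sp))$ are created by the forgetful functor and computed levelwise in $\Fil(\Sp)$, so this right adjoint preserves filtered colimits. Consequently $\Rees(I) = \Rees(I) \otimes \mb S$ is compact, and more generally $\Rees(I) \otimes P$ is compact whenever $P$ is a finite spectrum, as is the case for every $P \in \mc P$.

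The main obstacle is essentially bookkeeping rather than substance: the technical content is entirely absorbed into Proposition~\ref{prop:compactpres}, so what remains is to identify $\nu_{\SSyn} P$ as the image of a compact object under a compactness-preserving free functor, and then transport the resulting compactness through the successor construction to $\SSyn_E$. One small care point is that we want preservation of \emph{all} hocolims, not just filtered ones; but this is exactly what Proposition~\ref{prop:compactpres} delivers in the stable setting, since pushouts are preserved automatically by stability and arbitrary coproducts reduce to filtered colimits of finite coproducts.
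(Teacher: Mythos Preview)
Your argument is correct and follows essentially the same route as the paper: both reduce to compactness of $\nu_{\SSyn} P$ in $\SSyn_E$ via Proposition~\ref{prop:compactpres}, and you helpfully spell out the step the paper leaves implicit (that $\Rees(I)\otimes P$ is compact in $\LMod_{\Rees(I)}(\Fil(\Sp))$ because the free--forgetful right adjoint preserves filtered colimits). The only substantive difference is packaging: the paper first uses that $\Phi$ preserves limits (Proposition~\ref{prop:comparisonholim}) to get preservation of cofiber sequences for free, then reduces to coproducts; you bypass limit-preservation and argue directly that compactness in a stable category forces $\map(\nu_{\SSyn} P,-)$ to preserve all hocolims.

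One small wording issue: what you call ``the conclusion of Proposition~\ref{prop:compactpres}''---that the mapping spectrum out of a compact object preserves all homotopy colimits---is not the stated conclusion of that proposition (which is that compactness passes to the successor category) but rather an argument appearing inside its proof. The logic is fine; just cite it as a standard fact about compact objects in stable $\infty$-categories rather than as the proposition's conclusion.
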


\begin{proof}
  Since both categories are stable and $\Phi$ preserves fiber sequences, it also preserves cofiber sequences. Therefore, it suffices to check that $\Phi$ preserves arbitrary coproducts by \cite[4.4.2.7]{lurie-htt}.

  We note then that
  \[
    (\Phi (\oplus X_i)) (P) = \map_{\SSyn_E}(\nu_{\SSyn} P, \oplus X_i).
  \]
  However, as $P$ is a finite spectrum, $\nu_{\SSyn} P$ is compact in $\SSyn_E$ by Proposition~\ref{prop:compactpres} and so the result holds.
\end{proof}

\subsection{The left adjoint}

Because the comparison functor $\Phi$ lands in the full subcategory $\Syn_E \subset \Fun(\mc P^\op, \Sp)$, we arrive at the following.

\begin{prop}
  \label{prop:comparisonadjoint}
  The functors $\Phi$ and $\Psi$ restrict to an adjunction
  \[
    \Syn_E \rightleftarrows \SSyn_E.
  \]
\end{prop}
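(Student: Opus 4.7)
The plan is to observe that the statement is essentially a formal consequence of Proposition~\ref{prop:comparisonholim}. By that result, the functor $\Phi\co \SSyn_E \to \Fun(\mc P^\op, \Sp)$ takes values in the full subcategory $\Syn_E$; since the inclusion $\iota\co \Syn_E \hookrightarrow \Fun(\mc P^\op, \Sp)$ is fully faithful, $\Phi$ corestricts (essentially uniquely) to a functor $\Phi'\co \SSyn_E \to \Syn_E$ with $\iota \circ \Phi' \simeq \Phi$. On the other side, $\Psi$ already has $\SSyn_E$ as its target, and we simply precompose with $\iota$ to obtain the restricted functor $\Psi \circ \iota\co \Syn_E \to \SSyn_E$.

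The second step is to verify that $(\Psi \circ \iota, \Phi')$ forms an adjoint pair, for which it suffices to exhibit a natural equivalence of mapping spaces. For $Y \in \Syn_E$ and $Z \in \SSyn_E$, I would assemble the chain
\[
    \Map_{\SSyn_E}(\Psi(\iota Y), Z) \simeq \Map_{\Fun(\mc P^\op,\Sp)}(\iota Y, \Phi Z) \simeq \Map_{\Fun(\mc P^\op,\Sp)}(\iota Y, \iota \Phi' Z) \simeq \Map_{\Syn_E}(Y, \Phi' Z),
\]
where the first equivalence is the established $\Psi \dashv \Phi$ adjunction, the second is the factorization of $\Phi$ through $\iota$, and the third is full faithfulness of $\iota$. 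All three equivalences are natural in $Y$ and $Z$, and this natural equivalence is precisely the defining data of the desired adjunction.

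There is no serious obstacle here: the substantive content of the comparison---namely that $\Phi$ lands in $\Syn_E$, which in turn rests on Proposition~\ref{prop:ARcofiberpres} (preservation of $E_*$-exact cofiber sequences by $\nu_{\SSyn}$)---has already been established in Proposition~\ref{prop:comparisonholim}. The remaining step is only the elementary unpacking above, and does not require factoring $\Psi$ through the Bousfield localization $L\co \Fun(\mc P^\op, \Sp) \to \Syn_E$ (though one could alternatively present the argument that way, using $\Psi \circ \iota \simeq \Psi \circ \iota \circ L \circ \iota$ since $L\iota \simeq \mathrm{id}$).
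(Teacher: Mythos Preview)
Your proposal is correct and matches the paper's approach: the paper itself gives no explicit proof, simply stating that the result follows because $\Phi$ lands in the full subcategory $\Syn_E \subset \Fun(\mc P^\op,\Sp)$, and what you have written is precisely the standard formal unpacking of that observation. There is nothing to add.
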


\begin{prop}
  \label{prop:comparisonembeddinggenerators}
  For any finite $E_*$-projective spectrum, the unit map
  \[
    \Omega^\infty \nu_{\Syn} Q \to \Phi \Psi \nu_{\Syn} Q
  \]
  is an equivalence.
\end{prop}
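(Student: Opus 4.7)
The unit at $\nu_{\Syn} Q$ is a map in $\Fun(\mc P^\op, \Sp)$ (reading the $\Omega^\infty$ on the source as the implicit identification of the two categories); my plan is to verify it is an equivalence by evaluating at each $P \in \mc P$ and identifying both sides. The first move is to identify $\Psi \nu_{\Syn} Q$ with $\nu_{\SSyn} Q$: since $\Psi$ is by construction the unique homotopy-colimit-preserving extension of $\nu_{\SSyn}\colon \mc P \to \SSyn_E$ along the Yoneda-type inclusion, and since $\nu_{\Syn} Q$ for $Q$ finite $E_*$-projective is (essentially, by Pstr\k{a}gowski) the corepresentable $\map_{\Sp}(-,Q)$ in $\Syn_E$, evaluating the extension on this representable gives $\Psi \nu_{\Syn} Q \simeq \nu_{\SSyn} Q$.

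Next, I would unpack the mapping spectrum $(\Phi \nu_{\SSyn} Q)(P) = \map_{\SSyn_E}(\nu_{\SSyn} P, \nu_{\SSyn} Q)$. Using the free/forget adjunction between $\LMod_{R_I}(\Fil(\Sp))$ and $\Fil(\Sp)$ together with $\nu_{\SSyn} P = R_I \otimes F_0 P$, the filtered mapping spectrum in $\LMod_{R_I}$ has level $n$ equal to
\[
  \FMap_{\LMod_{R_I}}(\nu_{\SSyn} P, \nu_{\SSyn} Q)^n \;\simeq\; \map_{\Sp}(P, \nu_{\SSyn}(Q)^n) \;=\; \map_{\Sp}(P, I^{\max(0,-n)} \otimes_R Q),
\]
which is precisely the $E$-Adams tower of $\map_{\Sp}(P,Q)$, stabilizing at $\map_{\Sp}(P,Q)$ in nonnegative filtration. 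Then $\map_{\SSyn_E}(\nu_{\SSyn} P, \nu_{\SSyn} Q)$ is the successor of this filtered spectrum in the sense of Section~\ref{sec:successor}.

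The final step is to identify the successor of the Adams tower with $\nu_{\Syn} Q(P)$. Proposition~\ref{prop:naturalidentify} computes the bigraded homotopy of the successor as the images $\mathrm{image}\bigl(\pi_t \map(P, I^{t-w} \otimes_R Q) \to \pi_t \map(P, I^{t-w-1} \otimes_R Q)\bigr)$. For $P$ finite $E_*$-projective, the universal coefficient isomorphism $[P, E \otimes_R N] \simeq \Hom_{E_*}(E_* P, E_* N)$ converts the long exact sequences defining these images into $E_*$-algebraic data---the same Ext groups over $E_* E$ that underlie Pstr\k{a}gowski's bigraded description of $\nu_{\Syn} Q$. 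Combined with the fact that the filtered spectrum stabilizes at $\map_{\Sp}(P,Q)$ in nonnegative filtration, this should exhibit the natural comparison map as an isomorphism on all bigraded homotopy, hence an equivalence of spectra.

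\textbf{Main obstacle.} Essentially all of the mathematical content lies in the last step: reconciling the concrete image description of the successor's homotopy (intrinsic to the framework of this paper) with Pstr\k{a}gowski's sheafification-based construction of $\nu_{\Syn} Q$. The universal coefficient isomorphism is the key device, channelling projectivity of $E_* P$ and $E_* Q$ into precise identifications of the relevant image groups with classically-defined Ext classes; once this match is in place, equivalence of the two spectral presheaves is forced.
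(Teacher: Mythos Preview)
Your reduction is the same as the paper's: first identify $\Psi\nu_{\Syn}Q\simeq\nu_{\SSyn}Q$, then reduce to showing
\[
  \Map_{\Syn_E}(\nu_{\Syn}P,\nu_{\Syn}Q)\;\longrightarrow\;\Map_{\SSyn_E}(\nu_{\SSyn}P,\nu_{\SSyn}Q)
\]
is an equivalence, and your unwinding of the right-hand side as the successor of the filtered spectrum $\map_{\Sp}(P,\Rees(I)\otimes Q)$ is correct.

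Where you diverge from the paper is in the final identification. You propose to compute the bigraded homotopy of both sides via Proposition~\ref{prop:naturalidentify} and then match image groups against Pstr\k{a}gowski's Ext description through the universal coefficient isomorphism. The paper instead invokes Proposition~\ref{prop:beilinsonweight} directly: since the filtered mapping space is $\Omega^\infty$ of the filtered spectrum $\map(P,\Rees(I)\otimes Q)$, its successor is (by Corollary~\ref{cor:Dhomotopical} and Proposition~\ref{prop:beilinsonweight}) canonically $\Omega^\infty\tau^B_{\geq 0}\map(P,\Rees(I)\otimes Q)$, the Beilinson connective cover. Pstr\k{a}gowski's mapping space is already known to have this d\'ecalage description, so the two sides match on the nose without any homotopy-group bookkeeping.

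Your route is not wrong, but the step you flag as the ``main obstacle'' is genuinely the entire content, and your sketch there is thin: saying the universal coefficient theorem converts the image groups into ``the same Ext groups'' that describe $\nu_{\Syn}Q$ is a correct slogan, but carrying it out amounts to re-deriving the d\'ecalage identification of synthetic mapping spectra by hand. The paper's move---recognizing that Proposition~\ref{prop:beilinsonweight} packages exactly this comparison---is the idea you are missing, and it replaces your homotopy-group chase with a one-line appeal to a structural result already proved earlier in the paper.
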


\begin{proof}
  By Definition~\ref{def:ssyncomparisonfunctor}, $\Psi$ is the hocolim-preserving extension of $\nu_{\SSyn}$ to $\Fun(\mc P^\op, \Sp)$, which means $\Psi \nu_{\Syn} \simeq \nu_{\SSyn}$. Therefore, we need to show that the map
  \[
    \nu_{\Syn} Q \to \Phi \nu_{\SSyn} Q
  \]
  is an equivalance in $\Fun(\mc P^\op, \Sp)$ for any $Q \in \mc P$. Taking values on any object $P$, we find that this reduces to showing
  \[
    \Map_{\Syn_E}(\nu_{\Syn}P, \nu_{\Syn} Q) \to \Map_{\SSyn_E}(\nu_{\SSyn} P, \nu_{\SSyn} Q)
  \]
  is always an equivalence.

  However, by \cite{pstragowski-synthetic} and Proposition~\ref{prop:beilinsonweight}, both sides are canonically identified with the truncation
  \[
    \Omega^\infty \tau^B_{\geq 0} \map(P, \Rees(I) \otimes Q)
  \]
  in the Beilinson $t$-structure.
\end{proof}

\begin{thm}
  \label{thm:comparisonembedding}
  The functor $\Psi\co \Syn_E \to \SSyn_E$ is fully faithful, and the essential image consists of the full subcategory of $\SSyn_E$ generated under homotopy colimits by objects of the form $\Sigma^{t,w} \nu(P)$ for $P$ a finite $E_*$-projective spectrum.
\end{thm}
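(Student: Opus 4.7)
The proof splits into (1) fully faithfulness of $\Psi$, and (2) identification of its essential image.

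For (1), I would use the standard criterion that the left adjoint $\Psi$ in the adjunction $\Psi \dashv \Phi$ of Proposition~\ref{prop:comparisonadjoint} is fully faithful precisely when the unit $\eta_X \co X \to \Phi\Psi X$ is an equivalence for every $X \in \Syn_E$. Since $\Phi$ preserves homotopy colimits by Proposition~\ref{prop:comparisonhocolim} and $\Psi$ is a left adjoint, the composite $\Phi\Psi$ is colimit-preserving, so the full subcategory of $\Syn_E$ on which $\eta$ is an equivalence is closed under homotopy colimits. It therefore suffices to verify $\eta$ on a generating family.

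I would then invoke Pstr\k{a}gowski's result that $\Syn_E$ is generated under homotopy colimits by the objects $\Sigma^{t,w}\nu_{\Syn}(P)$ with $P$ a finite $E_*$-projective spectrum. The case $(t,w) = (0,0)$ is precisely Proposition~\ref{prop:comparisonembeddinggenerators}. To reduce the general bigraded case to this, I would check that $\Psi$ commutes with both $\Sigma = \Sigma^{1,1}$ and the weight shift $\sh = \Sigma^{0,1}$, and similarly for $\Phi$. Compatibility with $\Sigma$ is automatic because both functors are exact between stable $\infty$-categories. Compatibility of $\Psi$ with $\sh$ follows from its construction as the colimit-extension of $\nu_{\SSyn} = \Rees(I) \otimes (-)$: the $\sh$-action on Adams--Rees modules is inherited from the shift on filtered spectra and is compatible with tensoring with $\Rees(I)$. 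Equivariance of $\Phi$ then comes for free by uniqueness of adjoints, producing a natural equivalence $\eta_{\Sigma^{t,w}\nu_{\Syn}(P)} \simeq \Sigma^{t,w}\eta_{\nu_{\Syn}(P)}$ and completing the verification.

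For (2), let $\mc T \subset \SSyn_E$ denote the full subcategory generated under homotopy colimits by $\{\Sigma^{t,w}\nu_{\SSyn}(P)\}$. Since $\Psi$ preserves colimits and commutes with bigraded suspensions, $\Psi(\Sigma^{t,w}\nu_{\Syn}(P)) \simeq \Sigma^{t,w}\nu_{\SSyn}(P) \in \mc T$; as these exhaust a generating family for $\Syn_E$, the essential image of $\Psi$ lies in $\mc T$. Conversely, because $\Psi$ is fully faithful and colimit-preserving and $\Syn_E$ is cocomplete, its essential image is closed under colimits in $\SSyn_E$: any colimit of a diagram $\{\Psi X_i\}$ is equivalent to $\Psi(\colim X_i)$. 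Since this colimit-closed subcategory contains each $\Sigma^{t,w}\nu_{\SSyn}(P)$, it contains all of $\mc T$, yielding the equality. The main obstacle I anticipate is the $\sh$-equivariance of the adjunction: both $\Syn_E$ and $\SSyn_E$ carry natural $\mb Z$-actions by weight shift, but these are defined through different frameworks---the Beilinson $t$-structure framework of Proposition~\ref{prop:beilinsonweight} on the synthetic side versus the concrete shift on filtered Rees modules on the semisynthetic side---so assembling the comparison into a coherently shift-equivariant adjunction demands matching these structures up carefully at the level of $\infty$-categorical data. Once this is arranged, the remainder of the argument is formal.
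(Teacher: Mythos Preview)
Your proposal is correct and follows essentially the same strategy as the paper: verify the unit $\eta$ of the adjunction on the generators $\Sigma^{t,w}\nu_{\Syn}(P)$ using Proposition~\ref{prop:comparisonembeddinggenerators}, then propagate to all of $\Syn_E$ by colimit-preservation of $\Phi\Psi$, and read off the essential image from the fact that $\Psi$ is colimit-preserving and $\Syn_E$ is generated by those objects. The one place you are more careful than the paper is the passage from $(t,w)=(0,0)$ to general bigraded shifts via $\sh$-equivariance of the adjunction---the paper simply asserts that the unit is an equivalence on all $\Sigma^{t,w}\nu_{\Syn}(P)$ without isolating this step, so your explicit treatment of it (and your identification of it as the only nonformal point) is a genuine improvement in exposition rather than a divergence in method.
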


\begin{proof}
  We first show full faithfulness. This is equivalent to knowing that the unit
  \[
    Y \to \Phi \Psi Y
  \]
  is always an equivalence in $\Syn_E$. By Proposition~\ref{prop:comparisonembeddinggenerators}, it is an equivalence when $Y = \nu_{\Syn} Q$, and by Proposition~\ref{prop:comparisonhocolim} both the source and target of the unit preserve hocolims in $Y$. However, $\Syn_E$ is generated under hocolims by objects $\Sigma^{t,w} \nu_{\Syn}(P)$, and so the unit map is always an equivalence.
  
  The functor $\Psi$ is hocolim-preserving, and $\SSyn_E$ is generated under hocolims by objects of the form $\Sigma^{t,w} \nu(P)$ for $P$ finite $E_*$-projective. Therefore, the essential image must be the closure of the collection of objects of the form $\Sigma^{t,w} \nu(P)$ in $\SSyn_E$ under hocolims, as desired.
\end{proof}

\begin{rmk}
  By Corollary~\ref{cor:generatedgraded}, any object in the image of $\Psi$ has a representative in $\Fil(\Sp)$ whose associated graded is a sum of terms of the form $\Sigma^{t,w} \gr(\Rees(I)) \otimes P$ for $P$ a finite $E_*$-projective. In some sense, $\Syn_E$ consists of objects with a ``cofibrant'' representative.
\end{rmk}

\begin{rmk}
  In \cite{syntheticcellular} it is shown that, when $E$ is connective, the category $\Syn_E$ is generated by the bigraded spheres. This correspondingly identifies the essential image of $\Psi$ as the full subcategory of $\SSyn_E$ generated under homotopy colimits by the bigraded spheres $S^{t,w}$.
\end{rmk}

\bibliography{../masterbib}
\end{document}